\newcommand{\ep}{\underline{\epsilon}}
\newcommand{\onen}{{\mathbf 1}_{n}}
\newcommand{\onenn}[1]{{\mathbf 1}_{#1}}
\newcommand{\onenp}{{\mathbf 1}_{n'}}
\newcommand{\onenpp}{{\mathbf 1}_{n''}}
\newcommand{\onem}{{\mathbf 1}_{m}}
\newcommand{\E}{\cal{E}}
\newcommand{\F}{\cal{F}}
\newcommand{\tsigma}{\sigma}
\newcommand{\tomega}{\omega}
\newcommand{\tpsi}{\psi}
\newcommand{\Ucas}{\cal{C}}
\newcommand{\UupD}{\cal{E}}
\newcommand{\UdownD}{\cal{F}}
\newcommand{\ogam}{\xi^+}
\newcommand{\odelt}{\hat{\xi}^+}
\newcommand{\gam}{\xi^{-}}
\newcommand{\delt}{\hat{\xi}^{-}}
\newcommand{\gams}{\xi^{-}_{\tsigma}}
\newcommand{\delts}{\hat{\xi}^{-}_{\tsigma}}
\newcommand{\ogams}{\xi^+_{\tsigma}}
\newcommand{\odelts}{\hat{\xi}^+_{\tsigma}}
\newcommand{\gamp}{\xi'^{-}}
\newcommand{\Uq}{{\bf U}_q(\mathfrak{sl}_2)}
\newcommand{\U}{\dot{{\bf U}}}
\newcommand{\Ucat}{\cal{U}}
\newcommand{\UcatD}{\dot{\cal{U}}}
\newcommand{\B}{\dot{\mathbb{B}}}
\newcommand{\UA}{{_{\cal{A}}\dot{{\bf U}}}}
\newcommand{\und}[1]{\underline{#1}}
\newcommand{\xsum}[2]{
  \xy
  (0,.4)*{\sum};
  (0,3.7)*{\scs #2};
  (0,-2.9)*{\scs #1};
  \endxy
}
\newcommand{\refequal}[1]{\xy {\ar@{=}^{#1}
(-1,0)*{};(1,0)*{}};
\endxy}
\newcommand{\To}{\Rightarrow}
\newcommand{\Hom}{{\rm Hom}}
\newcommand{\HOM}{{\rm HOM}}
\renewcommand{\to}{\rightarrow}
\newcommand{\maps}{\colon}
\newcommand{\op}{{\rm op}}
\newcommand{\co}{{\rm co}}
\newcommand{\im}{{\rm im\ }}
\newcommand{\scs}{\scriptstyle}
\def\Id{\mathrm{Id}}
\def\mf{\mathfrak}
\def\shuffle{\,\raise 1pt\hbox{$\scriptscriptstyle\cup{\mskip
               -4mu}\cup$}\,}
\theoremstyle{definition}
\newtheorem{thm}{Theorem}[section]
\newtheorem{cor}[thm]{Corollary}
\newtheorem{rem}[thm]{Remark}
\newtheorem{prop}[thm]{Proposition}
\newtheorem{defn}[thm]{Definition}
\numberwithin{equation}{section}
\def\emph#1{{\sl #1\/}}
\let\hat=\widehat
\let\tilde=\widetilde
\let\phi=\varphi
\let\theta=\vartheta
\let\epsilon=\varepsilon
\def\Z{{\mathbbm Z}}
\def\Q{{\mathbbm Q}}
\def\cal#1{\mathcal{#1}}%
\def\1{\mathbbm{1}}%
\def\nn{\notag}
\newcommand{\Uup}{
    \xy {\ar (0,-3)*{};(0,3)*{} };(1.5,0)*{};(-1.5,0)*{};\endxy}
\newcommand{\Udown}{
    \xy {\ar (0,3)*{};(0,-3)*{} };(1.5,0)*{};(-1.5,0)*{};\endxy}
\newcommand{\Uupdot}{
   \xy {\ar (0,-3)*{};(0,3)*{} };(0,0)*{\bullet};(1.5,0)*{};(-1.5,0)*{};\endxy}
\newcommand{\Udowndot}{
   \xy {\ar (0,3)*{};(0,-3)*{} };(0,0)*{\bullet};(1.5,0)*{};(-1.5,0)*{};\endxy}
\newcommand{\Ucupr}{\;\;
    \vcenter{\xy (-2,3)*{}; (2,3)*{} **\crv{(-2,-1) & (2,-1)}?(1)*\dir{>};
            (2,-3)*{};(-2,3)*{}; \endxy} \;\; }
\newcommand{\Ucupl}{\;\;
    \vcenter{\xy (2,3)*{}; (-2,3)*{} **\crv{(2,-1) & (-2,-1)}?(1)*\dir{>};
            (2,-3)*{};(-2,3)*{}; \endxy} \;\; }
\newcommand{\Ucapr}{\;\;
    \vcenter{\xy (-2,-3)*{}; (2,-3)*{} **\crv{(-2,1) & (2,1)}?(1)*\dir{>};
            (2,-3)*{};(-2,3)*{}; \endxy} \;\; }
\newcommand{\Ucapl}{\;\;
    \vcenter{\xy (2,-3)*{}; (-2,-3)*{} **\crv{(2,1) & (-2,1)}?(1)*\dir{>};
            (2,-3)*{};(-2,3)*{}; \endxy} \;\; }
\newcommand{\Ucross}{\;\;
    \vcenter{\xy {\ar (2.5,-2.5)*{};(-2.5,2.5)*{}}; {\ar (-2.5,-2.5)*{};(2.5,2.5)*{}};
    (4,0)*{};(-4,0)*{};\endxy} \;\; }
\newcommand{\Ucrossu}{\xy {\ar (2.5,-2.5)*{};(-2.5,2.5)*{}}; {\ar (-2.5,-2.5)*{};(2.5,2.5)*{} };
\endxy}
\newcommand{\Ucrossd}{
    \vcenter{\xy {\ar (2.5,2.5)*{};(-2.5,-2.5)*{}}; {\ar (-2.5,2.5)*{};(2.5,-2.5)*{} }; (4,0)*{};(-4,0)*{};\endxy}  }
\newcommand{\sUup}{
    \xy {\ar (0,-2)*{};(0,2)*{} };(1.5,0)*{};(-1.5,0)*{};\endxy}
\newcommand{\sUdown}{
    \xy {\ar (0,2)*{};(0,-2)*{} };(1.5,0)*{};(-1.5,0)*{};\endxy}
\newcommand{\sUupdot}{
   \xy {\ar (0,-2)*{};(0,2)*{} };(0,0)*{\scs \bullet};(1.5,0)*{};(-1.5,0)*{};\endxy}
\newcommand{\sUdowndot}{
   \xy {\ar (0,2)*{};(0,-2)*{} };(0,0)*{\scs \bullet};(1.5,0)*{};(-1.5,0)*{};\endxy}
\newcommand{\sUcupr}{\;\;
    \vcenter{\xy (-1.5,2)*{}; (1.5,2)*{} **\crv{(-2,-1) & (2,-1)}?(1)*\dir{>};
            (1.5,-2)*{};(-1.5,2)*{}; \endxy} \;\; }
\newcommand{\sUcupl}{\;\;
    \vcenter{\xy (1.5,2)*{}; (-1.5,2)*{} **\crv{(2,-1) & (-2,-1)}?(1)*\dir{>};
            (1.5,-2)*{};(-1.5,2)*{}; \endxy} \;\; }
\newcommand{\sUcapr}{\;\;
    \vcenter{\xy (-1.5,-2)*{}; (1.5,-2)*{} **\crv{(-2,1) & (2,1)}?(1)*\dir{>};
            (1.5,-2)*{};(-1.5,2)*{}; \endxy} \;\; }
\newcommand{\sUcapl}{\;\;
    \vcenter{\xy (1.5,-2)*{}; (-1.5,-2)*{} **\crv{(2,1) & (-2,1)}?(1)*\dir{>};
            (1.5,-2)*{};(-1.5,2)*{}; \endxy} \;\; }
\newcommand{\ccbub}[1]{
\xybox{%
 (-6,0)*{};
  (6,0)*{};
  (-4,0)*{}="t1";
  (4,0)*{}="t2";
  "t2";"t1" **\crv{(4,6) & (-4,6)};
   ?(1)*\dir{>};
  "t2";"t1" **\crv{(4,-6) & (-4,-6)};
   ?(.3)*\dir{}+(0,0)*{\bullet}+(0,-3)*{\scs {#1}};
}}
\newcommand{\cbub}[1]{
\xybox{%
 (-6,0)*{};
  (6,0)*{};
  (-4,0)*{}="t1";
  (4,0)*{}="t2";
  "t2";"t1" **\crv{(4,6) & (-4,6)};
    ?(.95)*\dir{<};
  "t2";"t1" **\crv{(4,-6) & (-4,-6)};
   ?(.3)*\dir{}+(0,0)*{\bullet}+(0,-3)*{\scs {#1}};
}}
\newcommand{\bbe}[1]{\xybox{%
  (-2,0)*{};
  (2,0)*{};
  (0,0);(0,-18) **\dir{-}; ?(.5)*\dir{<}+(2.3,0)*{\scriptstyle{#1}};
}}
\newcommand{\bbsid}{\xybox{%
  (-2,0)*{};
  (2,0)*{};
  (0,10);(0,4) **\dir{-};
}}
\newcommand{\bbpef}{\xybox{%
  (-6,0)*{};
  (6,0)*{};
  (-4,0)*{}="t1";
  (4,0)*{}="t2";
  "t1";"t2" **\crv{(-4,-6) & (4,-6)}; ?(.15)*\dir{>} ?(.9)*\dir{>};
}}
\newcommand{\bbpfe}{\xybox{%
  (-6,0)*{};
  (6,0)*{};
  (-4,0)*{}="t1";
  (4,0)*{}="t2";
  "t2";"t1" **\crv{(4,-6) & (-4,-6)}; ?(.15)*\dir{>} ?(.9)*\dir{>};
}}
\newcommand{\bbcfe}[1]{\xybox{%
  (-6,0)*{};
  (6,0)*{};
  (-4,0)*{}="t1";
  (4,0)*{}="t2";
  "t1";"t2" **\crv{(-4,6) & (4,6)}; ?(.15)*\dir{>} ?(.9)*\dir{>}
  ?(.5)*\dir{}+(0,2)*{\scriptstyle{#1}};
}}
\newcommand{\bbcef}[1]{\xybox{%
  (-6,0)*{};
  (6,0)*{};
  (-4,0)*{}="t1";
  (4,0)*{}="t2";
  "t2";"t1" **\crv{(4,6) & (-4,6)}; ?(.15)*\dir{>}
  ?(.9)*\dir{>} ?(.5)*\dir{}+(0,2)*{\scriptstyle{#1}};
}}
\newcommand{\lowrru}[1]{\xybox{%
  (-8,0)*{};
  (8,0)*{};
  (-6,-18)*{};(6,-9)*{} **\crv{(-6,-13) & (6,-15)} ?(1)*\dir{>};
  (6,-9)*{};(6,0)*{}  **\dir{-} ?(.3)*\dir{ }+(2,0)*{\scs {\bf j}};
}}
\newcommand{\lowllu}[1]{\xybox{%
  (-8,0)*{};
  (8,0)*{};
  (6,-18)*{};(-6,-9)*{} **\crv{(6,-13) & (-6,-15)} ?(1)*\dir{>};
  (-6,-9)*{};(-6,0)*{}  **\dir{-} ?(.3)*\dir{ }+(-2,0)*{\scs {\bf j}};
}}
\newcommand{\bbdl}[1]{\xybox{%
  (2,0);(0,-8) **\crv{(2,-2)&(0,-6)}; ?(.5)*\dir{>}
}}
\newcommand{\bbdlu}[1]{\xybox{%
  (2,0);(0,-8) **\crv{(2,-2)&(0,-6)}; ?(.5)*\dir{<}
}}
\newcommand{\bbdr}[1]{\xybox{%
  (-2,0);(0,-8) **\crv{(-2,-2)&(0,-6)}; ?(.5)*\dir{>}
}}
\newcommand{\bbdru}[1]{\xybox{%
  (-2,0);(0,-8) **\crv{(-2,-2)&(0,-6)}; ?(.5)*\dir{<}
}}
\title{A categorification of the Casimir of quantum sl(2)}
      \author{
      Anna Beliakova, Mikhail Khovanov, and Aaron D.\ Lauda}
\date{January 10, 2011}
\begin{document}
%

\maketitle

\begin{abstract}
\begin{center}
We categorify the Casimir element of the idempotented form of quantum sl(2).
\end{center}
\end{abstract}

\setcounter{tocdepth}{2} \tableofcontents

\newpage

%
\section{Introduction}
%

The Witten-Reshetikhin-Turaev invariant~\cite{RT,Witten} of a 3-manifold presented by surgery along a framed link is given by summing over labellings of the components of the link by irreducible representations of the quantum group $U_q(\mathfrak{sl}_2)$, then evaluating the corresponding coloured Jones polynomial at a root of unity. Crane and Frenkel~\cite{CF} conjectured that quantum 3-manifold invariants could be categorified using the categorified representation theory of $U_q(\mathfrak{sl}_2)$.  While the Jones polynomial and coloured Jones polynomial have been categorified~\cite{BW,Kh1}, the problem of categorification at a root of unity has seen little progress.

A universal invariant of knots  taking values in (an appropriate completion of) the center of the quantum $U_q(\mathfrak{sl}_2)$ was constructed by Lawrence~\cite{Law2,Law}. This universal invariant dominates all coloured Jones polynomials.  The center of $U_q(\mathfrak{sl}_2)$ as a ${\Z}[q,q^{-1}]$-algebra  is freely generated by  the Casimir element $C$. For $p\geq 0$, let us introduce
$$\sigma_p=\prod^p_{i=1} \left(C^2-(q^{2i}+2+q^{-2i})\right)$$
which are monic polynomials of degree $p$ in $C^2$. The universal invariant $J_K$ of a knot $K$ can be written \cite[Theorem 4.5]{Hab} as
$$J_K=\sum_{p\geq 0} a_p(K) \sigma_p\, .$$
The  coefficients $a_p(K)\in {\mathbb Z}[q,q^{-1}]$, $p\in \mathbb N$,
determine
the Witten-Reshetikhin-Turaev invariant of any integral homology 3--sphere
obtained by surgery on the knot $K$. Therefore it is natural to seek a categorification of this universal invariant within the context of categorified representation theory of $U_q(\mf{sl}_2)$.  Here we take a first step in this ambitious program by categorifying the quantum Casimir element for $U_q(\mf{sl}_2)$.

Igor Frenkel conjectured~\cite{Fren} the existence of a categorification of the integral idempotented version $\UA$ of the quantum enveloping algebra of $\mf{sl}_2$ at generic $q$. The algebra $\UA$ is the $\Z[q,q^{-1}]$-subalgebra of the algebra $\U$ defined by Beilinson, Lusztig and MacPherson~\cite{BLM} and generalized to arbitrary types by Lusztig~\cite{Lus4}.  In $\U$ the identity element $1 \in \mathbf{U}_q(\mathfrak{sl}_2)$ is replaced by a collection of orthogonal idempotents $1_n$ indexed by the weight lattice for $\mathfrak{sl}_2$. We recall the definition of $\mathbf{U}_q(\mathfrak{sl}_2)$ in Section~\ref{subsec_old_casimir} and $\U$ in Section~\ref{subsec_udot}.

In \cite{Lau1} the third author introduced a categorification $\UcatD$ of $\UA$ given by the idempotent completion of an additive 2-category $\Ucat$ whose objects $n \in \Z$ are parameterized by the integral weight lattice of $\mf{sl}_2$.  The 1-morphisms from $n$ to $m$ are given by direct sums of 1-morphisms $\cal{E}_{\ep}\onen\{t\} = \cal{E}_{\epsilon_1}\dots \cal{E}_{\epsilon_k}\onen\{t\}$ where $\ep=\epsilon_1\dots\epsilon_k$ with $\epsilon_1,\dots,\epsilon_k \in \{+,-\}$, $m-n=2\sum_{i=1}^k \epsilon_i 1$,  $\cal{E}_{+}=\cal{E}$, $\cal{E_{-}}=\cal{F}$ and $t\in \Z$. The 1-morphisms $\cal{E}$ and $\cal{F}$ can be thought of as categorifications of the generators $E$ and $F$ of quantum $\mf{sl}_2$. The 2-morphisms are given by $\Bbbk$-linear combinations of certain planar diagrams modulo local relations.  In \cite{Lau1} it was shown that the split Grothendieck ring of $\UcatD$ is isomorphic to $\UA$,
\begin{equation}
  K_0(\UcatD) \cong \UA,
\end{equation}
when the ground ring is a field $\Bbbk$.  In \cite{KLMS} this result was proven with $\Bbbk$ replaced by the integers.

The quantum Casimir element for $U_q(\mathfrak{sl}_2)$ has the form
\begin{equation} \label{eq_usual_casimir_INTRO}
c := EF +\frac{q^{-1}K+qK^{-1}}{(q-q^{-1})^2} = FE +\frac{qK+q^{-1}K^{-1}}{(q-q^{-1})^2}.
\end{equation}
This element is central and is preserved by various (anti)linear (anti)involutions defined on $\mathbf{U}_q(\mathfrak{sl}_2)$. In this paper we category the integral idempotent version of the Casimir element obtained from \eqref{eq_usual_casimir_INTRO} by clearing denominators by multiplying by $(q-q^{-1})^2$ and projecting via $1_n$.  We also multiply by $-1$ for convenience and obtain the integral idempotented Casimir element $\dot{C}$ for $\U$:
\begin{eqnarray}
 \dot{C} &=& \prod_{n\in\Z}C1_n, \\
  C1_n =1_nC &:=& (-q^2+2-q^{-2})EF1_n-(q^{n-1}+q^{1-n})1_n, \label{eq_casimir_INTRO}\\
            &=& (-q^2+2-q^{-2})FE1_n-(q^{n+1}+q^{-1-n})1_n. \label{eq_casimir2_INTRO}
\end{eqnarray}
This element belongs to the center, defined in Section~\ref{subsec_idempotented_rings}, of the idempotented ring $\U$.

To categorify the component $C1_n$ of the idempotented Casimir element of $\U$ in the form given in \eqref{eq_casimir_INTRO} we must lift elements $q^aEF1_n$ and $q^b1_n$ to 1-morphisms $\cal{E}\cal{F}\onen\{a\}$ and $\onen\{b\}$.  We follow the now standard procedure of lifting powers of $q$ to grading shifts and using complexes whenever minus signs are present. This requires us to work with the 2-category $Kom(\UcatD)$ of bounded complexes over the 2-category $\UcatD$ whose objects are integers $n \in \Z$, 1-morphisms are bounded complexes of 1-morphisms in $\UcatD$, and 2-morphisms are chain maps constructed from the 2-morphisms in $\UcatD$.   We also consider the 2-category $Com(\UcatD)$ which has the same objects and 1-morphisms as $Kom(\Ucat)$, and whose 2-morphisms are chain maps up to homotopy.

We are looking for a complex with four copies of $\cal{E}\cal{F}\onen$ and two copies of $\onen$ with grading shifts:
\begin{equation}
  \text{$\cal{E}\cal{F}\onen\{-2\}$, \;\;
  $\cal{E}\cal{F}\onen$, \;\;
  $\cal{E}\cal{F}\onen$, \;\;
  $\cal{E}\cal{F}\onen\{2\}$, \;\;
  $\onen\{1-n\}$, \;\; $\onen\{n-1\}$}.  \nn
\end{equation}
The minus signs in \eqref{eq_casimir_INTRO} indicate that the terms $\cal{E}\cal{F}\onen\{-2\}$,  $\cal{E}\cal{F}\onen\{2\}$, $\onen\{1-n\}$, $\onen\{n-1\}$ should live in odd homological degrees, and the remaining two copies of $\cal{E}\cal{F}\onen$ in even degrees.

The positioning of these terms in the complex is naturally dictated
by the $q$-degrees of the possible maps between them. Negative degree
endomorphisms of $\cal{E}\cal{F}\onen$ exist only for $n>1$, while
there are obvious degree two endomorphisms given by placing a dot on
one of two vertical lines in the diagram of the identity map:
$\text{$\Uupdot\Udown$}$ and $\text{$\Uup\Udowndot$}$ (see Section~\ref{subsec_def_ucat} for a review of the 2-category $\Ucat$). We
can arrange the above four copies of $\cal{E}\cal{F}\onen$ with the
appropriate shifts and cohomological degrees into a complex just
using these maps
\[
\xy
 (-30,0)*+{\cal{E}\cal{F}\onen\{2\}}="1";
 (0,15)*+{\cal{E}\cal{F}\onen}="2";
 (0,-15)*+{\cal{E}\cal{F}\onen}="3";
 (30,0)*+{\cal{E}\cal{F}\onen\{-2\}}="4";
  {\ar "1";"2"};
  {\ar "1";"3"};
  {\ar "2";"4"};
  {\ar "3";"4"};
  (-17,12)*{\text{$\Uupdot\Udown$}};
  (17,12)*{-\text{$\Uup\Udowndot$}};
  (-17,-12)*{\text{$\Uup\Udowndot$}};
  (17,-12)*{\text{$\Uupdot\Udown$}};
\endxy
\]
where the exact position of the minus sign is unimportant. To find the room for the two shifted copies of $\onen$ we observe that clockwise cup and cap 2-morphisms have degree $1-n$, perfectly matching the difference in degrees of these $\onen$ and those of the middle $\cal{E}\cal{F}\onen$ in the complex, leading to a commutative square
 \[
\xy
 (-30,0)*+{\onen\{1-n\}}="1";
 (0,15)*+{\cal{E}\cal{F}\onen}="2";
 (0,-15)*+{\cal{E}\cal{F}\onen}="3";
 (30,0)*+{\onen\{n-1\}}="4";
  {\ar^{\vcenter{\xy (2,3)*{}; (-2,3)*{} **\crv{(2,-1) & (-2,-1)}?(1)*\dir{>};
             \endxy} } "1";"2"};
  {\ar_{\text{$\Ucupl$}} "1";"3"};
  {\ar^{-\text{$\Ucapr$}} "2";"4"};
  {\ar_{\vcenter{\xy (-2,-3)*{}; (2,-3)*{} **\crv{(-2,1) & (2,1)}?(1)*\dir{>};
            \endxy}} "3";"4"};
\endxy
\]
These two commutative squares can be glued into a single complex $\cal{C}\onen$ centered in homological degree zero:
\begin{equation} \label{eq_casimir_EF_INTRO}
 \cal{C}\onen :=\xy
  (-55,15)*+{\E\F \onen \{2\}}="1";
  (-55,-15)*+{\onen \{1-n\}}="2";
  (0,15)*+{\und{\E\F \onen }}="3";
  (0,-15)*+{\und{\E\F\onen}}="4";
  (55,-15)*+{\onen \{n-1\}}="5";
  (55,15)*+{\E\F \onen \{-2\}}="6";
   {\ar^{\Uupdot\Udown} "1";"3"};
   {\ar^{} "1";"4"};
   "1"+(18,-6)*{\Uup\Udowndot};
   {\ar_(.35){\Ucupl} "2";"3"}; 
   {\ar_-{\Ucupl} "2";"4"};     
   {\ar^-<<<<<<<{\Ucapr} "3";"5"};
   "3"+(20,4)*{-\;\Uup\Udowndot};
   {\ar "3";"6"};
   {\ar_{-\;\;\Ucapr} "4";"5"};
   {\ar_{} "4";"6"};
   "4"+(24,8)*{\Uupdot\Udown};
   (-57,0)*{\bigoplus};(0,0)*{\bigoplus};(57,0)*{\bigoplus};
 \endxy
\end{equation}
We call the above complex the Casimir complex.  The image of this
complex in the Grothendieck ring of $Com(\UcatD)$ is $C1_n$ expressed in the form given by the right hand side of equation \eqref{eq_casimir_INTRO}.

Starting with the form of the idempotented Casimir element given in \eqref{eq_casimir2_INTRO} we obtain a different complex:
\begin{equation} \label{eq_casimir_FE_INTRO}
\cal{C}'\onen :=
\xy
  (-55,15)*+{\F\E \onen \{2\}}="1";
  (-55,-15)*+{\onen \{1+n\}}="2";
  (0,15)*+{\und{\F\E \onen }}="3";
  (0,-15)*+{\und{\F\E\onen}}="4";
  (55,-15)*+{\onen \{-n-1\}}="5";
  (55,15)*+{\F\E \onen \{-2\}}="6";
   {\ar^{\Udown\Uupdot} "1";"3"};
   {\ar^{} "1";"4"};
   "1"+(18,-6)*{\Udowndot\Uup};
   {\ar_(.35){\Ucupr} "2";"3"}; 
   {\ar_-{\Ucupr} "2";"4"};     
   {\ar^-<<<<<<<{\Ucapl} "3";"5"};
   "3"+(20,4)*{-\;\Udowndot\Uup};
   {\ar "3";"6"};
   {\ar_{-\;\;\Ucapl} "4";"5"};
   {\ar_{} "4";"6"};
   "4"+(24,8)*{\Udown\Uupdot};
   (-57,0)*{\bigoplus};(0,0)*{\bigoplus};(57,0)*{\bigoplus};
 \endxy
\end{equation}
However, we will show that the complex $\cal{C}'\onen$ is homotopy equivalent to $\cal{C}\onen$.  These two complexes behave well under certain symmetry 2-functors $\tpsi$, $\tomega$, $\tsigma$ defined for the 2-category $\Ucat$ in \cite{Lau1}, and extended here in Section~\ref{sec_sym} to the 2-categories $Kom(\UcatD)$ and $Com(\UcatD)$. In particular, $\cal{C}'\onen=\tsigma(\cal{C}\onenn{-n})$. These symmetry 2-functors categorify certain (anti)linear (anti)involutions on the algebras $\U$ with the various (anti)linearity and (anti)involution properties being reflected in the (contravariant)covariant behaviour of the 2-functors.  Just as one can go between the two forms of the Casimir in \eqref{eq_casimir_INTRO} and \eqref{eq_casimir2_INTRO} using these (anti)involutions on the algebras $\U$, we relate the complexes above together with their alternative versions obtained by moving the minus signs and reordering the dot 2-morphisms via these categorified symmetries of $\U$.

Our results can be summarized as follows:
\begin{thm} \label{thm_main}\hfill
\begin{enumerate}[a)]
 \item
There are canonical mutually-inverse isomorphisms
\begin{equation}
      \varrho^{\tsigma} \maps \cal{C}\onen \to \cal{C}'\onen,
      \qquad
      \hat{\varrho}^{\tsigma} \maps \cal{C}'\onen \to \cal{C}\onen
    \end{equation}
     in $Com(\UcatD)$. If $n \leq 0$ the complex $\cal{C}\onen$ is indecomposable in $Kom(\UcatD)$, and complex $\cal{C}'\onen$ is isomorphic to a direct sum of $\cal{C}\onen$ and a contractible complex.
 If $n \geq0$ the complex $\cal{C}'\onen$  is indecomposable in $Kom(\UcatD)$, and complex $\cal{C}\onen$ is isomorphic to a direct sum of $\cal{C}'\onen$ and a contractible complex.
 If $n=0$ complexes $\cal{C}\onen$ and $\cal{C}'\onen$ are isomorphic in $Kom(\UcatD)$.

 \item Under the isomorphism $K_0(\UcatD) \cong K_0(Com(\UcatD)) \cong \UA$
we have $[\cal{C}\onen]=C1_n$,  so that the complex $\cal{C}\onen$ in $Com(\UcatD)$ descends to the component $C1_n$ of the Casimir element $\dot{C}$ of $\U$ after passing to the Grothendieck ring.

  \item  The complex $\cal{C}\onen$ is invariant under the symmetries $\tpsi$ and $\tomega\tsigma$ of $Kom(\Ucat)$.  Symmetry $\tsigma$ takes $\cal{C}\onen=\eqref{eq_casimir_EF_INTRO}$ to the complex $\cal{C}'\onenn{-n}$ given by \eqref{eq_casimir_FE_INTRO} for $-n$.

  \item \emph{Commutativity}: There exists a collection of invertible 2-morphisms of complexes
\begin{equation}
  \kappa_{X} \maps X \cal{C} \to \cal{C} X,
\end{equation}
with inverses
\begin{equation}
  \hat{\kappa}_{X} \maps \cal{C} X \to X \cal{C},
\end{equation}
for all $X$ in $Com(\UcatD)$.

\emph{Naturality:} The collection of invertible chain maps $\kappa_X$ is natural in the sense that for any 2-morphism $f \maps X \to Y$ the squares
\begin{equation}
    \xy
   (-10,-10)*+{X\cal{C}}="tl";
   (10,-10)*+{\cal{C}X}="tr";
   (-10,10)*+{Y\cal{C}}="bl";
   (10,10)*+{\cal{C}Y}="br";
   {\ar_-{\kappa_{X}} "tl";"tr"};
   {\ar^{f\Ucas} "tl";"bl"};
   {\ar^{\kappa_{Y}} "bl";"br"};
   {\ar_{\Ucas f} "tr";"br"};
  \endxy
  \qquad
      \xy
   (-10,-10)*+{\cal{C}X}="tl";
   (10,-10)*+{X\cal{C}}="tr";
   (-10,10)*+{\cal{C}Y}="bl";
   (10,10)*+{Y\cal{C}}="br";
   {\ar_-{\hat{\kappa}_{X}} "tl";"tr"};
   {\ar^{\Ucas f} "tl";"bl"};
   {\ar^{\hat{\kappa}_{Y}} "bl";"br"};
   {\ar_{f\Ucas} "tr";"br"};
  \endxy
\end{equation}
commute in $Com(\UcatD)$.  By construction these invertible chain maps are compatible with composition in $Com(\UcatD)$ given by the tensor product of complexes and juxtaposition of diagrams. That is, for complexes $Y=\onenp Y\onen$ and $X=\onenpp X \onenp$, with $\cal{C}XY\onen = \onenpp\cal{C}\onenpp X\onenp Y\onen$,
we have a commutative diagram
\begin{equation}
  \xy
   (-20,15)*+{\cal{C}XY\onen}="l";
   (20,15)*+{XY\cal{C}\onen}="r";
   (0,0)*+{X\cal{C}Y\onen}="b";
   {\ar^-{\kappa_{XY}} "l";"r"};
   {\ar_{\kappa_X Y} "l";"b"};
   {\ar_{X\kappa_Y} "b";"r"};
  \endxy
\end{equation}
in $Com(\UcatD)$.
\end{enumerate}
\end{thm}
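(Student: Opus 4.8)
The plan is to treat the four parts roughly in the order (b), (a), (c), (d), since each rests on the structural facts established before it. Before anything else I would verify that $\cal{C}\onen$ and $\cal{C}'\onen$ are genuine complexes, i.e.\ that $d^2=0$. The two commuting squares were arranged so that the two paths through each square differ only by a sign together with the position of a dot or the choice of bubble: composing the two ``dot'' maps shows a dot on the $\E$-strand commutes with a dot on the $\F$-strand, composing a cup with a dot shows the dot slides around the cup, and the cup--cap composite gives the same bubble along both paths. Each of these is one of the local relations of $\Ucat$, so $d^2=0$. Part (b) is then immediate: the class $[\cal{C}\onen]$ in $K_0(Com(\UcatD))\cong\UA$ is the alternating sum of its terms weighted by grading shifts, which equals $(-q^2+2-q^{-2})EF1_n-(q^{n-1}+q^{1-n})1_n=C1_n$ by \eqref{eq_casimir_INTRO}; the analogous computation for $\cal{C}'\onen$ uses \eqref{eq_casimir2_INTRO}.

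For part (a) the key input is the pair of categorified commutator isomorphisms in $\UcatD$: for $n\ge 0$ one has $\E\F\onen\cong\F\E\onen\oplus[n]\onen$ and for $n\le 0$ one has $\F\E\onen\cong\E\F\onen\oplus[-n]\onen$, where $[n]\onen=\bigoplus_j\onen\{n-1-2j\}$. These identify the $\E\F$-building blocks of $\cal{C}\onen$ with the $\F\E$-building blocks of $\cal{C}'\onen$ up to extra copies of $\onen$. I would write $\varrho^{\tsigma}$ and $\hat{\varrho}^{\tsigma}$ explicitly as matrices of cup, cap, crossing and dot $2$-morphisms realizing these isomorphisms degreewise, and check directly that they are chain maps with the two composites equal to the identity in $Com(\UcatD)$. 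For the sharper statements in $Kom(\UcatD)$ I would, when $n\le 0$, exhibit the ``extra'' $\onen$-summands of $\cal{C}'\onen$ (those absent from $\cal{C}\onen$) as a direct-sum subcomplex on which the differential restricts to an isomorphism, hence a contractible complex that splits off by Gaussian elimination, leaving $\cal{C}\onen$; indecomposability of $\cal{C}\onen$ then follows by showing its degree-zero endomorphism ring has no idempotents other than $0$ and $1$, using the known structure of $\End(\E\F\onen)$ and $\End(\onen)$ from \cite{Lau1,KLMS}. The case $n\ge 0$ is symmetric with the roles of $\cal{C}$ and $\cal{C}'$ exchanged, and $n=0$ is the common case in which no extra summands appear.

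Part (c) is a bookkeeping computation with the symmetry $2$-functors. I would recall the action of $\tpsi$, $\tomega$, $\tsigma$ on the generating $2$-morphisms (dots, crossings, cups, caps) and on objects; in particular $\tsigma$ swaps $\E\leftrightarrow\F$, sends $\onen$ to $\onenn{-n}$, and reflects diagrams across a vertical axis. Applying $\tsigma$ termwise to $\cal{C}\onenn{-n}$ and tracking the differentials then yields $\cal{C}'\onen$ by direct inspection, which is the last assertion; invariance of $\cal{C}\onen$ under $\tpsi$ and under $\tomega\tsigma$ is checked the same way, by verifying that each functor fixes every term and every component of the differential up to the allowed reindexing of the complex.

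The main obstacle is part (d). The strategy is to reduce the construction of $\kappa_X$ to the generating $1$-morphisms: I would first construct invertible chain maps $\kappa_{\E}\maps\E\cal{C}\to\cal{C}\E$ and $\kappa_{\F}\maps\F\cal{C}\to\cal{C}\F$ explicitly, as matrices of $2$-morphisms between the (longer) complexes $\E\cal{C}\onen$ and $\cal{C}\E\onen$, and verify the chain-map and invertibility conditions diagrammatically. For a general $1$-morphism $X=\cal{E}_{\ep}\onen$ I would then \emph{define} $\kappa_X$ via the compatibility triangle, composing the $\kappa_{\E}$'s and $\kappa_{\F}$'s one strand at a time, so that the triangle commutes by construction and by associativity of this composition; for a general complex $X$ it extends by passing to total complexes. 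The remaining work is naturality: since any $2$-morphism is a composite of the generating dots, crossings, cups and caps, it suffices to verify the two naturality squares when $f$ is one of these generators, reducing to a finite, if lengthy, list of diagrammatic identities in $\Ucat$. I expect the genuine difficulty to lie in producing the explicit $\kappa_{\E}$ and $\kappa_{\F}$ and in organizing these naturality checks; here I would lean heavily on the biadjunction between $\E$ and $\F$ and on the bubble and $\mathfrak{sl}_2$ relations to collapse the number of independent cases.
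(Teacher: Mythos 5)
Your proposal follows essentially the same route as the paper: explicit chain maps $\varrho^{\tsigma},\hat{\varrho}^{\tsigma}$ with explicit homotopies for part (a), splitting off contractible summands via $\F\E\onen\cong\E\F\onen\oplus_{[n]}\onen$ and a term-by-term analysis of possible decompositions for indecomposability, the alternating-sum computation in $K_0$ for (b), termwise application of the symmetry $2$-functors for (c), and for (d) defining $\kappa$ on the generators $\E\onen,\F\onen$, extending by composition so the compatibility triangle holds by construction, and verifying naturality only on the generating $2$-morphisms. The approach is sound; the paper's only organizational refinement you do not spell out is the systematic use of $\tpsi$, $\tomega$, $\tsigma$ to derive $\ogam,\odelt$ from $\gam,\delt$ and to transport the naturality checks between cases, which is what makes the long diagrammatic verifications tractable.
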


Parts a) and d) are difficult, while parts b) and c) are obvious. The indecomposability of the Casimir and the resulting simplifications are discussed in section~\ref{sec_indec}. The rest of part a) and part c) of the Theorem can be found in Proposition~\ref{prop_homotopy_sym}. The construction of the commutativity chain isomorphisms is given in Section~\ref{subsec_commutativity}, while the naturality of these maps is proven in Section~\ref{sec_nat}.

As explained in \cite[Section 3.7]{KL3} an additive 2-category can be viewed as an idempotented monoidal category by regarding  1-morphisms as objects of the monoidal category.  The 2-morphisms in the 2-category become 1-morphisms in the monoidal category.  The composition operation for 1-morphisms and the horizontal composition for 2-morphisms in the original 2-category gives rise to the monoidal structure, allowing objects and morphisms to be tensored together. It is sometimes convenient to view $Kom(\UcatD)$ and $Com(\UcatD)$ as idempotented additive monoidal categories in this way.

The commutativity and naturality statements in the third property above imply that the complex $\cal{C}\onen$ is in the (Drinfeld) center of the additive monoidal category $Com(\UcatD)$ \cite{JS,Ma,KV}.  The collection of chain maps $\kappa_X$ define an invertible natural transformation of functors
$\kappa\maps - \otimes \cal{C} \To \cal{C} \otimes -$, where $- \otimes \cal{C}$ and $\cal{C}\otimes -$ are the endofunctors of $Com(\UcatD)$ given by tensoring on the right, respectively left, with the complex $\cal{C}\onen$ for appropriate $n$.

\medskip

The categorification of the Casimir element for quantum $\mathfrak{sl}_2$ presented here demonstrates the increase in combinatorial complexity that arises when lifting structures to the categorical level: the Casimir complex only commutes with other complexes up to chain homotopies, which are rather involved.   By appealing extensively to the graphical calculus for categorified $\U$ and its symmetries, we are able to study the Casimir complex and construct explicit chain maps giving commutativity of the Casimir up to chain homotopy.  This paper presents new identities that are used for simplifying 2-morphisms in categorified $\mathfrak{sl}_2$.  We hope that the calculations in this paper will serve to further illustrate how complex computations can be performed in the graphical calculus for $\Ucat$.

\smallskip
\noindent {\it Acknowledgments:}
A.B. would like to acknowledge the Swiss National Science Foundation for support via grant PP002-119088.  M.K. is grateful to the NSF for partially supporting him via grants DMS-0706924 and DMS-0739392.  A.L. was partially supported by the NSF grants  DMS-0739392 and DMS-0855713 and would like to thank the MSRI for support in Spring 2010 when this work was almost completed.


%
\section{Casimir element and idempotented form of quantum $\mathfrak{sl}_2$ }
%

%
\subsection{Quantum $\mathfrak{sl}_2$ and the Casimir element} \label{subsec_old_casimir}
%

The quantum group $\Uq$ is the associative algebra (with unit) over
$\Q(q)$ with generators $E$, $F$, $K$, $K^{-1}$ and relations
\begin{eqnarray}
  KK^{-1}=&1&=K^{-1}K, \label{eq_UqI}\\
  KE &=& q^2EK, \\
  KF&=&q^{-2}FK, \\
  EF-FE&=&\frac{K-K^{-1}}{q-q^{-1}}. \label{eq_UqIV}
\end{eqnarray}
For simplicity the algebra $\Uq$ is written ${\bf U}$. For more details on quantum groups see \cite{Kassel}.

For $a\geq 0$ we put $[a]=\frac{q^a-q^{-a}}{q-q^{-1}}$,
$[a]!=[a][a-1]\dots [1]$ and
$ E^{(a)}=\frac{E^a}{[a]!}$, $ F^{(a)}=\frac{F^a}{[a]!}$.
We further define the integral form
${}_{\cal{A}}{\bf U}$ to be the $\Z[q,q^{-1}]$-subalgebra of $\bf U$
generated by
\begin{equation}
  \{ E^{(a)}, F^{(a)}, K^{\pm 1}|\; a \in \Z_{+}
\}.
\end{equation}

There are several $\Z[q,q^{-1}]$-(anti)linear (anti)automorphisms that will be used in this paper. Let $\bar{\;}$ be the $\Q$-linear involution of $\Q(q)$ which maps $q$ to $q^{-1}$.
\begin{itemize}
  \item The $\Q(q)$-antilinear algebra involution $\und{\psi} \maps {\bf U} \to {\bf U}$ is given by
\[
 \und{\psi}(E)=E, \quad \und{\psi}(F)=F, \quad \und{\psi}(K) = K^{-1}, \quad
 \und{\psi}(fx)=\bar{f}\und{\psi}(x) \quad \text{for $f \in \Q(q)$ and $x \in {\bf U}$}.\]
 \item The $\Q(q)$-linear algebra involution $\und{\omega}\maps {\bf U} \to {\bf U}$ is given by
\begin{eqnarray*}
\und{\omega}(E)=F, && \und{\omega}(F)=E, \qquad \und{\omega}(K) = K^{-1}, \\
   \und{\omega}(fx)=f\und{\omega}(x), && {\rm for} \; f\in \Q(q) \; {\rm and} \; x \in {\bf U},\\
  \und{\omega}(xy)=\und{\omega}(x)\und{\omega}(y), && {\rm for} \; x,y \in {\bf U}.
\end{eqnarray*}
 \item  The $\Q(q)$-linear algebra antiinvolution $\und{\sigma} \maps {\bf U} \to {\bf U}$ is given by
\begin{eqnarray*}
\und{\sigma}(E)=E, && \und{\sigma}(F)=F, \qquad \und{\sigma}(K) = K^{-1}, \\
   \und{\sigma}(fx)=f\und{\sigma}(x), && {\rm for} \; f\in \Q(q) \; {\rm and} \; x \in {\bf U},\\
  \und{\sigma}(xy)=\und{\sigma}(y)\und{\sigma}(x), && {\rm for} \; x,y \in {\bf U}.
\end{eqnarray*}
\end{itemize}

The (anti)linear (anti)involutions pairwise commute and generate the group $G=(\Z_2)^3$ of (anti)linear (anti)automorphisms acting on ${\bf U}$. Throughout the paper we will also use the index two subgroup $G_1 =\{1, \und{\psi},\und{\omega}\und{\sigma}, \und{\psi}\und{\omega}\und{\sigma}\}$ of $G$ and the coset $G\setminus G_1=\{\und{\omega}, \und{\sigma}, \und{\psi}\und{\omega}, \und{\psi}\und{\sigma} \}$.

The Casimir element for ${\bf U}$ is given by
\begin{equation} \label{eq_usual_casimir}
 c:= EF +\frac{q^{-1}K+qK^{-1}}{(q-q^{-1})^2} = FE +\frac{qK+q^{-1}K^{-1}}{(q-q^{-1})^2}.
\end{equation}
It is easy to verify that $Ec=cE$, $Fc=cF$ and $Kc=cK$. Moreover, $c$ generates the center of ${\bf U}$, and
\begin{equation}
  Z({\bf U}) = \Q(q)[c].
\end{equation}
We will be most interested in the element
\begin{eqnarray}
  C:= -(q-q^{-1})^2c &=& (-q^2+2-q^{-2})EF-q^{-1}K-qK^{-1}, \label{eq_nonidemp_casimir1}\\
   &=& (-q^2+2-q^{-2})FE-q K-q^{-1}K^{-1}. \label{eq_nonidemp_casimir2}
\end{eqnarray}
Of course,
\begin{equation}
  Z({\bf U}) = \Q(q)[C].
\end{equation}
The element $C$ belongs to the integral form ${}_{\cal{A}}{\bf U}$ of {\bf U}, and  we call $C$ the Casimir element.

The symmetries in $G$ preserve the Casimir element:
\begin{equation}
  \und{\psi}(C) = \und{\omega}(C)=\und{\sigma}(C)= C.
\end{equation}
Notice that the involutions $\und{\psi}$, $\und{\omega}\und{\sigma}$,  $\und{\psi}\und{\omega}\und{\sigma}$ in $G_1$ preserve the form of the Casimir in \eqref{eq_nonidemp_casimir1} and \eqref{eq_nonidemp_casimir2}, while the involutions $\und{\omega}$, $\und{\sigma}$, $\und{\psi}\und{\omega}$, $\und{\psi}\und{\sigma}$ in $G\setminus G_1$ map one form of the Casimir element in \eqref{eq_nonidemp_casimir1} and \eqref{eq_nonidemp_casimir2} to the other.

All of these symmetries preserve the integral form ${}_{\cal{A}}{\bf U}$ of $\mathbf{U}$.

%
\subsection{Idempotented rings and their centers} \label{subsec_idempotented_rings}
%
An idempotented ring $A$ is a not necessarily unital associative ring equipped with a family of mutually-orthogonal idempotents $1_i$, indexed by elements $i$ of a set $I$, such that
\begin{equation}
  A = \bigoplus_{i,j \in I} 1_i A 1_j.
\end{equation}
The center $Z(A)$ of $A$ is a subspace of $\prod_{i \in I}1_iA1_i$ consisting of elements $\prod_{i \in I}z_i$ such that
\begin{equation}
  z_i x = x z_j
\end{equation}
for any $i,j \in I$ and $x \in 1_iA1_j$.  $Z(A)$ is a commutative ring isomorphic to the center of the category of idempotented $A$-modules, i.e. $A$-modules $M$ such that
\begin{equation}
  M = \bigoplus_{i \in I} 1_iM.
\end{equation}
An idempotented ring $A$ has a unit element if and only if the set $I$ is finite, in which case
\begin{equation}
  1 = \sum_{i \in I} 1_i.
\end{equation}
For unital $A$, the center of $A$ defined as above coincides with the usual center of $A$.

%
\subsection{BLM $\U$} \label{subsec_udot}
%

The Belinson-Lusztig-MacPherson~\cite{BLM} algebra $\U$ is the $\Q(q)$-algebra
obtained by modifying ${\bf U}$ by replacing the unit element with a collection of orthogonal
idempotents $1_n$ for $n \in \Z$,
\begin{equation} \label{eq_orthog_idemp}
  1_n1_m=\delta_{n,m}1_n,
\end{equation}
indexed by the weight lattice of $\mathfrak{sl}_2$, such that
\begin{equation}
K1_n =1_nK= q^n 1_n, \quad E1_n = 1_{n+2}E=1_{n+2}E1_n, \quad F1_n = 1_{n-2}F=1_{n-2}F1_n.
\end{equation}
Similarly, the $\Z[q,q^{-1}]$-subalgebra $\UA$ of $\U$ is obtained
from ${}_{\cal{A}}{\bf U}$ by replacing the unit element by a collection of orthogonal
idempotents \eqref{eq_orthog_idemp}, such that
\begin{eqnarray} \label{eq_onesubn}
K1_n &=& 1_nK = 1_nK1_n = q^n 1_n, \\
  E^{(a)}1_n &=& 1_{n+2a}E^{(a)} = 1_{n+2a}E^{(a)}1_n, \nn\\
F^{(a)}1_n &=& 1_{n-2a}F^{(a)}=1_{n-2a}F^{(a)}. \nn
\end{eqnarray}
The diagram below illustrates the various algebras considered so far
\begin{equation}
  \xymatrix{
  {}_{\cal{A}}{\bf U} \ar@{~>}[d] \ar@{^{(}->}[r]& {\bf U} \ar@{~>}[d]\\
  \UA \ar@{^{(}->}[r]  & \U,
  }
\end{equation}
where the rightward pointing arrows are the inclusions of subalgebras, and the squiggly arrows denote
passing to the idempotent form of the algebra. See~\cite{Lus4} and the references therein for more details on the algebra $\U$.

There are direct sum decompositions
\[
 \U = \bigoplus_{n,m \in \Z}1_m\U1_n \qquad \qquad \UA = \bigoplus_{n,m \in
 \Z}1_m(\UA)1_n
\]
with $1_m(\UA)1_n$ the $\Z[q,q^{-1}]$-submodule spanned by
$1_mE^{(a)}F^{(b)}1_n$ and $1_mF^{(b)}E^{(a)}1_n$ for $a,b \in \Z_+$ (these elements are zero unless $m=n+2a-2b$).

The algebra $\U$ does not have the unit since the infinite sum
$\sum_{n\in \Z}1_n$ does not belong to $\U$; instead, the system
of idempotents $\{1_n | n \in \Z \}$ serves as a
substitute for $1$.  Lusztig's basis $\B$ of $\U$ consists of the following elements of $\U$:
\begin{enumerate}[(i)]
     \item $E^{(a)}F^{(b)}1_{n} \quad $ for $a$,$b\in \Z_+$,
     $n\in\Z$, $n\leq b-a$,
     \item $F^{(b)}E^{(a)}1_{n} \quad$ for $a$,$b\in\Z_+$, $n\in\Z$,
     $n\geq
     b-a$,
\end{enumerate}
where $E^{(a)}F^{(b)}1_{b-a}=F^{(b)}E^{(a)}1_{b-a}$.

The (anti)involutions $\und{\psi}$, $\und{\omega}$, and  $\und{\sigma}$ all naturally
extend to  $\U$ if we set
\begin{equation}
    \und{\psi}(1_{n}) = 1_{n}, \quad \und{\omega}(1_{n}) = 1_{-n},
    \quad \und{\sigma}(1_{n})=1_{-n}.\nn
\end{equation}
Taking direct sums of the induced maps on each summand $1_m\U1_n$ allows these
maps to be extended to $\U$ and $\UA$.  These
$\Z[q,q^{-1}]$-(anti)linear (anti)algebra homomorphisms are recorded below on
some elements of $\UA$:
\begin{eqnarray}
 \und{\omega} &\maps&
   q^s1_m E^{(a)} F^{(b)}1_n  \mapsto  q^s 1_{-m} F^{(a)} E^{(b)}1_{-n} ,
  \label{eq_def_omega}\\
 \nn\\
\und{\sigma}&\maps&
   q^s 1_{n} E^{(a)} F^{(b)}1_n  \mapsto  q^s 1_{-n} F^{(b)}E^{(a)}1_{-m},
  \label{eq_def_sigma}\\
 \nn\\
 \und{\psi} &\maps&
    q^s  1_m E^{(a)} F^{(b)}1_n
    \mapsto
 q^{-s}  1_m E^{(a)} F^{(b)}1_n,
 \label{eq_def_psi}
\end{eqnarray}
where $m=n+2a-2b$. The group $G$ acts on both $\U$ and $\UA$.

There is a natural homomorphism from the center of $\mathbf{U}$ to the center of its idempotented form $\U$ that sends $x \in Z(\mathbf{U})$ to $\prod_{n \in \Z}1_nx1_n$.  It is not hard to check that this homomorphism is, in fact, an isomorphism.  Denote by
\begin{equation}
  \dot{C} = \prod_{n \in \Z} 1_nC
\end{equation}
the image of $C$ in $\U$ under this homomorphism.  Let $\dot{C}_{ev}=\prod_{n \in 2\Z} 1_nC$ and $\dot{C}_{od}=\prod_{n \in 2\Z+1} 1_nC$.
Then $\dot{C}=\dot{C}_{ev}+\dot{C}_{ev}$ and
\begin{equation}
  Z(\mathbf{\U}) \cong \Q(q)[\dot{C}_{ev}] \times \Q(q)[\dot{C}_{od}].
\end{equation}

We call $C1_n$ components or terms of the Casimir element $\dot{C}$. They are given by
\begin{eqnarray}
  1_nC =C1_n &=& (-q^2+2-q^{-2})EF1_n-(q^{n-1}+q^{1-n})1_n, \label{eq_casimir}\\
            &=& (-q^2+2-q^{-2})FE1_n-(q^{n+1}+q^{-1-n})1_n. \label{eq_casimir2}
\end{eqnarray}
Components are preserved by the symmetries in $G_1=\{1,\und{\psi},\und{\omega}\und{\sigma},\und{\psi}\und{\sigma}\und{\omega}\}$:
\begin{equation}
 g(C1_n) = C1_n
\end{equation}
for any $g \in G_1$.  They are interchanged by the elements in the coset $G\setminus G_1=\{\und{\sigma},\und{\omega}, \und{\psi}\und{\sigma}, \und{\psi}\und{\omega}\}$:
\begin{equation}
 g'(C1_n) = C1_{-n}
\end{equation}
for any $g' \in G\setminus G_1$. In addition to sending $1_n$ to $1_{-n}$, the involutions in $G \setminus G_1$ map one form of the Casimir in \eqref{eq_casimir} and \eqref{eq_casimir2} to the other form.

%
\section{Brief review of sl(2)-calculus}
%

%
\subsection{The 2-category $\Ucat$} \label{subsec_def_ucat}
%

%
\subsubsection{Definition of $\Ucat$}
%
Fix a field $\Bbbk$. Here we recall the definition of the $\Bbbk$-linear 2-category $\Ucat=\Ucat(\mf{sl}_2)$ introduced in \cite{Lau1}, see also \cite{Lau2,KL3}.

\begin{defn} \label{def_Ucat}
The 2-category $\Ucat$ is the additive 2-category consisting of
\begin{itemize}
  \item objects: $n$ for $n \in \Z$.
\end{itemize}
The hom $\Ucat(n,n')$ between two objects $n$, $n'$ is an additive $\Bbbk$-linear category:
\begin{itemize}
  \item objects of $\Ucat(n,n')$: for a signed sequence $\ep = (\epsilon_1,\epsilon_2, \dots, \epsilon_m)$, where $\epsilon_1, \dots, \epsilon_m \in \{ +,-\}$, define
 $$\cal{E}_{\ep} := \cal{E}_{\epsilon_1} \cal{E}_{\epsilon_2}\dots \cal{E}_{\epsilon_m}$$
where $\cal{E}_{+}:= \cal{E}$ and $\cal{E}_{-}:= \cal{F}$.  An object of $\Ucat(n,n')$, called a 1-morphism
in $\Ucat$, is a formal finite direct sum of 1-morphisms
  \[
 \cal{E}_{\ep} \onen\{t\} =\onenp \cal{E}_{\ep} \onen\{t\}
  \]
for any $t\in \Z$ and signed sequence $\ep$ such that $n'=n+\sum_{j=1}^m \epsilon_j2 $.

  \item morphisms of $\Ucat(n,n')$: given objects $\cal{E}_{\ep} \onen\{t\}
  ,\cal{E}_{\ep'} \onen\{t'\} \in \Ucat(n,n')$, the hom
sets $\Ucat(\cal{E}_{\ep} \onen\{t\},\cal{E}_{\ep'} \onen\{t'\})$ of $\Ucat(n,n')$
are $\Bbbk$-vector spaces given by linear combinations of diagrams with degree $t-t'$, modulo certain relations, built from composites of:
\begin{enumerate}[i)]
  \item  Degree zero identity 2-morphisms $1_x$ for each 1-morphism $x$ in
$\Ucat$; the identity 2-morphisms $1_{\cal{E} \onen}\{t\}$ and
$1_{\cal{F} \onen}\{t\}$  are represented graphically by
\[
\begin{array}{ccc}
  1_{\cal{E} \onen\{t\}} &\quad  & 1_{\cal{F} \onen\{t\}} \\ \\
    \xy
 (0,8);(0,-8); **\dir{-} ?(.5)*\dir{>}+(2.3,0)*{\scriptstyle{}};
 (0,11)*{};
 (6,2)*{ n};
 (-8,2)*{ n +2};
 (-10,0)*{};(10,0)*{};
 \endxy
 & &
 \;\;   \xy
 (0,8);(0,-8); **\dir{-} ?(.5)*\dir{<}+(2.3,0)*{\scriptstyle{}};
 (0,-11)*{};(0,11)*{};
 (6,2)*{ n};
 (-8,2)*{ n -2};
 (-12,0)*{};(12,0)*{};
 \endxy
\\ \\
   \;\;\text{ {\rm deg} 0}\;\;
 & &\;\;\text{ {\rm deg} 0}\;\;
\end{array}
\]
and more generally, for a signed sequence $\ep = (\epsilon_1,\epsilon_2, \dots, \epsilon_m)$, the identity $1_{\cal{E}_{\ep} \onen\{t\}}$ 2-morphism is
represented as
\begin{equation*}
\begin{array}{ccc}
  \xy
 (-12,8);(-12,-8); **\dir{-};
 (-4,8);(-4,-8); **\dir{-};
 (4,0)*{\cdots};
 (12,8);(12,-8); **\dir{-};
  (-12,-11)*{\cal{E}_{\epsilon_1}}; (-2,-11)*{\cal{E}_{\epsilon_2}};(14,-11)*{\cal{E}_{\epsilon_m}};
 (18,2)*{ n}; (-20,2)*{ n'};
 \endxy
\end{array}
\end{equation*}
where the strand labelled $\cal{E}_{\epsilon_{\alpha}}$ is oriented up if $\epsilon_{\alpha}=+$
and oriented down if $\epsilon_{\alpha}=-$. We will often omit labels from the strands since the labels can be deduced from the orientation of a strand.

  \item For each $n \in X$ the 2-morphisms
\[
\begin{tabular}{|l|c|c|c|c|}
 \hline
 {\bf 2-morphism:} &   \xy
 (0,7);(0,-7); **\dir{-} ?(.75)*\dir{>};
 (0,-2)*{\txt\large{$\bullet$}};
 (6,4)*{n}; (-8,4)*{n +2}; (-10,0)*{};(10,0)*{};
 \endxy
 &
     \xy
 (0,7);(0,-7); **\dir{-} ?(.75)*\dir{<};
 (0,-2)*{\txt\large{$\bullet$}};
 (-6,4)*{n}; (8,4)*{n+2}; (-10,0)*{};(10,9)*{};
 \endxy
 &
   \xy
  (0,0)*{\xybox{
    (-4,-4)*{};(4,4)*{} **\crv{(-4,-1) & (4,1)}?(1)*\dir{>} ;
    (4,-4)*{};(-4,4)*{} **\crv{(4,-1) & (-4,1)}?(1)*\dir{>};
     (8,1)*{n};     (-12,0)*{};(12,0)*{};     }};
  \endxy
 &
   \xy
  (0,0)*{\xybox{
    (-4,4)*{};(4,-4)*{} **\crv{(-4,1) & (4,-1)}?(1)*\dir{>} ;
    (4,4)*{};(-4,-4)*{} **\crv{(4,1) & (-4,-1)}?(1)*\dir{>};
     (8,1)*{ n};     (-12,0)*{};(12,0)*{};     }};
  \endxy
\\ & & & &\\
\hline
 {\bf Degree:} & \;\;\text{  2 }\;\;
 &\;\;\text{  2}\;\;& \;\;\text{ -2}\;\;
 & \;\;\text{  -2}\;\; \\
 \hline
\end{tabular}
\]
\[
\begin{tabular}{|l|c|c|c|c|}
 \hline
  {\bf 2-morphism:} &  \xy
    (0,-3)*{\bbpef{}};
    (8,-5)*{n};    (-12,0)*{};(12,0)*{};
    \endxy
  & \xy
    (0,-3)*{\bbpfe{}};
    (8,-5)*{n};    (-12,0)*{};(12,0)*{};
    \endxy
  & \xy
    (0,-2)*{\bbcef{}};
    (8,0)*{n};     (-12,0)*{};(12,0)*{};
    \endxy
  & \xy
    (0,-2)*{\bbcfe{}};
    (8,0)*{n};    (-12,0)*{};(12,0)*{};
    \endxy\\& & &  &\\ \hline
 {\bf Degree:} & \;\;\text{  $1+n$}\;\;
 & \;\;\text{ $1-n$}\;\;
 & \;\;\text{ $1+n$}\;\;
 & \;\;\text{  $1-n$}\;\;
 \\
 \hline
\end{tabular}
\]
\end{enumerate}
such that the following identities hold:

\item  cups and caps are biadjointness morphisms  up to grading shifts:
\begin{equation} \label{eq_biadjoint1}
  \xy   0;/r.18pc/:
    (-8,0)*{}="1";
    (0,0)*{}="2";
    (8,0)*{}="3";
    (-8,-10);"1" **\dir{-};
    "1";"2" **\crv{(-8,8) & (0,8)} ?(0)*\dir{>} ?(1)*\dir{>};
    "2";"3" **\crv{(0,-8) & (8,-8)}?(1)*\dir{>};
    "3"; (8,10) **\dir{-};
    (12,-9)*{n};
    (-6,9)*{n+2};
    \endxy
    \; =
    \;
\xy   0;/r.18pc/:
    (-8,0)*{}="1";
    (0,0)*{}="2";
    (8,0)*{}="3";
    (0,-10);(0,10)**\dir{-} ?(.5)*\dir{>};
    (5,8)*{n};
    (-9,8)*{n+2};
    \endxy
\qquad \quad  \xy   0;/r.18pc/:
    (-8,0)*{}="1";
    (0,0)*{}="2";
    (8,0)*{}="3";
    (-8,-10);"1" **\dir{-};
    "1";"2" **\crv{(-8,8) & (0,8)} ?(0)*\dir{<} ?(1)*\dir{<};
    "2";"3" **\crv{(0,-8) & (8,-8)}?(1)*\dir{<};
    "3"; (8,10) **\dir{-};
    (12,-9)*{n+2};
    (-6,9)*{ n};
    \endxy
    \; =
    \;
\xy   0;/r.18pc/:
    (-8,0)*{}="1";
    (0,0)*{}="2";
    (8,0)*{}="3";
    (0,-10);(0,10)**\dir{-} ?(.5)*\dir{<};
   (9,8)*{n+2};
    (-6,8)*{ n};
    \endxy
\end{equation}

\begin{equation}
 \xy   0;/r.18pc/:
    (8,0)*{}="1";
    (0,0)*{}="2";
    (-8,0)*{}="3";
    (8,-10);"1" **\dir{-};
    "1";"2" **\crv{(8,8) & (0,8)} ?(0)*\dir{>} ?(1)*\dir{>};
    "2";"3" **\crv{(0,-8) & (-8,-8)}?(1)*\dir{>};
    "3"; (-8,10) **\dir{-};
    (12,9)*{n};
    (-5,-9)*{n+2};
    \endxy
    \; =
    \;
      \xy 0;/r.18pc/:
    (8,0)*{}="1";
    (0,0)*{}="2";
    (-8,0)*{}="3";
    (0,-10);(0,10)**\dir{-} ?(.5)*\dir{>};
    (5,-8)*{n};
    (-9,-8)*{n+2};
    \endxy
\qquad \quad \xy  0;/r.18pc/:
    (8,0)*{}="1";
    (0,0)*{}="2";
    (-8,0)*{}="3";
    (8,-10);"1" **\dir{-};
    "1";"2" **\crv{(8,8) & (0,8)} ?(0)*\dir{<} ?(1)*\dir{<};
    "2";"3" **\crv{(0,-8) & (-8,-8)}?(1)*\dir{<};
    "3"; (-8,10) **\dir{-};
    (12,9)*{n+2};
    (-6,-9)*{ n};
    \endxy
    \; =
    \;
\xy  0;/r.18pc/:
    (8,0)*{}="1";
    (0,0)*{}="2";
    (-8,0)*{}="3";
    (0,-10);(0,10)**\dir{-} ?(.5)*\dir{<};
    (9,-8)*{n+2};
    (-6,-8)*{ n};
    \endxy
\end{equation}

\item NilHecke relations hold:
 \begin{equation}
  \vcenter{\xy 0;/r.18pc/:
    (-4,-4)*{};(4,4)*{} **\crv{(-4,-1) & (4,1)}?(1)*\dir{>};
    (4,-4)*{};(-4,4)*{} **\crv{(4,-1) & (-4,1)}?(1)*\dir{>};
    (-4,4)*{};(4,12)*{} **\crv{(-4,7) & (4,9)}?(1)*\dir{>};
    (4,4)*{};(-4,12)*{} **\crv{(4,7) & (-4,9)}?(1)*\dir{>};
 \endxy}
 =0, \qquad \quad
 \vcenter{
 \xy 0;/r.18pc/:
    (-4,-4)*{};(4,4)*{} **\crv{(-4,-1) & (4,1)}?(1)*\dir{>};
    (4,-4)*{};(-4,4)*{} **\crv{(4,-1) & (-4,1)}?(1)*\dir{>};
    (4,4)*{};(12,12)*{} **\crv{(4,7) & (12,9)}?(1)*\dir{>};
    (12,4)*{};(4,12)*{} **\crv{(12,7) & (4,9)}?(1)*\dir{>};
    (-4,12)*{};(4,20)*{} **\crv{(-4,15) & (4,17)}?(1)*\dir{>};
    (4,12)*{};(-4,20)*{} **\crv{(4,15) & (-4,17)}?(1)*\dir{>};
    (-4,4)*{}; (-4,12) **\dir{-};
    (12,-4)*{}; (12,4) **\dir{-};
    (12,12)*{}; (12,20) **\dir{-};
  (18,8)*{n};
\endxy}
 \;\; =\;\;
 \vcenter{
 \xy 0;/r.18pc/:
    (4,-4)*{};(-4,4)*{} **\crv{(4,-1) & (-4,1)}?(1)*\dir{>};
    (-4,-4)*{};(4,4)*{} **\crv{(-4,-1) & (4,1)}?(1)*\dir{>};
    (-4,4)*{};(-12,12)*{} **\crv{(-4,7) & (-12,9)}?(1)*\dir{>};
    (-12,4)*{};(-4,12)*{} **\crv{(-12,7) & (-4,9)}?(1)*\dir{>};
    (4,12)*{};(-4,20)*{} **\crv{(4,15) & (-4,17)}?(1)*\dir{>};
    (-4,12)*{};(4,20)*{} **\crv{(-4,15) & (4,17)}?(1)*\dir{>};
    (4,4)*{}; (4,12) **\dir{-};
    (-12,-4)*{}; (-12,4) **\dir{-};
    (-12,12)*{}; (-12,20) **\dir{-};
  (10,8)*{n};
\endxy} \label{eq_nil_rels}
  \end{equation}
\begin{eqnarray}
  \xy
  (4,4);(4,-4) **\dir{-}?(0)*\dir{<}+(2.3,0)*{};
  (-4,4);(-4,-4) **\dir{-}?(0)*\dir{<}+(2.3,0)*{};
  (9,2)*{n};
 \endxy
 \quad =
\xy
  (0,0)*{\xybox{
    (-4,-4)*{};(4,4)*{} **\crv{(-4,-1) & (4,1)}?(1)*\dir{>}?(.25)*{\bullet};
    (4,-4)*{};(-4,4)*{} **\crv{(4,-1) & (-4,1)}?(1)*\dir{>};
     (8,1)*{ n};
     (-10,0)*{};(10,0)*{};
     }};
  \endxy
 \;\; -
 \xy
  (0,0)*{\xybox{
    (-4,-4)*{};(4,4)*{} **\crv{(-4,-1) & (4,1)}?(1)*\dir{>}?(.75)*{\bullet};
    (4,-4)*{};(-4,4)*{} **\crv{(4,-1) & (-4,1)}?(1)*\dir{>};
     (8,1)*{ n};
     (-10,0)*{};(10,0)*{};
     }};
  \endxy
 \;\; =
\xy
  (0,0)*{\xybox{
    (-4,-4)*{};(4,4)*{} **\crv{(-4,-1) & (4,1)}?(1)*\dir{>};
    (4,-4)*{};(-4,4)*{} **\crv{(4,-1) & (-4,1)}?(1)*\dir{>}?(.75)*{\bullet};
     (8,1)*{ n};
     (-10,0)*{};(10,0)*{};
     }};
  \endxy
 \;\; -
  \xy
  (0,0)*{\xybox{
    (-4,-4)*{};(4,4)*{} **\crv{(-4,-1) & (4,1)}?(1)*\dir{>} ;
    (4,-4)*{};(-4,4)*{} **\crv{(4,-1) & (-4,1)}?(1)*\dir{>}?(.25)*{\bullet};
     (8,1)*{ n};
     (-10,0)*{};(10,0)*{};
     }};
  \endxy \nn \\ \label{eq_nil_dotslide}
\end{eqnarray}

  \item All 2-morphisms are cyclic\footnote{See \cite{Lau1} and the references therein for
the definition of a cyclic 2-morphism with respect to a biadjoint structure.} with respect to the above biadjoint structure.  This is ensured by the relations:
\begin{equation} \label{eq_cyclic_dot}
    \xy
    (-8,5)*{}="1";
    (0,5)*{}="2";
    (0,-5)*{}="2'";
    (8,-5)*{}="3";
    (-8,-10);"1" **\dir{-};
    "2";"2'" **\dir{-} ?(.5)*\dir{<};
    "1";"2" **\crv{(-8,12) & (0,12)} ?(0)*\dir{<};
    "2'";"3" **\crv{(0,-12) & (8,-12)}?(1)*\dir{<};
    "3"; (8,10) **\dir{-};
    (15,-9)*{ n+2};
    (-12,9)*{n};
    (0,4)*{\txt\large{$\bullet$}};
    (10,8)*{\scs };
    (-10,-8)*{\scs };
    \endxy
    \quad = \quad
      \xy
 (0,10);(0,-10); **\dir{-} ?(.75)*\dir{<}+(2.3,0)*{\scriptstyle{}}
 ?(.1)*\dir{ }+(2,0)*{\scs };
 (0,0)*{\txt\large{$\bullet$}};
 (-6,5)*{ n};
 (8,5)*{ n +2};
 (-10,0)*{};(10,0)*{};(-2,-8)*{\scs };
 \endxy
    \quad = \quad
    \xy
    (8,5)*{}="1";
    (0,5)*{}="2";
    (0,-5)*{}="2'";
    (-8,-5)*{}="3";
    (8,-10);"1" **\dir{-};
    "2";"2'" **\dir{-} ?(.5)*\dir{<};
    "1";"2" **\crv{(8,12) & (0,12)} ?(0)*\dir{<};
    "2'";"3" **\crv{(0,-12) & (-8,-12)}?(1)*\dir{<};
    "3"; (-8,10) **\dir{-};
    (15,9)*{n+2};
    (-12,-9)*{n};
    (0,4)*{\txt\large{$\bullet$}};
    (-10,8)*{\scs };
    (10,-8)*{\scs };
    \endxy
\end{equation}
\begin{equation} \label{eq_cyclic_cross-gen}
\xy 0;/r.19pc/:
  (0,0)*{\xybox{
    (-4,-4)*{};(4,4)*{} **\crv{(-4,-1) & (4,1)}?(1)*\dir{>};
    (4,-4)*{};(-4,4)*{} **\crv{(4,-1) & (-4,1)};
     (4,4)*{};(-18,4)*{} **\crv{(4,16) & (-18,16)} ?(1)*\dir{>};
     (-4,-4)*{};(18,-4)*{} **\crv{(-4,-16) & (18,-16)} ?(1)*\dir{<}?(0)*\dir{<};
     (18,-4);(18,12) **\dir{-};(12,-4);(12,12) **\dir{-};
     (-18,4);(-18,-12) **\dir{-};(-12,4);(-12,-12) **\dir{-};
     (8,1)*{ n};
     (-10,0)*{};(10,0)*{};
      (4,-4)*{};(12,-4)*{} **\crv{(4,-10) & (12,-10)}?(1)*\dir{<}?(0)*\dir{<};
      (-4,4)*{};(-12,4)*{} **\crv{(-4,10) & (-12,10)}?(1)*\dir{>}?(0)*\dir{>};
     }};
  \endxy
\quad =  \quad \xy
  (0,0)*{\xybox{
    (-4,-4)*{};(4,4)*{} **\crv{(-4,-1) & (4,1)}?(0)*\dir{<} ;
    (4,-4)*{};(-4,4)*{} **\crv{(4,-1) & (-4,1)}?(0)*\dir{<};
     (-8,0)*{ n};
     (-12,0)*{};(12,0)*{};
     }};
  \endxy \quad =  \quad
 \xy 0;/r.19pc/:
  (0,0)*{\xybox{
    (4,-4)*{};(-4,4)*{} **\crv{(4,-1) & (-4,1)}?(1)*\dir{>};
    (-4,-4)*{};(4,4)*{} **\crv{(-4,-1) & (4,1)};
     (-4,4)*{};(18,4)*{} **\crv{(-4,16) & (18,16)} ?(1)*\dir{>};
     (4,-4)*{};(-18,-4)*{} **\crv{(4,-16) & (-18,-16)} ?(1)*\dir{<}?(0)*\dir{<};
     (-18,-4);(-18,12) **\dir{-};(-12,-4);(-12,12) **\dir{-};
     (18,4);(18,-12) **\dir{-};(12,4);(12,-12) **\dir{-};
     (8,1)*{ n};
     (-10,0)*{};(10,0)*{};
     (-4,-4)*{};(-12,-4)*{} **\crv{(-4,-10) & (-12,-10)}?(1)*\dir{<}?(0)*\dir{<};
      (4,4)*{};(12,4)*{} **\crv{(4,10) & (12,10)}?(1)*\dir{>}?(0)*\dir{>};
     }};
  \endxy
\end{equation}
The cyclic condition on 2-morphisms expressed by \eqref{eq_cyclic_dot} and
\eqref{eq_cyclic_cross-gen} ensures that isotopic diagrams represent
the same 2-morphism in $\Ucat$.

It will be convenient to introduce degree zero 2-morphisms:
\begin{equation} \label{eq_crossl-gen}
  \xy
  (0,0)*{\xybox{
    (-4,-4)*{};(4,4)*{} **\crv{(-4,-1) & (4,1)}?(1)*\dir{>} ;
    (4,-4)*{};(-4,4)*{} **\crv{(4,-1) & (-4,1)}?(0)*\dir{<};
     (8,2)*{ n};
     (-12,0)*{};(12,0)*{};
     }};
  \endxy
:=
 \xy 0;/r.19pc/:
  (0,0)*{\xybox{
    (4,-4)*{};(-4,4)*{} **\crv{(4,-1) & (-4,1)}?(1)*\dir{>};
    (-4,-4)*{};(4,4)*{} **\crv{(-4,-1) & (4,1)};
     (-4,4);(-4,12) **\dir{-};
     (-12,-4);(-12,12) **\dir{-};
     (4,-4);(4,-12) **\dir{-};(12,4);(12,-12) **\dir{-};
     (16,1)*{n};
     (-10,0)*{};(10,0)*{};
     (-4,-4)*{};(-12,-4)*{} **\crv{(-4,-10) & (-12,-10)}?(1)*\dir{<}?(0)*\dir{<};
      (4,4)*{};(12,4)*{} **\crv{(4,10) & (12,10)}?(1)*\dir{>}?(0)*\dir{>};
     }};
  \endxy
  \quad = \quad
  \xy 0;/r.19pc/:
  (0,0)*{\xybox{
    (-4,-4)*{};(4,4)*{} **\crv{(-4,-1) & (4,1)}?(1)*\dir{<};
    (4,-4)*{};(-4,4)*{} **\crv{(4,-1) & (-4,1)};
     (4,4);(4,12) **\dir{-};
     (12,-4);(12,12) **\dir{-};
     (-4,-4);(-4,-12) **\dir{-};(-12,4);(-12,-12) **\dir{-};
     (16,1)*{n};
     (10,0)*{};(-10,0)*{};
     (4,-4)*{};(12,-4)*{} **\crv{(4,-10) & (12,-10)}?(1)*\dir{>}?(0)*\dir{>};
      (-4,4)*{};(-12,4)*{} **\crv{(-4,10) & (-12,10)}?(1)*\dir{<}?(0)*\dir{<};
     }};
  \endxy
\end{equation}
\begin{equation} \label{eq_crossr-gen}
  \xy
  (0,0)*{\xybox{
    (-4,-4)*{};(4,4)*{} **\crv{(-4,-1) & (4,1)}?(0)*\dir{<} ;
    (4,-4)*{};(-4,4)*{} **\crv{(4,-1) & (-4,1)}?(1)*\dir{>};
     (-8,2)*{ n};
     (-12,0)*{};(12,0)*{};
     }};
  \endxy
:=
 \xy 0;/r.19pc/:
  (0,0)*{\xybox{
    (-4,-4)*{};(4,4)*{} **\crv{(-4,-1) & (4,1)}?(1)*\dir{>};
    (4,-4)*{};(-4,4)*{} **\crv{(4,-1) & (-4,1)};
     (4,4);(4,12) **\dir{-};
     (12,-4);(12,12) **\dir{-};
     (-4,-4);(-4,-12) **\dir{-};(-12,4);(-12,-12) **\dir{-};
     (-16,1)*{n};
     (10,0)*{};(-10,0)*{};
     (4,-4)*{};(12,-4)*{} **\crv{(4,-10) & (12,-10)}?(1)*\dir{<}?(0)*\dir{<};
      (-4,4)*{};(-12,4)*{} **\crv{(-4,10) & (-12,10)}?(1)*\dir{>}?(0)*\dir{>};
     }};
  \endxy
  \quad = \quad
  \xy 0;/r.19pc/:
  (0,0)*{\xybox{
    (4,-4)*{};(-4,4)*{} **\crv{(4,-1) & (-4,1)}?(1)*\dir{<};
    (-4,-4)*{};(4,4)*{} **\crv{(-4,-1) & (4,1)};
     (-4,4);(-4,12) **\dir{-};
     (-12,-4);(-12,12) **\dir{-};
     (4,-4);(4,-12) **\dir{-};(12,4);(12,-12) **\dir{-};
     (-16,1)*{n};
     (-10,0)*{};(10,0)*{};
     (-4,-4)*{};(-12,-4)*{} **\crv{(-4,-10) & (-12,-10)}?(1)*\dir{>}?(0)*\dir{>};
      (4,4)*{};(12,4)*{} **\crv{(4,10) & (12,10)}?(1)*\dir{<}?(0)*\dir{<};
     }};
  \endxy
\end{equation}
where the second equality in \eqref{eq_crossl-gen} and \eqref{eq_crossr-gen}
follow from \eqref{eq_cyclic_cross-gen}.  We also write
\[
  \xy
 (0,7);(0,-7); **\dir{-} ?(.75)*\dir{>}+(2.3,0)*{\scriptstyle{}};
 (0.1,-2)*{\txt\large{$\bullet$}};
 (6,4)*{ n};(-3,-1)*{\alpha};
 (-10,0)*{};(10,0)*{};(0,-10)*{};(0,10)*{};
 \endxy \quad := \quad \left(
  \xy
 (0,7);(0,-7); **\dir{-} ?(.75)*\dir{>}+(2.3,0)*{\scriptstyle{}};
 (0.1,-2)*{\txt\large{$\bullet$}};
 (6,4)*{ n};(-3,-1)*{};
 (-10,0)*{};(10,0)*{};(0,-10)*{};(0,10)*{};
 \endxy
 \right)^{\alpha}
\]
to denote the $\alpha$-fold vertical composite of a dot with itself.

\item  All dotted bubbles of negative degree are zero. That is,
\begin{equation} \label{eq_positivity_bubbles}
 \xy
 (-12,0)*{\cbub{\alpha}};
 (-8,8)*{n};
 \endxy
  = 0
 \qquad
  \text{if $\alpha<n-1$,} \qquad
 \xy
 (-12,0)*{\ccbub{\alpha}};
 (-8,8)*{n};
 \endxy = 0\quad
  \text{if $\alpha< -n-1$}
\end{equation}
for all $\alpha \in \Z_+$.  A dotted bubble of degree zero equals 1:
\[
\xy 0;/r.18pc/:
 (0,0)*{\cbub{n-1}};
  (4,8)*{n};
 \endxy
  = 1 \quad \text{for $n \geq 1$,}
  \qquad \quad
  \xy 0;/r.18pc/:
 (0,0)*{\ccbub{-n-1}};
  (4,8)*{n};
 \endxy
  = 1 \quad \text{for $n \leq -1$.}
\]
It is often convenient to express dotted bubbles using a notation introduced in \cite{KLMS} that emphasizes the degree:
\[
\xy 0;/r.18pc/:
 (0,0)*{\cbub{\spadesuit+\alpha}};
  (4,8)*{n};
 \endxy\quad := \quad
\xy 0;/r.18pc/:
 (0,0)*{\cbub{(n-1)+\alpha}};
  (4,8)*{n};
 \endxy
   \qquad \quad
   \qquad
   \xy 0;/r.18pc/:
 (0,0)*{\ccbub{\spadesuit+\alpha}};
  (4,8)*{n};
 \endxy \quad := \quad
   \xy 0;/r.18pc/:
 (0,0)*{\ccbub{(-n-1)+\alpha}};
  (4,8)*{n};
 \endxy
\]
so that
\[
 \deg\left(\xy 0;/r.18pc/:
 (0,0)*{\cbub{\spadesuit+\alpha}};
  (4,8)*{n};
 \endxy \right) \;\; = \;\; 2\alpha
 \qquad \qquad
  \deg\left(    \xy 0;/r.18pc/:
 (0,0)*{\ccbub{\spadesuit+\alpha}};
  (4,8)*{n};
 \endxy\right) \;\; = \;\; 2\alpha.
\]
The value of $\spadesuit$ depends on the orientation, $\spadesuit = n-1$ for clockwise oriented bubbles and $\spadesuit = -n-1$ for counter-clockwise oriented bubbles.  Notice that for some values of $n$ it is possible that $\spadesuit +\alpha$ is a negative number even though $\alpha \geq 0$.  While vertically composing a generator with itself a negative number of times does not make sense, having these symbols around greatly simplifies the calculus.  For each $\spadesuit + \alpha <0$, where
\[
 \deg\left(\xy 0;/r.18pc/:
 (0,0)*{\cbub{\spadesuit+\alpha}};
  (4,8)*{n};
 \endxy \right) \geq 0
 \qquad \qquad
  \deg\left(    \xy 0;/r.18pc/:
 (0,0)*{\ccbub{\spadesuit+\alpha}};
  (4,8)*{n};
 \endxy\right) \geq 0,
\]
we introduce formal symbols, called {\em fake bubbles},  inductively defined by the equation
\begin{center}
\begin{eqnarray}
 \makebox[0pt]{ $
\left( \xy 0;/r.15pc/:
 (0,0)*{\ccbub{\spadesuit+0}};
  (4,8)*{n};
 \endxy
 +
 \xy 0;/r.15pc/:
 (0,0)*{\ccbub{\spadesuit+1}};
  (4,8)*{n};
 \endxy t
 + \cdots +
 \xy 0;/r.15pc/:
 (0,0)*{\ccbub{\spadesuit+\alpha}};
  (4,8)*{n};
 \endxy t^{\alpha}
 + \cdots
\right)
%
\left( \xy 0;/r.15pc/:
 (0,0)*{\cbub{\spadesuit+0}};
  (4,8)*{n};
 \endxy
 + \cdots +
 \xy 0;/r.15pc/:
 (0,0)*{\cbub{\spadesuit+\alpha}};
 (4,8)*{n};
 \endxy t^{\alpha}
 + \cdots
\right) =1 .$ } \nn \\ \label{eq_infinite_Grass}
\end{eqnarray}
\end{center}
and the additional condition
\[
\xy 0;/r.18pc/:
 (0,0)*{\cbub{\spadesuit +0}};
  (4,8)*{n};
 \endxy
 \quad = \quad
  \xy 0;/r.18pc/:
 (0,0)*{\ccbub{\spadesuit +0}};
  (4,8)*{n};
 \endxy
  \quad = \quad 1.
\]
Equation~\eqref{eq_infinite_Grass} is called the infinite Grassmannian relation.  It remains valid even in high degree when most of the bubbles involved are not fake bubbles.  See \cite{Lau1} for more details.

\item For the following relations we employ the convention that all summations
are increasing, so that $\sum_{f=0}^{\alpha}$ is zero if $\alpha < 0$.
\begin{equation} \label{eq_reduction}
  \text{$\xy 0;/r.18pc/:
  (14,8)*{n};
  (-3,-8)*{};(3,8)*{} **\crv{(-3,-1) & (3,1)}?(1)*\dir{>};?(0)*\dir{>};
    (3,-8)*{};(-3,8)*{} **\crv{(3,-1) & (-3,1)}?(1)*\dir{>};
  (-3,-12)*{\bbsid};  (-3,8)*{\bbsid};
  (3,8)*{}="t1";  (9,8)*{}="t2";
  (3,-8)*{}="t1'";  (9,-8)*{}="t2'";
   "t1";"t2" **\crv{(3,14) & (9, 14)};
   "t1'";"t2'" **\crv{(3,-14) & (9, -14)};
   "t2'";"t2" **\dir{-} ?(.5)*\dir{<};
   (9,0)*{}; (-6,-8)*{\scs };
 \endxy$} \;\; = \;\; -\sum_{f_1+f_2=-n}
   \xy
  (19,4)*{n};
  (0,0)*{\bbe{}};(-2,-8)*{\scs };
  (12,-2)*{\cbub{\spadesuit+f_2}};
  (0,6)*{\bullet}+(3,-1)*{\scs f_1};
 \endxy
\qquad \qquad
  \text{$ \xy 0;/r.18pc/:
  (-12,8)*{n};
   (-3,-8)*{};(3,8)*{} **\crv{(-3,-1) & (3,1)}?(1)*\dir{>};?(0)*\dir{>};
    (3,-8)*{};(-3,8)*{} **\crv{(3,-1) & (-3,1)}?(1)*\dir{>};
  (3,-12)*{\bbsid};
  (3,8)*{\bbsid}; (6,-8)*{\scs };
  (-9,8)*{}="t1";
  (-3,8)*{}="t2";
  (-9,-8)*{}="t1'";
  (-3,-8)*{}="t2'";
   "t1";"t2" **\crv{(-9,14) & (-3, 14)};
   "t1'";"t2'" **\crv{(-9,-14) & (-3, -14)};
  "t1'";"t1" **\dir{-} ?(.5)*\dir{<};
 \endxy$} \;\; = \;\;
 \sum_{g_1+g_2=n}^{}
   \xy
  (-12,8)*{n};
  (0,0)*{\bbe{}};(2,-8)*{\scs};
  (-12,-2)*{\ccbub{\spadesuit+g_2}};
  (0,6)*{\bullet}+(3,-1)*{\scs g_1};
 \endxy
\end{equation}
\begin{eqnarray}
 \vcenter{\xy 0;/r.18pc/:
  (-8,0)*{};
  (8,0)*{};
  (-4,10)*{}="t1";
  (4,10)*{}="t2";
  (-4,-10)*{}="b1";
  (4,-10)*{}="b2";(-6,-8)*{\scs };(6,-8)*{\scs };
  "t1";"b1" **\dir{-} ?(.5)*\dir{<};
  "t2";"b2" **\dir{-} ?(.5)*\dir{>};
  (10,2)*{n};
  (-10,2)*{n};
  \endxy}
&\quad = \quad&
 -\;\;
 \vcenter{   \xy 0;/r.18pc/:
    (-4,-4)*{};(4,4)*{} **\crv{(-4,-1) & (4,1)}?(1)*\dir{>};
    (4,-4)*{};(-4,4)*{} **\crv{(4,-1) & (-4,1)}?(1)*\dir{<};?(0)*\dir{<};
    (-4,4)*{};(4,12)*{} **\crv{(-4,7) & (4,9)};
    (4,4)*{};(-4,12)*{} **\crv{(4,7) & (-4,9)}?(1)*\dir{>};
  (8,8)*{n};(-6,-3)*{\scs };
     (6.5,-3)*{\scs };
 \endxy}
  \quad + \quad
   \sum_{ \xy  (0,3)*{\scs f_1+f_2+f_3}; (0,0)*{\scs =n-1};\endxy}
    \vcenter{\xy 0;/r.18pc/:
    (-10,10)*{n};
    (-8,0)*{};
  (8,0)*{};
  (-4,-15)*{}="b1";
  (4,-15)*{}="b2";
  "b2";"b1" **\crv{(5,-8) & (-5,-8)}; ?(.05)*\dir{<} ?(.93)*\dir{<}
  ?(.8)*\dir{}+(0,-.1)*{\bullet}+(-3,2)*{\scs f_3};
  (-4,15)*{}="t1";
  (4,15)*{}="t2";
  "t2";"t1" **\crv{(5,8) & (-5,8)}; ?(.15)*\dir{>} ?(.95)*\dir{>}
  ?(.4)*\dir{}+(0,-.2)*{\bullet}+(3,-2)*{\scs \; f_1};
  (0,0)*{\ccbub{\scs \quad \spadesuit+f_2}};
  \endxy} \nn
 \\  \; \nn \\
 \vcenter{\xy 0;/r.18pc/:
  (-8,0)*{};(-6,-8)*{\scs };(6,-8)*{\scs };
  (8,0)*{};
  (-4,10)*{}="t1";
  (4,10)*{}="t2";
  (-4,-10)*{}="b1";
  (4,-10)*{}="b2";
  "t1";"b1" **\dir{-} ?(.5)*\dir{>};
  "t2";"b2" **\dir{-} ?(.5)*\dir{<};
  (10,2)*{n};
  (-10,2)*{n};
  \endxy}
&\quad = \quad&
 -\;\;
   \vcenter{\xy 0;/r.18pc/:
    (-4,-4)*{};(4,4)*{} **\crv{(-4,-1) & (4,1)}?(1)*\dir{<};?(0)*\dir{<};
    (4,-4)*{};(-4,4)*{} **\crv{(4,-1) & (-4,1)}?(1)*\dir{>};
    (-4,4)*{};(4,12)*{} **\crv{(-4,7) & (4,9)}?(1)*\dir{>};
    (4,4)*{};(-4,12)*{} **\crv{(4,7) & (-4,9)};
  (8,8)*{n};(-6,-3)*{\scs };  (6,-3)*{\scs };
 \endxy}
  \quad + \quad
    \sum_{ \xy  (0,3)*{\scs g_1+g_2+g_3}; (0,0)*{\scs =-n-1};\endxy}
    \vcenter{\xy 0;/r.18pc/:
    (-8,0)*{};
  (8,0)*{};
  (-4,-15)*{}="b1";
  (4,-15)*{}="b2";
  "b2";"b1" **\crv{(5,-8) & (-5,-8)}; ?(.1)*\dir{>} ?(.95)*\dir{>}
  ?(.8)*\dir{}+(0,-.1)*{\bullet}+(-3,2)*{\scs g_3};
  (-4,15)*{}="t1";
  (4,15)*{}="t2";
  "t2";"t1" **\crv{(5,8) & (-5,8)}; ?(.15)*\dir{<} ?(.9)*\dir{<}
  ?(.4)*\dir{}+(0,-.2)*{\bullet}+(3,-2)*{\scs g_1};
  (0,0)*{\cbub{\scs \quad\; \spadesuit + g_2}};
  (-10,10)*{n};
  \endxy} \label{eq_ident_decomp}
\end{eqnarray}
for all $n\in \Z$.  In equations \eqref{eq_reduction} and \eqref{eq_ident_decomp} whenever the summations are nonzero they utilize fake bubbles.

\item the additive composition functor $\Ucat(n,n')
 \times  \Ucat(n',n'')\to\Ucat(n,n'') $ is given on
 1-morphisms of $\Ucat$ by
\begin{equation}
  \cal{E}_{\ep'}\mathbf{1}_{n'}\{t'\} \times \cal{E}_{\ep}\onen\{t\} \mapsto
  \cal{E}_{\ep'\ep}\onen\{t+t'\}
\end{equation}
for $n'=n+\sum_{j=1}^m \epsilon_i 2$, and on 2-morphisms of $\Ucat$ by juxtaposition of diagrams
\[
\left(\;\;\vcenter{\xy 0;/r.16pc/:
 (-4,-15)*{}; (-20,25) **\crv{(-3,-6) & (-20,4)}?(0)*\dir{<}?(.6)*\dir{}+(0,0)*{\bullet};
 (-12,-15)*{}; (-4,25) **\crv{(-12,-6) & (-4,0)}?(0)*\dir{<}?(.6)*\dir{}+(.2,0)*{\bullet};
 ?(0)*\dir{<}?(.75)*\dir{}+(.2,0)*{\bullet};?(0)*\dir{<}?(.9)*\dir{}+(0,0)*{\bullet};
 (-28,25)*{}; (-12,25) **\crv{(-28,10) & (-12,10)}?(0)*\dir{<};
  ?(.2)*\dir{}+(0,0)*{\bullet}?(.35)*\dir{}+(0,0)*{\bullet};
 (-36,-15)*{}; (-36,25) **\crv{(-34,-6) & (-35,4)}?(1)*\dir{>};
 (-28,-15)*{}; (-42,25) **\crv{(-28,-6) & (-42,4)}?(1)*\dir{>};
 (-42,-15)*{}; (-20,-15) **\crv{(-42,-5) & (-20,-5)}?(1)*\dir{>};
 (6,10)*{\cbub{}{}};
 (-23,0)*{\cbub{}{}};
 (8,-4)*{n'};(-44,-4)*{n''};
 \endxy}\;\;\right) \;\; \times \;\;
\left(\;\;\vcenter{ \xy 0;/r.18pc/: (-14,8)*{\xybox{
 (0,-10)*{}; (-16,10)*{} **\crv{(0,-6) & (-16,6)}?(.5)*\dir{};
 (-16,-10)*{}; (-8,10)*{} **\crv{(-16,-6) & (-8,6)}?(1)*\dir{}+(.1,0)*{\bullet};
  (-8,-10)*{}; (0,10)*{} **\crv{(-8,-6) & (-0,6)}?(.6)*\dir{}+(.2,0)*{\bullet}?
  (1)*\dir{}+(.1,0)*{\bullet};
  (0,10)*{}; (-16,30)*{} **\crv{(0,14) & (-16,26)}?(1)*\dir{>};
 (-16,10)*{}; (-8,30)*{} **\crv{(-16,14) & (-8,26)}?(1)*\dir{>};
  (-8,10)*{}; (0,30)*{} **\crv{(-8,14) & (-0,26)}?(1)*\dir{>}?(.6)*\dir{}+(.25,0)*{\bullet};
   }};
 (-2,-4)*{n}; (-26,-4)*{n'};
 \endxy} \;\;\right)
 \;\;\mapsto \;\;
\vcenter{\xy 0;/r.16pc/:
 (-4,-15)*{}; (-20,25) **\crv{(-3,-6) & (-20,4)}?(0)*\dir{<}?(.6)*\dir{}+(0,0)*{\bullet};
 (-12,-15)*{}; (-4,25) **\crv{(-12,-6) & (-4,0)}?(0)*\dir{<}?(.6)*\dir{}+(.2,0)*{\bullet};
 ?(0)*\dir{<}?(.75)*\dir{}+(.2,0)*{\bullet};?(0)*\dir{<}?(.9)*\dir{}+(0,0)*{\bullet};
 (-28,25)*{}; (-12,25) **\crv{(-28,10) & (-12,10)}?(0)*\dir{<};
  ?(.2)*\dir{}+(0,0)*{\bullet}?(.35)*\dir{}+(0,0)*{\bullet};
 (-36,-15)*{}; (-36,25) **\crv{(-34,-6) & (-35,4)}?(1)*\dir{>};
 (-28,-15)*{}; (-42,25) **\crv{(-28,-6) & (-42,4)}?(1)*\dir{>};
 (-42,-15)*{}; (-20,-15) **\crv{(-42,-5) & (-20,-5)}?(1)*\dir{>};
 (6,10)*{\cbub{}};
 (-23,0)*{\cbub{}};
 \endxy}
 \vcenter{ \xy 0;/r.16pc/: (-14,8)*{\xybox{
 (0,-10)*{}; (-16,10)*{} **\crv{(0,-6) & (-16,6)}?(.5)*\dir{};
 (-16,-10)*{}; (-8,10)*{} **\crv{(-16,-6) & (-8,6)}?(1)*\dir{}+(.1,0)*{\bullet};
  (-8,-10)*{}; (0,10)*{} **\crv{(-8,-6) & (-0,6)}?(.6)*\dir{}+(.2,0)*{\bullet}?
  (1)*\dir{}+(.1,0)*{\bullet};
  (0,10)*{}; (-16,30)*{} **\crv{(0,14) & (-16,26)}?(1)*\dir{>};
 (-16,10)*{}; (-8,30)*{} **\crv{(-16,14) & (-8,26)}?(1)*\dir{>};
  (-8,10)*{}; (0,30)*{} **\crv{(-8,14) & (-0,26)}?(1)*\dir{>}?(.6)*\dir{}+(.25,0)*{\bullet};
   }};
 (0,-5)*{n};
 \endxy}
\]
\end{itemize}
\end{defn}

%
\subsubsection{Relations in $\Ucat$}
%

In this section we collect some relations that follow from the
definition of  $\Ucat$.  These relations were proven in \cite{Lau1}.

\begin{equation} \label{eq_ind_dotslide}
\xy
  (0,0)*{\xybox{
    (-4,-4)*{};(4,4)*{} **\crv{(-4,-1) & (4,1)}?(1)*\dir{>}?(.25)*{\bullet}+(-2.5,1)*{\alpha};
    (4,-4)*{};(-4,4)*{} **\crv{(4,-1) & (-4,1)}?(1)*\dir{>};
     (8,-4)*{n};
     (-10,0)*{};(10,0)*{};
     }};
  \endxy
 \;\; -
 \xy
  (0,0)*{\xybox{
    (-4,-4)*{};(4,4)*{} **\crv{(-4,-1) & (4,1)}?(1)*\dir{>}?(.75)*{\bullet}+(2.5,-1)*{\alpha};
    (4,-4)*{};(-4,4)*{} **\crv{(4,-1) & (-4,1)}?(1)*\dir{>};
     (8,-4)*{n};
     (-10,0)*{};(10,0)*{};
     }};
  \endxy
 \;\; =
\xy
  (0,0)*{\xybox{
    (-4,-4)*{};(4,4)*{} **\crv{(-4,-1) & (4,1)}?(1)*\dir{>};
    (4,-4)*{};(-4,4)*{} **\crv{(4,-1) & (-4,1)}?(1)*\dir{>}?(.75)*{\bullet}+(-2.5,-1)*{\alpha};
     (8,3)*{ n};
     (-10,0)*{};(10,0)*{};
     }};
  \endxy
 \;\; -
  \xy
  (0,0)*{\xybox{
    (-4,-4)*{};(4,4)*{} **\crv{(-4,-1) & (4,1)}?(1)*\dir{>} ;
    (4,-4)*{};(-4,4)*{} **\crv{(4,-1) & (-4,1)}?(1)*\dir{>}?(.25)*{\bullet}+(2.5,1)*{\alpha};
     (8,3)*{n};
     (-10,0)*{};(10,0)*{};
     }};
  \endxy
  \;\; = \;\;
  \sum_{f_1 + f_2 = \alpha-1}
  \xy
  (3,4);(3,-4) **\dir{-}?(0)*\dir{<} ?(.5)*\dir{}+(0,0)*{\bullet}+(2.5,1)*{\scs f_2};
  (-3,4);(-3,-4) **\dir{-}?(0)*\dir{<}?(.5)*\dir{}+(0,0)*{\bullet}+(-2.5,1)*{\scs f_1};;
  (9,-4)*{n};
 \endxy
\end{equation}

For all $n \in \Z$ the following equation holds
\begin{equation}
 \vcenter{
 \xy 0;/r.17pc/:
    (-4,-4)*{};(4,4)*{} **\crv{(-4,-1) & (4,1)};
    (4,-4)*{};(-4,4)*{} **\crv{(4,-1) & (-4,1)}  ?(0)*\dir{<};
    (4,4)*{};(12,12)*{} **\crv{(4,7) & (12,9)};
    (12,4)*{};(4,12)*{} **\crv{(12,7) & (4,9)};
    (-4,12)*{};(4,20)*{} **\crv{(-4,15) & (4,17)};
    (4,12)*{};(-4,20)*{} **\crv{(4,15) & (-4,17)}?(1)*\dir{>};
    (-4,4)*{}; (-4,12) **\dir{-};
    (12,-4)*{}; (12,4) **\dir{-};
    (12,12)*{}; (12,20) **\dir{-}?(1)*\dir{>};;
  (18,8)*{n};
\endxy}
-\;
   \vcenter{
 \xy 0;/r.17pc/:
    (4,-4)*{};(-4,4)*{} **\crv{(4,-1) & (-4,1)};
    (-4,-4)*{};(4,4)*{} **\crv{(-4,-1) & (4,1)}?(0)*\dir{<};;
    (-4,4)*{};(-12,12)*{} **\crv{(-4,7) & (-12,9)};
    (-12,4)*{};(-4,12)*{} **\crv{(-12,7) & (-4,9)};
    (4,12)*{};(-4,20)*{} **\crv{(4,15) & (-4,17)};
    (-4,12)*{};(4,20)*{} **\crv{(-4,15) & (4,17)}?(1)*\dir{>};
    (4,4)*{}; (4,12) **\dir{-};
    (-12,-4)*{}; (-12,4) **\dir{-};
    (-12,12)*{}; (-12,20) **\dir{-}?(1)*\dir{>};;
  (10,8)*{n};
\endxy}
  \; = \;
\sum_{\xy (0,2)*{\scs f_1+f_2+f_3+f_4}; (0,-1)*{\scs =n}\endxy} \; \xy 0;/r.17pc/:
    (-4,12)*{}="t1";
    (4,12)*{}="t2";
  "t2";"t1" **\crv{(5,5) & (-5,5)}; ?(.15)*\dir{} ?(1)*\dir{>}
  ?(.2)*\dir{}+(0,-.2)*{\bullet}+(3,-2)*{\scs f_1};
    (-4,-12)*{}="t1";
    (4,-12)*{}="t2";
  "t2";"t1" **\crv{(5,-5) & (-5,-5)};  ?(0)*\dir{<}
  ?(.15)*\dir{}+(0,-.2)*{\bullet}+(3,2)*{\scs f_3};
    (-8,1)*{\ccbub{\scs \spadesuit +f_4}{}};
    (13,12)*{};(13,-12)*{} **\dir{-} ?(.5)*\dir{<};
    (13,8)*{\bullet}+(3,2)*{\scs f_2};
  (19,-6)*{n};
  \endxy
+\;
 \sum_{\xy (0,2)*{\scs g_1+g_2+g_3+g_4}; (0,-1)*{\scs =-n-2}\endxy} \; \xy 0;/r.17pc/: (-10,12)*{};(-10,-12)*{} **\dir{-}
?(.5)*\dir{<};
  (-10,8)*{\bullet}+(-3,2)*{\scs g_2};
  (-4,12)*{}="t1";
  (4,12)*{}="t2";
  "t1";"t2" **\crv{(-4,5) & (4,5)};  ?(1)*\dir{>}
  ?(.4)*\dir{}+(0,-.2)*{\bullet}+(-3,-2)*{\scs \;\; g_1};
  (-4,-12)*{}="t1";
  (4,-12)*{}="t2";
  "t2";"t1" **\crv{(4,-5) & (-4,-5)};  ?(1)*\dir{>}
  ?(.8)*\dir{}+(0,-.2)*{\bullet}+(1,4)*{\scs g_3};
  (12,1)*{\cbub{\scs \spadesuit+g_4}{}};
  (24,6)*{n};
  \endxy
 \label{eq_r3_extra}
\end{equation}
where the first sum is over all $f_1, f_2, f_3, f_4 \geq 0$ with
$f_1+f_2+f_3+f_4=n$ and the second sum is over all $g_1, g_2, g_3,
g_4 \geq 0$ with $g_1+g_2+g_3+g_4=-n -2$.  Recall that all
summations in this paper are increasing, so that the first summation
is zero if $n<0$ and the second is zero when $-2<n$.  By rotating this equation and shifting $n$ we also have
\begin{equation}
   \vcenter{
 \xy 0;/r.17pc/:
    (4,-4)*{};(-4,4)*{} **\crv{(4,-1) & (-4,1)}?(0)*\dir{<};
    (-4,-4)*{};(4,4)*{} **\crv{(-4,-1) & (4,1)};
    (-4,4)*{};(-12,12)*{} **\crv{(-4,7) & (-12,9)};
    (-12,4)*{};(-4,12)*{} **\crv{(-12,7) & (-4,9)};
    (4,12)*{};(-4,20)*{} **\crv{(4,15) & (-4,17)}?(1)*\dir{>};;
    (-4,12)*{};(4,20)*{} **\crv{(-4,15) & (4,17)}?(1)*\dir{};
    (4,4)*{}; (4,12) **\dir{-};
    (-12,-4)*{}; (-12,4) **\dir{-}?(0)*\dir{<};
    (-12,12)*{}; (-12,20) **\dir{-};
  (10,8)*{n};
\endxy}
-\;
 \vcenter{
 \xy 0;/r.17pc/:
    (-4,-4)*{};(4,4)*{} **\crv{(-4,-1) & (4,1)}?(0)*\dir{<};
    (4,-4)*{};(-4,4)*{} **\crv{(4,-1) & (-4,1)};
    (4,4)*{};(12,12)*{} **\crv{(4,7) & (12,9)};
    (12,4)*{};(4,12)*{} **\crv{(12,7) & (4,9)};
    (-4,12)*{};(4,20)*{} **\crv{(-4,15) & (4,17)}?(1)*\dir{>};
    (4,12)*{};(-4,20)*{} **\crv{(4,15) & (-4,17)}?(1)*\dir{};
    (-4,4)*{}; (-4,12) **\dir{-};
    (12,-4)*{}; (12,4) **\dir{-}?(0)*\dir{<};
    (12,12)*{}; (12,20) **\dir{-};
  (18,8)*{n};
\endxy}
  \; = \;
 \sum_{\xy (0,2)*{\scs f_1+f_2+f_3+f_4}; (0,-1)*{\scs =n-2}\endxy} \; \xy 0;/r.17pc/:
    (-4,12)*{}="t1";
    (4,12)*{}="t2";
  "t2";"t1" **\crv{(5,5) & (-5,5)}; ?(.15)*\dir{} ?(1)*\dir{>}
  ?(.8)*\dir{}+(0,-.2)*{\bullet}+(-3,-2)*{\scs f_1};
    (-4,-12)*{}="t1";
    (4,-12)*{}="t2";
  "t2";"t1" **\crv{(5,-5) & (-5,-5)};  ?(0)*\dir{<}
  ?(.85)*\dir{}+(0,-.2)*{\bullet}+(-3,2)*{\scs f_3};
    (8,1)*{\ccbub{\scs \spadesuit +f_4}{}};
    (-13,12)*{};(-13,-12)*{} **\dir{-} ?(.5)*\dir{>};
    (-13,8)*{\bullet}+(-3,2)*{\scs f_2};
  (19,6)*{n};
  \endxy
+\;
  \sum_{\xy (0,2)*{\scs g_1+g_2+g_3+g_4}; (0,-1)*{\scs =-n}\endxy} \; \xy 0;/r.17pc/:
(10,12)*{};(10,-12)*{} **\dir{-}?(.5)*\dir{>};
  (10,8)*{\bullet}+(3,2)*{\scs g_2};
  (-4,12)*{}="t1";
  (4,12)*{}="t2";
  "t1";"t2" **\crv{(-4,5) & (4,5)};  ?(1)*\dir{>}
  ?(.7)*\dir{}+(0,-.2)*{\bullet}+(3,-2)*{\scs \;\; g_1};
  (-4,-12)*{}="t1";
  (4,-12)*{}="t2";
  "t2";"t1" **\crv{(4,-5) & (-4,-5)}; ?(1)*\dir{>}
  ?(.2)*\dir{}+(0,-.2)*{\bullet}+(1,4)*{\scs g_3};
  (-10,0)*{\cbub{\scs \spadesuit+g_4}{}};
  (24,6)*{n};
  \endxy
 \label{eq_r3_extra2}
\end{equation}

Dotted curl relations:
\begin{equation} \label{eq_reduction-dots}
  \text{$\xy 0;/r.18pc/:
  (14,8)*{n};
  (-3,-8)*{};(3,8)*{} **\crv{(-3,-1) & (3,1)}?(1)*\dir{>};?(0)*\dir{>};
    (3,-8)*{};(-3,8)*{} **\crv{(3,-1) & (-3,1)}?(1)*\dir{>};
  (-3,-12)*{\bbsid};  (-3,8)*{\bbsid};
  (3,8)*{}="t1";  (9,8)*{}="t2";
  (3,-8)*{}="t1'";  (9,-8)*{}="t2'";
   "t1";"t2" **\crv{(3,14) & (9, 14)};
   "t1'";"t2'" **\crv{(3,-14) & (9, -14)};
   (9,-4)*{\bullet}+(3,-1)*{\scs x};
   "t2'";"t2" **\dir{-} ?(.5)*\dir{<};
   (9,0)*{}; (-6,-8)*{\scs };
 \endxy$} \;\; = \;\; -\sum_{f_1+f_2=x-n}
   \xy
  (19,4)*{n};
  (0,0)*{\bbe{}};(-2,-8)*{\scs };
  (12,-2)*{\cbub{\spadesuit+f_2}{}};
  (0,6)*{\bullet}+(3,-1)*{\scs f_1};
 \endxy
\qquad \qquad
  \text{$ \xy 0;/r.18pc/:
  (-12,8)*{n};
   (-3,-8)*{};(3,8)*{} **\crv{(-3,-1) & (3,1)}?(1)*\dir{>};?(0)*\dir{>};
    (3,-8)*{};(-3,8)*{} **\crv{(3,-1) & (-3,1)}?(1)*\dir{>};
  (3,-12)*{\bbsid};
  (3,8)*{\bbsid}; (6,-8)*{\scs };
  (-9,8)*{}="t1";
  (-3,8)*{}="t2";
  (-9,-8)*{}="t1'";
  (-3,-8)*{}="t2'";
   "t1";"t2" **\crv{(-9,14) & (-3, 14)};
   "t1'";"t2'" **\crv{(-9,-14) & (-3, -14)};
  "t1'";"t1" **\dir{-} ?(.5)*\dir{<};  (-9,-4)*{\bullet}+(-3,-1)*{\scs x};
 \endxy$} \;\; = \;\;
 \sum_{g_1+g_2=x+n}^{}
   \xy
  (-12,8)*{n};
  (0,0)*{\bbe{}};(2,-8)*{\scs};
  (-12,-2)*{\ccbub{\spadesuit+g_2}{}};
  (0,6)*{\bullet}+(3,-1)*{\scs g_1};
 \endxy
\end{equation}
One can also show the relations:
\begin{eqnarray}
 \vcenter{\xy 0;/r.18pc/:
  (-8,0)*{};
  (8,0)*{};
  (-4,10)*{}="t1";
  (4,10)*{}="t2";
  (-4,-10)*{}="b1";
  (4,-10)*{}="b2";(-6,-8)*{\scs };(6,-8)*{\scs };
  "t1";"b1" **\dir{-} ?(.5)*\dir{<};
  "t2";"b2" **\dir{-} ?(.5)*\dir{>};
   (-4,-4)*{\bullet}+(-3,-1)*{\scs x};
   (4,-4)*{\bullet}+(3,-1)*{\scs y};
  (10,2)*{n};
  (-10,2)*{n};
  \endxy}
&\quad = \quad&
 -\;\;
 \vcenter{   \xy 0;/r.18pc/:
    (-4,-4)*{};(4,4)*{} **\crv{(-4,-1) & (4,1)};
    (4,-4)*{};(-4,4)*{} **\crv{(4,-1) & (-4,1)}?(0)*\dir{<};
    (-4,4)*{};(4,12)*{} **\crv{(-4,7) & (4,9)};
    (4,4)*{};(-4,12)*{} **\crv{(4,7) & (-4,9)}?(1)*\dir{>};
  (8,8)*{n};(-6,-3)*{\scs };
     (6.5,-3)*{\scs };
   (-4,4)*{\bullet}+(-3,-1)*{\scs y};
   (4,4)*{\bullet}+(3,-1)*{\scs x};
 \endxy}
  \quad + \quad
   \sum_{ \xy  (0,3)*{\scs f_1+f_2+f_3}; (0,0)*{\scs =x+y+n-1};\endxy}
    \vcenter{\xy 0;/r.18pc/:
    (-10,10)*{n};
    (-8,0)*{};
  (8,0)*{};
  (-4,-15)*{}="b1";
  (4,-15)*{}="b2";
  "b2";"b1" **\crv{(5,-8) & (-5,-8)}; ?(.05)*\dir{<} ?(.93)*\dir{<}
  ?(.8)*\dir{}+(0,-.1)*{\bullet}+(-3,2)*{\scs f_3};
  (-4,15)*{}="t1";
  (4,15)*{}="t2";
  "t2";"t1" **\crv{(5,8) & (-5,8)}; ?(.15)*\dir{>} ?(.95)*\dir{>}
  ?(.4)*\dir{}+(0,-.2)*{\bullet}+(3,-2)*{\scs \; f_1};
  (0,0)*{\ccbub{\scs \quad \spadesuit+f_2}{}};
  \endxy} \nn
 \\  \; \nn \\
 \vcenter{\xy 0;/r.18pc/:
  (-8,0)*{};(-6,-8)*{\scs };(6,-8)*{\scs };
  (8,0)*{};
  (-4,10)*{}="t1";
  (4,10)*{}="t2";
  (-4,-10)*{}="b1";
  (4,-10)*{}="b2";
  "t1";"b1" **\dir{-} ?(.5)*\dir{>};
  "t2";"b2" **\dir{-} ?(.5)*\dir{<};
  (-4,-4)*{\bullet}+(-3,-1)*{\scs x};
   (4,-4)*{\bullet}+(3,-1)*{\scs y};
  (10,2)*{n};
  (-10,2)*{n};
  \endxy}
&\quad = \quad&
 -\;\;
   \vcenter{\xy 0;/r.18pc/:
    (-4,-4)*{};(4,4)*{} **\crv{(-4,-1) & (4,1)}?(0)*\dir{<};
    (4,-4)*{};(-4,4)*{} **\crv{(4,-1) & (-4,1)};
    (-4,4)*{};(4,12)*{} **\crv{(-4,7) & (4,9)}?(1)*\dir{>};
    (4,4)*{};(-4,12)*{} **\crv{(4,7) & (-4,9)};
  (8,8)*{n};(-6,-3)*{\scs };  (6,-3)*{\scs };
   (-4,4)*{\bullet}+(-3,-1)*{\scs y};
   (4,4)*{\bullet}+(3,-1)*{\scs x};
 \endxy}
  \quad + \quad
    \sum_{ \xy  (0,3)*{\scs g_1+g_2+g_3}; (0,0)*{\scs =x+y+-n-1};\endxy}
    \vcenter{\xy 0;/r.18pc/:
    (-8,0)*{};
  (8,0)*{};
  (-4,-15)*{}="b1";
  (4,-15)*{}="b2";
  "b2";"b1" **\crv{(5,-8) & (-5,-8)}; ?(.1)*\dir{>} ?(.95)*\dir{>}
  ?(.8)*\dir{}+(0,-.1)*{\bullet}+(-3,2)*{\scs g_3};
  (-4,15)*{}="t1";
  (4,15)*{}="t2";
  "t2";"t1" **\crv{(5,8) & (-5,8)}; ?(.15)*\dir{<} ?(.9)*\dir{<}
  ?(.4)*\dir{}+(0,-.2)*{\bullet}+(3,-2)*{\scs g_1};
  (0,0)*{\cbub{\scs \quad\; \spadesuit + g_2}{}};
  (-10,10)*{n};
  \endxy} \label{eq_ident_decomp-dots}
\end{eqnarray}

Bubble slide equations:
\begin{eqnarray} \label{eq_bubslide1}
 \xy
  (-5,8)*{n};
  (0,0)*{\bbe{}};
  (12,-2)*{\cbub{\spadesuit+j}{}};
 \endxy
  & =&
     \xy
  (0,8)*{n};
  (12,0)*{\bbe{}};
  (0,-2)*{\cbub{\spadesuit+(j-2)}{}};
  (12,6)*{\bullet}+(3,-1)*{\scs 2};
 \endxy
   -2 \;
         \xy
  (0,8)*{n};
  (12,0)*{\bbe{}};
  (0,-2)*{\cbub{\spadesuit+(j-1)}{}};
  (12,6)*{\bullet}+(8,-1)*{\scs };
 \endxy
 + \;\;
     \xy
  (0,8)*{n};
  (12,0)*{\bbe{}};
  (0,-2)*{\cbub{\spadesuit+j}{}};
  (12,6)*{}+(8,-1)*{\scs };
 \endxy
 \\ \nn \\ \nn \\
  \xy
  (17,8)*{n};
  (12,0)*{\bbe{}};
  (0,-2)*{\ccbub{\spadesuit+j}{}};
  (12,6)*{}+(8,-1)*{\scs };
 \endxy
  &=&
    \xy
  (15,8)*{n};
  (0,0)*{\bbe{}};
  (12,-2)*{\ccbub{\quad\spadesuit+(j-2)}{}};
  (0,6)*{\bullet }+(3,1)*{\scs 2};
 \endxy
  -2 \;
      \xy
  (15,8)*{n};
  (0,0)*{\bbe{}};
  (12,-2)*{\ccbub{\quad\spadesuit+(j-1)}{}};
  (0,6)*{\bullet }+(5,-1)*{\scs };
 \endxy
 + \;\;
      \xy
  (15,8)*{n};
  (0,0)*{\bbe{}};
  (12,-2)*{\ccbub{\spadesuit+j}{}};
 \endxy
 \end{eqnarray}

\begin{eqnarray}
\xy
  (14,8)*{n};
  (0,0)*{\bbe{}};
  (12,-2)*{\ccbub{\spadesuit+j}{}};
  (0,6)*{ }+(7,-1)*{\scs  };
 \endxy
 & = &
  \xsum{f=0}{j}(j+1-f)
   \xy
  (0,8)*{n+2};
  (12,0)*{\bbe{}};
  (0,-2)*{\ccbub{\spadesuit+f}{}};
  (12,6)*{\bullet}+(5,-1)*{\scs j-f};
 \endxy \label{eq_bubbleslide_cc_r}\\ \nn \\  \nn \\
     \xy
  (15,8)*{n};
  (11,0)*{\bbe{}};
  (0,-2)*{\cbub{\spadesuit+j\quad }{}};
 \endxy
   &= &
     \xsum{f=0}{j}(j+1-f)
     \xy
  (18,8)*{n};
  (0,0)*{\bbe{}};
  (14,-4)*{\cbub{\spadesuit+f}{}};
  (0,6)*{\bullet }+(5,-1)*{\scs j-f};
 \endxy \label{eq_bubslide2}
\end{eqnarray}

Below we collect a few additional identities that have not appeared in the literature previously.

The following relation together with its image under the various symmetries of the 2-category $\Ucat$ will be used extensively in the paper.
\begin{prop}
\begin{equation}
  \vcenter{\xy 0;/r.17pc/:
    (4,-4)*{};(-4,4)*{} **\crv{(4,-1) & (-4,1)}?(0)*\dir{<};
    (-4,-4)*{};(4,4)*{} **\crv{(-4,-1) & (4,1)};
    (-4,4)*{};(-12,12)*{} **\crv{(-4,7) & (-12,9)};
    (-12,4)*{};(-4,12)*{} **\crv{(-12,7) & (-4,9)};
    (4,12)*{};(-4,20)*{} **\crv{(4,15) & (-4,17)}?(1)*\dir{>};;
    (-4,12)*{};(4,20)*{} **\crv{(-4,15) & (4,17)}?(1)*\dir{};
    (4,4)*{}; (4,12) **\dir{-};
    (-12,-4)*{}; (-12,4) **\dir{-}?(0)*\dir{<};
    (-12,12)*{}; (-12,20) **\dir{-};
    (-2,-1)*{\bullet};
  (8,12)*{n};
\endxy}
 \;\; -\;\;
  \vcenter{\xy 0;/r.17pc/:
    (4,-4)*{};(-4,4)*{} **\crv{(4,-1) & (-4,1)}?(0)*\dir{<};
    (-4,-4)*{};(4,4)*{} **\crv{(-4,-1) & (4,1)};
    (-4,4)*{};(-12,12)*{} **\crv{(-4,7) & (-12,9)};
    (-12,4)*{};(-4,12)*{} **\crv{(-12,7) & (-4,9)};
    (4,12)*{};(-4,20)*{} **\crv{(4,15) & (-4,17)}?(1)*\dir{>};;
    (-4,12)*{};(4,20)*{} **\crv{(-4,15) & (4,17)}?(1)*\dir{};
    (4,4)*{}; (4,12) **\dir{-};
    (-12,-4)*{}; (-12,4) **\dir{-}?(0)*\dir{<};
    (-12,12)*{}; (-12,20) **\dir{-};
    (2,-1)*{\bullet};
  (8,12)*{n};
\endxy}
\;\; - \;\;
 \vcenter{ \xy 0;/r.17pc/:
    (-4,-4)*{};(4,4)*{} **\crv{(-4,-1) & (4,1)}?(0)*\dir{<};
    (4,-4)*{};(-4,4)*{} **\crv{(4,-1) & (-4,1)};
    (4,4)*{};(12,12)*{} **\crv{(4,7) & (12,9)};
    (12,4)*{};(4,12)*{} **\crv{(12,7) & (4,9)};
    (-4,12)*{};(4,20)*{} **\crv{(-4,15) & (4,17)}?(1)*\dir{>};
    (4,12)*{};(-4,20)*{} **\crv{(4,15) & (-4,17)}?(1)*\dir{};
    (-4,4)*{}; (-4,12) **\dir{-};
    (12,-4)*{}; (12,4) **\dir{-}?(0)*\dir{<};
    (12,12)*{}; (12,20) **\dir{-};
  (16,12)*{n}; (-4,8)*{\bullet};
\endxy}
\;\; + \;\;
 \vcenter{ \xy 0;/r.17pc/:
    (-4,-4)*{};(4,4)*{} **\crv{(-4,-1) & (4,1)}?(0)*\dir{<};
    (4,-4)*{};(-4,4)*{} **\crv{(4,-1) & (-4,1)};
    (4,4)*{};(12,12)*{} **\crv{(4,7) & (12,9)};
    (12,4)*{};(4,12)*{} **\crv{(12,7) & (4,9)};
    (-4,12)*{};(4,20)*{} **\crv{(-4,15) & (4,17)}?(1)*\dir{>};
    (4,12)*{};(-4,20)*{} **\crv{(4,15) & (-4,17)}?(1)*\dir{};
    (-4,4)*{}; (-4,12) **\dir{-};
    (12,-4)*{}; (12,4) **\dir{-}?(0)*\dir{<};
    (12,12)*{}; (12,20) **\dir{-};
  (16,12)*{n}; (4,12)*{\bullet};
\endxy}
\;\; -\;\;
 \xy 0;/r.17pc/:
    (-12,12)*{};(-12,-12)*{} **\dir{-} ?(.5)*\dir{>};
    (-4,-12)*{};(-4,12)*{} **\dir{-} ?(.5)*\dir{>};
    (4,12)*{};(4,-12)*{} **\dir{-} ?(.5)*\dir{>};
  (8,8)*{n};
  \endxy
+\;
  \xy 0;/r.17pc/:
  (-4,12)*{};(12,-12)*{} **\crv{(-4,4) & (12,-4)}?(.5)*\dir{>};
  (4,12)*{}="t1";
  (12,12)*{}="t2";
  "t1";"t2" **\crv{(4,5) & (12,5)};  ?(1)*\dir{>};
  (-4,-12)*{}="t1";  (4,-12)*{}="t2";
  "t2";"t1" **\crv{(4,-5) & (-4,-5)}; ?(1)*\dir{>};
  (19,4)*{n};
  \endxy
 \;\; =\;\;0.
\end{equation}
\end{prop}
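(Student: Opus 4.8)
The plan is to show the left-hand side vanishes by reducing all six diagrams to a common normal form using only the relations already established for $\Ucat$. The organising observation is that the first two diagrams are the triple-crossing configuration $T_1$ appearing as the first term on the left of \eqref{eq_r3_extra2}, decorated by a single dot on one of the two strands entering its central lower crossing, while the third and fourth diagrams are the second configuration $T_2$ there, decorated by a dot on one of the two strands entering its central upper crossing. Because these diagrams carry mixed orientations (one of the three strands runs downward), exactly one of the three crossings in each triple joins like-oriented strands, and it is precisely that crossing to which the dots in the first four diagrams are attached; the remaining two crossings are between oppositely oriented strands and will disappear under curl and bigon reductions. As a preliminary sanity check I would confirm that all six terms are homogeneous of the same $q$-degree, which forces the bubble-free targets (the fifth and sixth diagrams) and constrains where dotted bubbles may appear.

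First I would use the NilHecke dot-slide relation \eqref{eq_nil_dotslide}, and its iterate \eqref{eq_ind_dotslide}, at the like-oriented crossing of each of the four dotted diagrams. Each application trades a dot for the same diagram with the dot pushed to the far side of the crossing, plus a copy of the diagram in which that crossing has been resolved into two parallel strands. Grouping the resulting terms according to the sign pattern of the statement, I expect the dotted triple-crossing pieces that survive to combine into the difference of dotted copies of $T_1$ and $T_2$, to which \eqref{eq_r3_extra2} (or its companion \eqref{eq_r3_extra}) applies, converting them into a sum of dotted bubbles. The crossing-resolved pieces, on the other hand, now contain only the two oppositely oriented crossings, which form either a bigon or a curl.

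Next I would collapse those bigons and curls. A dotted curl is evaluated by the dotted reduction relation \eqref{eq_reduction-dots} (an undotted one by \eqref{eq_reduction}), and a bigon by the R2 relation \eqref{eq_ident_decomp}, in its dotted form \eqref{eq_ident_decomp-dots}; each such move returns a bare through-strand, a cup--cap, or a strand carrying a dotted bubble. The bare through-strand contributions should assemble into the identity diagram (the fifth term) and the cup--cap contributions into the sixth term, with the signs demanded by the statement.

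The final and hardest step is to show that every bubble-decorated term produced along the way cancels, since the right-hand side is bubble-free. Here I would collect the dotted bubbles arising from \eqref{eq_r3_extra2} and from the curl and bigon reductions of the two resolved configurations, move them onto a common strand using the bubble-slide relations \eqref{eq_bubslide1}, \eqref{eq_bubslide2}, \eqref{eq_bubbleslide_cc_r}, and then apply the infinite Grassmannian relation \eqref{eq_infinite_Grass} to telescope the resulting sums of genuine and fake bubbles to zero. The main obstacle is exactly this bubble bookkeeping: each reduction introduces sums over bubble degrees involving fake bubbles with shifted $\spadesuit$-parameters, and verifying that the clockwise and counter-clockwise families cancel in matching pairs, while tracking the signs that the cyclic biadjoint structure attaches to each isotopy, is where essentially all of the computational work lies.
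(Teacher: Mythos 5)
Your overall route --- nilHecke dot-slides followed by \eqref{eq_r3_extra2}, then curl, bigon and bubble reductions --- is exactly the one the paper indicates for this proposition, so the shape of the argument is right. But the concrete first step rests on a misreading of the diagrams and, as written, would fail. The ambient 1-morphism here is $\cal{F}\cal{E}\cal{F}\onen$: in each triple-crossing configuration \emph{two} of the three strands are oriented downward, the unique like-oriented crossing is the $\cal{F}\cal{F}$ crossing between those two downward strands (the middle crossing of each triangle), and it carries no dot. The four dots sit instead at a \emph{mixed} crossing --- the bottom crossing of the first configuration and the top crossing of the second --- with one dot on the upward strand and one on the downward strand, both on the same side of that crossing. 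So applying \eqref{eq_nil_dotslide} at the like-oriented crossing does not engage the dots at all; you must apply its cyclic rotation at the mixed crossings where the dots actually live.

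This changes the bookkeeping in an essential way. The orientation-preserving smoothing that the dot-slide produces at a mixed crossing is not a pair of parallel strands but a turnback (a cap--cup pair), which is precisely where the last diagram of the identity comes from; the three-parallel-strand term then has to be produced downstream, by reducing the curls and bigons created once a turnback is inserted (via \eqref{eq_reduction-dots} and \eqref{eq_ident_decomp}) and by the bubble terms supplied by \eqref{eq_r3_extra2}. Your claim that after resolving the dotted crossing ``the remaining two crossings form a bigon or a curl'' inherits the same error: the two mixed crossings of each configuration involve different pairs of strands and do not form a bigon with one another, so the curls only appear after a turnback has been inserted. The concluding outline of the bubble cancellation (bubble slides plus the infinite Grassmannian relation, with attention to fake bubbles) is fine in spirit, but the plan needs to be re-aimed at the correct crossings before any of it can be carried out.
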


The proof utilizes the nilHecke relations to slide dots as well as \eqref{eq_r3_extra2}.

\begin{prop}[Step functions]
\begin{equation}
 \vcenter{   \xy 
    (-4,-4)*{};(4,4)*{} **\crv{(-4,-1) & (4,1)}?(1)*\dir{>};
    (4,-4)*{};(-4,4)*{} **\crv{(4,-1) & (-4,1)}?(1)*\dir{};?(0)*\dir{<};
    (-4,4)*{};(4,12)*{} **\crv{(-4,7) & (4,9)};
    (4,4)*{};(-4,12)*{} **\crv{(4,7) & (-4,9)};
    (-4,12)*{};(4,12)*{} **\crv{(-4,18) & (4,18)}?(1)*\dir{>};
  (8,8)*{n};
 \endxy}
 \quad = \quad
 \left\{
\begin{array}{ccl}
  -\;\;\vcenter{\xy (-4,0)*{};(4,0)*{} **\crv{(-4,6) & (4,6)}?(1)*\dir{>};
  (8,8)*{n};
  \endxy} & \quad & \text{if $n \leq 0$} \\ & & \\
  0 & \quad & \text{otherwise.}
\end{array}
 \right.
\end{equation}
\begin{equation}
 \vcenter{   \xy 
    (4,-4)*{};(-4,4)*{} **\crv{(4,-1) & (-4,1)}?(1)*\dir{>};
    (-4,-4)*{};(4,4)*{} **\crv{(-4,-1) & (4,1)}?(1)*\dir{};?(0)*\dir{<};
    (4,4)*{};(-4,12)*{} **\crv{(4,7) & (-4,9)};
    (-4,4)*{};(4,12)*{} **\crv{(-4,7) & (4,9)};
    (4,12)*{};(-4,12)*{} **\crv{(4,18) & (-4,18)}?(1)*\dir{>};
  (8,8)*{n};
 \endxy}
 \quad = \quad
 \left\{
\begin{array}{ccl}
  -\;\;\vcenter{\xy (4,0)*{};(-4,0)*{} **\crv{(4,6) & (-4,6)}?(1)*\dir{>};
  (8,8)*{n};
  \endxy} & \quad & \text{if $n \geq 0$} \\ & & \\
  0 & \quad & \text{otherwise.}
\end{array}
 \right.
\end{equation}
\end{prop}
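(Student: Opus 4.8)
The plan is to reduce the left-hand side of the first identity by resolving the double crossing with the identity-decomposition relation \eqref{eq_ident_decomp} and then collapsing the resulting diagrams into bubbles whose values are forced by the positivity axiom \eqref{eq_positivity_bubbles}.

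First I read the left-hand side as the external cap $\mathrm{ev}\maps \cal{E}\cal{F}\onen \to \onen$ precomposed with the mixed double crossing $X$ of an $\cal{E}$-strand and an $\cal{F}$-strand. Solving \eqref{eq_ident_decomp} for $X$ gives $X = -\,\id_{\cal{E}\cal{F}\onen} + \sum_{f_1+f_2+f_3=n-1} T_{f_1,f_2,f_3}$, where $T_{f_1,f_2,f_3}$ is the diagram built from a cup carrying $f_1$ dots at the top, a counterclockwise bubble with label $\spadesuit+f_2$ in the middle, and a cap carrying $f_3$ dots at the bottom. Composing with $\mathrm{ev}$, the identity summand contributes exactly $-\,\mathrm{ev}$, namely minus the plain cap on the right-hand side, so everything reduces to evaluating $\mathrm{ev}\circ\sum_{f_1+f_2+f_3=n-1}T_{f_1,f_2,f_3}$.

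The main point is that $\mathrm{ev}$ closes the top cup of each $T_{f_1,f_2,f_3}$ into a closed loop. Since the mixed crossings are degree zero while the relevant cups and caps have degree $1-n$, a degree count identifies this loop as a clockwise bubble carrying exactly $f_1$ dots. By the positivity of dotted bubbles \eqref{eq_positivity_bubbles}, such a bubble vanishes whenever $f_1<n-1$; as $f_1\le f_1+f_2+f_3=n-1$ in every summand, the only term that can survive is $f_1=n-1$, which forces $f_2=f_3=0$. For that term the clockwise bubble has degree zero and hence equals $1$, the middle bubble is the degree-zero counterclockwise bubble and also equals $1$, and the bottom cap carries no dots; thus $\mathrm{ev}\circ\sum T_{f_1,f_2,f_3}$ collapses to a single plain cap. (One could instead package the two bubbles through the infinite Grassmannian relation \eqref{eq_infinite_Grass}, but positivity makes the cancellation immediate.) Observe also that the summation range in \eqref{eq_ident_decomp} is empty exactly when $n-1<0$, that is, when $n\le 0$.

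Assembling the pieces: for $n\le 0$ the correction sum is empty and $\mathrm{ev}\circ X=-\,\mathrm{ev}$ is precisely the plain cap with a minus sign, whereas for $n\ge 1$ the correction sum supplies one extra plain cap and $\mathrm{ev}\circ X=-\,\mathrm{ev}+\mathrm{ev}=0$; this is the claimed step function. The second identity is the left-right mirror of the first, and I would obtain it by applying the symmetry $2$-functor $\omega$ (which interchanges $\cal{E}$ and $\cal{F}$ and replaces the object $n$ by $-n$) to the first identity at weight $-n$, turning the hypothesis $-n\le 0$ into $n\ge 0$. The step I expect to demand the most care is pinning down the orientation and sign of the loop created by $\mathrm{ev}$, so as to be sure it is a clockwise bubble with $f_1$ dots contributing $+1$ with the correct sign; once \eqref{eq_ident_decomp} and \eqref{eq_positivity_bubbles} are applied, the remainder is routine bookkeeping.
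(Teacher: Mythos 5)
Your proof is correct, but it takes a different route from the paper's: the paper's entire proof is the observation that the statement ``follows immediately from the Dotted Curl Relations \eqref{eq_reduction-dots}'' --- one reads the capped double crossing as a curl and quotes \eqref{eq_reduction} --- whereas you resolve the double crossing with the identity decomposition \eqref{eq_ident_decomp} and then evaluate the closed-off correction terms using the positivity of dotted bubbles \eqref{eq_positivity_bubbles} together with the degree-zero and fake-bubble normalizations. Since the curl relation and the identity decomposition are interderivable, your computation is in effect a self-contained re-derivation of the instance of the curl relation the paper cites; it buys explicitness (the dichotomy $n\le 0$ versus $n\ge 1$ falls out of the emptiness of the summation range $f_1+f_2+f_3=n-1$, and the single surviving term $f_1=n-1$, $f_2=f_3=0$ visibly contributes one plain cap) at the cost of a little more bookkeeping. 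Two details you should state rather than gesture at: the loop created by closing the top cup of $T_{f_1,f_2,f_3}$ with the external cap is the \emph{clockwise} bubble because of the orientations of that cup and cap (the degree count is consistent with this but does not by itself fix the orientation, and it is the clockwise bubble that \eqref{eq_positivity_bubbles} kills when it carries fewer than $n-1$ dots); and for $n\ge 1$ the surviving middle counterclockwise bubble with label $\spadesuit+0$ is a fake bubble, equal to $1$ by the normalization accompanying \eqref{eq_infinite_Grass}. Your deduction of the second identity by applying $\tomega$ to the first at weight $-n$ is fine, the two crossings contributing $(-1)^2=+1$.
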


\begin{proof}
This proposition follows immediately from the Dotted Curl Relations \eqref{eq_reduction-dots}.
\end{proof}

\begin{prop}
\begin{equation}
 \vcenter{   \xy 
    (-4,-4)*{};(4,4)*{} **\crv{(-3,-1) & (3,1)}?(1)*\dir{>};
    (4,-4)*{};(-4,4)*{} **\crv{(3,-1) & (-3,1)}?(1)*\dir{>};;
    (-4,4)*{};(-12,4)*{} **\crv{(-4,8) & (-12,8)};
    (-4,-4)*{};(-12,-4)*{} **\crv{(-4,-8) & (-12,-8)};
    (4,4)*{};(12,4)*{} **\crv{(4,8) & (12,8)};
    (4,-4)*{};(12,-4)*{} **\crv{(4,-8) & (12,-8)};
    (12,-4)*{};(12,4)*{} **\dir{-};
    (-12,-4)*{};(-12,4)*{} **\dir{-};
  (2,9)*{n};
 \endxy}
 \quad = \quad
 \left\{
\begin{array}{ccl}
  \vcenter{\xy (0,0)*{\cbub{\spadesuit+1}{}};
  (8,8)*{0};
  \endxy} & \quad & \text{if $n =0$,} \\ & & \\
  0 & \quad & \text{otherwise.}
\end{array}
 \right.
\end{equation}
\end{prop}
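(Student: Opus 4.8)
The diagram is \emph{closed}, so it represents an element of the commutative ring of bubbles $\End(\1_n)$, and the plan is to evaluate it by resolving its single crossing and then collapsing the resulting product of two bubbles by means of the infinite Grassmannian relation \eqref{eq_infinite_Grass}. Concretely, the picture is a crossing of two upward-oriented strands whose left pair of endpoints is joined by a turnback on the far left and whose right pair of endpoints is joined by a turnback on the far right.

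First I would group the crossing together with the right-hand turnback. Tracking orientations (both strands point up, and the right-hand loop is oriented downwards), this subdiagram is exactly the left-hand side of the first dotted-curl relation \eqref{eq_reduction} with no extra dots, so it may be replaced by $-\sum_{f_1+f_2=-n}$ of an upward strand carrying $f_1$ dots times a clockwise bubble of type $\spadesuit+f_2$, all in the region of object $n$ (so that $\spadesuit=n-1$). The remaining left-hand turnback then closes the surviving upward strand: an $f_1$-times dotted strand closed off on the left becomes a counterclockwise bubble with $f_1$ genuine dots, i.e.\ a bubble of type $\spadesuit+(f_1+n+1)$ since counterclockwise bubbles carry $\spadesuit=-n-1$, and this bubble again sits at object $n$ (the region to the right of the strand). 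Hence the whole diagram equals $-\sum_{f_1+f_2=-n}$ of a counterclockwise bubble of type $\spadesuit+(f_1+n+1)$ times a clockwise bubble of type $\spadesuit+f_2$, both at object $n$.

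Finally I would reindex by $\alpha=f_1+n+1$ and $\beta=f_2$, so that $\alpha+\beta=1$, while the constraints $f_1,f_2\ge 0$ become $\alpha\ge n+1$ and $\beta\ge 0$. The sum is then minus the coefficient of $t$ in the product of the two bubble generating functions of \eqref{eq_infinite_Grass} (which equals $1$), but with the counterclockwise index restricted to $\alpha\ge n+1$. Because that full coefficient vanishes, and because every bubble of negative degree is zero by \eqref{eq_positivity_bubbles} (so only $\alpha\in\{0,1\}$ can give a nonzero product), the answer depends only on how $\alpha\ge n+1$ cuts into $\{0,1\}$: for $n\ge 1$ both values are excluded and the diagram is $0$; for $n\le -1$ both values survive and the sum reproduces the full vanishing coefficient, so the diagram is again $0$; and for $n=0$ only $\alpha=1$ survives, giving minus a counterclockwise bubble of type $\spadesuit+1$, which equals a clockwise bubble of type $\spadesuit+1$ by the degree-one part of \eqref{eq_infinite_Grass}. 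This is precisely the claimed value. The one genuinely delicate point, and the main obstacle, is the orientation-and-region bookkeeping needed to match the correct curl relation and to pin down the exact $\spadesuit$-shift of the counterclockwise bubble, together with the careful handling of fake bubbles and positivity in the three boundary cases.
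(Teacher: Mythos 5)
Your proposal is correct and follows essentially the same route as the paper, whose proof is the one-line observation that the statement "follows from the Curl relations in $\Ucat$ together with the positivity of bubbles axiom"; you have simply carried out that computation explicitly (curl relation on the right turnback, closure into a product of a dotted counterclockwise bubble and a clockwise bubble, then positivity plus the degree-two part of the infinite Grassmannian relation \eqref{eq_infinite_Grass} to evaluate the three cases). The bookkeeping of orientations, regions, and the $\spadesuit$-shifts is all consistent with the conventions of Section~\ref{subsec_def_ucat}, so no gap remains.
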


\begin{proof}
This proposition follows from the Curl relations in $\Ucat$ together with the positivity of bubbles axiom.
\end{proof}

%
\subsection{The 2-categories $\UcatD$, $Kom(\UcatD)$, and $Com(\UcatD)$} \label{sec_KomU}
%

%
\subsubsection{Additive categories, homotopy categories, and Karoubi envelopes}
%

For an additive category $\cal{M}$ we write $Kom(\cal{M})$ for the
category of bounded complexes in $\cal{M}$.  An object $(X,d)$ of
$Kom(\cal{M})$ is a collection of objects $X^i$ of $\cal{M}$
together with maps $d_i$
\begin{equation}
  \xymatrix{\dots \ar[r]^{d} &
   X^{i-1} \ar[r]^{d_{i-1}} &
   X^i \ar[r]^{d_i} &
   X^{i+1} \ar[r]^{d_{i+1}} & \dots}
\end{equation}
such that $d_{i+1} d_i =0$ and only finitely many objects are
nonzero. A morphisms $f \maps (X,d) \to (Y,d)$ in $Kom(\cal{M})$ is
a collection of morphisms $f_i \maps X^i \to Y^i$ such that
\begin{equation}
    \xymatrix{
\dots \ar[r]^{d} &
X^{i-1} \ar[r]^{d_{i-1}} \ar[d]_{f_{i-1}}&
X^i \ar[r]^{d_i} \ar[d]_{f_i}&
 X^{i+1} \ar[r]^{d_{i+1}} \ar[d]_{f_{i+1}}&
 \dots \\
 \dots \ar[r]_{d} &
 Y^{i-1} \ar[r]_{d_{i-1}} &
 Y^i \ar[r]_{d_i} &
 Y^{i+1} \ar[r]_{d_{i+1}} & \dots}
\end{equation}
commutes.

Given a pair of morphisms $f,g \maps (X,d) \to (Y,d)$ in
$Kom(\cal{M})$,  we say that $f$ is {\em homotopic} to $g$ if there
exists morphisms $h^i \maps X^{i} \to Y^{i-1}$ such that $f_i-g_i =
h^{i+1}d_i+d_{i-1}h^i$ for all $i$. A morphism of complexes is said
to be null-homotopic if it is homotopic to the zero map.

\begin{defn}
The homotopy category $Com(\cal{M})$ has the same objects as $Kom(\cal{M})$, and morphisms are morphisms in $Kom(\cal{M})$ modulo null-homotopic morphisms.
\end{defn}

The Karoubi envelope $Kar(\cal{M})$ of a category $\cal{M}$ is an
enlargement of $\cal{M}$ in which all idempotents split. An idempotent
$e \maps b\to b$ in a category $\cal{M}$  is said to split if there
exist morphisms
\[
 \xymatrix{ b \ar[r]^g & b' \ar[r]^h &b}
\]
such that $e=hg$ and $gh = \Id_{b'}$. More precisely, the
Karoubi envelope $Kar(\cal{M})$ is a category whose objects are pairs $(b,e)$
where $e \maps b \to b$ is an idempotent of $\cal{M}$ and whose
morphisms are triples of the form
\[
 (e,f,e') \maps (b,e) \to (b',e')
\]
where $f \maps b \to b'$ in $\cal{M}$ making the diagram
\begin{equation} \label{eq_Kar_morph}
 \xymatrix{
 b \ar[r]^f \ar[d]_e \ar[dr]^{f} & b' \ar[d]^{e'} \\ b \ar[r]_f & b'
 }
\end{equation}
commute. Thus, $f$ must satisfy $f=e'fe$, which is equivalent to $f=e'f=fe$. Composition is induced from the composition in $\cal{M}$,
and the identity morphism is $(e,e,e) \maps (b,e) \to (b,e)$.  When
$\cal{M}$ is an additive category, the splitting of idempotents
allows us to write $(b,e)\in Kar(\cal{M})$ as $\im e$, and we have $b
\cong \im e \oplus \im (\Id_b-e)$.

The identity map $\Id_b\maps b \to b$ is an idempotent, and this gives rise
to a fully faithful functor $\cal{M} \to Kar(\cal{M})$. In
$Kar(\cal{M})$ all idempotents of $\cal{M}$ split and this
functor is universal with respect to functors which split
idempotents in $\cal{M}$.  When $\cal{M}$ is additive the inclusion $\cal{M} \to Kar(\cal{M})$ is an additive functor (see \cite[Section 9]{Lau1} and references therein).

\begin{prop} \label{prop_Kar_Kom}
For any additive category $\cal{M}$ there exists a canonical equivalence
\begin{equation}
  Kom\left(Kar(\cal{M})\right) \cong Kar\left(Kom(\cal{M})\right).
\end{equation}
\end{prop}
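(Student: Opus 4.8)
The plan is to recognize $Kom(Kar(\cal{M}))$ as the Karoubi envelope of $Kom(\cal{M})$ and then invoke the universal characterization of the idempotent completion, rather than building an explicit pair of mutually inverse functors by hand. First I would observe that the additive fully faithful inclusion $\cal{M} \to Kar(\cal{M})$ induces, by applying it in each cohomological degree, an additive functor $\iota \maps Kom(\cal{M}) \to Kom(Kar(\cal{M}))$, and that $\iota$ is again fully faithful: a chain map between complexes whose terms lie in $\cal{M}$ is just a degreewise family of morphisms of $\cal{M}$, and the condition of commuting with the differentials reads identically in $\cal{M}$ and in $Kar(\cal{M})$. With $\iota$ in hand it then suffices to verify two things — that $Kom(Kar(\cal{M}))$ is idempotent complete, and that every object of $Kom(Kar(\cal{M}))$ is a direct summand of an object of the form $\iota(B)$ — after which the universal property of the Karoubi envelope (\cite[Section 9]{Lau1} and references therein) supplies a canonical equivalence $Kar(Kom(\cal{M})) \iso Kom(Kar(\cal{M}))$.

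To see idempotent completeness I would take an idempotent chain map $\pi \maps (X,d) \to (X,d)$ in $Kom(Kar(\cal{M}))$ and split each component $\pi^i$ inside $Kar(\cal{M})$, which is possible since $Kar(\cal{M})$ is idempotent complete; since $\pi$ commutes with $d$ the differentials respect the resulting decompositions $X^i \iso \im \pi^i \oplus \im(\Id-\pi^i)$, so $\im\pi$ is a subcomplex and $(X,d) \iso \im\pi \oplus \im(\Id-\pi)$. For the summand statement I would unfold a general object $(X,d)$ with $X^i=(b^i,e^i)$: its differential is a morphism $d_i=(e^i,\tilde d_i,e^{i+1})$ with $\tilde d_i = e^{i+1}\tilde d_i e^i$ and $\tilde d_{i+1}\tilde d_i = 0$ in $\cal{M}$, so $B\define(b^i,\tilde d_i)$ is an honest bounded complex over $\cal{M}$ and $e\define(e^i)$ is an idempotent chain endomorphism of $\iota(B)$ — the identity $\tilde d_i=e^{i+1}\tilde d_i e^i$ forces $e^{i+1}\tilde d_i=\tilde d_i=\tilde d_i e^i$, so $e$ is indeed a chain map. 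Splitting $e$ recovers $\im e\iso(X,d)$, realizing $(X,d)$ as a direct summand of $\iota(B)$.

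I would then conclude by assembling these three facts into the universal property, noting that every construction above is carried out degreewise so that boundedness is automatically preserved. The step I expect to be the genuine content — everything else being formal — is the unfolding in the summand argument: namely the observation that a bounded complex in $Kar(\cal{M})$ is precisely the same data as a bounded complex in $\cal{M}$ together with an idempotent chain endomorphism, and that taking the image of that idempotent returns the original complex up to canonical isomorphism. Care is needed there to check that $\tilde d$ really squares to zero, that $e$ really is a chain map, and that the image functor intertwines the two descriptions; once this dictionary is set up the equivalence follows with no further calculation.
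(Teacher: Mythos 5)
Your proof is correct, but it is organized differently from the paper's. The paper constructs an explicit functor $\rho_{\cal{M}} \maps Kom(Kar(\cal{M})) \to Kar(Kom(\cal{M}))$ sending a complex with terms $(X^i,e_i)$ to the underlying complex in $Kom(\cal{M})$ equipped with the idempotent chain map $(\dots,e_i,\dots)$, then checks fully faithfulness and essential surjectivity directly (the latter by replacing an object $(Y,e)$ of $Kar(Kom(\cal{M}))$ with the complex having terms $(Y^i,e_i)$ and differentials $d_ie_i$). You instead recognize $Kom(Kar(\cal{M}))$ abstractly as a Karoubi envelope of $Kom(\cal{M})$: you check that the degreewise inclusion $\iota$ is fully faithful, that $Kom(Kar(\cal{M}))$ is idempotent complete, and that every object splits off $\iota(B)$ for the ``underlying'' complex $B$ over $\cal{M}$, and then invoke the universal characterization. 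The essential computational content is the same in both arguments --- namely the dictionary identifying a bounded complex over $Kar(\cal{M})$ with a bounded complex over $\cal{M}$ together with a compatible idempotent chain endomorphism, including the little verifications that $e^{i+1}\tilde d_i=\tilde d_i=\tilde d_i e^i$ and that the induced differential on the image is $\tilde d_i$ again, which you carry out correctly. Your route buys a cleaner conceptual structure and yields the idempotent completeness of $Kom(Kar(\cal{M}))$ directly as a lemma (the paper only gets this a posteriori, and uses it in the proof of the companion statement for $Com$); the cost is that you lean on the precise form of the universal property (fully faithful, idempotent-complete target, every object a retract of the image), which is standard but should be stated explicitly, and you do not produce the explicit equivalence $\rho_{\cal{M}}$ that the paper reuses later when checking compatibility of the symmetry 2-functors with the identification $Kom(Kar(\cal{M}))\cong Kar(Kom(\cal{M}))$.
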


\begin{proof}
Define the functor
\begin{equation} \label{eq_rhoM}
  \rho_\cal{M} \maps Kom(Kar(\cal{M})) \to Kar(Kom(\cal{M}))
\end{equation}
as follows.  An object of $Kom(Kar(\cal{M}))$ has the form
\begin{equation}
(X,e) =
  \xymatrix{\dots \ar[r]^-{d_{i-1}} &
   (X^i,e_i) \ar[r]^-{d_i} &
   (X^{i+1},e_{i+1}) \ar[r]^-{d_{i+1}} & \dots}
\end{equation}
where $e_i^2=e_i$, $d_{i+1}d_i=0$, and $d_i=e_{i+1}d_ie_i$.  Here $e_i\maps X^i \to X^i$ is an idempotent and $d_i\maps X^i \to X^{i+1}$.  The functor
$\rho_{\cal{M}}$ takes this object to the pair in $Kar(Kom(\cal{M}))$ consisting of the complex
\begin{equation}
  X =
    \xymatrix{\dots \ar[r]^-{d_{i-1}} &
   X^i \ar[r]^-{d_i} &
   X^{i+1} \ar[r]^-{d_{i+1}} & \dots}
\end{equation}
and the idempotent chain map $(\dots,e_i,e_{i+1}, \dots)$.  A morphism $f \maps (X,e) \to (X',e')$ in $Kom(Kar(\cal{M}))$ is a collection of maps $f_i \maps X^i\to (X')^i$ such that the squares
\begin{equation}
\xymatrix{
  X^i \ar[r]^{d_i} \ar[d]_{f_i} & X^{i+1} \ar[d]^{f_{i+1}} \\ (X')^i \ar[r]_{d'_i} & (X')^{i+1} }
\end{equation}
commute, and $f_i=e'_if_ie_i$.  The functor $\rho_{\cal{M}}$ takes the morphism $f$ to the ``same" morphism $\{f_i\}$ of complexes equipped with idempotents $(\dots, e_i, e_{i+1}, \dots)$ and $(\dots, e'_i, e'_{i+1}, \dots)$.

It is easy to see that $\rho_{\cal{M}}$ is fully-faithful.  To show $\rho_{\cal{M}}$ is essentially surjective note that any object $Y$ of $Kar(Kom(\cal{M}))$ is isomorphic to $\rho_{\cal{M}}(\tilde{Y})$ for some object $\tilde{Y}$ of $Kom(Kar(\cal{M}))$. The object $Y$ consists of a complex
\begin{equation}
  Y =
    \xymatrix{\dots \ar[r]^-{d_{i-1}} &
   Y^i \ar[r]^-{d_i} &
   Y^{i+1} \ar[r]^-{d_{i+1}} & \dots}
\end{equation}
in $Kom(\cal{M})$ together with idempotents $e_i\maps Y^i \to Y^i$ such that
\begin{equation}
  e_{i+1}d_i = d_i e_i.
\end{equation}
Let $\tilde{Y}$ be the object
\begin{equation}
    \xymatrix{\dots \ar[rr]^-{d_{i-1}e_{i-1}} &&
   (Y^i,e_i) \ar[rr]^-{d_i e_i} & &
   (Y^{i+1},e_{i+1}) \ar[rr]^-{d_{i+1}e_{i+1}} & & \dots}
\end{equation}
in $Kom(Kar(\cal{M}))$. We can define morphisms $\phi_1 \maps Y \to \rho_{\cal{M}}(\tilde{Y})$ and $\phi_2 \maps \rho_{\cal{M}}(\tilde{Y}) \to Y$ in $Kar(Kom(\cal{M}))$ that are both given on $Y^i$ as multiplication by $e_i$.
Then $\phi_2 \phi_1 = \Id_{Y}$ and $\phi_1 \phi_2 =\Id_{\rho_{\cal{M}}(\tilde{Y})}$, showing that $Y$ and $\rho_{\cal{M}}(\tilde{Y})$ are isomorphic.  Together with fully-faithfulness of $\rho_{\cal{M}}$ this completes the proof that $\rho_{\cal{M}}$ is an equivalence of categories.
\end{proof}

\begin{prop} \label{prop_Kar_Com}
For any additive $\Bbbk$-linear category $\cal{M}$ with finite dimensional hom spaces there exists a canonical equivalence
\begin{equation}
  Com\left(Kar(\cal{M})\right) \cong Kar\left(Com(\cal{M})\right).
\end{equation}
\end{prop}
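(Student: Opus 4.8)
The plan is to promote the equivalence $\rho_{\cal M}$ constructed in Proposition~\ref{prop_Kar_Kom} to the homotopy level. First I would observe that a null-homotopy in $Kom(Kar(\cal M))$ is a collection of morphisms $h^i \maps (X^i,e_i) \to (Y^{i-1},e'_{i-1})$ in $Kar(\cal M)$, and upon forgetting the idempotents these are morphisms in $\cal M$; hence $\rho_{\cal M}$ carries null-homotopic chain maps to null-homotopic chain maps. Composing $\rho_{\cal M}$ with the functor $Kar(Kom(\cal M)) \to Kar(Com(\cal M))$ induced by the projection $Kom(\cal M) \to Com(\cal M)$ therefore annihilates null-homotopic morphisms and descends to a functor
\[
\bar\rho_{\cal M} \maps Com(Kar(\cal M)) \to Kar(Com(\cal M)), \qquad (X,e) \mapsto (X,[e]),
\]
where $[e]$ denotes the homotopy class of the idempotent chain map $e$.

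Next I would prove full faithfulness by comparing hom spaces directly, reusing the explicit description from the proof of Proposition~\ref{prop_Kar_Kom}. A morphism $\bar\rho_{\cal M}(X,e) \to \bar\rho_{\cal M}(X',e')$ in $Kar(Com(\cal M))$ is a homotopy class $[f]$ of chain maps $f \maps X \to X'$ satisfying $e'fe \simeq f$, whereas a morphism in $Com(Kar(\cal M))$ is an honest chain map $g$ with $g = e'ge$, taken up to homotopy through $Kar(\cal M)$. For surjectivity, note that $g := e'fe$ is an honest chain map with $e'ge = g$ and $[g] = [f]$. For injectivity, if $g = e'ge$ is null-homotopic in $Com(\cal M)$ via some $h$, then, since the idempotents of a complex over $Kar(\cal M)$ satisfy $d e = d = e' d$ at each stage, the conjugated homotopy $e'he$ lies in $Kar(\cal M)$ and exhibits $g$ as null-homotopic there. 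Thus $\bar\rho_{\cal M}$ is fully faithful.

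The main obstacle is \emph{essential surjectivity}, and this is exactly where finite-dimensionality enters. Given an object $(Y,\epsilon)$ of $Kar(Com(\cal M))$, with $\epsilon$ an idempotent of $A := \End_{Com(\cal M)}(Y)$, it suffices by full faithfulness to produce an honest idempotent chain map $\tilde\epsilon$ on $Y$ whose class splits to an object isomorphic to $(Y,\epsilon)$. Because $Y$ is a bounded complex and the hom spaces of $\cal M$ are finite-dimensional, the algebra $B := \End_{Kom(\cal M)}(Y)$ is a finite-dimensional $\Bbbk$-algebra, hence semiperfect, and $A$ is the quotient of $B$ by the ideal of null-homotopic endomorphisms, with projection $\pi \maps B \to A$. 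I would lift $\epsilon$ in two stages: its image in the semisimple quotient $A/J(A)$ lifts along the surjection $B/J(B) \twoheadrightarrow A/J(A)$ of semisimple algebras, and the resulting idempotent lifts from $B/J(B)$ to an idempotent $\tilde\epsilon \in B$ because idempotents lift modulo the radical of a semiperfect ring.

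Finally I would reconcile $\pi(\tilde\epsilon)$ with $\epsilon$. By construction these are idempotents of $A$ congruent modulo $J(A)$, hence conjugate by a unit of $A$; since conjugate idempotents split to isomorphic objects, $(Y,\epsilon) \cong (Y,\pi(\tilde\epsilon))$ in $Kar(Com(\cal M))$. The latter is $\bar\rho_{\cal M}$ applied to the object of $Com(Kar(\cal M))$ determined, via $\rho_{\cal M}$, by the honest idempotent chain map $\tilde\epsilon$ on $Y$, so it lies in the essential image of $\bar\rho_{\cal M}$. This establishes essential surjectivity and, together with full faithfulness, completes the proof. I expect the lifting of idempotents across $\pi$ to be the only genuinely delicate point: the ideal of null-homotopic maps need not be nil (a contractible complex already gives a non-nil such ideal), so the argument must route through the radical and the conjugacy of idempotents rather than through a naive nil-ideal lifting.
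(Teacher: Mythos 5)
Your proposal is correct and follows essentially the same route as the paper: descend the equivalence $\rho_{\cal{M}}$ of Proposition~\ref{prop_Kar_Kom} to the homotopy level and use finite-dimensionality of the endomorphism algebras of bounded complexes to lift idempotents from $Com$ to $Kom$. The only difference is one of packaging and detail: the paper phrases the conclusion as ``$Com(Kar(\cal{M}))$ is idempotent-complete, so the universal property of $Kar$ produces a quasi-inverse,'' and delegates the lifting (including the subtlety that the ideal of null-homotopic maps is not nil, so one must route through the radical and use conjugacy of idempotents) to a citation of Benson, whereas you prove full faithfulness and essential surjectivity directly and spell out that lifting argument.
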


\begin{proof}
The functor $\rho_{\cal{M}}$ descends to a functor
\begin{equation}
  \rho^c_{\cal{M}} \maps Com(Kar(\cal{M})) \to
  Kar(Com(\cal{M})).
\end{equation}
Given an object $X$ of $Com(Kar(\cal{M}))$ we can view it as an object of $Kom(Kar(\cal{M}))$.  By the idempotent-lifting property for finite-dimensional algebras \cite[Chapter 1]{Benson} an idempotent of $Com(Kar(\cal{M}))$ lifts to an idempotent in $Kom(Kar(\cal{M}))$, and the latter category is idempotent-complete by the previous proposition.  Therefore, $Com(Kar(\cal{M}))$ is idempotent-complete as well, allowing us to define a functor
\begin{equation}
  \tilde{\rho}_{\cal{M}} \maps Kar(Com(\cal{M})) \to
  Kar(Com(Kar(\cal{M}))) \simeq Com(Kar(\cal{M}))
\end{equation}
such that $\rho^c_{\cal{M}}\tilde{\rho}_{\cal{M}} \cong \Id_{Kar(Com(\cal{M}))}$, $\tilde{\rho}_{\cal{M}}\rho^c_{\cal{M}} \cong \Id_{Com(Kar(\cal{M}))}$, showing that $\rho^c_{\cal{M}}$ is an equivalence.
\end{proof}

Alternatively, the result follows from \cite[Corollary 2.12]{BS}.

%
\subsubsection{Karoubian envelope of $\Ucat$}
%

\begin{defn}
Define the additive 2-category $\UcatD$ to have the
same objects as $\Ucat$ and hom additive categories
given by $\UcatD(n,m) =
Kar\left(\Ucat(n,m)\right)$. The fully-faithful
additive functors $\Ucat(n,m) \to
\UcatD(n,m)$ combine to form an additive
2-functor $\Ucat \to \UcatD$ universal with respect to splitting
idempotents in the hom categories $\UcatD(n,m)$.  The
composition functor $\UcatD(n,n') \times
\UcatD(n',n'') \to \UcatD(n,n'')$ is induced
by the universal property of the Karoubi envelope from the
composition functor for $\Ucat$. The 2-category $\UcatD$ has graded
2-homs given by
\begin{equation}
\HOM_{\UcatD}(x,y) := \bigoplus_{t\in \Z}\Hom_{\UcatD}(x\{t\},y).
\end{equation}
\end{defn}

\begin{thm}[Theorem 9.1.3 \cite{Lau1}]  \label{thm_Groth}
There is an isomorphism
\begin{equation}
  \gamma \maps \U_{\cal{A}} \to K_0(\UcatD),
\end{equation}
where $K_0(\UcatD)$ is the split Grothendieck ring of $\UcatD$.
\end{thm}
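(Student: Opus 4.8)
The plan is to construct the map $\gamma$ explicitly on Lusztig's generators, verify it is a well-defined ring homomorphism by categorifying the defining relations of $\UA$ as direct sum decompositions, and then prove bijectivity. The map sends $E^{(a)}1_n \mapsto [\E^{(a)}\onen]$ and $F^{(b)}1_n \mapsto [\F^{(b)}\onen]$, where the divided-power $1$-morphisms $\E^{(a)}\onen$ and $\F^{(b)}\onen$ live in the Karoubi envelope $\UcatD$. They arise because the degree-zero part of the endomorphism algebra of $\E^a\onen$ in $\Ucat$ is the nilHecke algebra $NH_a$, whose unique indecomposable projective, up to grading shift, splits $\E^a\onen$ into a direct sum of copies of an indecomposable $1$-morphism $\E^{(a)}\onen$ with graded multiplicity $[a]!$. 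This categorifies $E^a1_n = [a]!\,E^{(a)}1_n$ and fixes the grading conventions.

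The first genuine step is to check that $\gamma$ respects the relations of $\UA$. The weight relations involving $K^{\pm 1}$ are built into the objects $n \in \Z$ and the $q$-grading, so they hold automatically once one sets $q[X]=[X\{1\}]$. The essential relation is the commutator $EF1_n - FE1_n = [n]1_n$, which must be lifted to a direct sum decomposition. For $n \geq 0$ one constructs, from the cup, cap, and dotted-cap $2$-morphisms, an isomorphism
\begin{equation}
  \E\F\onen \;\iso\; \F\E\onen \;\oplus\; \bigoplus_{j=0}^{n-1}\onen\{n-1-2j\},
\end{equation}
with the analogous decomposition with $\E$ and $\F$ exchanged for $n \leq 0$; the existence of the splitting maps is exactly what is guaranteed by the local relations \eqref{eq_reduction} and \eqref{eq_ident_decomp} together with biadjointness \eqref{eq_biadjoint1}. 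Applying $K_0$ recovers the commutator relation, and the higher divided-power relations follow formally from this one together with the nilHecke splitting. This shows $\gamma$ is a homomorphism of $\Z[q,q^{-1}]$-algebras, and surjectivity is then immediate: every object of $\UcatD$ is a summand of a sum of composites of $\E$ and $\F$, so its class lies in the subring generated by the images of the generators.

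The heart of the proof, and the expected main obstacle, is injectivity, which I would establish by categorifying Lusztig's nondegenerate semilinear form. Define a pairing on $K_0(\UcatD)$ by graded dimensions of Hom spaces, $\la [X],[Y]\ra := \gdim \HOM_{\UcatD}(X,Y)$. The technical crux is a \emph{basis theorem} for the $2$-morphism spaces: one specifies a spanning set of diagrams in normal form (oriented strands carrying dots, crossings in a fixed order, and bubbles placed on the far left or right) and proves it is linearly independent. Linear independence cannot be read off from the defining relations; it requires constructing a faithful action of $\Ucat$ on an explicit graded module --- for instance on the direct sum of equivariant cohomology rings of partial flag varieties --- so that the normal-form diagrams act by linearly independent operators. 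Granting the basis theorem, one computes the graded dimensions of $\HOM_{\UcatD}(\E_{\ep}\onen,\E_{\ep'}\onen)$ and shows they coincide with the values $\la E_{\ep}1_n, E_{\ep'}1_n\ra$ of Lusztig's form on $\U$. Since $\gamma$ intertwines this categorified pairing with the semilinear form on $\UA$, and the latter is nondegenerate, $\gamma$ is injective. Equivalently, the computation shows each $\E^{(a)}\F^{(b)}\onen$ (for $n \leq b-a$) and $\F^{(b)}\E^{(a)}\onen$ (for $n \geq b-a$) has one-dimensional degree-zero endomorphisms, hence is indecomposable, and that these are pairwise non-isomorphic up to shift; they therefore give a $\Z[q,q^{-1}]$-basis of $K_0(\UcatD)$ matching Lusztig's basis $\B$, which yields the isomorphism.
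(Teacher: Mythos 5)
This theorem is not proved in the present paper at all; it is imported verbatim as Theorem 9.1.3 of \cite{Lau1}, so there is no internal proof to compare against. Your outline accurately reproduces the strategy of the proof in that reference --- divided powers from nilHecke idempotents, the categorified commutator decomposition $\E\F\onen \cong \F\E\onen \oplus_{[n]}\onen$, surjectivity by generation, and injectivity via the graded-Hom pairing together with a basis/nondegeneracy theorem proved through the $2$-representation on equivariant cohomology of flag varieties --- so it is essentially the same approach as the cited proof.
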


In \cite[Corollary 5.14]{KLMS} this result is proven when the ground field $\Bbbk$ is replaced by the commutative ring $\Z$.

%
\subsubsection{Karoubian envelopes of $Kom(\Ucat)$ and $Com(\Ucat)$}
%

\begin{defn}
Define $Kom(\Ucat)$ to be the additive 2-category with objects $n \in \Z$ and additive hom categories $Kom(\Ucat)(n,m):=Kom\left( \Ucat\left( n,m\right)\right)$.  The additive composition functor
$Kom\left(\Ucat(n',n'')\right)\times Kom\left(\Ucat(n,n')\right) \to Kom(\Ucat(n,n''))$ is given by the tensor product of complexes using the additive composition functor on $\Ucat$ to tensor 1-morphisms via composition.
\end{defn}

\begin{defn}
Define $Com(\Ucat)$ to be the additive 2-category with the same objects and 1-morphisms as $Kom(\Ucat)$ and 2-morphisms given by identifying homotopy equivalent 2-morphisms in $Kom(\Ucat)$.
\end{defn}

Recall that $\UcatD=Kar(\Ucat)$. By Propositions~\ref{prop_Kar_Kom} and \ref{prop_Kar_Com} there are equivalences
\begin{equation}
  Kar(Kom(\Ucat)) \cong Kom(\UcatD), \qquad Kar(Com(\Ucat)) \cong Com(\UcatD).
\end{equation}

The 2-categories we consider fit into the following table where the horizontal arrows denote passage to the Karoubian envelope and vertical arrows stand for passage to complexes and modding out by null-homotopic maps.
\begin{equation} \label{eq_all_2cats}
 \xy
  (-25,20)*+{\Ucat}="tl";
  (25,20)*+{\UcatD = Kar(\Ucat)}="tr";
  (-25,0)*+{Kom(\Ucat)}="ml";
  (25,0)*+{Kom(\UcatD) \cong Kar(Kom(\Ucat))}="mr";
  (-25,-20)*+{Com(\Ucat)}="bl";
  (25,-20)*+{Com(\UcatD) \cong Kar(Com(\Ucat))}="br";
    {\ar "tl";"ml"};
    {\ar "ml";"bl"};
    {\ar "tr";"mr"};
    {\ar "mr";"br"};
    {\ar "tl";"tr"};
    {\ar "ml";"mr"};
    {\ar "bl";"br"};
 \endxy
\end{equation}

%
\subsection{Symmetry 2-functors} \label{sec_sym}
%

A covariant/contravariant functor $\alpha \maps \cal{M} \to \cal{M}'$ extends canonically to a functor $$Kar(\alpha)\maps Kar(\cal{M}) \to Kar(\cal{M}').$$  An additive covariant/contravariant functor $\alpha \maps \cal{M} \to \cal{M}'$  between additive categories extends canonically to an additive functor $$Kom(\alpha) \maps Kom(\cal{M}) \to Kom(\cal{M}')$$ and an exact functor $Com(\alpha) \maps Com(\cal{M}) \to Com(\cal{M}')$ between triangulated categories.

Given an exact endofunctor $\alpha \maps \cal{M} \to \cal{M}$ these extensions respect the equivalence $\rho_{\cal{M}}$ in \eqref{eq_rhoM}, in the sense that the diagrams
\begin{eqnarray}
  \xy
   (-25,10)*+{Kom(Kar(\cal{M}))}="tl";
   (25,10)*+{Kar(Kom(\cal{M}))}="tr";
   (-25,-10)*+{Kom(Kar(\cal{M}))}="bl";
   (25,-10)*+{Kar(Kom(\cal{M}))}="br";
    {\ar^{\rho_{\cal{M}}} "tl";"tr"};
    {\ar_{Kom(Kar(\alpha))} "tl";"bl"};
    {\ar_{\rho_{\cal{M}}} "bl";"br"};
    {\ar^{Kar(Kom(\alpha))} "tr";"br"};
  \endxy
\\ \nn \\ \nn \\
  \xy
   (-25,10)*+{Com(Kar(\cal{M}))}="tl";
   (25,10)*+{Kar(Com(\cal{M}))}="tr";
   (-25,-10)*+{Com(Kar(\cal{M}))}="bl";
   (25,-10)*+{Kar(Com(\cal{M}))}="br";
    {\ar^{\rho_{\cal{M}}} "tl";"tr"};
    {\ar_{Com(Kar(\alpha))} "tl";"bl"};
    {\ar_{\rho_{\cal{M}}} "bl";"br"};
    {\ar^{Kar(Com(\alpha))} "tr";"br"};
  \endxy
\end{eqnarray}
commute.

In this section we recall several 2-functor involutions $\tomega$, $\tpsi$, $\tsigma$ on the 2-category $\Ucat$ defined in \cite{Lau1} and extend them to 2-functors on all the 2-categories in \eqref{eq_all_2cats}.
We use the same notation for these extended 2-functors.

Denote by $\Ucat^{\op}$ the 2-category with the same objects as
$\Ucat$ but the 1-morphisms reversed.  The direction of the
2-morphisms remain fixed. The 2-category $\Ucat^{\co}$ has the same
objects and 1-morphism as $\Ucat$, but the directions of the
2-morphisms is reversed. That is, $\Ucat^{\co}(x,y)=\Ucat(y,x)$ for
1-morphisms $x$ and $y$. Finally, $\Ucat^{\co\op}$ denotes the
2-category with the same objects as $\Ucat$, but the directions of
the 1-morphisms and 2-morphisms have been reversed.

Using the symmetries of the diagrammatic relations imposed on
$\Ucat$ 2-functors were defined in \cite{Lau1} that categorify various $\Z[q,q^{-1}]$-(anti)linear
(anti)automorphisms of the algebra $\U$. The various forms of
contravariant behaviour for 2-functors on $\Ucat$ translate into
properties of the corresponding homomorphism in $\U$ as the
following table summarizes:
\begin{center}
\begin{tabular}{|l|l|}
  \hline
  {\bf 2-functors} & {\bf Algebra maps} \\ \hline \hline
  $\Ucat \to \Ucat$ &  $\Z[q,q^{-1}]$-linear
 homomorphisms\\
  $\Ucat \to \Ucat^{\op}$ & $\Z[q,q^{-1}]$-linear
antihomomorphisms \\
  $\Ucat \to \Ucat^{\co}$ & $\Z[q,q^{-1}]$-antilinear
 homomorphisms \\
  $\Ucat \to \Ucat^{\co\op}$ & $\Z[q,q^{-1}]$-antilinear
antihomomorphisms \\
  \hline
\end{tabular}
\end{center}

\paragraph{Rescale, invert the orientation, and send $n \mapsto -n$:}

Consider the operation on the diagrammatic calculus that rescales
the crossing $\Ucross \mapsto -\Ucross$, inverts
the orientation of each strand and sends $n \mapsto -n$:
\[
\tomega\left(\;\;\vcenter{\xy 0;/r.16pc/:
 (-4,-15)*{}; (-20,25) **\crv{(-3,-6) & (-20,4)}?(0)*\dir{<}?(.6)*\dir{}+(0,0)*{\bullet};
 (-12,-15)*{}; (-4,25) **\crv{(-12,-6) & (-4,0)}?(0)*\dir{<}?(.6)*\dir{}+(.2,0)*{\bullet};
 ?(0)*\dir{<}?(.75)*\dir{}+(.2,0)*{\bullet};?(0)*\dir{<}?(.9)*\dir{}+(0,0)*{\bullet};
 (-28,25)*{}; (-12,25) **\crv{(-28,10) & (-12,10)}?(0)*\dir{<};
  ?(.2)*\dir{}+(0,0)*{\bullet}?(.35)*\dir{}+(0,0)*{\bullet};
 (-36,-15)*{}; (-36,25) **\crv{(-34,-6) & (-35,4)}?(1)*\dir{>};
 (-28,-15)*{}; (-42,25) **\crv{(-28,-6) & (-42,4)}?(1)*\dir{>};
 (-42,-15)*{}; (-20,-15) **\crv{(-42,-5) & (-20,-5)}?(1)*\dir{>};
 (6,10)*{\cbub{}{}};
 (-23,0)*{\cbub{}{}};
 (8,-4)*{n};(-44,-4)*{m};
 \endxy}\;\;\right) \quad = \quad - \;\;
 \vcenter{\xy 0;/r.16pc/:
 (-4,-15)*{}; (-20,25) **\crv{(-3,-6) & (-20,4)}?(1)*\dir{>}?(.6)*\dir{}+(0,0)*{\bullet};
 (-12,-15)*{}; (-4,25) **\crv{(-12,-6) & (-4,0)}?(1)*\dir{>}?(.6)*\dir{}+(.2,0)*{\bullet};
 ?(1)*\dir{>}?(.75)*\dir{}+(.2,0)*{\bullet};?(.9)*\dir{}+(0,0)*{\bullet};
 (-28,25)*{}; (-12,25) **\crv{(-28,10) & (-12,10)}?(1)*\dir{>};
  ?(.2)*\dir{}+(0,0)*{\bullet}?(.35)*\dir{}+(0,0)*{\bullet};
 (-36,-15)*{}; (-36,25) **\crv{(-34,-6) & (-35,4)}?(0)*\dir{<};
 (-28,-15)*{}; (-42,25) **\crv{(-28,-6) & (-42,4)}?(0)*\dir{<};
 (-42,-15)*{}; (-20,-15) **\crv{(-42,-5) & (-20,-5)}?(0)*\dir{<};
 (6,10)*{\ccbub{}{}};
 (-23,0)*{\ccbub{}{}};
 (8,-4)*{-n};(-44,-4)*{-m};
 \endxy}
\]
This gives a strict invertible 2-functor
$\tomega\maps \Ucat \to \Ucat$
\begin{eqnarray}
  \tomega \maps \Ucat &\to& \Ucat \nn \\
  n &\mapsto&  -n \nn \\
  \onem\cal{E}^{\alpha_1} \cal{F}^{\beta_1}\cal{E}^{\alpha_2} \cdots
 \cal{E}^{\alpha_k}\cal{F}^{\beta_k}\onen\{s\}
 &\mapsto &
 \mathbf{1}_{-m} \cal{F}^{\alpha_1} \cal{E}^{\beta_1}\cal{F}^{\alpha_2} \cdots
\cal{F}^{\alpha_k}\cal{E}^{\beta_k}\mathbf{1}_{-n}\{s\}.
\end{eqnarray}
This 2-functor extends to a 2-endofunctor
\begin{eqnarray}
  \tomega \maps Kom(\Ucat) &\to& Kom(\Ucat), \nn \\
  n &\mapsto&  -n \nn \\
 (X,d)
 &\mapsto &
   \xy
    (-50,0)*+{\cdots}="1";
    (-30,0)*+{\tomega(X^{i-1})}="2";
    (0,0)*+{\tomega(X^i)}="3";
    (30,0)*+{ \tomega(X^{i+1})}="4";
    (50,0)*+{\cdots}="5";
    {\ar^-{} "1";"2"};
    {\ar^-{\tomega(d_{i-1})} "2";"3"};
    {\ar^-{\tomega(d_i)} "3";"4"};
    {\ar^-{} "4";"5"};
   \endxy \nn \\
   f_i \maps X \to Y & \mapsto &
   \tomega(f_i) \maps \tomega(X) \to \tomega(Y),
\end{eqnarray}
and a 2-endofunctor on all the other 2-categories in \eqref{eq_all_2cats}.

\paragraph{Rescale, reflect across the vertical axis, and send $n \mapsto -n$: }

The operation on diagrams that rescales the crossing $\Ucross
\mapsto -\Ucross$, reflects a diagram across the
vertical axis, and sends $n$ to $-n$ leaves invariant the relations on the
2-morphisms of $\Ucat$. This operation
\[
\tsigma\left(\;\;\vcenter{\xy 0;/r.16pc/:
 (-4,-15)*{}; (-20,25) **\crv{(-3,-6) & (-20,4)}?(0)*\dir{<}?(.6)*\dir{}+(0,0)*{\bullet};
 (-12,-15)*{}; (-4,25) **\crv{(-12,-6) & (-4,0)}?(0)*\dir{<}?(.6)*\dir{}+(.2,0)*{\bullet};
 ?(0)*\dir{<}?(.75)*\dir{}+(.2,0)*{\bullet};?(0)*\dir{<}?(.9)*\dir{}+(0,0)*{\bullet};
 (-28,25)*{}; (-12,25) **\crv{(-28,10) & (-12,10)}?(0)*\dir{<};
  ?(.2)*\dir{}+(0,0)*{\bullet}?(.35)*\dir{}+(0,0)*{\bullet};
 (-36,-15)*{}; (-36,25) **\crv{(-34,-6) & (-35,4)}?(1)*\dir{>};
 (-28,-15)*{}; (-42,25) **\crv{(-28,-6) & (-42,4)}?(1)*\dir{>};
 (-42,-15)*{}; (-20,-15) **\crv{(-42,-5) & (-20,-5)}?(1)*\dir{>};
 (6,10)*{\cbub{}{}};
 (-23,0)*{\cbub{}{}};
 (8,-4)*{n};(-44,-4)*{m};
 \endxy}\;\;\right) \quad = \quad -\;\;
 \vcenter{\xy 0;/r.16pc/:
 (4,-15)*{}; (20,25) **\crv{(3,-6) & (20,4)}?(0)*\dir{<}?(.6)*\dir{}+(0,0)*{\bullet};
 (12,-15)*{}; (4,25) **\crv{(12,-6) & (4,0)}?(0)*\dir{<}?(.6)*\dir{}+(.2,0)*{\bullet};
 ?(0)*\dir{<}?(.75)*\dir{}+(.2,0)*{\bullet};?(0)*\dir{<}?(.9)*\dir{}+(0,0)*{\bullet};
 (28,25)*{}; (12,25) **\crv{(28,10) & (12,10)}?(0)*\dir{<};
  ?(.2)*\dir{}+(0,0)*{\bullet}?(.35)*\dir{}+(0,0)*{\bullet};
 (36,-15)*{}; (36,25) **\crv{(34,-6) & (35,4)}?(1)*\dir{>};
 (28,-15)*{}; (42,25) **\crv{(28,-6) & (42,4)}?(1)*\dir{>};
 (42,-15)*{}; (20,-15) **\crv{(42,-5) & (20,-5)}?(1)*\dir{>};
 (-6,10)*{\ccbub{}{}};
 (23,0)*{\ccbub{}{}};
 (-8,-4)*{-n};(44,-4)*{-m};
 \endxy}
\]
is contravariant for composition of 1-morphisms, covariant for
composition of 2-morphisms, and preserves the degree of a diagram.
This symmetry gives an invertible 2-functor
\begin{eqnarray}
  \tsigma \maps \Ucat &\to& \Ucat^{\op}, \nn \\
  n &\mapsto&  -n \nn \\
  \onem\cal{E}^{\alpha_1} \cal{F}^{\beta_1}\cal{E}^{\alpha_2} \cdots
 \cal{E}^{\alpha_k}\cal{F}^{\beta_k}\onen\{s\}
 &\mapsto &
 \mathbf{1}_{-n} \cal{F}^{\beta_k} \cal{E}^{\alpha_k}\cal{F}^{\beta_{k-1}} \cdots
\cal{F}^{\beta_1}\cal{E}^{\alpha_1}\mathbf{1}_{-m}\{s\} \nn
\end{eqnarray}
that acts on 2-morphisms via the symmetry described above.  This 2-functor extends to a 2-functor
\begin{eqnarray}
  \tsigma \maps Kom(\Ucat) &\to& Kom(\Ucat) \nn \\
  n &\mapsto&  -n \nn \\
 (X,d)
 &\mapsto &
   \xy
    (-50,0)*+{\cdots}="1";
    (-30,0)*+{\tsigma(X^{i-1})}="2";
    (0,0)*+{\tsigma(X^i)}="3";
    (30,0)*+{ \tsigma(X^{i+1})}="4";
    (50,0)*+{\cdots}="5";
    {\ar^-{} "1";"2"};
    {\ar^-{\tsigma(d_{i-1})} "2";"3"};
    {\ar^-{\tsigma(d_i)} "3";"4"};
    {\ar^-{} "4";"5"};
   \endxy \nn \\
   f_i \maps X \to Y & \mapsto &
   \tsigma(f_i) \maps \tsigma(X) \to \tsigma(Y),
\end{eqnarray}
and, likewise, a 2-endofunctor on all the other 2-categories from \eqref{eq_all_2cats}.

\paragraph{Reflect across the x-axis and invert orientation:}
Here we are careful to keep track of what happens to the shifts of
sources and targets
\[
\tpsi\left(\;\;\vcenter{\xy 0;/r.16pc/:
 (-4,-15)*{}; (-20,25) **\crv{(-3,-6) & (-20,4)}?(0)*\dir{<}?(.6)*\dir{}+(0,0)*{\bullet};
 (-12,-15)*{}; (-4,25) **\crv{(-12,-6) & (-4,0)}?(0)*\dir{<}?(.6)*\dir{}+(.2,0)*{\bullet};
 ?(0)*\dir{<}?(.75)*\dir{}+(.2,0)*{\bullet};?(0)*\dir{<}?(.9)*\dir{}+(0,0)*{\bullet};
 (-28,25)*{}; (-12,25) **\crv{(-28,10) & (-12,10)}?(0)*\dir{<};
  ?(.2)*\dir{}+(0,0)*{\bullet}?(.35)*\dir{}+(0,0)*{\bullet};
 (-36,-15)*{}; (-36,25) **\crv{(-34,-6) & (-35,4)}?(1)*\dir{>};
 (-28,-15)*{}; (-42,25) **\crv{(-28,-6) & (-42,4)}?(1)*\dir{>};
 (-42,-15)*{}; (-20,-15) **\crv{(-42,-5) & (-20,-5)}?(1)*\dir{>};
 (6,10)*{\cbub{}{}};
 (-23,0)*{\cbub{}{}};
 (8,-4)*{n};(-44,-4)*{m};
  (3,-19)*{\scs \{t\}};(3,29)*{\scs \{t'\}};
 \endxy}\;\;\right) \quad = \quad
  \vcenter{\xy 0;/r.16pc/:
 (-4,15)*{}; (-20,-25) **\crv{(-3,6) & (-20,-4)}?(1)*\dir{>}?(.6)*\dir{}+(0,0)*{\bullet};
 (-12,15)*{}; (-4,-25) **\crv{(-12,6) & (-4,0)}?(1)*\dir{>}?(.6)*\dir{}+(.2,0)*{\bullet};
 ?(1)*\dir{>}?(.75)*\dir{}+(.2,0)*{\bullet};?(.9)*\dir{}+(0,0)*{\bullet};
 (-28,-25)*{}; (-12,-25) **\crv{(-28,-10) & (-12,-10)}?(1)*\dir{>};
  ?(.2)*\dir{}+(0,0)*{\bullet}?(.35)*\dir{}+(0,0)*{\bullet};
 (-36,15)*{}; (-36,-25) **\crv{(-34,6) & (-35,-4)}?(0)*\dir{<};
 (-28,15)*{}; (-42,-25) **\crv{(-28,6) & (-42,-4)}?(0)*\dir{<};
 (-42,15)*{}; (-20,15) **\crv{(-42,5) & (-20,5)}?(0)*\dir{<};
 (6,-10)*{\cbub{}{}};
 (-23,0)*{\cbub{}{}};
 (8,4)*{n};(-44,4)*{m};
 (3,-29)*{\scs \;\;\{-t'\}};(3,19)*{\scs\;\; \{-t\}};
 \endxy}
\]
Shift reversals on the right hand side are required for this
transformation to preserve the degree of a diagram.  This gives an invertible 2-functor
\begin{eqnarray}
  \tpsi \maps \Ucat &\to& \Ucat^{\co}, \nn \\
  n &\mapsto&  n \nn \\
  \onem\cal{E}^{\alpha_1} \cal{F}^{\beta_1}\cal{E}^{\alpha_2} \cdots
 \cal{E}^{\alpha_k}\cal{F}^{\beta_k}\onen\{s\}
 &\mapsto &
 \onem\cal{E}^{\alpha_1} \cal{F}^{\beta_1}\cal{E}^{\alpha_2} \cdots
\cal{E}^{\alpha_k}\cal{F}^{\beta_k}\onen\{-s\},
\end{eqnarray}
and on 2-morphisms $\tpsi$ reflects the diagrams across the $x$-axis and inverts the orientation.

Since $\tpsi$ is contravariant on 2-morphisms in $\Ucat$, this 2-functor extends to a 2-functor
\begin{eqnarray}
  \tpsi \maps Kom(\Ucat) &\to& Kom(\Ucat), \nn \\
  n &\mapsto&  n \nn \\
 (X,d)
 &\mapsto &
   \xy
    (-50,0)*+{\cdots}="1";
    (-30,0)*+{\tpsi(X^{i+1})}="2";
    (0,0)*+{\tpsi(X^i)}="3";
    (30,0)*+{ \tpsi(X^{i-1})}="4";
    (50,0)*+{\cdots}="5";
    {\ar^-{} "1";"2"};
    {\ar^-{\tpsi(d_{i})} "2";"3"};
    {\ar^-{\tpsi(d_{i-1})} "3";"4"};
    {\ar^-{} "4";"5"};
   \endxy \nn \\
   f_i \maps X \to Y & \mapsto &
   \tpsi(f_i) \maps \tpsi(Y) \to \tpsi(X),
\end{eqnarray}
and, likewise, a 2-endofunctor on all 2-categories in \eqref{eq_all_2cats}. Notice that $\tpsi$ inverts the homological degree so that
$\tpsi$ acts on a complex $(X,d)$ in $Kom(\Ucat)$
by $(\tpsi X)^{i}=\tpsi(X^{-i})$.

\bigskip

These 2-functors are involutive and commute with each other `on-the-nose':
\begin{equation}
  \tomega \tsigma = \tomega \tsigma, \qquad \tsigma \tpsi = \tpsi \tsigma, \qquad
  \tomega \tpsi = \tpsi \tomega,
\end{equation}
generating a group $\cal{G} = (\Z_2)^3$ of 2-functors acting on all the 2-categories in \eqref{eq_all_2cats}. Equivalences in table \eqref{eq_all_2cats} respect this action.  On the Grothendieck group of $\UcatD$ the 2-functors $\tpsi$, $\tomega$, $\tsigma$ descend to (anti)involutions $\und{\psi}$, $\und{\omega}$, and $\und{\sigma}$ on $\UA$.

%
\section{The Casimir complex}
%

%
\subsection{The Casimir complex and its symmetries} \label{subsec_casimir_complex}
%

We sometimes represent the Casimir complex \eqref{eq_casimir_EF_INTRO} using the notation
\begin{equation}
  \xy
  (-60,0)*+{ \scs
    \E{}\F{} \onen \{2\}
    \oplus \onen \{1-n\}  }="1";
  (-5,0)*+{ \scs
     \und{ \E{}\F{} \onen
      \oplus \E{}\F{} \onen}}="3";
  (60,0)*+{ \scs
    \E{}\F{} \onen \{-2\} \oplus\onen \{n-1\}.
     }="5";
   {\ar^-{
  \left(
    \begin{array}{cc}
      \text{$\Uupdot\Udown$} & \Ucupl \\ & \\
      \text{$\Uup\Udowndot $} & \Ucupl \\
    \end{array}
  \right)
   } "1";"3"};
   {\ar^-{
  \left(
    \begin{array}{cc}
      -\;\Uup\Udowndot  &  \text{$\Uupdot\Udown $}  \\ & \\
      \Ucapr & -\;\Ucapr\\
    \end{array}
  \right)
   } "3";"5"}; (-72,15)*{\cal{C}\onen :=};
 \endxy\nn
\end{equation}
or
\begin{equation} \label{eq_4_casimir}
\xy
  (-50,0)*+{\left(\begin{array}{c}
      \scs  \E\F \onen \{2\}\\ \scs \onen \{1-n\}
    \end{array}\right) }="1";
  (0,0)*+{\und{\left(\begin{array}{c}
      \scs  \E\F \onen\\ \scs \E\F \onen
    \end{array}\right)} }="3";
  (60,0)*+{\left(\begin{array}{c}
      \scs \E\F \onen \{-2\}\\ \scs \onen \{n-1\}
    \end{array}\right) }="5";
   {\ar^-{
  \left(
    \begin{array}{cc}
      \text{$\Uupdot\Udown$} & \Ucupl \\ & \\
      \text{$\Uup\Udowndot $} & \Ucupl \\
    \end{array}
  \right)
   } "1";"3"};
   {\ar^-{
  \left(
    \begin{array}{cc}
      -\;\Uup\Udowndot  &  \text{$\Uupdot\Udown $}  \\ & \\
      \Ucapr & -\;\Ucapr\\
    \end{array}
  \right)
   } "3";"5"}; (-72,15)*{\cal{C}\onen :=};
 \endxy
\end{equation}
We will interchange freely between these methods of depicting the complex $\cal{C}\onen$.

The placement of minus signs in the above complex is arbitrary as long as each square anticommutes.  In fact, we get different placements of the minus signs and dots using the symmetry 2-functors defined in Section~\ref{sec_sym}:
\begin{align}
 \tpsi(\cal{C}\onen) &=
   \xy
  (-60,0)*+{
  \left(\begin{array}{c}
      \scs  \E{}\F{} \onen \{2\}\\ \scs \onen \{1-n\}
    \end{array}\right) }="1";
  (0,0)*+{\und{\left(\begin{array}{c}
      \scs  \E{}\F{} \onen\\ \scs \E{}\F{} \onen
    \end{array}\right)}  }="3";
  (50,0)*+{\left(\begin{array}{c}
      \scs  \E{}\F{} \onen \{-2\}\\ \scs \onen \{n-1\}
    \end{array}\right) }="5";
   {\ar^-{
  \left(
    \begin{array}{cc}
      \text{$-\;\Uup\Udowndot$} & \Ucupl \\ & \\
      \text{$\Uupdot\Udown $} & -\;\Ucupl \\
    \end{array}
  \right)
   } "1";"3"};
   {\ar^-{
  \left(
    \begin{array}{cc}
      \text{$\Uupdot\Udown$}  &  \text{$\Uup\Udowndot $}  \\ & \\
      \Ucapr & \Ucapr\\
    \end{array}
  \right)
   } "3";"5"}; 
 \endxy \nn \\
 \label{eq_casimir_EF_tilde}
\end{align}
\begin{align}
 \tsigma\tomega(\cal{C}\onen) &= \xy
 (-50,0)*+{
    \left(\begin{array}{c}
      \scs  \E\F \onen \{2\} \\ \scs  \onen \{1-n\}
    \end{array}\right)
 }="1";
 (0,0)*+{
   \und{\left(\begin{array}{c}
   \scs   \scs   \E\F \onen \\ \scs \E\F \onen
 \end{array}\right)} }="3";
  (60,0)*+{
   \left(\begin{array}{c}
      \scs     \E\F \onen \{-2\} \\ \scs \onen \{n-1\}
    \end{array}\right)      }="5";
   {\ar^-{
  \left( \begin{array}{cc}
      \text{$\Uup\Udowndot$} & \Ucupl \\ & \\
      \text{$\Uupdot \Uup$} & \Ucupl \\
    \end{array}  \right)
   } "1";"3"};
   {\ar^-{
  \left(
    \begin{array}{cc}
      -\;\Uupdot \Udown &  \text{$\Uup \Udowndot$}  \\ & \\
      \Ucapr & -\;\Ucapr\\
    \end{array}
  \right)
   } "3";"5"};
 \endxy \nn \\
 \label{eq_casimir_EF_bar}
 \end{align}
 \begin{align}
  \tsigma \tomega\tpsi(\cal{C}\onen) &=
   \xy
  (-60,0)*+{
  \left(\begin{array}{c}
      \scs  \E{}\F{} \onen \{2\}\\ \scs \onen \{1-n\}
    \end{array}\right) }="1";
  (0,0)*+{\und{\left(\begin{array}{c}
      \scs  \E{} \F{}\onen\\ \scs \E{}\F{} \onen
    \end{array}\right)}  }="3";
  (50,0)*+{\left(\begin{array}{c}
      \scs  \E{}\F{} \onen \{-2\}\\ \scs \onen \{n-1\}
    \end{array}\right) }="5";
   {\ar^-{
  \left(
    \begin{array}{cc}
      \text{$-\;\Uupdot\Udown$} & \Ucupl \\ & \\
      \text{$\Uup\Udowndot $} & -\;\Ucupl \\
    \end{array}
  \right)
   } "1";"3"};
   {\ar^-{
  \left(
    \begin{array}{cc}
      \text{$\Uup\Udowndot$}  &  \text{$\Uupdot\Udown $}  \\ & \\
      \Ucapr & \Ucapr\\
    \end{array}
  \right)
   } "3";"5"}; 
 \endxy \nn \\
 \end{align}
It is trivial to check (see also Proposition~\ref{prop_homotopy_sym} below) that these complexes are all isomorphic to the Casimir complex $\cal{C}\onen$.  We will write $\cal{G}_1=\{\Id, \tpsi, \tomega\tsigma, \tpsi\tomega\tsigma\}$ for the subgroup of symmetry 2-functors in $\cal{G}=(\Z_2)^3$ that preserve the Casimir complex.

Just as the symmetries in $G\setminus G_1=\{\und{\omega}, \und{\sigma}, \und{\psi}\und{\omega}, \und{\psi}\und{\sigma}\}$ interchange between the two forms of the Casimir element
\begin{eqnarray}
 \xy
  (-40,0)*++{(-q^2+2-q^{-2})EF-q^{-1}K-qK^{-1}}="1";
  (40,0)*++{(-q^2+2-q^{-2})FE-qK-q^{-1}K^{-1},}="2";
    {\ar@/^3pc/^{G\setminus G_1} "1";"2"};
    {\ar@/^3pc/^{G\setminus G_1} "2";"1"};
 \endxy \nn
\end{eqnarray}
we can write down a categorification of the idempotented  Casimir element in the form
\begin{equation}
  (-q^2+2-q^{-2})FE1_n-(q^{-n-1}+q^{1+n})1_n
\end{equation}
by applying symmetry 2-functors in $\cal{G}\setminus\cal{G}_1=\{\tomega, \tsigma, \tpsi\tomega, \tpsi\tomega\}$ to the Casimir complex $\cal{C}\onenn{-n}$.  Depending on which 2-functor in $\cal{G}\setminus\cal{G}_1$ is chosen, we will get a different placement of minus signs and dots:
\begin{align}
\tsigma(\cal{C}\onenn{-n}) &= \xy
 (-50,0)*+{
    \left(\begin{array}{c}
      \scs  \F\E \onen \{2\} \\ \scs  \onen \{1+n\}
    \end{array}\right)
 }="1";
 (0,0)*+{
   \und{\left(\begin{array}{c}
   \scs   \scs   \F\E \onen \\ \scs \F\E \onen
 \end{array}\right)} }="3";
  (60,0)*+{
   \left(\begin{array}{c}
      \scs     \F\E \onen \{-2\} \\ \scs \onen \{-n-1\}
    \end{array}\right)      }="5";
   {\ar^-{
  \left( \begin{array}{cc}
      \text{$\Udown\Uupdot$} & \Ucupr \\ & \\
      \text{$\Udowndot \Uup$} & \Ucupr \\
    \end{array}  \right)
   } "1";"3"};
   {\ar^-{
  \left(
    \begin{array}{cc}
      -\;\Udowndot \Uup &  \text{$\Udown \Uupdot$}  \\ & \\
      \Ucapl & -\;\Ucapl\\
    \end{array}
  \right)
   } "3";"5"};
 \endxy \nn \\
 \label{eq_casimir_FE}
\end{align}
\begin{align}
 \tomega(\cal{C}\onenn{-n}) &=
  \xy
  (-50,0)*+{\left(\begin{array}{c}
      \scs  \F\E \onen \{2\}\\ \scs \onen \{1+n\}
    \end{array}\right) }="1";
  (0,0)*+{\und{\left(\begin{array}{c}
      \scs  \F\E \onen\\ \scs \F\E \onen
    \end{array}\right)} }="3";
  (60,0)*+{\left(\begin{array}{c}
      \scs \F\E \onen \{-2\}\\ \scs \onen \{-n-1\}
    \end{array}\right) }="5";
   {\ar^-{
  \left(
    \begin{array}{cc}
      \text{$\Udowndot\Uup$} & \Ucupr \\ & \\
      \text{$\Udown\Uupdot $} & \Ucupr \\
    \end{array}
  \right)
   } "1";"3"};
   {\ar^-{
  \left(
    \begin{array}{cc}
      -\;\Udown\Uupdot  &  \text{$\Udowndot\Uup $}  \\ & \\
      \Ucapl & -\;\Ucapl\\
    \end{array}
  \right)
   } "3";"5"};
 \endxy \nn \\
 \label{eq_casimir_FE_bar}
\end{align}
 \begin{align}
  \tsigma\tpsi(\cal{C}\onenn{-n}) &=
   \xy
  (-60,0)*+{
  \left(\begin{array}{c}
      \scs  \F{}\E{} \onen \{2\}\\ \scs \onen \{1+n\}
    \end{array}\right) }="1";
  (0,0)*+{\und{\left(\begin{array}{c}
      \scs  \F{}\E{} \onen\\ \scs \F{}\E{} \onen
    \end{array}\right)}  }="3";
  (50,0)*+{\left(\begin{array}{c}
      \scs  \F{} \E{}\onen \{-2\}\\ \scs \onen \{-n-1\}
    \end{array}\right) }="5";
   {\ar^-{
  \left(
    \begin{array}{cc}
      \text{$-\;\Udowndot\Uup$} & \Ucupr \\ & \\
      \text{$\Udown\Uupdot $} & -\;\Ucupr \\
    \end{array}
  \right)
   } "1";"3"};
   {\ar^-{
  \left(
    \begin{array}{cc}
      \text{$\Udown\Uupdot$}  &  \text{$\Udowndot\Uup $}  \\ & \\
      \Ucapl & \Ucapl\\
    \end{array}
  \right)
   } "3";"5"}; 
 \endxy \nn \\
 \label{eq_casimir_EF_tilde} \\
 \tomega\tpsi(\cal{C}\onenn{-n}) &=
   \xy
  (-60,0)*+{
  \left(\begin{array}{c}
      \scs  \F{}\E{} \onen \{2\}\\ \scs \onen \{1+n\}
    \end{array}\right) }="1";
  (0,0)*+{\und{\left(\begin{array}{c}
      \scs  \F{}\E{} \onen\\ \scs \F{}\E{} \onen
    \end{array}\right)}  }="3";
  (50,0)*+{\left(\begin{array}{c}
      \scs  \F{}\E{} \onen \{-2\}\\ \scs \onen \{-n-1\}
    \end{array}\right) }="5";
   {\ar^-{
  \left(
    \begin{array}{cc}
      \text{$-\;\Udown\Uupdot$} & \Ucupr \\ & \\
      \text{$\Udowndot\Uup $} & -\;\Ucupr \\
    \end{array}
  \right)
   } "1";"3"};
   {\ar^-{
  \left(
    \begin{array}{cc}
      \text{$\Udowndot\Uup$}  &  \text{$\Udown\Uupdot $}  \\ & \\
      \Ucapl & \Ucapl\\
    \end{array}
  \right)
   } "3";"5"}; 
 \endxy \nn \\
\end{align}
The first complex is identical to the complex $\cal{C}'\onen$ in \eqref{eq_casimir_FE_INTRO}.  The four complexes above are isomorphic to one another in $Kom(\UcatD)$. In Proposition~\ref{prop_homotopy_sym} below we will show that they are homotopy equivalent to the complex $\cal{C}\onen$.

\begin{prop} \label{prop_homotopy_sym} \hfill
\begin{enumerate}[a)]
  \item \label{part_a} For any $g \in \cal{G}_1$ and $g'\in \cal{G}\setminus\cal{G}_1$ there are
chain isomorphisms $\cal{C}\onen \simeq g(\cal{C}\onen)$ and
homotopy equivalences $\cal{C}\onen \simeq g'(\cal{C}\onenn{-n})$ given by chain maps
\begin{equation} \label{eq_bigsym}
 \xy
 (-40,-5)*+{\cal{C}\onen}="t";
 (-10,15)*+{\tsigma(\cal{C}\onenn{-n})}="mt2";
 (-10,-15)*+{\tomega(\cal{C}\onenn{-n})}="mt1";
 (10,-5)*+{\tpsi(\cal{C}\onen)}="mt3";
 (-40,-45)*+{\tsigma\tomega(\cal{C}\onen)}="mb1";
 (40,15)*+{\tsigma\tpsi(\cal{C}\onenn{-n})}="mb3";
 (40,-15)*+{\tomega\tpsi(\cal{C}\onenn{-n})}="mb2";
 (10,-45)*+{\tsigma\tomega\tpsi(\cal{C}\onen)}="b";
  {\ar@{.>}^{\varrho^{\tsigma}} "t";"mt2"};
  {\ar@{.>}_{\varrho^{\tomega}} "t";"mt1"};
  {\ar^{\varrho^{\tpsi}} "t";"mt3"};
  {\ar_{\varrho^{\tsigma\tomega}} "t";"mb1"};
  {\ar|(.67){\hole}^(.3){\tsigma(\varrho^{\tsigma\tomega})} "mt2";"mt1"};
  {\ar_{\tpsi(\hat{\varrho}^{\tsigma\tomega})} "mt3";"b"};
  {\ar@{.>}_{\tomega(\hat{\varrho}^{\tsigma})} "mb1";"mt1"};
  {\ar|(.4){\hole}_(.6){\tomega(\varrho^{\tpsi})} "mt1";"mb2"};
  {\ar@{.>}^{\tpsi(\hat{\varrho}^{\tomega})} "mt3";"mb2"};
  {\ar@{.>}^{\tpsi(\hat{\varrho}^{\tsigma})} "mt3";"mb3"};
  {\ar_{\tsigma\tomega(\varrho^{\tpsi})} "mb1";"b"};
  {\ar^{\tpsi\tsigma(\hat{\varrho}^{\tsigma\tomega})} "mb3";"mb2"};
  {\ar@{.>}_{\tpsi\tomega(\varrho^{\tsigma})} "b";"mb2"};
  {\ar^{\tsigma(\varrho^{\tpsi})} "mt2";"mb3"};
 \endxy
\end{equation}
where the solid lines represent chain isomorphisms and the dotted lines represent chain homotopy equivalences.  We denote the inverse of a map $\varrho^{g}$ by $\hat{\varrho}^{g}$ for $g \in \cal{G}$.

 \item \label{rem_varrho_nat}
The triangles on the left and right in \eqref{eq_bigsym} commute and the four remaining squares anticommute.

 \item \label{part_c} Complexes $\cal{C}\onen$ and $\cal{C}'\onen$ are homotopy equivalent.
\end{enumerate}
\end{prop}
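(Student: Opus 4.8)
The plan is to obtain part c) as a special case of part a), so that the real work lies in constructing the generating chain maps in the diagram \eqref{eq_bigsym}. Recall from the discussion preceding the proposition that $\cal{C}'\onen = \tsigma(\cal{C}\onenn{-n})$ and that $\tsigma \in \cal{G}\setminus\cal{G}_1$. Hence, once part a) produces the homotopy equivalence $\varrho^{\tsigma}\maps \cal{C}\onen \to \tsigma(\cal{C}\onenn{-n})$, reading off this one edge of \eqref{eq_bigsym} gives a homotopy equivalence $\cal{C}\onen \simeq \cal{C}'\onen$, which is exactly part c). So it suffices to build the maps in \eqref{eq_bigsym} and verify their properties.

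First I would treat the solid edges, which are chain isomorphisms. The complexes $\tpsi(\cal{C}\onen)$ and $\tsigma\tomega(\cal{C}\onen)$ displayed above have the same objects as $\cal{C}\onen$ and differ from it only in the placement of signs and in which of the two parallel strands carries a dot in the differential. I would therefore define the generators $\varrho^{\tpsi}$ and $\varrho^{\tsigma\tomega}$ term-by-term as signed permutations of the direct summands in each homological degree, choosing the permutation and signs so that every square of the resulting ladder commutes; checking that these are chain maps is then pure sign bookkeeping. Since $\tpsi$ and $\tsigma\tomega$ generate $\cal{G}_1$ and the symmetry 2-functors are exact, applying them to these two generators yields all the remaining solid edges.

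The substantive step is the construction of the homotopy-equivalence generators $\varrho^{\tsigma}\maps \cal{C}\onen \to \tsigma(\cal{C}\onenn{-n})$ and $\varrho^{\tomega}\maps \cal{C}\onen \to \tomega(\cal{C}\onenn{-n})$, which relate a complex built from $\E\F\onen$ to one built from $\F\E\onen$. Here the objects genuinely differ: the auxiliary summands $\onen\{1-n\}$, $\onen\{n-1\}$ of $\cal{C}\onen$ carry shifts $\{1-n\},\{n-1\}$, while the corresponding summands of $\cal{C}'\onen$ carry $\{1+n\},\{-n-1\}$. To pass between them I would invoke the categorified $\mathfrak{sl}_2$ relations \eqref{eq_reduction} and \eqref{eq_ident_decomp}, which express the difference between $\E\F\onen$ and $\F\E\onen$ through the biadjunction cups and caps together with dotted bubbles. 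Concretely, I would write down explicit 2-morphisms for each component of $\varrho^{\tsigma}$ and of a candidate inverse $\hat{\varrho}^{\tsigma}$, built from crossings, cups, caps and bubbles of the degree dictated by the shifts, and then exhibit the null-homotopies showing that the two round-trip composites differ from the relevant identities by boundaries. The dot-slide relation \eqref{eq_ind_dotslide}, the reduction relations \eqref{eq_reduction}--\eqref{eq_ident_decomp}, and the bubble-slide relations \eqref{eq_bubslide1}--\eqref{eq_bubslide2} are the tools that make these homotopies explicit, and the sign of $n$ controls which bubbles are genuine and which are fake, matching the case division of Theorem~\ref{thm_main}a), in which one complex is the direct sum of the other with a contractible complex. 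I expect this to be the main obstacle: producing the homotopies requires sustained computation in the graphical calculus and careful bookkeeping of degrees so that the contractible summands are correctly annihilated in $Com(\UcatD)$.

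Finally, for part b) I would verify that the two triangles commute and the four squares anticommute. Rather than recompute each composite from scratch, I would use the functoriality of $\tpsi,\tomega,\tsigma$: every edge of \eqref{eq_bigsym} is the image under some $g \in \cal{G}$ of one of the four generators $\varrho^{\tpsi},\varrho^{\tsigma\tomega},\varrho^{\tsigma},\varrho^{\tomega}$, so the triangle and square identities reduce to a finite list of relations among these generators and their inverses, which I would then check directly on the explicit component maps assembled in the previous step. With parts a) and b) established, part c) is immediate, as noted at the outset.
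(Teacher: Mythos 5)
Your proposal follows essentially the same route as the paper: the $\cal{G}_1$-edges are defined as signed identity permutations of the direct summands, the genuine work is the explicit graphical-calculus construction of $\varrho^{\tsigma}$ together with its homotopy inverse and the two null-homotopies built from sideways crossings, cups, caps and dotted bubbles, part b) is reduced by functoriality of the symmetry 2-functors to a direct check of a few generating relations, and part c) is read off from the $\tsigma$-edge since $\cal{C}'\onen=\tsigma(\cal{C}\onenn{-n})$. The only inessential divergence is that the paper does not construct $\varrho^{\tomega}$ independently but defines it as the composite $\tsigma(\varrho^{\tsigma\tomega})\circ\varrho^{\tsigma}$ via the commuting left triangle, so only one nontrivial homotopy equivalence has to be built from scratch rather than the two you propose.
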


\begin{proof}
We define explicit chain isomorphisms
 \[
 \xy
  (-14,0)*+{\cal{C}\onen}="1";
  (14,0)*+{\tpsi(\cal{C}\onen)}="2";
   {\ar@/^1pc/^{\varrho^{\tpsi}} "1";"2"};
   {\ar@/^1pc/^{\hat{\varrho}^{\tpsi}} "2";"1"};
 \endxy
  \qquad
   \xy
  (-14,0)*+{\cal{C}\onen}="1";
  (14,0)*+{\tsigma\tomega(\cal{C}\onen),}="2";
   {\ar@/^1pc/^{\varrho^{\tsigma\tomega}} "1";"2"};
   {\ar@/^1pc/^{\hat{\varrho}^{\tsigma\tomega}} "2";"1"};
 \endxy
\]
as well as a homotopy equivalence
\[
 \xy
  (-14,0)*+{\cal{C}\onen}="1";
  (14,0)*+{\tsigma(\cal{C}\onen).}="2";
   {\ar@/^1pc/^{\varrho^{\tsigma}} "1";"2"};
   {\ar@/^1pc/^{\hat{\varrho}^{\tsigma}} "2";"1"};
 \endxy
\]
While the maps $\varrho^{\tpsi}$ and $\varrho^{\tsigma\tomega}$ are rather uninteresting, it is convenient to fix them.

Define $\varrho^{\tpsi} \maps \cal{C}\onen \to \tpsi(\cal{C}\onen)$ and its inverse $\hat{\varrho}^{\tpsi} \maps \tpsi(\cal{C}\onen) \to \cal{C}\onen$ by
\begin{equation} 
  \left(\varrho^{\tpsi}\right)_{-1} :=
  \left(
    \begin{array}{cc}
      \text{$-\;\Uup\Udown$} & 0 \\
      0 & 1 \\
    \end{array}
  \right)
\quad
  \left(\varrho^{\tpsi}\right)_{0} :=
  \left(
    \begin{array}{cc}
      0 & \text{$\Uup\Udown$} \\
      \text{$-\;\Uup\Udown$} & 0 \\
    \end{array}
  \right)
\quad
  \left(\varrho^{\tpsi}\right)_{+1} :=
  \left(
    \begin{array}{cc}
      \text{$\Uup\Udown$} & 0 \\
      0 & -1 \\
    \end{array}
  \right)
\end{equation}
\begin{equation} 
  \left(\hat{\varrho}^{\tpsi}\right)_{-1} :=
  \left(
    \begin{array}{cc}
      \text{$-\;\Uup\Udown$} & 0 \\
      0 & 1 \\
    \end{array}
  \right)
\quad
  \left(\hat{\varrho}^{\tpsi}\right)_{0} :=
  \left(
    \begin{array}{cc}
      0 & \text{$-\;\Uup\Udown$} \\
      \text{$\Uup\Udown$} & 0 \\
    \end{array}
  \right)
\quad
  \left(\hat{\varrho}^{\tpsi}\right)_{+1} :=
  \left(
    \begin{array}{cc}
      \text{$\Uup\Udown$} & 0 \\
      0 & -1 \\
    \end{array}
  \right)
\end{equation}
where $\cal{C}\onen$ and $\tpsi(\cal{C}\onen)$ are given in \eqref{eq_4_casimir} and \eqref{eq_casimir_EF_tilde}.
The map $\varrho^{\tsigma\tomega} \maps \cal{C}\onen \to \tsigma\tomega(\cal{C}\onen)$ and its inverse $\hat{\varrho}^{\tsigma\tomega} \maps \tsigma\tomega(\cal{C}\onen) \to \cal{C}\onen$ are given by the chain maps
\begin{equation} 
  \left(\varrho^{\tsigma\tomega}\right)_{-1} :=
  \left(
    \begin{array}{cc}
      \text{$\Uup\Udown$} & 0 \\
      0 & 1 \\
    \end{array}
  \right)
\quad
  \left(\varrho^{\tsigma\tomega}\right)_{0} :=
  \left(
    \begin{array}{cc}
      0 & \text{$\Uup\Udown$} \\
      \text{$\Uup\Udown$} & 0 \\
    \end{array}
  \right)
\quad
  \left(\varrho^{\tsigma\tomega}\right)_{+1} :=
  \left(
    \begin{array}{cc}
      \text{$-\;\Uup\Udown$} & 0 \\
      0 & -1 \\
    \end{array}
  \right)
\end{equation}
\begin{equation} 
  \left(\hat{\varrho}^{\tsigma\tomega}\right)_{-1} :=
  \left(
    \begin{array}{cc}
      \text{$\Uup\Udown$} & 0 \\
      0 & 1 \\
    \end{array}
  \right)
\quad
  \left(\hat{\varrho}^{\tsigma\tomega}\right)_{0} :=
  \left(
    \begin{array}{cc}
      0 & \text{$\Uup\Udown$} \\
      \text{$\Uup\Udown$} & 0 \\
    \end{array}
  \right)
\quad
  \left(\hat{\varrho}^{\tsigma\tomega}\right)_{+1} :=
  \left(
    \begin{array}{cc}
      \text{$-\;\Uup\Udown$} & 0 \\
      0 & -1 \\
    \end{array}
  \right)
\end{equation}
with $\tsigma \tomega(\cal{C}\onen)$ given by \eqref{eq_casimir_EF_bar}.
We sometimes express chain maps using cube-like diagrams.  For example, the (rather obvious) chain maps $\varrho^{\tsigma\tomega} \maps \cal{C}\onen \to \tsigma\tomega(\cal{C}\onen)$ and $\hat{\varrho}^{\tsigma\tomega} \maps \tsigma\tomega(\cal{C}\onen)\to  \cal{C}\onen$ can be depicted as
\begin{equation}
 \xy
  (-68,-15)*+{\E{}\F{} \onen \{2\}}="1";
  (-40,-35)*+{\onen \{1-n\}}="2";
  (-10,-15)*+{\E{}\F{} \onen }="3";
  (10,-35)*+{\cal{E}\cal{F} \onen}="4";
  (68,-35)*+{\onen \{n-1\}}="5";
  (40,-15)*+{\E{}\F{} \onen \{-2\}}="6";
   {\ar^(.65){\sUupdot\sUdown} "1";"3"};
   {\ar^{} "1";"4"};
   "1"+(12,-8)*{\sUup\sUdowndot};
   {\ar^(.55){\sUcupl} "2";"3"}; 
   {\ar_-{\sUcupl} "2";"4"};     
   {\ar^-(.75){\sUcapr} "3";"5"};
   "4"+(14,4)*{\sUupdot\sUdown};
   {\ar "3";"6"};
   {\ar_{-\;\;\sUcapr} "4";"5"};
   {\ar "4";"6"};
   "6"+(-18,3)*{-\;\sUup\sUdowndot};
  (-68,35)*+{\E \F\onen \{2\}}="1e";
  (-40,15)*+{\onen \{1-n\}}="2e";
  (-10,35)*+{\E{}\F{} \onen }="3e";
  (10,15)*+{\E{}\F{} \onen}="4e";
  (68,15)*+{\onen \{n-1\}}="5e";
  (40,35)*+{\E{}\F{} \onen \{-2\}}="6e";
   {\ar^{\sUup\sUdowndot} "1e";"3e"};
   {\ar^{} "1e";"4e"};
   "1e"+(12,-8)*{\sUupdot\sUdown};
   {\ar^(.6){\sUcupl} "2e";"3e"}; 
   {\ar_-(.35){\sUcupl} "2e";"4e"};     
      {\ar^-(.25){\sUcapr} "3e";"5e"};
   "4e"+(14,4)*{\sUup\sUdowndot};
   {\ar "3e";"6e"};
   {\ar_(.3){-\;\;\sUcapr} "4e";"5e"};
   {\ar "4e";"6e"};
   "6e"+(-22,3)*{-\;\sUupdot\sUdown};
   {\textcolor[rgb]{0.00,0.50,0.25}{\ar@/_0.3pc/_(.4){\varrho^{\tsigma\tomega}_1} "1"+(-3,3); "1e"+(-3,-3)}};
   {\textcolor[rgb]{0.00,0.50,0.25}{\ar@/_0.3pc/_(.65){\varrho^{\tsigma\tomega}_2} "2"+(-3,3); "2e"+(-3,-3)}};
   {\textcolor[rgb]{0.00,0.50,0.25}{\ar@/_0.3pc/_(.35){\varrho^{\tsigma\tomega}_6} "5"+(1,3); "5e"+(1,-3)}};
   {\textcolor[rgb]{0.00,0.50,0.25}{\ar@/_0.4pc/_(.2){\varrho^{\tsigma\tomega}_3} "3"+(-1,3); "4e"}};
   {\textcolor[rgb]{0.00,0.50,0.25}{\ar@/_0.3pc/_(.1){\varrho^{\tsigma\tomega}_4} "4"+(3,3); "3e"+(2,-3)}};
   {\textcolor[rgb]{0.00,0.50,0.25}{\ar@/_0.3pc/_(.3){\varrho^{\tsigma\tomega}_5} "6"+(5,3); "6e"+(5,-3)}};
   {\textcolor[rgb]{0.00,0.00,1.00}{\ar@/_0.3pc/_(.6){\hat{\varrho}^{\tsigma\tomega}_1} "1e"+(-5,-3); "1"+(-5,3)}};
   {\textcolor[rgb]{0.00,0.00,1.00}{\ar@/_0.3pc/_(.35){\hat{\varrho}^{\tsigma\tomega}_2} "2e"+(-5,-3); "2"+(-5,3)}};
   {\textcolor[rgb]{0.00,0.00,1.00}{\ar@/_0.3pc/_(.65){\hat{\varrho}^{\tsigma\tomega}_6} "5e"+(-1,-3); "5"+(-1,3)}};
   {\textcolor[rgb]{0.00,0.00,1.00}{\ar@/_0.3pc/_(.9){\hat{\varrho}^{\tsigma\tomega}_4} "3e"+(0,-3); "4"+(1,3)}};
   {\textcolor[rgb]{0.00,0.00,1.00}{\ar@/_0.4pc/_(.7){\hat{\varrho}^{\tsigma\tomega}_3} "4e"; "3"+(-3,3)}};
   {\textcolor[rgb]{0.00,0.00,1.00}{\ar@/_0.3pc/_(.7){\hat{\varrho}^{\tsigma\tomega}_5} "6e"+(3,-3); "6"+(3,3)}};
 \endxy
\end{equation}
where
\begin{equation}
  \varrho^{\tsigma\tomega}_1 = \varrho^{\tsigma\tomega}_3 = \varrho^{\tsigma\tomega}_4 = -\varrho^{\tsigma\tomega}_5 = \Id_{\cal{E}\cal{F}\onen} \qquad \varrho^{\tsigma\tomega}_2 = - \varrho^{\tsigma\tomega}_6 = \Id_{1_n},
\end{equation}
and
\begin{equation}
  \hat{\varrho}^{\tsigma\tomega}_1 = \hat{\varrho}^{\tsigma\tomega}_3 = \hat{\varrho}^{\tsigma\tomega}_4 = -\hat{\varrho}^{\tsigma\tomega}_5 = \Id_{\cal{F}\cal{E}\onen} \qquad \hat{\varrho}^{\tsigma\tomega}_2 = - \hat{\varrho}^{\tsigma\tomega}_6 = \Id_{1_n}.
\end{equation}

 The interesting maps here are $\varrho^{\tsigma} \maps \cal{C}\onen \to \tsigma(\cal{C}\onenn{-n})$ and its homotopy inverse $\hat{\varrho}^{\tsigma} \maps \tsigma(\cal{C}\onenn{-n}) \to \cal{C}\onen$, given by the diagram
\begin{equation}
 \xy
  (-68,-15)*+{\F\E \onen \{2\}}="1";
  (-40,-35)*+{\onen \{1+n\}}="2";
  (-10,-15)*+{\F\E \onen }="3";
  (10,-35)*+{\F\E \onen}="4";
  (40,-15)*+{\F\E \onen \{-2\}}="6";
  (68,-35)*+{\onen \{-n-1\}}="5";
   {\ar^(.65){\sUdown\sUupdot} "1";"3"};
   {\ar^{} "1";"4"};
   "1"+(12,-8)*{\sUdowndot\sUup};
   {\ar^(.55){\sUcupr} "2";"3"}; 
   {\ar_-{\sUcupr} "2";"4"};     
      {\ar^-(.35){\sUcapl} "3";"5"};
   "4"+(14,6)*{\sUdown\sUupdot};
   {\ar "3";"6"};
   {\ar^(.6){-\sUcapl} "4";"5"};
   {\ar "4";"6"};
   "6"+(-20,3)*{-\;\sUdowndot\sUup};
  (-68,35)*+{\E \F\onen \{2\}}="1e";
  (-40,15)*+{\onen \{1-n\}}="2e";
  (-10,35)*+{\E\F \onen }="3e";
  (10,15)*+{\E\F \onen}="4e";
  (40,35)*+{\E\F \onen \{-2\}}="6e";
  (68,15)*+{\onen \{n-1\}}="5e";
   {\ar^{\sUupdot\sUdown} "1e";"3e"};
   {\ar^{} "1e";"4e"};
   "1e"+(12,-8)*{\sUup\sUdowndot};
   {\ar^(.6){\sUcupl} "2e";"3e"}; 
   {\ar_-(.35){\sUcupl} "2e";"4e"};     
      {\ar^-(.25){\sUcapr} "3e";"5e"};
   "4e"+(6,8)*{\sUupdot\sUdown};
   {\ar^{-\sUup\sUdowndot} "3e";"6e"};
   {\ar^(.58){-\sUcapr} "4e";"5e"};
   {\ar "4e";"6e"};
   {\textcolor[rgb]{0.00,0.50,0.25}{\ar@/_0.3pc/_(.4){\hat{\varrho}^{\tsigma}_1} "1"+(-3,3); "1e"+(-3,-3)}};
   {\textcolor[rgb]{0.00,0.50,0.25}{\ar@/_0.4pc/_(.7){\hat{\varrho}^{\tsigma}_2} "1"; "2e"}};
   {\textcolor[rgb]{0.00,0.50,0.25}{\ar@/_0.3pc/_(.75){\hat{\varrho}^{\tsigma}_3} "2"+(0,3); "1e"+(2,-3)}};
   {\textcolor[rgb]{0.00,0.50,0.25}{\ar@/_0.3pc/_(.65){\hat{\varrho}^{\tsigma}_4} "2"+(5,3); "2e"+(5,-3)}};
   {\textcolor[rgb]{0.00,0.50,0.25}{\ar@/_0.3pc/_(.4){\hat{\varrho}^{\tsigma}_5} "3"+(1,3); "3e"+(1,-3)}};
   {\textcolor[rgb]{0.00,0.50,0.25}{\ar@/_0.3pc/_(.65){\hat{\varrho}^{\tsigma}_6} "4"+(1,3); "4e"+(1,-3)}};
   {\textcolor[rgb]{0.00,0.50,0.25}{\ar@/_0.3pc/_(.35){\hat{\varrho}^{\tsigma}_{10}} "5"+(5,3); "5e"+(5,-3)}};
   {\textcolor[rgb]{0.00,0.50,0.25}{\ar@/_0.4pc/_(.3){\hat{\varrho}^{\tsigma}_8} "5"+(-1,3); "6e"+(7,-3)}};
   {\textcolor[rgb]{0.00,0.50,0.25}{\ar@/_0.3pc/_(.25){\hat{\varrho}^{\tsigma}_9} "6"+(3,3); "5e"+(-3,-3)}};
   {\textcolor[rgb]{0.00,0.50,0.25}{\ar@/_0.3pc/_(.3){\hat{\varrho}^{\tsigma}_{7}} "6"+(-3,3); "6e"+(-3,-3)}};
   {\textcolor[rgb]{0.00,0.00,1.00}{\ar@/_0.3pc/_(.6){\varrho^{\tsigma}_1} "1e"+(-5,-3); "1"+(-5,3)}};
   {\textcolor[rgb]{0.00,0.00,1.00}{\ar@/_0.3pc/_(.25){\varrho^{\tsigma}_2} "1e"+(0,-3); "2"+(-2,3)}};
   {\textcolor[rgb]{0.00,0.00,1.00}{\ar@/_0.4pc/_(.3){\varrho^{\tsigma}_3} "2e"; "1"}};
   {\textcolor[rgb]{0.00,0.00,1.00}{\ar@/_0.3pc/_(.35){\varrho^{\tsigma}_4} "2e"+(3,-3); "2"+(3,3)}};
   {\textcolor[rgb]{0.00,0.00,1.00}{\ar@/_0.3pc/_(.6){\varrho^{\tsigma}_5} "3e"+(-1,-3); "3"+(-1,3)}};
   {\textcolor[rgb]{0.00,0.00,1.00}{\ar@/_0.3pc/_(.35){\varrho^{\tsigma}_6} "4e"+(-1,-3); "4"+(-1,3)}};
   {\textcolor[rgb]{0.00,0.00,1.00}{\ar@/_0.3pc/_(.65){\varrho^{\tsigma}_{10}} "5e"+(3,-3); "5"+(3,3)}};
   {\textcolor[rgb]{0.00,0.00,1.00}{\ar@/_0.3pc/_(.75){\varrho^{\tsigma}_9} "5e"+(-5,-3); "6"+(1,3)}};
   {\textcolor[rgb]{0.00,0.00,1.00}{\ar@/_0.3pc/_(.7){\varrho^{\tsigma}_8} "6e"+(5,-3); "5"+(-3,3)}};
   {\textcolor[rgb]{0.00,0.00,1.00}{\ar@/_0.3pc/_(.7){\varrho^{\tsigma}_{7}} "6e"+(-5,-3); "6"+(-5,3)}};
 \endxy
\end{equation}
where
\[
\varrho^{\tsigma}_1 =\varrho^{\tsigma}_5=\varrho^{\tsigma}_6=\varrho^{\tsigma}_{7}\;\; =\;\; \vcenter{
 \xy 0;/r.17pc/:
    (-4,-4)*{};(4,4)*{} **\crv{(-4,-1) & (4,1)}?(1)*\dir{>};
    (4,-4)*{};(-4,4)*{} **\crv{(4,-1) & (-4,1)}?(0)*\dir{<};
\endxy}
    \qquad
\varrho^{\tsigma}_2 \;\; =\;\;
  -\;  \vcenter{\xy 0;/r.15pc/:
  (-4,-12)*{}="t1";
  (4,-12)*{}="t2";
  "t2";"t1" **\crv{(5,-4) & (-5,-4)}; ?(0)*\dir{<}
  \endxy}
    \qquad
\varrho^{\tsigma}_3 \;\; =\;\; -\;     \sum_{ \xy  (0,3)*{\scs f_1+f_2}; (0,0)*{\scs
=-n-1};\endxy}
    \vcenter{\xy 0;/r.15pc/:
  (-4,15)*{}="t1";
  (4,15)*{}="t2";
  "t2";"t1" **\crv{(5,8) & (-5,8)}; ?(0)*\dir{<}
  ?(.4)*\dir{}+(0,-.2)*{\bullet}+(3,-2)*{\scs \; f_1};
  (0,0)*{\cbub{\scs \quad \spadesuit+f_2}{}};
  \endxy}
\]
\[
\varrho^{\tsigma}_4 \;\; =\;\;
  - \; \vcenter{\xy 0;/r.15pc/:
    (0,0)*{\cbub{\scs \quad \spadesuit-n}{}};
  \endxy}
    \qquad
\varrho^{\tsigma}_8 \;\; =\;\; -\;     \sum_{ \xy  (0,3)*{\scs f_1+f_2}; (0,0)*{\scs
=n-1};\endxy}
    \vcenter{\xy 0;/r.15pc/:
  (-4,-15)*{}="t1";
  (4,-15)*{}="t2";
  "t2";"t1" **\crv{(5,-8) & (-5,-8)}; ?(0)*\dir{<}
  ?(.67)*\dir{}+(0,-.2)*{\bullet}+(-4,2)*{\scs \; f_1};
  (2,0)*{\ccbub{\scs \quad \spadesuit+f_2}{}};
  \endxy}
      \qquad
\varrho^{\tsigma}_9\;\;=\;\; -\;  \vcenter{\xy 0;/r.15pc/:
  (-4,12)*{}="t1";
  (4,12)*{}="t2";
  "t2";"t1" **\crv{(5,4) & (-5,4)}; ?(0)*\dir{<}
  \endxy}
        \qquad
\varrho^{\tsigma}_{10} \;\; =\;\; \vcenter{\xy 0;/r.15pc/:
    (0,0)*{\ccbub{\scs \quad \spadesuit+n}{}};
  \endxy}
\]
\[
\hat{\varrho}^{\tsigma}_1=\hat{\varrho}^{\tsigma}_5=\hat{\varrho}^{\tsigma}_6 =\hat{\varrho}^{\tsigma}_{7} \;\; =\;\; -\;\vcenter{
 \xy 0;/r.17pc/:
    (-4,-4)*{};(4,4)*{} **\crv{(-4,-1) & (4,1)}?(0)*\dir{<};
    (4,-4)*{};(-4,4)*{} **\crv{(4,-1) & (-4,1)}?(1)*\dir{>};
\endxy}
    \qquad
\hat{\varrho}^{\tsigma}_2 \;\; =\;\;
  -\;  \vcenter{\xy 0;/r.15pc/:
  (-4,-12)*{}="t1";
  (4,-12)*{}="t2";
  "t2";"t1" **\crv{(5,-4) & (-5,-4)}; ?(1)*\dir{>}
  \endxy}
    \qquad
\hat{\varrho}^{\tsigma}_3 \;\; =\;\; -\;     \sum_{ \xy  (0,3)*{\scs f_1+f_2}; (0,0)*{\scs
=n-1};\endxy}
    \vcenter{\xy 0;/r.15pc/:
  (-4,15)*{}="t1";
  (4,15)*{}="t2";
  "t2";"t1" **\crv{(5,8) & (-5,8)}; ?(.15)*\dir{>} ?(.95)*\dir{>}
  ?(.4)*\dir{}+(0,-.2)*{\bullet}+(3,-2)*{\scs \; f_1};
  (0,0)*{\ccbub{\scs \quad \spadesuit+f_2}{}};
  \endxy}
\]
\[
\hat{\varrho}^{\tsigma}_4 \;\; =\;\;
  - \; \vcenter{\xy 0;/r.15pc/:
    (0,0)*{\ccbub{\scs \quad \spadesuit+n}{}};
  \endxy}
      \qquad
\hat{\varrho}^{\tsigma}_8 \;\; =\;\; -\;  \vcenter{\xy 0;/r.15pc/:
  (-4,12)*{}="t1";
  (4,12)*{}="t2";
  "t2";"t1" **\crv{(5,4) & (-5,4)}; ?(1)*\dir{>}
  \endxy}
    \qquad
\hat{\varrho}^{\tsigma}_9\;\;=\;\; -\;   \sum_{ \xy  (0,3)*{\scs f_1+f_2}; (0,0)*{\scs
=-n-1};\endxy}
    \vcenter{\xy 0;/r.15pc/:
  (-4,-15)*{}="t1";
  (4,-15)*{}="t2";
  "t2";"t1" **\crv{(5,-8) & (-5,-8)}; ?(1)*\dir{>}
  ?(.67)*\dir{}+(0,-.2)*{\bullet}+(-4,2)*{\scs \; f_1};
  (2,0)*{\cbub{\scs \quad \spadesuit+f_2}{}};
  \endxy}
    \qquad
\hat{\varrho}^{\tsigma}_{10} \;\; =\;\; \vcenter{\xy 0;/r.15pc/:
    (0,0)*{\cbub{\scs \quad \spadesuit-n}{}};
  \endxy}
\]
The chain homotopies
$\varrho^{\tsigma}\hat{\varrho}^{\tsigma}-\Id \simeq 0$, $\hat{\varrho}^{\tsigma}\varrho^{\tsigma}-\Id \simeq 0$ are given by

\begin{equation}
 \xy
  (-68,-15)*+{\F\E \onen \{2\}}="1";
  (-40,-35)*+{\onen \{1+n\}}="2";
  (-10,-15)*+{\F\E \onen }="3";
  (10,-35)*+{\F\E \onen}="4";
  (40,-15)*+{\F\E \onen \{-2\}}="6";
  (68,-35)*+{\onen \{-n-1\}}="5";
   {\ar^(.65){\sUdown\sUupdot} "1";"3"};
   {\ar^{} "1";"4"};
   "1"+(12,-8)*{\sUdowndot\sUup};
   {\ar^(.55){\sUcupr} "2";"3"}; 
   {\ar_-{\sUcupr} "2";"4"};     
      {\ar^-(.35){\sUcapl} "3";"5"};
   "4"+(14,6)*{\sUdown\sUupdot};
   {\ar "3";"6"};
   {\ar^(.6){-\;\;\sUcapl} "4";"5"};
   {\ar "4";"6"};
   "6"+(-20,3)*{-\;\sUdowndot\sUup};
  (-68,35)*+{\E \F\onen \{2\}}="1e";
  (-40,15)*+{\onen \{1-n\}}="2e";
  (-10,35)*+{\E\F \onen }="3e";
  (10,15)*+{\E\F \onen}="4e";
  (40,35)*+{\E\F \onen \{-2\}}="6e";
  (68,15)*+{\onen \{n-1\}}="5e";
   {\ar^{\sUupdot\sUdown} "1e";"3e"};
   {\ar^{} "1e";"4e"};
   "1e"+(12,-8)*{\sUup\sUdowndot};
   {\ar^(.6){\sUcupl} "2e";"3e"}; 
   {\ar_-(.35){\sUcupl} "2e";"4e"};     
      {\ar^-(.25){\sUcapr} "3e";"5e"};
   "4e"+(6,8)*{\sUupdot\sUdown};
   {\ar^{-\sUup\sUdowndot} "3e";"6e"};
   {\ar^(.58){-\;\;\sUcapr} "4e";"5e"};
   {\ar "4e";"6e"};
  {\textcolor[rgb]{1.00,0.00,0.00}{\ar@/_1.6pc/_(.4){h_1} "3e"; "1e"}};
  {\textcolor[rgb]{1.00,0.00,0.00}{\ar@/^1.0pc/^(.2){h_2} "3e"; "2e"}};
  {\textcolor[rgb]{1.00,0.00,0.00}{\ar@/^1.0pc/^(.1){h_3} "6e"+(2,-3); "4e"}};
  {\textcolor[rgb]{1.00,0.00,0.00}{\ar@/^1.2pc/^(.45){h_4} "5e"; "4e"}};
  {\textcolor[rgb]{1.00,0.00,0.50}{\ar@/_1.6pc/_(.3){h'_1} "3"; "1"}};
  {\textcolor[rgb]{1.00,0.00,0.50}{\ar@/^1.0pc/^(.2){h'_2} "3"; "2"}};
  {\textcolor[rgb]{1.00,0.00,0.50}{\ar@/^1.0pc/^(.1){h'_3} "6"+(2,-3); "4"}};
  {\textcolor[rgb]{1.00,0.00,0.50}{\ar@/^1.0pc/^(.45){h'_4} "5"; "4"}};
 \endxy
\end{equation}
where
\begin{align}
h_1=h_3 \;\; &=\;\; -\; \sum_{ \xy  (0,3)*{\scs f_1+f_2+f_3};
(0,0)*{\scs =n-2};\endxy}
    \vcenter{\xy 0;/r.15pc/:
    (-8,0)*{};  (8,0)*{};
  (-4,-15)*{}="b1";
  (4,-15)*{}="b2";
  "b2";"b1" **\crv{(5,-8) & (-5,-8)}; ?(0)*\dir{<}
  ?(.8)*\dir{}+(0,-.1)*{\bullet}+(-4,2)*{\scs f_3};
  (-4,15)*{}="t1";
  (4,15)*{}="t2";
  "t2";"t1" **\crv{(5,8) & (-5,8)};  ?(1)*\dir{>}
  ?(.4)*\dir{}+(0,-.2)*{\bullet}+(3,-2)*{\scs \; f_1};
  (0,0)*{\ccbub{\scs \quad \spadesuit+f_2}};
  \endxy}  \qquad
  \qquad h_2  &= \;\; - \;  \sum_{ \xy  (0,3)*{\scs f_1+f_2}; (0,0)*{\scs
=n-1};\endxy}
    \vcenter{\xy 0;/r.15pc/:
  (-4,-15)*{}="t1";
  (4,-15)*{}="t2";
  "t2";"t1" **\crv{(5,-8) & (-5,-8)}; ?(0)*\dir{<}
  ?(.67)*\dir{}+(0,-.2)*{\bullet}+(-4,2)*{\scs \; f_1};
  (2,0)*{\ccbub{\scs \quad \spadesuit+f_2}{}};
  \endxy} \qquad
h_4 \;\; &= \;\;     \sum_{ \xy  (0,3)*{\scs f_1+f_2}; (0,0)*{\scs
=n-1};\endxy}
    \vcenter{\xy 0;/r.15pc/:
  (-4,15)*{}="t1";
  (4,15)*{}="t2";
  "t2";"t1" **\crv{(5,8) & (-5,8)}; ?(.15)*\dir{>} ?(.95)*\dir{>}
  ?(.4)*\dir{}+(0,-.2)*{\bullet}+(3,-2)*{\scs \; f_1};
  (0,0)*{\ccbub{\scs \quad \spadesuit+f_2}{}};
  \endxy}
\\
h'_1=h'_3 \;\; &=\;\; -\; \sum_{ \xy  (0,3)*{\scs g_1+g_2+g_3};
(0,0)*{\scs =x+y-n-2};\endxy}
    \vcenter{\xy 0;/r.15pc/:
    (-8,0)*{};
  (8,0)*{};
  (-4,-15)*{}="b1";
  (4,-15)*{}="b2";
  "b2";"b1" **\crv{(5,-8) & (-5,-8)}; ?(.1)*\dir{>} ?(.95)*\dir{>}
  ?(.8)*\dir{}+(0,-.1)*{\bullet}+(-3,2)*{\scs g_3};
  (-4,15)*{}="t1";
  (4,15)*{}="t2";
  "t2";"t1" **\crv{(5,8) & (-5,8)}; ?(.15)*\dir{<} ?(.9)*\dir{<}
  ?(.4)*\dir{}+(0,-.2)*{\bullet}+(3,-2)*{\scs g_1};
  (0,0)*{\cbub{\scs \quad\; \spadesuit + g_2}{}};
  \endxy} \qquad
\quad h_2' &= \;\; -\;
 \sum_{ \xy  (0,3)*{\scs f_1+f_2};
(0,0)*{\scs =-n-1};\endxy}
    \vcenter{\xy 0;/r.15pc/:
  (-4,-15)*{}="t1";
  (4,-15)*{}="t2";
  "t2";"t1" **\crv{(5,-8) & (-5,-8)}; ?(1)*\dir{>}
  ?(.67)*\dir{}+(0,-.2)*{\bullet}+(-4,2)*{\scs \; f_1};
  (2,0)*{\cbub{\scs \quad \spadesuit+f_2}{}};
  \endxy}
  \qquad
h'_4 \;\; &= \;\;
 \sum_{ \xy  (0,3)*{\scs f_1+f_2}; (0,0)*{\scs
=-n-1};\endxy}
    \vcenter{\xy 0;/r.15pc/:
  (-4,15)*{}="t1";
  (4,15)*{}="t2";
  "t2";"t1" **\crv{(5,8) & (-5,8)}; ?(0)*\dir{<}
  ?(.4)*\dir{}+(0,-.2)*{\bullet}+(3,-2)*{\scs \; f_1};
  (0,0)*{\cbub{\scs \quad \spadesuit+f_2}{}};
  \endxy}
\end{align}
One can verify the following equations
\begin{eqnarray}
  \varrho^{\tsigma}\hat{\varrho}^{\tsigma}-\Id &=&hd + dh,\\
  \hat{\varrho}^{\tsigma} \varrho^{\tsigma}-\Id &=&h'd + dh'.
\end{eqnarray}

The chain maps just defined satisfy
\begin{align}
  \tsigma(\varrho^{\tsigma}) &= \hat{\varrho}^{\tsigma},
  &
  \tsigma(\hat{\varrho}^{\tsigma}) &= \varrho^{\tsigma},
   \\
  \tpsi(\varrho^{\tpsi}) &=  - \varrho^{\tpsi},
  &  \tpsi(\hat{\varrho}^{\tpsi}) &=  - \hat{\varrho}^{\tpsi},\\
  \tomega(\varrho^{\tsigma\tomega}) &= \tsigma(\hat{\varrho}^{\tsigma\tomega}),
  &
  \tomega(\hat{\varrho}^{\tsigma\tomega}) &= \tsigma(\varrho^{\tsigma\tomega}).
\end{align}

To prove part \ref{rem_varrho_nat} of the proposition one can check by direct computation that the front square in \eqref{eq_bigsym} anticommutes.  The back solid square is just $\tsigma$ applied to the front solid square so it also anticommutes. The leftmost square commutes on the nose and we define maps
\begin{align}
  \varrho^{\tomega} &:= \tsigma(\varrho^{\tsigma\tomega}) \circ \varrho^{\tsigma}
  & \hat{\varrho}^{\tomega} &:= \hat{\varrho}^{\tsigma} \circ \tsigma(\hat{\varrho}^{\tsigma\tomega}),
\end{align}
using these commutative squares.  The rightmost square is just $\tpsi$ applied to the leftmost square so it also commutes on the nose.  The top square can be shown to anticommute. After observing that $\tpsi\tomega(\varrho^{\tsigma})=\tpsi\tomega\tsigma(\hat{\varrho}^{\tsigma})$ and $\tomega(\hat{\varrho}^{\tsigma})=\tomega\tsigma(\varrho^{\tsigma})$ the anticommutativity of the bottom square follows since it is just $\tomega\tsigma$ applied to the top square.

Part \ref{part_c} follows immediately from part \ref{part_a} since $\cal{C}'\onen=\tsigma(\cal{C}\onenn{-n})$.
\end{proof}

Theorem~\ref{thm_Groth} and the results in section~\ref{sec_KomU} imply that
\begin{equation}
K_0\left( Kar\left( Kom(\Ucat) \right)\right) \cong
K_0\left( Kom\left( Kar(\Ucat) \right)\right) =
K_0\left( Kar(\Ucat) \right) \cong K_0(\UcatD) \cong \UA.
\end{equation}
Under this isomorphism
\begin{equation}
  [\cal{C}\onen] = C1_n.
\end{equation}

%
\subsection{Commutativity of the Casimir complex} \label{subsec_commutativity}
%

%
\subsubsection{Commutativity chain maps $\gam$ and $\delt$}
%

\begin{defn} \label{def_xi_minus}
Define  chain maps $\gam = \gam\onen \maps \F \cal{C}\onen\to\cal{C}\F\onen$ and
$\delt = \delt\onen \maps \cal{C}\F\onen \to \F \cal{C}\onen$ as follows:
\begin{equation}
 \xy
  (-65,35)*+{\E\F\F \onen \{2\}}="1'";
  (-45,15)*+{\F\onen \{-1-n\}}="2'";
  (-10,35)*+{\E\F\F \onen }="3'";
  (10,15)*+{\E\F\F \onen}="4'";
  (45,35)*+{\E\F\F \onen \{-2\}}="5'";
  (65,15)*+{\F\onen \{n-3\}}="6'";
   {\ar^{\sUupdot\sUdown\sUdown} "1'";"3'"};
   {\ar^{} "1'" ;"4'"};
   "1'"+(15,-8)*{\sUup\sUdowndot\sUdown};
   "3'"+(-15,-5)*{\sUcupl\sUdown};
   "3'"+(28,-4)*{\text{$\sUcapr\sUdown$}};
   "4'"+(4,8)*{\sUupdot\sUdown\sUdown};
   {\ar "2'";"3'"};
   {\ar^(.4){\text{$\sUcupl\sUdown$}} "2'";"4'"};
   {\ar^-{-\;\sUup\sUdowndot\sUdown} "3'";"5'"};
   {\ar "3'";"6'"};
   {\ar "4'";"5'"};
   {\ar^(.4){-\;\;\text{$\sUcapr\sUdown$}} "4'";"6'"};
  (-65,-15)*+{\F\E\F \onen \{2\}}="1";
  (-45,-35)*+{\F\onen \{1-n\}}="2";
  (-10,-15)*+{\F\E\F \onen }="3";
  (10,-35)*+{\F\E\F \onen}="4";
  (45,-15)*+{\F\E\F \onen \{-2\}}="5";
  (65,-35)*+{\F\onen \{n-1\}}="6";
   {\ar^{\sUdown\sUupdot\sUdown} "1";"3"};
   {\ar^{} "1" ;"4"};
   "4"+(-12,8)*{\sUdown\sUup\sUdowndot};
   "3"+(-16,-5)*{\sUdown\sUcupl};
   "6"+(-18,10)*{\text{$\sUdown\sUcapr$}};
   "4"+(14,4)*{\sUdown\sUupdot\sUdown};
   {\ar "2";"3"};
   {\ar_(.4){\text{$\sUdown\sUcupl$}} "2";"4"};
   {\ar^(.63){-\;\sUdown\sUup\sUdowndot} "3";"5"};
   {\ar "3";"6"};
   {\ar "4";"5"};
   {\ar_(.4){-\text{$\sUdown\sUcapr$}} "4";"6"};
   {\textcolor[rgb]{0.00,0.50,0.25}{\ar@/^0.25pc/^{\gam_1} "1"+(-6,4);"1'"+(-6,-4)}};
   {\textcolor[rgb]{0.00,0.50,0.25}{\ar@/^0.3pc/^{\gam_2} "1";"2'"+(-4,-3)}};
   {\textcolor[rgb]{0.00,0.50,0.25}{\ar@/^0.3pc/^{\gam_3} "3";"3'"}};
   {\textcolor[rgb]{0.00,0.50,0.25}{\ar^(.3){\gam_4} "3"+(4,3);"4'"}};
   {\textcolor[rgb]{0.00,0.50,0.25}{\ar@/^0.35pc/^(.7){\gam_5} "4"+(1,4);"4'"+(1,-4)}};
    {\textcolor[rgb]{0.00,0.50,0.25}{\ar^{\gam_6} "5";"5'"+(-1,-3)}};
   {\textcolor[rgb]{0.00,0.50,0.25}{\ar@/^0.3pc/^(.3){\gam_9} "5"+(4,3);"6'"+(-1,-3)}};
   {\textcolor[rgb]{0.00,0.50,0.25}{\ar@/^0.3pc/^(.2){\gam_7} "6";"5'"+(4,-4)}};
   {\textcolor[rgb]{0.00,0.50,0.25}{\ar^{\gam_8} "6"+(4,4);"6'"+(4,-4)}};
   {\textcolor[rgb]{0.00,0.00,1.00}{\ar@/^0.25pc/^{\delt_1} "1'"+(-4,-4);"1"+(-4,4)}};
   {\textcolor[rgb]{0.00,0.00,1.00}{\ar@/^0.3pc/^{\delt_3} "2'";"1"+(3,3)}};
   {\textcolor[rgb]{0.00,0.00,1.00}{\ar@/^0.25pc/^(.7){\delt_2} "1'"+(4,-4);"2"+(-4,4)}};
   {\textcolor[rgb]{0.00,0.00,1.00}{\ar@/^0.3pc/^(.3){\delt_4} "2'";"2"}};
   {\textcolor[rgb]{0.00,0.00,1.00}{\ar@/^0.35pc/^{\delt_5} "3'"+(2,-3);"3"+(2,3)}};
   {\textcolor[rgb]{0.00,0.00,1.00}{\ar^(.55){\delt_6} "3'"+(5,-3);"4"+(-5,3)}};
   {\textcolor[rgb]{0.00,0.00,1.00}{\ar@/^0.3pc/^(.3){\delt_7} "4'"+(3,-4);"4"+(3,4)}};
   {\textcolor[rgb]{0.00,0.00,1.00}{\ar@/^0.3pc/^(.55){\delt_9} "6'"+(2,-3);"5"+(6,3)}};
   {\textcolor[rgb]{0.00,0.00,1.00}{\ar@/^0.25pc/^(.55){\delt_8} "5'"+(1,-3);"5"+(2,3)}};
 \endxy
\end{equation}
where
\[
 \gam_1 =\;\; \vcenter{
 \xy 0;/r.13pc/:
    (-4,-12)*{};(4,4)*{} **\crv{(-4,-1) & (4,1)} ?(0)*\dir{<};
    (4,-12)*{};(-4,4)*{} **\crv{(4,-1) & (-4,1)};
    (4,4)*{};(12,12)*{} **\crv{(4,7) & (12,9)};
    (12,4)*{};(4,12)*{} **\crv{(12,7) & (4,9)};
    (-4,4)*{}; (-4,12) **\dir{-}?(1)*\dir{>};
    (12,-12)*{}; (12,4) **\dir{-}?(0)*\dir{<};
  (10.5,9.5)*{\bullet}; 
\endxy}
 - \;\; \vcenter{
 \xy 0;/r.13pc/:
    (-4,-12)*{};(4,4)*{} **\crv{(-4,-1) & (4,1)} ?(0)*\dir{<};
    (4,-12)*{};(-4,4)*{} **\crv{(4,-1) & (-4,1)};
    (4,4)*{};(12,12)*{} **\crv{(4,7) & (12,9)};
    (12,4)*{};(4,12)*{} **\crv{(12,7) & (4,9)};
    (-4,4)*{}; (-4,12) **\dir{-}?(1)*\dir{>};
    (12,-12)*{}; (12,4) **\dir{-}?(0)*\dir{<};
    (-4,6)*{\bullet};
\endxy}
\;\;- \sum_{\xy (0,2)*{\scs f_1+f_2+f_3+f_4}; (0,-1)*{\scs =n-2}; (0,-4)*{\scs 1 \leq f_3};\endxy}
\vcenter{  \xy 0;/r.13pc/:
  (4,12)*{};(-12,-12)*{} **\crv{(4,4) & (-12,-4)} ?(1)*\dir{>}
  ?(.75)*\dir{}+(0,0)*{\bullet}+(-4.5,1)*{\scs f_2};
  (-4,12)*{}="t1";
  (-12,12)*{}="t2";
  "t1";"t2" **\crv{(-4,5) & (-12,5)};?(1)*\dir{>}?
   ?(.5)*\dir{}+(0,0)*{\bullet}+(-1,-3)*{\scs f_1};
  (4,-12)*{}="t1";  (-4,-12)*{}="t2";
  "t2";"t1" **\crv{(-4,-5) & (4,-5)}; ?(1)*\dir{>}
    ?(.5)*\dir{}+(0,0)*{\bullet}+(2,3.5)*{\scs f_3};
  (14,4)*{\ccbub{\spadesuit+f_4}{}};
  \endxy }
\qquad
\gam_2  =\;\; \vcenter{
 \xy 0;/r.13pc/:
    (-4,-4)*{};(4,-4)*{} **\crv{(-4,4) & (4,4)} ?(0)*\dir{<};
    (12,-4)*{}; (12,12) **\dir{-}?(0)*\dir{<};
\endxy}
\;\; - \sum_{\xy (0,2)*{\scs f_1+f_2+f_3}; (0,-1)*{\scs =n-1}; (0,-4)*{\scs 1 \leq f_3};\endxy}
\vcenter{  \xy 0;/r.13pc/:
  (-12,12)*{}; (-12,-12)*{} **\dir{-} ?(1)*\dir{>};
   ?(.5)*\dir{}+(0,0)*{\bullet}+(4,1)*{\scs f_1};
  (4,-12)*{}="t1";  (-4,-12)*{}="t2";
  "t2";"t1" **\crv{(-4,-5) & (4,-5)}; ?(1)*\dir{>}
    ?(.75)*\dir{}+(0,0)*{\bullet}+(4,1)*{\scs f_3};
  (2,5)*{\ccbub{\spadesuit+f_2}{}};
  \endxy }
\]
\[
 \gam_3  =\;\; \vcenter{
 \xy 0;/r.13pc/:
    (-4,-12)*{};(4,4)*{} **\crv{(-4,-1) & (4,1)} ?(0)*\dir{<};
    (4,-12)*{};(-4,4)*{} **\crv{(4,-1) & (-4,1)};
    (4,4)*{};(12,12)*{} **\crv{(4,7) & (12,9)};
    (12,4)*{};(4,12)*{} **\crv{(12,7) & (4,9)};
    (-4,4)*{}; (-4,12) **\dir{-}?(1)*\dir{>};
    (12,-12)*{}; (12,4) **\dir{-}?(0)*\dir{<};
  (10.5,9.5)*{\bullet}; 
\endxy}
 - \;\; \vcenter{
 \xy 0;/r.13pc/:
    (-4,-12)*{};(4,4)*{} **\crv{(-4,-1) & (4,1)} ?(0)*\dir{<};
    (4,-12)*{};(-4,4)*{} **\crv{(4,-1) & (-4,1)};
    (4,4)*{};(12,12)*{} **\crv{(4,7) & (12,9)};
    (12,4)*{};(4,12)*{} **\crv{(12,7) & (4,9)};
    (-4,4)*{}; (-4,12) **\dir{-}?(1)*\dir{>};
    (12,-12)*{}; (12,4) **\dir{-}?(0)*\dir{<};
    (-4,6)*{\bullet}; 
\endxy}
\;\;- \sum_{\xy (0,2)*{\scs f_1+f_2+f_3+f_4}; (0,-1)*{\scs =n-2}; \endxy}
\vcenter{  \xy 0;/r.13pc/:
  (4,12)*{};(-12,-12)*{} **\crv{(4,4) & (-12,-4)} ?(1)*\dir{>}
  ?(.75)*\dir{}+(0,0)*{\bullet}+(-4.5,1)*{\scs f_2};
  (-4,12)*{}="t1";
  (-12,12)*{}="t2";
  "t1";"t2" **\crv{(-4,5) & (-12,5)};?(1)*\dir{>}?
   ?(.5)*\dir{}+(0,0)*{\bullet}+(-1,-3)*{\scs f_1};
  (4,-12)*{}="t1";  (-4,-12)*{}="t2";
  "t2";"t1" **\crv{(-4,-5) & (4,-5)}; ?(1)*\dir{>}
    ?(.5)*\dir{}+(0,0)*{\bullet}+(2,3.5)*{\scs f_3};
  (14,4)*{\ccbub{\spadesuit+f_4}{}};
  \endxy }
\qquad
\gam_4  = \;\; \vcenter{
 \xy 0;/r.13pc/:
    (-4,-12)*{};(-12,12)*{} **\crv{(-4,3) & (-12,5)}?(1)*\dir{>};
    (-12,-12)*{};(-4,12)*{} **\crv{(-12,3) & (-4,5)}?(0)*\dir{<};
    (4,-12)*{}; (4,12) **\dir{-}?(0)*\dir{<};
\endxy}
  \; -
 \sum_{\xy (0,2)*{\scs f_1+f_2+f_3}; (0,-1)*{\scs =n-1}; \endxy}
\vcenter{  \xy 0;/r.13pc/:
  (-4,12)*{};(-12,-12)*{} **\crv{(-4,4) & (-12,-4)} ?(1)*\dir{>};
  (4,12)*{}="t1";
  (-12,12)*{}="t2";
  "t1";"t2" **\crv{(4,5) & (-12,5)};?(1)*\dir{>}?
   ?(.25)*\dir{}+(0,0)*{\bullet}+(-1,-3.5)*{\scs f_1};
  (4,-12)*{}="t1";  (-4,-12)*{}="t2";
  "t2";"t1" **\crv{(-4,-5) & (4,-5)}; ?(1)*\dir{>}
    ?(.5)*\dir{}+(0,0)*{\bullet}+(2,3.5)*{\scs f_3};
  (14,2)*{\ccbub{\spadesuit+f_2}{}};
  \endxy }
\]
\[
\gam_5  =\;\; \vcenter{
 \xy 0;/r.13pc/:
    (-4,-12)*{};(4,4)*{} **\crv{(-4,-1) & (4,1)} ?(0)*\dir{<};
    (4,-12)*{};(-4,4)*{} **\crv{(4,-1) & (-4,1)};
    (4,4)*{};(12,12)*{} **\crv{(4,7) & (12,9)};
    (12,4)*{};(4,12)*{} **\crv{(12,7) & (4,9)};
    (-4,4)*{}; (-4,12) **\dir{-}?(1)*\dir{>};
    (12,-12)*{}; (12,4) **\dir{-}?(0)*\dir{<};
    (4,4)*{\bullet}; 
\endxy}
\;\; - \;\; \vcenter{
 \xy 0;/r.13pc/:
    (-4,-12)*{};(4,4)*{} **\crv{(-4,-1) & (4,1)} ?(0)*\dir{<};
    (4,-12)*{};(-4,4)*{} **\crv{(4,-1) & (-4,1)};
    (4,4)*{};(12,12)*{} **\crv{(4,7) & (12,9)};
    (12,4)*{};(4,12)*{} **\crv{(12,7) & (4,9)};
    (-4,4)*{}; (-4,12) **\dir{-}?(1)*\dir{>};
    (12,-12)*{}; (12,4) **\dir{-}?(0)*\dir{<};
    (-4.3,6)*{\bullet}; 
\endxy}
 \;\; +\;\;
 \sum_{\xy (0,2)*{\scs f_1+f_2+f_3}; (0,-1)*{\scs =n-1}; \endxy}
\vcenter{  \xy 0;/r.13pc/:
  (-4,12)*{};(-12,-12)*{} **\crv{(-4,4) & (-12,-4)} ?(1)*\dir{>};
  (4,12)*{}="t1";
  (-12,12)*{}="t2";
  "t1";"t2" **\crv{(4,5) & (-12,5)};?(1)*\dir{>}?
   ?(.75)*\dir{}+(0,0)*{\bullet}+(-2,-3.5)*{\scs f_1};
  (4,-12)*{}="t1";  (-4,-12)*{}="t2";
  "t2";"t1" **\crv{(-4,-5) & (4,-5)}; ?(1)*\dir{>}
    ?(.5)*\dir{}+(0,0)*{\bullet}+(2,3.5)*{\scs f_3};
  (14,2)*{\ccbub{\spadesuit+f_2}{}};
  \endxy }
\]
\[
\gam_6 =\;\; \vcenter{
 \xy 0;/r.13pc/:
    (-4,-12)*{};(4,4)*{} **\crv{(-4,-1) & (4,1)} ?(0)*\dir{<};
    (4,-12)*{};(-4,4)*{} **\crv{(4,-1) & (-4,1)};
    (4,4)*{};(12,12)*{} **\crv{(4,7) & (12,9)};
    (12,4)*{};(4,12)*{} **\crv{(12,7) & (4,9)};
    (-4,4)*{}; (-4,12) **\dir{-}?(1)*\dir{>};
    (12,-12)*{}; (12,4) **\dir{-}?(0)*\dir{<};
    (4,4)*{\bullet}; 
\endxy}
 - \;\; \vcenter{
 \xy 0;/r.13pc/:
    (-4,-12)*{};(4,4)*{} **\crv{(-4,-1) & (4,1)} ?(0)*\dir{<};
    (4,-12)*{};(-4,4)*{} **\crv{(4,-1) & (-4,1)};
    (4,4)*{};(12,12)*{} **\crv{(4,7) & (12,9)};
    (12,4)*{};(4,12)*{} **\crv{(12,7) & (4,9)};
    (-4,4)*{}; (-4,12) **\dir{-}?(1)*\dir{>};
    (12,-12)*{}; (12,4) **\dir{-}?(0)*\dir{<};
    (-4.3,6)*{\bullet}; 
\endxy}
 \;\; +\;\;
 \sum_{\xy (0,2)*{\scs f_1+f_2+f_3}; (0,-1)*{\scs =n-1};(0,-4)*{\scs 1 \leq f_1}; \endxy}
\vcenter{  \xy 0;/r.13pc/:
  (-4,12)*{};(-12,-12)*{} **\crv{(-4,4) & (-12,-4)} ?(1)*\dir{>};
  (4,12)*{}="t1";
  (-12,12)*{}="t2";
  "t1";"t2" **\crv{(4,5) & (-12,5)};?(1)*\dir{>}?
   ?(.75)*\dir{}+(0,0)*{\bullet}+(-2,-3.5)*{\scs f_1};
  (4,-12)*{}="t1";  (-4,-12)*{}="t2";
  "t2";"t1" **\crv{(-4,-5) & (4,-5)}; ?(1)*\dir{>}
    ?(.5)*\dir{}+(0,0)*{\bullet}+(-3,3.5)*{\scs f_3};
  (14,2)*{\ccbub{\spadesuit+f_2}{}};
  \endxy }
\qquad
\gam_7\;\; =\;\; -\;\;
 \sum_{\xy (0,2)*{\scs f_1+f_2=n}; (0,-1)*{\scs 1 \leq f_1}; \endxy}
\vcenter{  \xy 0;/r.13pc/:
  (-4,12)*{};(-4,-12)*{} **\crv{(-4,4) & (-4,-4)} ?(1)*\dir{>};
  (4,12)*{}="t1";
  (-12,12)*{}="t2";
  "t1";"t2" **\crv{(4,5) & (-12,5)};?(1)*\dir{>}?
   ?(.75)*\dir{}+(0,0)*{\bullet}+(-2,-3.5)*{\scs f_1};
  (10,-3)*{\ccbub{\spadesuit+f_2}{}};
  \endxy }
\]
\[
\gam_8  =\;\;
 \vcenter{   \xy 0;/r.13pc/:
    (-4,-12)*{};(4,4)*{} **\crv{(-1,-4) & (1,4)}?(0)*\dir{<};
    (-4,12)*{};(4,-4)*{} **\crv{(-1,4) & (1,-4)}?(1)*\dir{};;
    (4,-4)*{};(12,4)*{} **\crv{(7,-4) & (9,4)};
    (4,4)*{};(12,-4)*{} **\crv{(7,4) & (9,-4)};
    (12,-4)*{};(12,4)*{} **\crv{(18,-4) & (18,4)}?(.5)*\dir{<};
  (-2,-7)*{\bullet};
 \endxy}
\;\;+\;\; \delta_{n,1} \;
 \vcenter{
 \xy 0;/r.13pc/:
    (0,-12)*{}; (0,12) **\dir{-}?(0)*\dir{<};
    (12.3,-4)*{\ccbub{\spadesuit+1}{}}; (9,8)*{1};
\endxy}
\qquad \qquad
\gam_9  =\;\; \vcenter{   \xy 0;/r.13pc/:
    (-12,18)*{}; (-12,-4)*{} **\dir{-} ?(1)*\dir{>};
    (-4,-4)*{};(4,4)*{} **\crv{(-4,-1) & (4,1)}?(1)*\dir{>};
    (4,-4)*{};(-4,4)*{} **\crv{(4,-1) & (-4,1)}?(1)*\dir{};?(0)*\dir{<};
    (-4,4)*{};(4,12)*{} **\crv{(-4,7) & (4,9)};
    (4,4)*{};(-4,12)*{} **\crv{(4,7) & (-4,9)};
    (-4,12)*{};(4,12)*{} **\crv{(-4,18) & (4,18)}?(1)*\dir{>};
 \endxy}
\;\; -\;\; \delta_{n,1}\;\vcenter{
 \xy 0;/r.13pc/:
    (4,-12)*{};(-4,-12)*{} **\crv{(4,-4) & (-4,-4)} ?(0)*\dir{<};
    (-12,-12)*{}; (-12,12) **\dir{-}?(0)*\dir{<};
    (2,8)*{1};
\endxy}
\]

The map $\delt\onen$ is defined via $\gam\onen$ using symmetry 2-functors:
\begin{equation} \label{eq_delt_def}
   \delt\onen:= \cal{F}\hat{\varrho}^{\tpsi}\circ\tpsi(\gam\onen)\circ \varrho^{\tpsi}\cal{F}.
\end{equation}
More explicitly,
\[
\delt_1 =\;\; \vcenter{
 \xy 0;/r.13pc/:
    (-4,12)*{};(4,-4)*{} **\crv{(-4,1) & (4,-1)};
    (4,12)*{};(-4,-4)*{} **\crv{(4,1) & (-4,-1)}?(0)*\dir{<};
    (4,-4)*{};(12,-12)*{} **\crv{(4,-7) & (12,-9)}?(1)*\dir{>};
    (12,-4)*{};(4,-12)*{} **\crv{(12,-7) & (4,-9)}?(1)*\dir{>};
    (-4,-4)*{}; (-4,-12) **\dir{-};
    (12,12)*{}; (12,-4) **\dir{-};
    (4,-4)*{\bullet}; 
\endxy}
 - \;\; \vcenter{
 \xy 0;/r.13pc/:
    (-4,12)*{};(4,-4)*{} **\crv{(-4,1) & (4,-1)};
    (4,12)*{};(-4,-4)*{} **\crv{(4,1) & (-4,-1)}?(0)*\dir{<};
    (4,-4)*{};(12,-12)*{} **\crv{(4,-7) & (12,-9)}?(1)*\dir{>};
    (12,-4)*{};(4,-12)*{} **\crv{(12,-7) & (4,-9)}?(1)*\dir{>};
    (-4,-4)*{}; (-4,-12) **\dir{-};
    (12,12)*{}; (12,-4) **\dir{-};
    (-4,-6)*{\bullet}; 
\endxy}
 \;\; +\;\;
 \sum_{\xy (0,2)*{\scs f_1+f_2+f_3}; (0,-1)*{\scs =n-1};(0,-4)*{\scs 1 \leq f_1}; \endxy}
\vcenter{  \xy 0;/r.13pc/:
  (-4,-12)*{};(-12,12)*{} **\crv{(-4,-4) & (-12,4)} ?(0)*\dir{<};
  (4,-12)*{}="t1";  (-12,-12)*{}="t2";
  "t1";"t2" **\crv{(4,-5) & (-12,-5)};?(0)*\dir{<}
   ?(.75)*\dir{}+(0,0)*{\bullet}+(-2,3.5)*{\scs f_1};
  (4,12)*{}="t1";  (-4,12)*{}="t2";
  "t2";"t1" **\crv{(-4,5) & (4,5)}; ?(0)*\dir{<}
    ?(.5)*\dir{}+(0,0)*{\bullet}+(2,-3.5)*{\scs f_3};
  (14,-2)*{\ccbub{\spadesuit+f_2}{}}; 
  \endxy }
\qquad
\delt_2 =\;\;
 \sum_{\xy (0,2)*{\scs f_1+f_2}; (0,-1)*{\scs =n-1};(0,-4)*{\scs 1 \leq f_1}; \endxy}
\vcenter{  \xy 0;/r.13pc/:
  (-4,-12)*{};(-12,12)*{} **\crv{(-4,-4) & (-12,4)} ?(0)*\dir{<};
  (4,-12)*{}="t1";  (-12,-12)*{}="t2";
  "t1";"t2" **\crv{(4,-5) & (-12,-5)};?(0)*\dir{<}
   ?(.75)*\dir{}+(0,0)*{\bullet}+(-3,3)*{\scs f_1};
  (4,2)*{\ccbub{\spadesuit+f_2}{}}; 
  \endxy }
\]
\[
\delt_3  =\;\; -\;
\vcenter{   \xy 0;/r.13pc/:
    (-12,-18)*{}; (-12,4)*{} **\dir{-} ?(0)*\dir{<};
    (-4,4)*{};(4,-4)*{} **\crv{(-4,1) & (4,-1)}?(0)*\dir{<};
    (4,4)*{};(-4,-4)*{} **\crv{(4,1) & (-4,-1)}?(1)*\dir{};?(1)*\dir{>};
    (-4,-4)*{};(4,-12)*{} **\crv{(-4,-7) & (4,-9)};
    (4,-4)*{};(-4,-12)*{} **\crv{(4,-7) & (-4,-9)};
    (-4,-12)*{};(4,-12)*{} **\crv{(-4,-18) & (4,-18)}?(0)*\dir{<};
 \endxy}
\;\; +\;\; \delta_{n,1}\;\vcenter{
 \xy 0;/r.13pc/:
    (4,12)*{};(-4,12)*{} **\crv{(4,4) & (-4,4)} ?(1)*\dir{>};
    (-12,12)*{}; (-12,-12) **\dir{-}?(1)*\dir{>};
    (2,-8)*{1};
\endxy}
\qquad
\delt_4 \;\; =\;\;
 \vcenter{   \xy 0;/r.13pc/:
    (-4,-12)*{};(4,4)*{} **\crv{(-1,-4) & (1,4)}?(0)*\dir{<};
    (-4,12)*{};(4,-4)*{} **\crv{(-1,4) & (1,-4)}?(1)*\dir{};;
    (4,-4)*{};(12,4)*{} **\crv{(7,-4) & (9,4)};
    (4,4)*{};(12,-4)*{} **\crv{(7,4) & (9,-4)};
    (12,-4)*{};(12,4)*{} **\crv{(18,-4) & (18,4)}?(.5)*\dir{<};
  (-2,-7)*{\bullet}; 
 \endxy}
\;\;+\;\; \delta_{n,1}\;
 \vcenter{
 \xy 0;/r.13pc/:
    (0,-12)*{}; (0,12) **\dir{-}?(0)*\dir{<};
    (12.3,-4)*{\ccbub{\spadesuit+1}{}}; (9,8)*{1};
\endxy}
 \]
\[
\delt_5 =\;\; \vcenter{
 \xy 0;/r.13pc/:
    (-4,12)*{};(4,-4)*{} **\crv{(-4,1) & (4,-1)};
    (4,12)*{};(-4,-4)*{} **\crv{(4,1) & (-4,-1)}?(0)*\dir{<};
    (4,-4)*{};(12,-12)*{} **\crv{(4,-7) & (12,-9)}?(1)*\dir{>};
    (12,-4)*{};(4,-12)*{} **\crv{(12,-7) & (4,-9)}?(1)*\dir{>};
    (-4,-4)*{}; (-4,-12) **\dir{-};
    (12,12)*{}; (12,-4) **\dir{-};
    (4,-4)*{\bullet}; 
\endxy}
 - \;\; \vcenter{
 \xy 0;/r.13pc/:
    (-4,12)*{};(4,-4)*{} **\crv{(-4,1) & (4,-1)};
    (4,12)*{};(-4,-4)*{} **\crv{(4,1) & (-4,-1)}?(0)*\dir{<};
    (4,-4)*{};(12,-12)*{} **\crv{(4,-7) & (12,-9)}?(1)*\dir{>};
    (12,-4)*{};(4,-12)*{} **\crv{(12,-7) & (4,-9)}?(1)*\dir{>};
    (-4,-4)*{}; (-4,-12) **\dir{-};
    (12,12)*{}; (12,-4) **\dir{-};
    (-4,-6)*{\bullet}; 
\endxy}
 \;\; +\;\;
 \sum_{\xy (0,2)*{\scs f_1+f_2+f_3}; (0,-1)*{\scs =n-1}; \endxy}
\vcenter{  \xy 0;/r.13pc/:
  (-4,-12)*{};(-12,12)*{} **\crv{(-4,-4) & (-12,4)} ?(0)*\dir{<};
  (4,-12)*{}="t1";  (-12,-12)*{}="t2";
  "t1";"t2" **\crv{(4,-5) & (-12,-5)};?(0)*\dir{<}
   ?(.75)*\dir{}+(0,0)*{\bullet}+(-2,3.5)*{\scs f_1};
  (4,12)*{}="t1";  (-4,12)*{}="t2";
  "t2";"t1" **\crv{(-4,5) & (4,5)}; ?(0)*\dir{<}
    ?(.5)*\dir{}+(0,0)*{\bullet}+(2,-3.5)*{\scs f_3};
  (14,-2)*{\ccbub{\spadesuit+f_2}{}}; 
  \endxy }
\qquad
\delt_6 \;\; = \;\;-\; \vcenter{
 \xy 0;/r.13pc/:
    (-4,-4)*{};(-12,12)*{} **\crv{(-4,3) & (-12,5)}?(0)*\dir{<};
    (-12,-4)*{};(-4,12)*{} **\crv{(-12,3) & (-4,5)}?(1)*\dir{>};
    (4,-4)*{}; (4,12) **\dir{-}?(0)*\dir{<};
\endxy}
 \;\; +\;\;
 \sum_{\xy (0,2)*{\scs f_1+f_2+f_3}; (0,-1)*{\scs =n-1}; \endxy}
\vcenter{  \xy 0;/r.13pc/:
  (-4,-12)*{};(-12,12)*{} **\crv{(-4,-4) & (-12,4)} ?(0)*\dir{<};
  (4,-12)*{}="t1";  (-12,-12)*{}="t2";
  "t1";"t2" **\crv{(4,-5) & (-12,-5)};?(0)*\dir{<}
   ?(.25)*\dir{}+(0,0)*{\bullet}+(1,4)*{\scs f_1};
  (4,12)*{}="t1";  (-4,12)*{}="t2";
  "t2";"t1" **\crv{(-4,5) & (4,5)}; ?(0)*\dir{<}
    ?(.5)*\dir{}+(0,0)*{\bullet}+(2,-3.5)*{\scs f_3};
  (14,-2)*{\ccbub{\spadesuit+f_2}{}}; 
  \endxy }
\]
\[
 \delt_7 = \;\; \vcenter{
 \xy 0;/r.13pc/:
    (4,-12)*{};(-4,4)*{} **\crv{(4,-1) & (-4,1)}?(0)*\dir{<};
    (-4,-12)*{};(4,4)*{} **\crv{(-4,-1) & (4,1)}?(0)*\dir{<};
    (-4,4)*{};(-12,12)*{} **\crv{(-4,7) & (-12,9)};
    (-12,4)*{};(-4,12)*{} **\crv{(-12,7) & (-4,9)}?(1)*\dir{>};;
    (4,4)*{}; (4,12) **\dir{-};
    (-12,-12)*{}; (-12,4) **\dir{-};
  (3,-5.5)*{\bullet}; 
\endxy}
 - \;\;
 \vcenter{
 \xy 0;/r.13pc/:
    (4,-12)*{};(-4,4)*{} **\crv{(4,-1) & (-4,1)}?(0)*\dir{<};
    (-4,-12)*{};(4,4)*{} **\crv{(-4,-1) & (4,1)}?(0)*\dir{<};
    (-4,4)*{};(-12,12)*{} **\crv{(-4,7) & (-12,9)};
    (-12,4)*{};(-4,12)*{} **\crv{(-12,7) & (-4,9)}?(1)*\dir{>};;
    (4,4)*{}; (4,12) **\dir{-};
    (-12,-12)*{}; (-12,4) **\dir{-};
  (-12.3,2)*{\bullet}; 
\endxy}
  -\;\;
 \sum_{\xy (0,2)*{\scs f_1+f_2+f_3+f_4}; (0,-1)*{\scs =n-2}; \endxy}
\vcenter{  \xy 0;/r.13pc/:
  (4,-12)*{};(-12,12)*{} **\crv{(4,-4) & (-12,4)} ?(0)*\dir{<}
   ?(.75)*\dir{}+(0,0)*{\bullet}+(-4,-1)*{\scs f_2};;
  (-4,-12)*{}="t1";  (-12,-12)*{}="t2";
  "t1";"t2" **\crv{(-4,-5) & (-12,-5)};?(0)*\dir{<}
   ?(.25)*\dir{}+(0,0)*{\bullet}+(0,3.5)*{\scs f_1};
  (4,12)*{}="t1";  (-4,12)*{}="t2";
  "t2";"t1" **\crv{(-4,5) & (4,5)}; ?(0)*\dir{<}
    ?(.5)*\dir{}+(0,0)*{\bullet}+(2,-3.5)*{\scs f_3};
  (14,-2)*{\ccbub{\spadesuit+f_4}{}}; 
  \endxy }
\]
\[
 \delt_8= \;\; \vcenter{
 \xy 0;/r.13pc/:
    (4,-12)*{};(-4,4)*{} **\crv{(4,-1) & (-4,1)}?(0)*\dir{<};
    (-4,-12)*{};(4,4)*{} **\crv{(-4,-1) & (4,1)}?(0)*\dir{<};
    (-4,4)*{};(-12,12)*{} **\crv{(-4,7) & (-12,9)};
    (-12,4)*{};(-4,12)*{} **\crv{(-12,7) & (-4,9)}?(1)*\dir{>};;
    (4,4)*{}; (4,12) **\dir{-};
    (-12,-12)*{}; (-12,4) **\dir{-};
  (3,-5.5)*{\bullet}; 
\endxy}
 - \;\;
 \vcenter{
 \xy 0;/r.13pc/:
    (4,-12)*{};(-4,4)*{} **\crv{(4,-1) & (-4,1)}?(0)*\dir{<};
    (-4,-12)*{};(4,4)*{} **\crv{(-4,-1) & (4,1)}?(0)*\dir{<};
    (-4,4)*{};(-12,12)*{} **\crv{(-4,7) & (-12,9)};
    (-12,4)*{};(-4,12)*{} **\crv{(-12,7) & (-4,9)}?(1)*\dir{>};;
    (4,4)*{}; (4,12) **\dir{-};
    (-12,-12)*{}; (-12,4) **\dir{-};
  (-12.3,2)*{\bullet}; 
\endxy}
 \;\; -\;\;
 \sum_{\xy (0,2)*{\scs f_1+f_2+f_3+f_4}; (0,-1)*{\scs =n-2};
  (0,-4)*{\scs 1 \leq f_3};\endxy}
\vcenter{  \xy 0;/r.13pc/:
  (4,-12)*{};(-12,12)*{} **\crv{(4,-4) & (-12,4)} ?(0)*\dir{<}
   ?(.75)*\dir{}+(0,0)*{\bullet}+(-4,-1)*{\scs f_2};;
  (-4,-12)*{}="t1";  (-12,-12)*{}="t2";
  "t1";"t2" **\crv{(-4,-5) & (-12,-5)};?(0)*\dir{<}
   ?(.25)*\dir{}+(0,0)*{\bullet}+(0,3.5)*{\scs f_1};
  (4,12)*{}="t1";  (-4,12)*{}="t2";
  "t2";"t1" **\crv{(-4,5) & (4,5)}; ?(0)*\dir{<}
    ?(.5)*\dir{}+(0,0)*{\bullet}+(2,-3.5)*{\scs f_3};
  (14,-2)*{\ccbub{\spadesuit+f_4}{}}; 
  \endxy }
\quad
\delt_9  =\;\; - \;
 \vcenter{
 \xy 0;/r.13pc/:
    (-4,12)*{};(4,12)*{} **\crv{(-4,4) & (4,4)}?(1)*\dir{>};
    (12,-12)*{}; (12,12) **\dir{-}?(0)*\dir{<};
\endxy}
 \;\; +\;\;\sum_{\xy (0,2)*{\scs f_1+f_2+f_3}; (0,-1)*{\scs =n-1}; (0,-4)*{\scs 1 \leq f_3};\endxy}
\vcenter{  \xy 0;/r.13pc/:
  (-12,12)*{}; (-12,-12)*{} **\dir{-} ?(1)*\dir{>};
   ?(.5)*\dir{}+(0,0)*{\bullet}+(4,1)*{\scs f_1};
  (4,12)*{}="t1";  (-4,12)*{}="t2";
  "t2";"t1" **\crv{(-4,5) & (4,5)}; ?(1)*\dir{>}
    ?(.75)*\dir{}+(0,0)*{\bullet}+(3,-1)*{\scs f_3};
 (2,-5)*{\ccbub{\spadesuit+f_2}{}}; 
  \endxy }
\]
\end{defn}

\begin{rem}
Note that when $n\leq 0$ all terms involving bubbles are zero.
\end{rem}

\begin{prop} \label{prop_gam_delt}
The maps $\gam$ and $\delt$ defined above are mutually-inverse chain homotopy equivalences between $\cal{F}\cal{C}\onen$ and $\cal{C}\cal{F}\onen$.
\end{prop}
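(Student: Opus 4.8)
The plan is to establish three things in turn: that $\gam$ is a chain map, that $\delt$ is a chain map, and that the two composites $\delt\gam$ and $\gam\delt$ are homotopic to the respective identities. The key structural observation, which cuts the labor roughly in half, is that $\delt$ is built from $\gam$ by conjugating with the chain isomorphisms $\varrho^{\tpsi}$, $\hat{\varrho}^{\tpsi}$ of Proposition~\ref{prop_homotopy_sym} and applying the symmetry $2$-functor $\tpsi$, as recorded in \eqref{eq_delt_def}. Since $\varrho^{\tpsi}\cal{F}$ and $\cal{F}\hat{\varrho}^{\tpsi}$ are chain isomorphisms and $\tpsi$ carries chain maps to chain maps (reversing homological and arrow direction), the map $\delt$ is automatically a chain map the moment $\gam$ is one. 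Thus no separate verification for $\delt$ is needed, and by the same token one of the two homotopy identities below will follow from the other by transport along $\tpsi$.

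First I would check that $\gam$ is a chain map. Writing $\cal{F}\cal{C}\onen$ and $\cal{C}\cal{F}\onen$ as the three-term complexes displayed in Definition~\ref{def_xi_minus}, with differentials obtained from the matrices in \eqref{eq_4_casimir} by horizontally composing with $\cal{F}$ on the appropriate side, the chain-map condition $\gam_{i+1}d^{\cal{F}\cal{C}}_i = d^{\cal{C}\cal{F}}_i\gam_i$ for $i=-1,0$ becomes a finite list of $2\times 2$ matrix equations whose entries are diagrammatic identities among the components $\gam_1,\dots,\gam_9$ pre- and post-composed with dots, cups, caps, and crossings. Each such identity I would reduce to the defining relations of $\Ucat$: the nilHecke relations \eqref{eq_nil_rels}--\eqref{eq_nil_dotslide} and the dot-slide \eqref{eq_ind_dotslide} to transport dots across crossings; the reduction and dotted-curl relations \eqref{eq_reduction} and \eqref{eq_reduction-dots} together with \eqref{eq_ident_decomp}, \eqref{eq_r3_extra}, \eqref{eq_r3_extra2} to resolve curls and produce the bubble sums; the bubble-slide relations \eqref{eq_bubslide1}--\eqref{eq_bubslide2} to normalize those sums; and the three auxiliary Propositions proved just above (the step-function vanishing and the two curl-to-bubble identities). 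Throughout one must respect the case distinctions in $n$: the $\delta_{n,1}$ correction terms in $\gam_8$ and $\gam_9$, and the collapse of every bubble sum when $n\leq 0$ recorded in the Remark following Definition~\ref{def_xi_minus}.

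Next I would show $\delt\gam\simeq\Id_{\cal{F}\cal{C}\onen}$ and $\gam\delt\simeq\Id_{\cal{C}\cal{F}\onen}$. Because $\cal{E}\cal{F}\cal{F}\onen$ and $\cal{F}\cal{E}\cal{F}\onen$ are genuinely non-isomorphic $1$-morphisms, differing by summands $\onen\{\cdot\}\cal{F}$ dictated by the categorified $\mathfrak{sl}_2$ relation, the complexes $\cal{F}\cal{C}\onen$ and $\cal{C}\cal{F}\onen$ are only homotopy equivalent, and the composites above will not equal the identity on the nose. I would therefore compute each composite entrywise and exhibit explicit null-homotopies $h$ and $h'$, of the same diagrammatic shape as the homotopies appearing in the proof of Proposition~\ref{prop_homotopy_sym}, satisfying $\delt\gam-\Id = hd+dh$ and $\gam\delt-\Id = h'd+dh'$. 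Verifying these homotopy equations is again a matter of applying \eqref{eq_reduction}, \eqref{eq_ident_decomp}, and the bubble-slides, and of matching the resulting bubble sums (including fake bubbles) termwise; the second identity I would then deduce from the first by applying $\tpsi$ and conjugating by $\varrho^{\tpsi}$, exploiting \eqref{eq_delt_def}.

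The main obstacle is not conceptual but the volume and delicacy of the calculus in the middle two steps. The hardest individual squares are those at the homological transition $0\to 1$ involving the cap morphisms, where the step-function Proposition and the dotted-curl relation \eqref{eq_reduction-dots} must be used to annihilate curls before the bubble sums can be matched; and the most error-prone bookkeeping is keeping the $\delta_{n,1}$ contributions and the positivity-of-bubbles vanishing \eqref{eq_positivity_bubbles} mutually consistent across all nine components, so that a single pair of homotopies $h$, $h'$ works uniformly for every weight $n$.
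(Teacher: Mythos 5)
Your proposal follows essentially the same route as the paper: $\gam$ is verified to be a chain map by direct diagrammatic computation, $\delt$ inherits this from its definition \eqref{eq_delt_def} via $\tpsi$-conjugation, and the two composites are shown to be homotopic to the identities by exhibiting explicit null-homotopies built from cap-cup-bubble diagrams (the paper writes both families $h$ and $h'$ down explicitly and omits the verification, whereas you propose to transport one from the other via $\tpsi$ — a minor economy consistent with the symmetry arguments used elsewhere in the paper). No gap.
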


\begin{proof}
To check that the maps $\gam$ and $\delt$ are chain maps it suffices to verify this claim for the map $\gam$ since $\delt$ is defined from $\gam$ via \eqref{eq_delt_def}. This is proven by direct computation.

Below we give explicit chain homotopies $\Id - \gam \delt =hd+dh$ and $\Id-\delt\gam = h'd+dh'$ showing that $\gam$ and $\delt$ are mutually-inverse homotopy equivalences.

\begin{equation}
 \xy
  (-65,35)*+{\E{}\F{}\F{} \onen \{2\}}="1'";
  (-45,15)*+{\F{}\onen \{1+n\}}="2'";
  (-10,35)*+{\E{}\F{}\F{} \onen }="3'";
  (10,15)*+{\E{}\F{}\F{} \onen}="4'";
  (45,35)*+{\E{}\F{}\F{} \onen \{-2\}}="5'";
  (65,15)*+{\F{}\onen \{n-3\}}="6'";
   {\ar^{\sUupdot\sUdown\sUdown} "1'";"3'"};
   {\ar^{} "1'" ;"4'"};
   "1'"+(22,-3)*{\sUup\sUdowndot\sUdown};
   "3'"+(-7,-9)*{\sUcupl\sUdown};
   "3'"+(15,-8)*{\text{$\sUcapr\sUdown$}};
   "5'"+(-6.5,-8)*{\sUupdot\sUdown\sUdown};
   {\ar "2'";"3'"};
   {\ar_(.4){\text{$\sUcupl\sUdown$}} "2'";"4'"};
   {\ar^-{-\;\sUup\sUdowndot\sUdown} "3'";"5'"};
   {\ar "3'";"6'"};
   {\ar "4'";"5'"};
   {\ar_(.4){-\;\;\text{$\sUcapr\sUdown$}} "4'";"6'"};
  (-65,-15)*+{\F\E\F \onen \{2\}}="1";
  (-45,-35)*+{\F\onen \{1-n\}}="2";
  (-10,-15)*+{\F\E\F \onen }="3";
  (10,-35)*+{\F\E\F \onen}="4";
  (45,-15)*+{\F\E\F \onen \{-2\}}="5";
  (65,-35)*+{\F\onen \{n-1\}}="6";
   {\ar^{\sUdown\sUupdot\sUdown} "1";"3"};
   {\ar^{} "1" ;"4"};
   "4"+(-12,8)*{\sUdown\sUup\sUdowndot};
   "3"+(-16,-5)*{\sUdown\sUcupl};
   "6"+(-18,10)*{\text{$\sUdown\sUcapr$}};
   "4"+(14,4)*{\sUdown\sUupdot\sUdown};
   {\ar "2";"3"};
   {\ar_(.4){\text{$\sUdown\sUcupl$}} "2";"4"};
   {\ar^(.63){-\;\sUdown\sUup\sUdowndot} "3";"5"};
   {\ar "3";"6"};
   {\ar "4";"5"};
   {\ar_(.4){-\text{$\sUdown\sUcapr$}} "4";"6"};
   {\textcolor[rgb]{1.00,0.00,0.00}{\ar@/_1.8pc/_{h_1} "3'";"1'"}};
   {\textcolor[rgb]{1.00,0.00,0.00}{\ar@/_1.4pc/_{h_2} "3'";"2'"}};
   {\textcolor[rgb]{1.00,0.00,0.00}{\ar@/^1.4pc/^(.75){h_3}
   "4'";"1'"}};
   {\textcolor[rgb]{1.00,0.00,0.00}{\ar@/_1.8pc/_{h_4} "5'";"3'"}};
   {\textcolor[rgb]{1.00,0.00,0.00}{\ar@/_1.4pc/_{h_5} "5'";"4'"}};
   {\textcolor[rgb]{1.00,0.00,0.00}{\ar@/_1.2pc/_(.65){h_6}
   "6'";"4'"}};
   {\textcolor[rgb]{1.00,0.00,0.00}{\ar@/_1.8pc/_{h'_1} "3";"1"}};
   {\textcolor[rgb]{1.00,0.00,0.00}{\ar@/^.8pc/^(.75){h'_2}
   "4";"1"}};
   {\textcolor[rgb]{1.00,0.00,0.00}{\ar@/^.8pc/^(.75){h'_7}
   "6";"3"}};
   {\textcolor[rgb]{1.00,0.00,0.00}{\ar@/_1.8pc/_(.35){h'_{5}}
   "5";"3"}};
   {\textcolor[rgb]{1.00,0.00,0.00}{\ar@/^1.6pc/^{h'_8}
   "6";"4"}};
   {\textcolor[rgb]{1.00,0.00,0.00}{\ar@/^1.6pc/^{h'_{4}}
   "4";"2"}};
   {\textcolor[rgb]{1.00,0.00,0.00}{\ar@/^.8pc/^(.25){h'_3}
   "3";"2"}};
    {\textcolor[rgb]{1.00,0.00,0.00}{\ar@/_1.3pc/_(.58){h'_{6}}
   "5";"4"+(3,3)}};
 \endxy
\end{equation}

To simplify notation we write
\begin{equation}
  \Theta(n) :=
\left\{
\begin{array}{cc}
  1 & \text{if $n \geq 0$,} \\
  0 & \text{if $n<0$.}
\end{array}
\right.
\end{equation}

\begin{equation}
h_1 \;\; = \;\;
\vcenter{
 \xy 0;/r.17pc/:
    (-4,-4)*{};(-12,12)*{} **\crv{(-4,3) & (-12,5)}?(0)*\dir{<};
    (-12,-4)*{};(-4,12)*{} **\crv{(-12,3) & (-4,5)}?(0)*\dir{<};
    (-20,-4)*{}; (-20,12) **\dir{-}?(1)*\dir{>};
\endxy}
\;\; + \;\;
\sum_{\xy (0,2)*{\scs f_1+f_2+f_3+f_4}; (0,-1)*{\scs =n-3};\endxy}
\vcenter{  \xy 0;/r.15pc/:
  (-4,-12)*{};(4,12)*{} **\crv{(-4,-4) & (4,4)} ?(0)*\dir{<}
  ?(.75)*\dir{}+(0,0)*{\bullet}+(3.5,1)*{\scs f_2};
  (4,-12)*{}="t1";  (-12,-12)*{}="t2";
  "t1";"t2" **\crv{(4,-5) & (-12,-5)};?(0)*\dir{<}
   ?(.75)*\dir{}+(0,0)*{\bullet}+(-1,3.5)*{\scs f_1};
  (-4,12)*{}="t1";  (-12,12)*{}="t2";
  "t1";"t2" **\crv{(-4,5) & (-12,5)}; ?(1)*\dir{>}
    ?(.5)*\dir{}+(0,0)*{\bullet}+(1,-3)*{\scs f_3};
  (14,-2)*{\ccbub{\spadesuit+f_4}{}};
  \endxy }
\end{equation}
\begin{equation}
h_2 \;\; = \;\;
\sum_{\xy (0,2)*{\scs f_1+f_2+f_3}; (0,-1)*{\scs =n-2};\endxy}
\vcenter{  \xy 0;/r.15pc/:
  (-4,-12)*{};(-4,12)*{} **\crv{(-4,-4)  &(-4,4)} ?(0)*\dir{<}
  ?(.75)*\dir{}+(0,0)*{\bullet}+(3.5,1)*{\scs f_2};
  (4,-12)*{}="t1";  (-12,-12)*{}="t2";
  "t1";"t2" **\crv{(4,-5) & (-12,-5)};?(0)*\dir{<}
   ?(.75)*\dir{}+(0,0)*{\bullet}+(-1,3.5)*{\scs f_1};
  (14,0)*{\ccbub{\spadesuit+f_3}{}};
  \endxy }
  \qquad
-h_3 = h_4 \;\; = \;\;
\vcenter{
 \xy 0;/r.17pc/:
    (-4,-4)*{};(-12,12)*{} **\crv{(-4,3) & (-12,5)}?(0)*\dir{<};
    (-12,-4)*{};(-4,12)*{} **\crv{(-12,3) & (-4,5)}?(0)*\dir{<};
    (-20,-4)*{}; (-20,12) **\dir{-}?(1)*\dir{>};
\endxy}
\end{equation}
\begin{equation}
h_5 \;\; = \;\;
\vcenter{
 \xy 0;/r.17pc/:
    (-4,-4)*{};(-12,12)*{} **\crv{(-4,3) & (-12,5)}?(0)*\dir{<};
    (-12,-4)*{};(-4,12)*{} **\crv{(-12,3) & (-4,5)}?(0)*\dir{<};
    (-20,-4)*{}; (-20,12) **\dir{-}?(1)*\dir{>};
\endxy}
\;\; + \;\;
\sum_{\xy (0,2)*{\scs f_1+f_2+f_3+f_4}; (0,-1)*{\scs =n-3};\endxy}
\vcenter{  \xy 0;/r.15pc/:
  (-4,12)*{};(4,-12)*{} **\crv{(-4,4) & (4,-4)} ?(1)*\dir{>}
  ?(.75)*\dir{}+(0,0)*{\bullet}+(3.5,-1)*{\scs f_2};
  (4,12)*{}="t1";  (-12,12)*{}="t2";
  "t1";"t2" **\crv{(4,5) & (-12,5)};?(1)*\dir{>}
   ?(.75)*\dir{}+(0,0)*{\bullet}+(-1,-3.5)*{\scs f_1};
  (-4,-12)*{}="t1";  (-12,-12)*{}="t2";
  "t1";"t2" **\crv{(-4,-5) & (-12,-5)}; ?(0)*\dir{<}
    ?(.5)*\dir{}+(0,0)*{\bullet}+(1,3)*{\scs f_3};
  (14,2)*{\ccbub{\spadesuit+f_4}{}};
  \endxy }
\qquad
h_6 \;\; = \;\; -\;
\sum_{\xy (0,2)*{\scs f_1+f_2+f_3}; (0,-1)*{\scs =n-2};\endxy}
\vcenter{  \xy 0;/r.15pc/:
  (-4,12)*{};(-4,-12)*{} **\crv{(-4,4)  &(-4,-4)} ?(1)*\dir{>}
  ?(.75)*\dir{}+(0,0)*{\bullet}+(3.5,-1)*{\scs f_2};
  (4,12)*{}="t1";  (-12,12)*{}="t2";
  "t1";"t2" **\crv{(4,5) & (-12,5)};?(1)*\dir{>}
   ?(.75)*\dir{}+(0,0)*{\bullet}+(-1,-3.5)*{\scs f_1};
  (14,0)*{\ccbub{\spadesuit+f_3}{}};
  \endxy }
\end{equation}
\begin{equation}
h'_1  =   h'_{6} \;\; = \;\; -\;
 \vcenter{
 \xy 0;/r.15pc/:
    (4,-4)*{};(-4,4)*{} **\crv{(4,-1) & (-4,1)}?(0)*\dir{<};
    (-4,-4)*{};(4,4)*{} **\crv{(-4,-1) & (4,1)}?(1)*\dir{};
    (-4,4)*{};(-12,12)*{} **\crv{(-4,7) & (-12,9)}?(1)*\dir{};
    (-12,4)*{};(-4,12)*{} **\crv{(-12,7) & (-4,9)}?(1)*\dir{};
    (4,12)*{};(-4,20)*{} **\crv{(4,15) & (-4,17)} ?(1)*\dir{>};
    (-4,12)*{};(4,20)*{} **\crv{(-4,15) & (4,17)}?(0)*\dir{};
    (4,4)*{}; (4,12) **\dir{-};
    (-12,-4)*{}; (-12,4) **\dir{-} ?(0)*\dir{<};
    (-12,12)*{}; (-12,20) **\dir{-};
\endxy}
 \qquad
h'_2 = -h'_5 \;\; = \;\;
 \vcenter{
 \xy 0;/r.15pc/:
    (4,-4)*{};(-4,4)*{} **\crv{(4,-1) & (-4,1)}?(0)*\dir{<};
    (-4,-4)*{};(4,4)*{} **\crv{(-4,-1) & (4,1)}?(1)*\dir{};
    (-4,4)*{};(-12,12)*{} **\crv{(-4,7) & (-12,9)}?(1)*\dir{};
    (-12,4)*{};(-4,12)*{} **\crv{(-12,7) & (-4,9)}?(1)*\dir{};
    (4,12)*{};(-4,20)*{} **\crv{(4,15) & (-4,17)} ?(1)*\dir{>};
    (-4,12)*{};(4,20)*{} **\crv{(-4,15) & (4,17)}?(0)*\dir{};
    (4,4)*{}; (4,12) **\dir{-};
    (-12,-4)*{}; (-12,4) **\dir{-} ?(0)*\dir{<};
    (-12,12)*{}; (-12,20) **\dir{-};
\endxy}
 \;\; - \sum_{\xy (0,2)*{\scs f_1+f_2+f_3}; (0,-1)*{\scs =n-2}; \endxy} \;
\vcenter{  \xy 0;/r.15pc/:
  (-12,12)*{}; (-12,-12)*{} **\dir{-} ?(1)*\dir{>};
  (4,-12)*{}="t1";  (-4,-12)*{}="t2";
  "t2";"t1" **\crv{(-4,-5) & (4,-5)}; ?(1)*\dir{>}
    ?(.25)*\dir{}+(0,0)*{\bullet}+(-3,3)*{\scs f_3};
  (4,12)*{}="t1";  (-4,12)*{}="t2";
  "t2";"t1" **\crv{(-4,5) & (4,5)}; ?(0)*\dir{<}
    ?(.25)*\dir{}+(0,0)*{\bullet}+(-3,-1)*{\scs f_1};
  (10,1)*{\ccbub{\spadesuit+f_2}{}};
  \endxy }
\end{equation}

\begin{equation}
h'_3 \;\; = \;\;   \vcenter{   \xy 0;/r.15pc/:
    (-18,-12)*{};(-10,-12)*{} **\crv{(-18,-4) & (-10,-4)}?(0)*\dir{<};
    (-4,-12)*{};(4,4)*{} **\crv{(-1,-4) & (1,4)}?(0)*\dir{<};
    (-4,12)*{};(4,-4)*{} **\crv{(-1,4) & (1,-4)}?(1)*\dir{};;
    (4,-4)*{};(12,4)*{} **\crv{(7,-4) & (9,4)};
    (4,4)*{};(12,-4)*{} **\crv{(7,4) & (9,-4)};
    (12,-4)*{};(12,4)*{} **\crv{(18,-4) & (18,4)}?(.5)*\dir{<};
  (8,8)*{n};
 \endxy}
 \;\; -\;\;
  \vcenter{
 \xy 0;/r.15pc/:
    (4,-4)*{};(-4,4)*{} **\crv{(4,-1) & (-4,1)}?(0)*\dir{<};
    (-4,-4)*{};(4,4)*{} **\crv{(-4,-1) & (4,1)}?(1)*\dir{};
    (-4,4)*{};(-12,12)*{} **\crv{(-4,7) & (-12,9)}?(1)*\dir{};
    (-12,4)*{};(-4,12)*{} **\crv{(-12,7) & (-4,9)}?(1)*\dir{};
    (-4,12)*{};(4,12)*{} **\crv{(-3,15) & (4,15)}?(0)*\dir{};
    (4,4)*{}; (4,12) **\dir{-};
    (-12,-4)*{}; (-12,4) **\dir{-} ?(0)*\dir{<};
    (-12,12)*{}; (-12,20) **\dir{-};
  (12,8)*{n};
\endxy}
\;\; =\;\;
\Theta(n-1)\;
 \vcenter{
 \xy 0;/r.15pc/:
    (-4,-12)*{};(4,-12)*{} **\crv{(-4,-4) & (4,-4)}?(0)*\dir{<};
    (12,12)*{}; (12,-12) **\dir{-}?(1)*\dir{>};
    (18,-8)*{n};
\endxy}
 \;\; -\;\;\sum_{\xy (0,2)*{\scs f_1+f_2+f_3}; (0,-1)*{\scs =n-1};\endxy}
\vcenter{  \xy 0;/r.15pc/:
  (-12,-12)*{}; (-12,12)*{} **\dir{-} ?(0)*\dir{<};
   ?(.5)*\dir{}+(0,0)*{\bullet}+(4,-1)*{\scs f_1};
  (4,-12)*{}="t1";  (-4,-12)*{}="t2";
  "t2";"t1" **\crv{(-4,-5) & (4,-5)}; ?(1)*\dir{>}
    ?(.75)*\dir{}+(0,0)*{\bullet}+(3,1)*{\scs f_3};
  (16,10)*{n};
  (2,5)*{\ccbub{\spadesuit+f_2}{}};
  \endxy }
\end{equation}
\begin{equation}
h'_4 \;\; = \;\; -\;
 \vcenter{
 \xy 0;/r.15pc/:
    (-4,-12)*{};(4,-12)*{} **\crv{(-4,-4) & (4,-4)}?(0)*\dir{<};
    (12,12)*{}; (12,-12) **\dir{-}?(1)*\dir{>};
\endxy}
 \;\; +\;\;\sum_{\xy (0,2)*{\scs f_1+f_2+f_3}; (0,-1)*{\scs =n-1}; (0,-4)*{\scs 1 \leq f_1};\endxy}
\vcenter{  \xy 0;/r.15pc/:
  (-12,-12)*{}; (-12,12)*{} **\dir{-} ?(0)*\dir{<};
   ?(.5)*\dir{}+(0,0)*{\bullet}+(4,-1)*{\scs f_1};
  (4,-12)*{}="t1";  (-4,-12)*{}="t2";
  "t2";"t1" **\crv{(-4,-5) & (4,-5)}; ?(1)*\dir{>}
    ?(.75)*\dir{}+(0,0)*{\bullet}+(3,1)*{\scs f_3};
  (2,5)*{\ccbub{\spadesuit+f_2}{}};
  \endxy }
\;\; = \;\;   \vcenter{
 \xy 0;/r.15pc/:
    (4,-4)*{};(-4,4)*{} **\crv{(4,-1) & (-4,1)}?(0)*\dir{<};
    (-4,-4)*{};(4,4)*{} **\crv{(-4,-1) & (4,1)}?(1)*\dir{};
    (-4,4)*{};(-12,12)*{} **\crv{(-4,7) & (-12,9)}?(1)*\dir{};
    (-12,4)*{};(-4,12)*{} **\crv{(-12,7) & (-4,9)}?(1)*\dir{};
    (-4,12)*{};(4,12)*{} **\crv{(-3,15) & (4,15)}?(0)*\dir{};
    (4,4)*{}; (4,12) **\dir{-};
    (-12,-4)*{}; (-12,4) **\dir{-} ?(0)*\dir{<};
    (-12,12)*{}; (-12,20) **\dir{-};
\endxy}
 \;\; -\;\;\sum_{\xy (0,2)*{\scs f_1+f_2}; (0,-1)*{\scs =n-1}; \endxy} \;
\vcenter{  \xy 0;/r.15pc/:
  (-12,-12)*{}; (-12,12)*{} **\dir{-} ?(0)*\dir{<};
   ?(.5)*\dir{};
  (4,-12)*{}="t1";  (-4,-12)*{}="t2";
  "t2";"t1" **\crv{(-4,-5) & (4,-5)}; ?(1)*\dir{>}
    ?(.75)*\dir{}+(0,0)*{\bullet}+(3,1)*{\scs f_1};
  (2,5)*{\ccbub{\spadesuit+f_2}{}};
  \endxy }
\end{equation}
\begin{equation}
h'_7 \;\; = \;\; -\;
 \vcenter{
 \xy 0;/r.15pc/:
    (-4,12)*{};(4,12)*{} **\crv{(-4,4) & (4,4)}?(1)*\dir{>};
    (12,12)*{}; (12,-12) **\dir{-}?(1)*\dir{>};
\endxy}
 \;\; +\;\;\sum_{\xy (0,2)*{\scs f_1+f_2+f_3}; (0,-1)*{\scs =n-1}; (0,-4)*{\scs 1 \leq f_1};\endxy}
\vcenter{  \xy 0;/r.15pc/:
  (-12,-12)*{}; (-12,12)*{} **\dir{-} ?(0)*\dir{<};
   ?(.5)*\dir{}+(0,0)*{\bullet}+(4,-1)*{\scs f_1};
  (4,12)*{}="t1";  (-4,12)*{}="t2";
  "t2";"t1" **\crv{(-4,5) & (4,5)}; ?(0)*\dir{<}
    ?(.75)*\dir{}+(0,0)*{\bullet}+(3,-1)*{\scs f_3};
  (2,-5)*{\ccbub{\spadesuit+f_2}{}};
  \endxy }
 \;\; = \;\;
  \vcenter{
 \xy 0;/r.15pc/:
    (4,4)*{};(-4,-4)*{} **\crv{(4,1) & (-4,-1)}?(0)*\dir{};
    (-4,4)*{};(4,-4)*{} **\crv{(-4,1) & (4,-1)}?(0)*\dir{<};
    (-4,-4)*{};(-12,-12)*{} **\crv{(-4,-7) & (-12,-9)}?(0)*\dir{};
    (-12,-4)*{};(-4,-12)*{} **\crv{(-12,-7) & (-4,-9)}?(1)*\dir{};
    (-4,-12)*{};(4,-12)*{} **\crv{(-3,-15) & (4,-15)}?(0)*\dir{};
    (4,-4)*{}; (4,-12) **\dir{-};
    (-12,4)*{}; (-12,-4) **\dir{-} ?(0)*\dir{};
    (-12,-12)*{}; (-12,-20) **\dir{-}?(1)*\dir{>};
\endxy}
 \;\; -\;\;\sum_{\xy (0,2)*{\scs f_1+f_2}; (0,-1)*{\scs =n-1}; \endxy}
\vcenter{  \xy 0;/r.15pc/:
  (-12,-12)*{}; (-12,12)*{} **\dir{-} ?(0)*\dir{<};
   ?(.5)*\dir{};
  (4,12)*{}="t1";  (-4,12)*{}="t2";
  "t2";"t1" **\crv{(-4,5) & (4,5)}; ?(0)*\dir{<}
    ?(.75)*\dir{}+(0,0)*{\bullet}+(3,-1)*{\scs f_1};
  (2,-5)*{\ccbub{\spadesuit+f_2}{}};
  \endxy }
\end{equation}
\begin{equation}
h'_8 \;\; = \;\; -\;  \vcenter{   \xy 0;/r.15pc/:
    (-18,12)*{};(-10,12)*{} **\crv{(-18,4) & (-10,4)}?(1)*\dir{>};
    (-4,-12)*{};(4,4)*{} **\crv{(-1,-4) & (1,4)}?(0)*\dir{<};
    (-4,12)*{};(4,-4)*{} **\crv{(-1,4) & (1,-4)}?(1)*\dir{};;
    (4,-4)*{};(12,4)*{} **\crv{(7,-4) & (9,4)};
    (4,4)*{};(12,-4)*{} **\crv{(7,4) & (9,-4)};
    (12,-4)*{};(12,4)*{} **\crv{(18,-4) & (18,4)}?(.5)*\dir{<};
  (8,8)*{n};
 \endxy}
 \;\; +\;\;
  \vcenter{
 \xy 0;/r.15pc/:
    (4,4)*{};(-4,-4)*{} **\crv{(4,1) & (-4,-1)}?(0)*\dir{};
    (-4,4)*{};(4,-4)*{} **\crv{(-4,1) & (4,-1)}?(0)*\dir{<};
    (-4,-4)*{};(-12,-12)*{} **\crv{(-4,-7) & (-12,-9)}?(0)*\dir{};
    (-12,-4)*{};(-4,-12)*{} **\crv{(-12,-7) & (-4,-9)}?(1)*\dir{};
    (-4,-12)*{};(4,-12)*{} **\crv{(-3,-15) & (4,-15)}?(0)*\dir{};
    (4,-4)*{}; (4,-12) **\dir{-};
    (-12,4)*{}; (-12,-4) **\dir{-} ?(0)*\dir{};
    (-12,-12)*{}; (-12,-20) **\dir{-}?(1)*\dir{>};
  (12,-8)*{n};
\endxy}
\;\; =\;\;
\Theta(n-1)\;
 \vcenter{
 \xy 0;/r.15pc/:
    (-4,12)*{};(4,12)*{} **\crv{(-4,4) & (4,4)}?(1)*\dir{>};
    (12,12)*{}; (12,-12) **\dir{-}?(1)*\dir{>};
    (18,-8)*{n};
\endxy}
 \;\; -\;\;\sum_{\xy (0,2)*{\scs f_1+f_2+f_3}; (0,-1)*{\scs =n-1};\endxy}
\vcenter{  \xy 0;/r.15pc/:
  (-12,-12)*{}; (-12,12)*{} **\dir{-} ?(0)*\dir{<};
   ?(.5)*\dir{}+(0,0)*{\bullet}+(4,-1)*{\scs f_1};
  (4,12)*{}="t1";  (-4,12)*{}="t2";
  "t2";"t1" **\crv{(-4,5) & (4,5)}; ?(0)*\dir{<}
    ?(.75)*\dir{}+(0,0)*{\bullet}+(3,-1)*{\scs f_3};
  (16,-10)*{n};(2,-5)*{\ccbub{\spadesuit+f_2}{}};
  \endxy }
\end{equation}

The rather nontrivial computation that these maps are the required homotopies is omitted from the paper.

\end{proof}

%
\subsubsection{Commutativity chain maps $\ogam$ and $\odelt$}
%

\begin{defn}
We introduce chain maps
\begin{equation} \label{eq_def_overline_gamma}
   \ogam = \ogam\onen:=
   \cal{E} \hat{\varrho}^{\tsigma \tomega} \circ
   \tomega\tsigma(\gam\onen)\circ \varrho^{\tsigma\tomega}\cal{E} \maps \cal{C}\cal{E}\onen \to \cal{E}\cal{C}\onen
 \end{equation}
and
 \begin{equation} \label{eq_def_overline_delta}
   \odelt = \odelt\onen :=
    \hat{\varrho}^{\tsigma \tomega} \cal{E}\circ
   \tomega\tsigma(\delt\onen)\circ \cal{E}\varrho^{\tsigma\tomega} \maps \cal{E}\cal{C}\onen\to \cal{C}\cal{E}\onen.
 \end{equation}
\end{defn}

The diagrams
\begin{equation}
      \xy
   (18,-10)*+{\cal{E}\tsigma\tomega(\cal{C}\onen)\onen}="tl";
   (-18,-10)*+{\tsigma\tomega(\cal{C}\onen)\cal{E}\onen}="tr";
   (18,10)*+{\cal{E}\cal{C}\onen}="bl";
   (-18,10)*+{\cal{C}\cal{E}\onen}="br";
   {\ar_{\tomega\tsigma(\gam)} "tr";"tl"};
   {\ar_{\cal{E} \hat{\varrho}^{\tsigma \tomega}} "tl";"bl"};
   {\ar^{\ogam} "br";"bl"};
   {\ar_{\varrho^{\tsigma \tomega}\cal{E}} "br";"tr"};
  \endxy
  \qquad
    \xy
   (-18,-10)*+{\cal{E}\tsigma\tomega(\cal{C}\onen)\onen}="tl";
   (18,-10)*+{\tsigma\tomega(\cal{C}\onen)\cal{E}\onen}="tr";
   (-18,10)*+{\cal{E}\cal{C}\onen}="bl";
   (18,10)*+{\cal{C}\cal{E}\onen}="br";
   {\ar_{ \tomega\tsigma(\delt)} "tl";"tr"};
   {\ar_{\cal{E}\varrho^{\tsigma \tomega}} "bl";"tl"};
   {\ar^{\odelt} "bl";"br"};
   {\ar_{\hat{\varrho}^{\tsigma \tomega}\cal{E}} "tr";"br"};
  \endxy.
\end{equation}
 commute by definition.

\begin{prop}
The maps $\ogam$ and $\odelt$ defined above are mutually-inverse chain homotopy equivalences between $\cal{E}\cal{C}\onen$ and $\cal{C}\cal{E}\onen$.
\end{prop}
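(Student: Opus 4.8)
The plan is to deduce this proposition formally from Proposition~\ref{prop_gam_delt}, which already carries all of the nontrivial computational content: that $\gam$ and $\delt$ are mutually-inverse chain homotopy equivalences between $\cal{F}\cal{C}\onen$ and $\cal{C}\cal{F}\onen$. The maps $\ogam$ and $\odelt$ were defined in \eqref{eq_def_overline_gamma} and \eqref{eq_def_overline_delta} as conjugates of $\tomega\tsigma(\gam)$ and $\tomega\tsigma(\delt)$ by the chain isomorphisms $\varrho^{\tsigma\tomega}$ and $\hat{\varrho}^{\tsigma\tomega}$, so the claim reduces to three structural facts: (i) $\gam\circ\delt\simeq\Id$ and $\delt\circ\gam\simeq\Id$; (ii) the symmetry $\tomega\tsigma=\tsigma\tomega$ is a covariant, homological-degree-preserving $2$-endofunctor of $Kom(\Ucat)$, hence descends to $Com(\Ucat)$ and sends mutually-inverse homotopy equivalences to mutually-inverse homotopy equivalences; and (iii) $\varrho^{\tsigma\tomega}$ and $\hat{\varrho}^{\tsigma\tomega}$ are honest mutually-inverse chain isomorphisms by Proposition~\ref{prop_homotopy_sym}.

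Concretely, first I would record the defining equalities $\ogam=\cal{E}\hat{\varrho}^{\tsigma\tomega}\circ\tomega\tsigma(\gam)\circ\varrho^{\tsigma\tomega}\cal{E}$ and $\odelt=\hat{\varrho}^{\tsigma\tomega}\cal{E}\circ\tomega\tsigma(\delt)\circ\cal{E}\varrho^{\tsigma\tomega}$, which hold because the two squares displayed just before the proposition commute on the nose. Then I would compose: in $\odelt\circ\ogam$ the two inner whiskered maps cancel, $\cal{E}\varrho^{\tsigma\tomega}\circ\cal{E}\hat{\varrho}^{\tsigma\tomega}=\cal{E}(\varrho^{\tsigma\tomega}\circ\hat{\varrho}^{\tsigma\tomega})=\Id$ by (iii), and covariance of $\tomega\tsigma$ on $2$-morphisms collapses the two remaining symmetry-images into $\tomega\tsigma(\delt\circ\gam)$. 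This leaves $\odelt\circ\ogam=\hat{\varrho}^{\tsigma\tomega}\cal{E}\circ\tomega\tsigma(\delt\circ\gam)\circ\varrho^{\tsigma\tomega}\cal{E}$.

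Next I would invoke (i) and (ii): a chain homotopy $\Id-\delt\circ\gam=h'd+dh'$ from Proposition~\ref{prop_gam_delt} is carried by $\tomega\tsigma$ to a chain homotopy $\Id-\tomega\tsigma(\delt\circ\gam)=\tomega\tsigma(h')\,\tomega\tsigma(d)+\tomega\tsigma(d)\,\tomega\tsigma(h')$, so $\tomega\tsigma(\delt\circ\gam)\simeq\Id$. Whiskering by $\cal{E}$ and conjugating by the isomorphisms $\hat{\varrho}^{\tsigma\tomega}\cal{E}$, $\varrho^{\tsigma\tomega}\cal{E}$ preserves null-homotopies, and $\hat{\varrho}^{\tsigma\tomega}\cal{E}\circ\varrho^{\tsigma\tomega}\cal{E}=(\hat{\varrho}^{\tsigma\tomega}\circ\varrho^{\tsigma\tomega})\cal{E}=\Id_{\cal{C}\cal{E}\onen}$, whence $\odelt\circ\ogam\simeq\Id$. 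The identity $\ogam\circ\odelt\simeq\Id$ then follows by the identical argument with the roles of $\gam$ and $\delt$ interchanged and $\gam\circ\delt\simeq\Id$ used in place of $\delt\circ\gam\simeq\Id$.

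The point to emphasize is that there is no genuine computational obstacle here, since the explicit homotopies all live in Proposition~\ref{prop_gam_delt}; the only thing to get right is the bookkeeping of variances and weights. I would double-check that $\tomega\tsigma$ is covariant on $2$-morphisms (each of $\tomega$ and $\tsigma$ is) and preserves the homological grading, so that it genuinely descends to an endofunctor of the homotopy category and respects vertical composition order, $\tomega\tsigma(\delt\circ\gam)=\tomega\tsigma(\delt)\circ\tomega\tsigma(\gam)$. I would also confirm, as already recorded by the two commuting squares preceding the proposition, that after the weight-reversing action on $1$-morphisms the source and target of $\ogam$ and $\odelt$ are indeed $\cal{C}\cal{E}\onen$ and $\cal{E}\cal{C}\onen$; this is exactly what makes the conjugation well-typed and the cancellations above legitimate.
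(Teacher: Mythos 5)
Your proposal is correct and is exactly the paper's argument: the paper's proof simply states that the proposition "follows at once" from the defining equations \eqref{eq_def_overline_gamma}, \eqref{eq_def_overline_delta} and Proposition~\ref{prop_gam_delt}, and what you have written is the careful unpacking of that one-line deduction (conjugation by the mutually-inverse chain isomorphisms $\varrho^{\tsigma\tomega}$, $\hat{\varrho}^{\tsigma\tomega}$ and transport of the homotopies of Proposition~\ref{prop_gam_delt} through the covariant-on-2-morphisms symmetry $\tomega\tsigma$). No discrepancies to report.
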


\begin{proof}
The Proposition follows at once from \eqref{eq_def_overline_gamma}, \eqref{eq_def_overline_delta}, and Proposition~\ref{prop_gam_delt}.
\end{proof}

%
\subsection{Indecomposability} \label{sec_indec}
%

%
\subsubsection{Indecomposability of Casimir complexes}
%

Since $\onen$ is indecomposable for all $n$, morphisms $\onen\{1-n\}$, $\onen\{n-1\}$ are as well. The 1-morphisms $\cal{E}\cal{F}\onen\{-2\}$, $\cal{E}\cal{F}\onen$, and $\cal{E}\cal{F}\onen\{2\}$ appearing in various direct summands of the complex $\cal{C}\onen$ are indecomposable when $n \leq 0$.   Assuming $n \leq 0$ each of the eight maps describing the differential in $\cal{C}\onen$ belongs to the graded Jacobson radical of the category $\UcatD(n,n)$. This implies that $\cal{C}\onen$ does not contain any contractible summands if $n\leq 0$.

\begin{prop}
  The complex $\cal{C}\onen$ given by \eqref{eq_casimir_EF_INTRO} is indecomposable in $Kom(\UcatD)$ if $n \leq 0$.
\end{prop}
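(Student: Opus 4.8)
The plan is to show that the complex $\cal{C}\onen$ has no nontrivial idempotent endomorphisms in $Kom(\UcatD)$ when $n \leq 0$, which suffices for indecomposability since $Kom(\UcatD)$ is idempotent-complete (being the Karoubi envelope $Kar(Kom(\Ucat))$). The strategy rests on the observation, already noted in the paragraph preceding the statement, that each of the six 1-morphisms appearing in $\cal{C}\onen$ is indecomposable for $n \leq 0$: the terms $\onen\{1-n\}$ and $\onen\{n-1\}$ are indecomposable because $\onen$ is, and the terms $\cal{E}\cal{F}\onen\{t\}$ are indecomposable precisely when $n \leq 0$ (for $n > 0$ one has the decomposition $\cal{E}\cal{F}\onen \cong \cal{F}\cal{E}\onen \oplus \bigoplus \onen\{\cdots\}$, which fails to split off summands when $n \leq 0$).

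First I would analyze the endomorphism ring of $\cal{C}\onen$ as a chain complex. An endomorphism $\phi$ is a collection $\phi_i$ of degree-zero 2-morphisms on the direct-sum terms in each homological degree, commuting with the differential. Writing $\phi$ in matrix form with respect to the direct-sum decompositions in degrees $-1, 0, +1$, I would argue that the \emph{diagonal} entries — the components of $\phi$ that map each indecomposable summand to itself — are forced to be scalars. This uses the fact that the graded endomorphism ring of each indecomposable summand, in degree zero, is just $\Bbbk$ (the local ring property): $\End(\onen)$ in degree $0$ is $\Bbbk$, and for $n\leq 0$ the degree-zero endomorphisms of $\cal{E}\cal{F}\onen\{t\}$ are also one-dimensional (spanned by the identity), since negative-degree endomorphisms of $\cal{E}\cal{F}\onen$ exist only for $n > 1$, as remarked in the introduction.

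Next I would show that the differentials force all these diagonal scalars to coincide. The key point, emphasized in the text, is that the eight component maps of the differential all lie in the graded Jacobson radical of $\UcatD(n,n)$ when $n \leq 0$; consequently the \emph{off-diagonal} entries of $\phi$ contribute only radical terms, and the chain-map condition $\phi_{i+1} d_i = d_i \phi_i$ relates the diagonal scalars across the differential. Because $\cal{C}\onen$ is connected through its differentials — every summand is linked to a middle term $\cal{E}\cal{F}\onen$ by a nonzero component of $d$ — commuting with these nonzero radical-valued maps forces the scalar on a source summand to equal the scalar on its target summand. Tracing through the two commuting/anticommuting squares that make up the complex, all six scalars are pinned to a single common value $\lambda \in \Bbbk$. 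Thus every endomorphism of $\cal{C}\onen$ in $Kom(\UcatD)$ is of the form $\lambda\,\Id + (\text{nilpotent from the radical})$, so the endomorphism ring is local, and the only idempotents are $0$ and $\Id$.

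The main obstacle I anticipate is verifying carefully that the off-diagonal entries genuinely land in the radical and cannot conspire to produce a nontrivial idempotent despite the diagonal being scalar; in particular one must rule out idempotent endomorphisms of the form $\lambda\,\Id + r$ with $r$ radical and $\lambda \in \{0,1\}$ being a genuine nontrivial idempotent. This is handled by the standard fact that if $\End(\cal{C}\onen)$ is a local ring — equivalently, if $\End(\cal{C}\onen)/\mathrm{rad}$ is a division ring (here $\Bbbk$) — then idempotents are trivial. The real work is therefore to establish the local-ring structure, i.e. that the scalar $\lambda$ is well-defined and that the non-scalar part is a nilpotent (two-sided) ideal; this requires knowing the graded dimensions of the relevant $\HOM$-spaces between the summands, which follow from the explicit basis theorem for $\UcatD(n,n)$ established in \cite{Lau1,KLMS}. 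I would organize the degree bookkeeping so that only finitely many potential nonzero off-diagonal maps need be inspected, and confirm each is positive-degree (hence radical) when $n\leq 0$.
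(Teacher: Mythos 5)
Your overall strategy---prove that $\End_{Kom(\UcatD)}(\cal{C}\onen)$ is local, so that idempotent-completeness gives indecomposability---is sound, and it is essentially the contrapositive of what the paper does (the paper assumes a nontrivial splitting $\cal{C}\onen\simeq\cal{C}_1\oplus\cal{C}_2$ and traces summands through the differential). However, there is a concrete gap at the decisive step. Your claim that ``the off-diagonal entries of $\phi$ contribute only radical terms'' fails exactly at the middle term: $(\cal{C}\onen)^0=\cal{E}\cal{F}\onen\oplus\cal{E}\cal{F}\onen$ is a sum of two \emph{isomorphic} indecomposables, so the off-diagonal components of $\phi_0$ are degree-zero maps $\cal{E}\cal{F}\onen\to\cal{E}\cal{F}\onen$, i.e.\ scalar multiples of the identity, not radical elements; $\End((\cal{C}\onen)^0)$ is $M_2(\Bbbk)$ up to radical, which is not local. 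Hence ``diagonal entries are scalars, off-diagonal entries are radical, so the scalars propagate'' does not by itself pin down $\phi_0$. To close this you must use the specific shape of the differential: the component of $d_{-1}$ out of $\cal{E}\cal{F}\onen\{2\}$ is the pair $\bigl(\;\Uupdot\Udown\;,\;\Uup\Udowndot\;\bigr)$, and these two dotted identities are linearly independent in the degree-$2$ part of $\END(\cal{E}\cal{F}\onen)$, while the component out of $\onen\{1-n\}$ is the \emph{diagonal} pair $(\Ucupl,\Ucupl)$. Writing $\phi_0 d_{-1}=d_{-1}\phi_{-1}$ entrywise against this basis forces the off-diagonal scalars of $\phi_0$ to vanish and all diagonal scalars to coincide when $n<0$. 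This computation is precisely the content of the paper's step showing that a hypothetical summand $\cal{C}_1$ containing $\onen\{1-n\}$ must also contain the diagonal copy of $\cal{E}\cal{F}\onen$ in $(\cal{C}\onen)^0$; it cannot be replaced by a generic radical argument.

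A second, smaller issue is the boundary case $n=0$, which your degree bookkeeping would flag but which needs its own argument rather than the blanket ``each off-diagonal map is positive-degree.'' For $n=0$ the degree-zero hom spaces $\cal{E}\cal{F}\onenn{0}\{2\}\to\onenn{0}\{1\}$ and $\onenn{0}\{-1\}\to\cal{E}\cal{F}\onenn{0}\{-2\}$ are nonzero (spanned by a cap, resp.\ a cup), so $(\cal{C}\onenn{0})^{\pm1}$ each admit a one-parameter family of direct sum decompositions and $\phi_{\pm1}$ have genuinely nonzero off-diagonal entries. One must then check, as the paper does, that the differential applied to the ``twisted'' copy $X\cong\cal{E}\cal{F}\onenn{0}\{2\}$ (the image of $(\Id,\,a\,\Ucapr)$) cannot land inside a single summand isomorphic to $\cal{E}\cal{F}\onenn{0}$ in degree $0$; this is an explicit diagrammatic computation, not a formal consequence of locality of the summands' endomorphism rings. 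With these two points repaired your proof matches the paper's; without them the key case is unproven.
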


\begin{proof}
Assume $n \leq 0$ so that $\cal{C}\onen$ consists of 6 terms, all indecomposable.  Those terms belong to the category $\UcatD(n,n)$, which is Krull-Schmidt with finite dimensional hom spaces.

There are no homs (of degree zero) $\onen\{1-n\} \to \cal{E}\cal{F}\onen\{2\}$ for any $n$.  The degree zero hom space $\cal{E}\cal{F}\onen\{2\} \to \onen\{1-n\}$ (for $n\leq 0$) is nontrivial only when $n=0$, and then it is spanned by the diagram $\;\vcenter{\xy (-2,-3)*{}; (2,-3)*{} **\crv{(-2,1) & (2,1)}?(1)*\dir{>};
 \endxy}\;$.  Consequently, $(\cal{C}\onen)^{-1} = \cal{E}\cal{F}\onen\{2\} \oplus \onen\{1-n\}$ has only one possible direct sum decomposition for $n< 0$, and a one-parameter family of direct sum decompositions for $n=0$
\begin{equation}
  (\cal{C}\onenn{0})^{-1} = X \oplus \onenn{0}\{1\},
\end{equation}
where $X\cong \cal{E}\cal{F}\onenn{0}\{2\}$ is the image of $\cal{E}\cal{F}\onenn{0}\{2\}$ in $(\cal{C}\onenn{0})^{-1}$ under the homomorphism
\begin{equation}
  \xymatrix{
  \cal{E}\cal{F}\onenn{0}\{2\}
  \ar[rrr]^-{\left(\begin{array}{c}
             \Id \\ a \Ucapr
           \end{array}\right)
  }
  &&& \cal{E}\cal{F}\onenn{0}\{2\} \oplus \onenn{0}\{1\}
  }
\end{equation}
for $a\in \Bbbk$. Any direct sum decomposition of $(\cal{C}\onen)^0 = \cal{E}\cal{F}\onen \oplus \cal{E}\cal{F}\onen$ is determined by a $2\times 2$ invertible matrix with coefficients in $\Bbbk$.   There are no homs (of degree zero) $\cal{E}\cal{F}\onen\{-2\} \to \onen\{n-1\}$, and the degree zero hom space $\onen\{n-1\} \to \cal{E}\cal{F}\onen\{-2\}$ is nontrivial only when $n=0$, and then it is spanned by $\;\vcenter{\xy (2,3)*{}; (-2,3)*{} **\crv{(2,-1) & (-2,-1)}?(1)*\dir{>};
\endxy}$. Therefore, for $n<0$ direct sum decomposition of $(\cal{C}\onen)^1$ is unique, and for $n=0$ any direct sum decomposition in $\UcatD(0,0)$ of $(\cal{C}\onenn{0})^{1} = \cal{E}\cal{F}\onenn{0}\{-2\} \oplus \onenn{0}\{-1\}$ has the form
\begin{equation}
  (\cal{C}\onenn{0})^1 \simeq \cal{E}\cal{F}\onenn{0}\{-2\} \oplus Y
\end{equation}
where $Y \simeq \onenn{0}\{n-1\}$ is the image of $\onenn{0}\{-1\}$ in $(\cal{C}\onenn{0})^{1}$ under the homomorphism
\begin{equation}
  \xymatrix{
  \onenn{0}\{-1\}
  \ar[rrr]^-{\left(\begin{array}{c}
              b \Ucupl \\ \Id
           \end{array}\right)
  }
  &&& \cal{E}\cal{F}\onenn{0}\{-2\} \oplus \onenn{0}\{-1\}
  }
\end{equation}
for some $b \in \Bbbk$.

Suppose that for some $n \leq 0$ there exists a nontrivial direct sum decomposition $\cal{C}\onen \simeq \cal{C}_1 \oplus \cal{C}_2$ in $Kom(\UcatD)$.  Then, from the above discussion, we know that the summand $\onen\{1-n\} \subset (\cal{C}\onen)^{-1}$ must be either in $\cal{C}_1$ or $\cal{C}_2$. We can assume it belongs to $\cal{C}_1$.  Applying the differential in $\cal{C}\onen$ and the classification of direct sum decompositions of $(\cal{C}\onen)^0$ we see that $\cal{C}_1$ must contain the diagonal summand of $(\cal{C}\onen)^0$, the image of
\begin{equation}
  \xymatrix{
  \cal{E}\cal{F}\onen
  \ar[rrr]^-{\left(\begin{array}{c}
              \Id
              \\
               \Id
           \end{array}\right)}
  &&& \cal{E}\cal{F}\onen \oplus \cal{E}\cal{F}\onen.
  }
\end{equation}
Further application of the differential and very few available direct sum decompositions of $(\cal{C}\onen)^1$ tell us that $\cal{C}_1$ must contain the summand $\cal{E}\cal{F}\onen\{-2\}$ of $(\cal{C}\onen)^1$.

If $(\cal{C}_2)^{-1} \neq 0$, then $(\cal{C}_2)^{-1}=X$, described above, for some $a \in \Bbbk$. Then $dX$ must lie inside a summand of $(\cal{C}\onen)^0$ isomorphic to $\cal{E}\cal{F}\onen$.  A simple computation shows that this is impossible.  Therefore, $(\cal{C}_2)^{-1}=0$ and $(\cal{C}_{-1})=(\cal{C}\onen)^{-1}$.  Applying the differential to $(\cal{C}_1)^{-1}$ we quickly conclude that $(\cal{C}_1)^0=(\cal{C}\onen)^0$ and then $(\cal{C}_1)^{1}=(\cal{C}\onen)^1$.  Hence $\cal{C}_2=0$ supplying a contradiction.
\end{proof}

\begin{cor}
The complex $\cal{C}'\onen=\tsigma(\cal{C}\onenn{-n})$ is indecomposable in $Kom(\UcatD)$ if $n \geq 0$.
\end{cor}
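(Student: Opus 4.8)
The plan is to deduce this Corollary directly from the Proposition just proved by transporting it through the symmetry 2-functor $\tsigma$. First I would recall that, by construction, $\cal{C}'\onen = \tsigma(\cal{C}\onenn{-n})$, as recorded in Section~\ref{subsec_casimir_complex} and in part c) of Theorem~\ref{thm_main}. Consequently, the assertion that $\cal{C}'\onen$ is indecomposable for $n \geq 0$ is equivalent to the assertion that $\cal{C}\onenn{-n}$ is indecomposable for $-n \leq 0$, and this last statement is exactly the content of the preceding Proposition applied with weight $-n$ in place of $n$.

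Next I would invoke that $\tsigma$ is an involutive, hence invertible, 2-functor which extends to $Kom(\UcatD)$, as established in Section~\ref{sec_sym}. Although $\tsigma$ is contravariant for horizontal composition of 1-morphisms (it is a 2-functor $\Ucat \to \Ucat^{\op}$ sending $-n \mapsto n$), it is covariant on 2-morphisms and preserves homological degree on complexes. Restricting to endomorphism categories, it therefore induces a covariant self-equivalence (in fact, being involutive, an isomorphism) of categories $Kom(\UcatD)(-n,-n) \xrightarrow{\ \sim\ } Kom(\UcatD)(n,n)$. The key general fact I would then use is that any equivalence of additive categories is additive and reflects zero objects and isomorphisms, so it respects direct sum decompositions and in particular carries indecomposable objects to indecomposable objects.

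Combining these observations gives the result with no computation: since $-n \leq 0$, the Proposition guarantees that $\cal{C}\onenn{-n}$ is indecomposable in $Kom(\UcatD)$, and hence its image $\tsigma(\cal{C}\onenn{-n}) = \cal{C}'\onen$ is indecomposable as well.

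There is essentially no hard step here, as this is a formal consequence of the Proposition together with the symmetry. The only point that must be checked with care is that $\tsigma$ genuinely acts as an additive self-equivalence on the relevant hom-categories of $Kom(\UcatD)$ — that is, that passing to complexes, forming the Karoubi envelope, and applying $\tsigma$ all commute as claimed. This is already guaranteed by the extension of $\tsigma$ to every 2-category in \eqref{eq_all_2cats} and by its compatibility with the equivalence $Kar(Kom(\Ucat)) \cong Kom(\UcatD)$, so no further work is required.
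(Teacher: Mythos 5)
Your proof is correct and follows exactly the route the paper intends: the Corollary is stated as an immediate consequence of the Proposition, obtained by transporting indecomposability of $\cal{C}\onenn{-n}$ (for $-n\leq 0$) through the involutive 2-functor $\tsigma$, which induces an additive self-equivalence of the relevant hom-categories of $Kom(\UcatD)$ and hence preserves indecomposability.
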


Assume $n \leq 0$. Since $\cal{C}\onen$ and $\cal{C}'\onen$ are isomorphic in $Com(\UcatD)$ by Proposition~\ref{prop_homotopy_sym}, and $\cal{F}\cal{E}\onen \cong \cal{E}\cal{F}\onen \oplus_{[n]}\onen$, we conclude that $\cal{C}'\onen$ is isomorphic in $Kom(\UcatD)$ to the direct sum of $\cal{C}\onen$,  contractible complexes
\begin{equation}
  \xymatrix{
  0 \ar[r] & \onen\{n-1-2\ell+2\} \ar[r]^{\Id} & \onen\{n-1-2\ell+2\} \ar[r] & 0
  }, \qquad 0 \leq \ell \leq n-1
\end{equation}
concentrated in cohomological degrees -1 and 0, and contractible complexes
\begin{equation}
  \xymatrix{
  0 \ar[r] & \onen\{n-1-2\ell-2\} \ar[r]^{\Id} & \onen\{n-1-2\ell-2\} \ar[r] & 0
  }, \qquad 0 \leq \ell \leq n-1
\end{equation}
concentrated in cohomological degrees 0 and 1.  When $n=0$ complexes $\cal{C}\onen$ and $\cal{C}'\onen$ are isomorphic in $Kom(\UcatD)$ via Proposition~\ref{prop_homotopy_sym}.

When $n \geq 0$ there is a similar isomorphism in $Kom(\UcatD)$ between $\cal{C}\onen$ and the direct sum of $\cal{C}'\onen$ and contractible complexes.

In the complex $\cal{C}'\onen=\tsigma(\cal{C}\onenn{-n})$ the 1-morphisms $\onen\{1+n\}$, $\onen\{-n-1\}$, $\cal{F}\cal{E}\onen\{-2\}$, $\cal{F}\cal{E}\onen$, and $\cal{F}\cal{E}\onen\{2\}$ in the direct summands are all indecomposable when $n \geq 0$.  In this way, the symmetry 2-functor $\tsigma$ plays an important role allowing us to switch between complexes $\cal{C}\onen$ and $\cal{C}'\onen$.

The commutativity chain maps studied above reduce drastically when we work with the indecomposable version of the Casimir complex.  Below we collect these maps for later convenience.

%
\subsubsection{Chain maps $\gam$ and $\delt$ in indecomposable case}
%

When $n \leq 0$ the maps in Definition~\ref{def_xi_minus} simplify to
\[
 \gam_1 = \gam_3 =\;\;
 \vcenter{
 \xy 0;/r.13pc/:
    (-4,-4)*{};(4,4)*{} **\crv{(-4,-1) & (4,1)} ?(0)*\dir{<};
    (4,-4)*{};(-4,4)*{} **\crv{(4,-1) & (-4,1)};
    (4,4)*{};(12,12)*{} **\crv{(4,7) & (12,9)};
    (12,4)*{};(4,12)*{} **\crv{(12,7) & (4,9)};
    (-4,4)*{}; (-4,12) **\dir{-}?(1)*\dir{>};
    (12,-4)*{}; (12,4) **\dir{-}?(0)*\dir{<};
  (10,9.5)*{\bullet}; 
\endxy}
\;\;  - \;\;\vcenter{
 \xy 0;/r.13pc/:
    (-4,-4)*{};(4,4)*{} **\crv{(-4,-1) & (4,1)} ?(0)*\dir{<};
    (4,-4)*{};(-4,4)*{} **\crv{(4,-1) & (-4,1)};
    (4,4)*{};(12,12)*{} **\crv{(4,7) & (12,9)};
    (12,4)*{};(4,12)*{} **\crv{(12,7) & (4,9)};
    (-4,4)*{}; (-4,12) **\dir{-}?(1)*\dir{>};
    (12,-4)*{}; (12,4) **\dir{-}?(0)*\dir{<};
    (-4.3,6)*{\bullet}; 
\endxy}
\qquad
\gam_2  =\;\; \vcenter{
 \xy 0;/r.13pc/:
    (-4,-4)*{};(4,-4)*{} **\crv{(-4,4) & (4,4)} ?(0)*\dir{<};
    (12,-4)*{}; (12,12) **\dir{-}?(0)*\dir{<};
\endxy}
\qquad
\gam_4  = \;\; \vcenter{
 \xy 0;/r.13pc/:
    (-4,-8)*{};(-12,8)*{} **\crv{(-4,-1) & (-12,1)}?(1)*\dir{>};
    (-12,-8)*{};(-4,8)*{} **\crv{(-12,-1) & (-4,1)}?(0)*\dir{<};
    (4,-8)*{}; (4,8) **\dir{-}?(0)*\dir{<};
\endxy}
\]
\[
\gam_5 = \gam_6 =\;\; \vcenter{
 \xy 0;/r.13pc/:
    (-4,-4)*{};(4,4)*{} **\crv{(-4,-1) & (4,1)} ?(0)*\dir{<};
    (4,-4)*{};(-4,4)*{} **\crv{(4,-1) & (-4,1)};
    (4,4)*{};(12,12)*{} **\crv{(4,7) & (12,9)};
    (12,4)*{};(4,12)*{} **\crv{(12,7) & (4,9)};
    (-4,4)*{}; (-4,12) **\dir{-}?(1)*\dir{>};
    (12,-4)*{}; (12,4) **\dir{-}?(0)*\dir{<};
    (4,4)*{\bullet}; 
\endxy}
\;\; - \;\; \vcenter{
 \xy 0;/r.13pc/:
    (-4,-4)*{};(4,4)*{} **\crv{(-4,-1) & (4,1)} ?(0)*\dir{<};
    (4,-4)*{};(-4,4)*{} **\crv{(4,-1) & (-4,1)};
    (4,4)*{};(12,12)*{} **\crv{(4,7) & (12,9)};
    (12,4)*{};(4,12)*{} **\crv{(12,7) & (4,9)};
    (-4,4)*{}; (-4,12) **\dir{-}?(1)*\dir{>};
    (12,-4)*{}; (12,4) **\dir{-}?(0)*\dir{<};
    (-4.3,6)*{\bullet}; 
\endxy}
\qquad
\gam_8  =\;\;
-\;\;
 \vcenter{
 \xy 0;/r.13pc/:
    (0,-4)*{}; (0,12) **\dir{-}?(0)*\dir{<};
    (0,4)*{\bullet};
\endxy}
\quad
\gam_9  =
\;\; -\;\; \vcenter{
 \xy 0;/r.13pc/:
    (4,-12)*{};(-4,-12)*{} **\crv{(4,-4) & (-4,-4)} ?(0)*\dir{<};
    (-12,-12)*{}; (-12,4) **\dir{-}?(0)*\dir{<};
\endxy}
\qquad
\gam_7\;\; = 0
\]

\[
\delt_1 = \delt_5\;\;= \;\;\vcenter{
 \xy 0;/r.13pc/:
    (4,-4)*{};(-4,4)*{} **\crv{(4,-1) & (-4,1)}?(0)*\dir{<};
    (-4,-4)*{};(4,4)*{} **\crv{(-4,-1) & (4,1)}?(0)*\dir{<};
    (-4,4)*{};(-12,12)*{} **\crv{(-4,7) & (-12,9)};
    (-12,4)*{};(-4,12)*{} **\crv{(-12,7) & (-4,9)}?(1)*\dir{>};;
    (4,4)*{}; (4,12) **\dir{-};
    (-12,-4)*{}; (-12,4) **\dir{-};
   (-4.3,4)*{\bullet}; 
\endxy}
\;\; - \;\;
 \vcenter{
 \xy 0;/r.13pc/:
    (4,-4)*{};(-4,4)*{} **\crv{(4,-1) & (-4,1)}?(0)*\dir{<};
    (-4,-4)*{};(4,4)*{} **\crv{(-4,-1) & (4,1)}?(0)*\dir{<};
    (-4,4)*{};(-12,12)*{} **\crv{(-4,7) & (-12,9)};
    (-12,4)*{};(-4,12)*{} **\crv{(-12,7) & (-4,9)}?(1)*\dir{>};;
    (4,4)*{}; (4,12) **\dir{-};
    (-12,-4)*{}; (-12,4) **\dir{-};
  (-12.3,2)*{\bullet}; 
\endxy}
\qquad
\delt_2 =\;\;0
\qquad
\delt_3  =
\;\; \vcenter{
 \xy 0;/r.13pc/:
    (4,12)*{};(-4,12)*{} **\crv{(4,4) & (-4,4)} ?(1)*\dir{>};
    (-12,12)*{}; (-12,-4) **\dir{-}?(1)*\dir{>};
    (2,-8)*{n};
\endxy}
\qquad
\delt_4 \;\; =\;\;
-\;\;
 \vcenter{
 \xy 0;/r.13pc/:
    (0,-4)*{}; (0,12) **\dir{-}?(0)*\dir{<};
    (0,4)*{\bullet};
\endxy}
\]
\[
\delt_6 \;\; = \;\;-\; \vcenter{
 \xy 0;/r.13pc/:
    (-4,-4)*{};(-12,12)*{} **\crv{(-4,3) & (-12,5)}?(0)*\dir{<};
    (-12,-4)*{};(-4,12)*{} **\crv{(-12,3) & (-4,5)}?(1)*\dir{>};
    (4,-4)*{}; (4,12) **\dir{-}?(0)*\dir{<};
\endxy}\qquad
 \delt_7 = \delt_8 =\;\; \vcenter{
 \xy 0;/r.13pc/:
    (4,-4)*{};(-4,4)*{} **\crv{(4,-1) & (-4,1)}?(0)*\dir{<};
    (-4,-4)*{};(4,4)*{} **\crv{(-4,-1) & (4,1)}?(0)*\dir{<};
    (-4,4)*{};(-12,12)*{} **\crv{(-4,7) & (-12,9)};
    (-12,4)*{};(-4,12)*{} **\crv{(-12,7) & (-4,9)}?(1)*\dir{>};;
    (4,4)*{}; (4,12) **\dir{-};
    (-12,-4)*{}; (-12,4) **\dir{-};
  (2,-1.5)*{\bullet}; 
\endxy}
\;\; - \;\;
 \vcenter{
 \xy 0;/r.13pc/:
    (4,-4)*{};(-4,4)*{} **\crv{(4,-1) & (-4,1)}?(0)*\dir{<};
    (-4,-4)*{};(4,4)*{} **\crv{(-4,-1) & (4,1)}?(0)*\dir{<};
    (-4,4)*{};(-12,12)*{} **\crv{(-4,7) & (-12,9)};
    (-12,4)*{};(-4,12)*{} **\crv{(-12,7) & (-4,9)}?(1)*\dir{>};;
    (4,4)*{}; (4,12) **\dir{-};
    (-12,-4)*{}; (-12,4) **\dir{-};
  (-12.3,2)*{\bullet}; 
\endxy}
\qquad
\delt_9  =\;\; - \;
 \vcenter{
 \xy 0;/r.13pc/:
    (-4,12)*{};(4,12)*{} **\crv{(-4,4) & (4,4)}?(1)*\dir{>};
    (12,-4)*{}; (12,12) **\dir{-}?(0)*\dir{<};
\endxy}
\]
In the diagrams above we have omitted the label $n$ on the far right region of each diagram for simplicity.  We will sometimes make use of this convention in the following sections when the labelling of each region is clear from the context.

%
\subsubsection{Chain maps $\ogam$ and $\odelt$ in indecomposable case}
%

For $n \leq 0$ the chain maps $\ogam\maps \cal{C}\cal{E}\onen \to \cal{E}\cal{C}\onen$ and $\odelt\maps \cal{E}\cal{C}\onen\to \cal{C}\cal{E}\onen$ defined in \eqref{eq_def_overline_gamma} and \eqref{eq_def_overline_delta} can be written as
\begin{equation}
 \xy
  (-65,35)*+{\E\E\F \onen \{2\}}="1'";
  (-45,15)*+{\E\onen \{1+n\}}="2'";
  (-10,35)*+{\E\E\F \onen }="3'";
  (10,15)*+{\E\E\F \onen}="4'";
  (45,35)*+{\E\E\F \onen \{-2\}}="5'";
  (65,15)*+{\E\onen \{-1-n\}}="6'";
   {\ar^{\sUup\sUupdot\sUdown} "1'";"3'"};
   {\ar^{} "1'" ;"4'"};
   "1'"+(15,-8)*{\sUup\sUup\sUdowndot};
   "3'"+(-15,-5)*{\sUup\sUcupl};
   "3'"+(28,-4)*{\text{$\sUup\sUcapr$}};
   "4'"+(4,8)*{\sUup\sUupdot\sUdown};
   {\ar "2'";"3'"};
   {\ar^(.4){\text{$\sUup\sUcupl$}} "2'";"4'"};
   {\ar^-{-\;\sUup\sUup\sUdowndot} "3'";"5'"};
   {\ar "3'";"6'"};
   {\ar "4'";"5'"};
   {\ar^(.4){-\;\;\text{$\sUup\sUcapr$}} "4'";"6'"};
  (-65,-15)*+{\E\F\E \onen \{2\}}="1";
  (-45,-35)*+{\E\onen \{n+1\}}="2";
  (-10,-15)*+{\E\F\E \onen }="3";
  (10,-35)*+{\E\F\E \onen}="4";
  (45,-15)*+{\E\F\E \onen \{-2\}}="5";
  (65,-35)*+{\E\onen \{n-1\}}="6";
   {\ar^{\sUupdot\sUdown\sUup} "1";"3"};
   {\ar^{} "1" ;"4"};
   "4"+(-12,8)*{\sUup\sUdowndot\sUup};
   "3"+(-16,-5)*{\sUcupl\sUup};
   "6"+(-18,10)*{\text{$\sUcapr\sUup$}};
   "4"+(14,4)*{\sUupdot\sUdown\sUup};
   {\ar "2";"3"};
   {\ar_(.4){\text{$\sUcupl\sUup$}} "2";"4"};
   {\ar^(.63){-\;\sUup\sUdowndot\sUup} "3";"5"};
   {\ar "3";"6"};
   {\ar "4";"5"};
   {\ar_(.4){-\text{$\sUcapr\sUup$}} "4";"6"};
   {\textcolor[rgb]{0.00,0.50,0.25}{\ar@/^0.25pc/^{\ogam_1} "1"+(-6,4);"1'"+(-6,-4)}};
   {\textcolor[rgb]{0.00,0.50,0.25}{\ar@/^0.3pc/^{\ogam_2} "1";"2'"+(-4,-3)}};
   {\textcolor[rgb]{0.00,0.50,0.25}{\ar@/^0.25pc/^{\ogam_3} "3";"3'"}};
   {\textcolor[rgb]{0.00,0.50,0.25}{\ar^(.75){\ogam_4} "3"+(4,3);"4'"}};
   {\textcolor[rgb]{0.00,0.50,0.25}{\ar@/^0.35pc/^(.55){\ogam_5} "4"+(1,4);"4'"+(1,-4)}};
    {\textcolor[rgb]{0.00,0.50,0.25}{\ar^{\ogam_6} "5";"5'"+(-1,-3)}};
   {\textcolor[rgb]{0.00,0.50,0.25}{\ar@/^0.3pc/^(.4){\ogam_8} "5"+(4,3);"6'"+(-1,-3)}};
   {\textcolor[rgb]{0.00,0.50,0.25}{\ar^{\ogam_7} "6"+(4,4);"6'"+(4,-4)}};
   {\textcolor[rgb]{0.00,0.00,1.00}{\ar@/^0.25pc/^{\odelt_1} "1'"+(-4,-4);"1"+(-4,4)}};
   {\textcolor[rgb]{0.00,0.00,1.00}{\ar@/^0.3pc/^(.5){\odelt_2} "2'";"1"+(3,3)}};
   {\textcolor[rgb]{0.00,0.00,1.00}{\ar@/^0.3pc/^(.3){\odelt_3} "2'";"2"}};
   {\textcolor[rgb]{0.00,0.00,1.00}{\ar@/^0.35pc/^{\odelt_4} "3'"+(2,-3);"3"+(2,3)}};
   {\textcolor[rgb]{0.00,0.00,1.00}{\ar^(.55){\odelt_5} "3'"+(5,-3);"4"+(-5,3)}};
   {\textcolor[rgb]{0.00,0.00,1.00}{\ar@/^0.3pc/^(.45){\odelt_6} "4'"+(3,-4);"4"+(3,4)}};
   {\textcolor[rgb]{0.00,0.00,1.00}{\ar@/^0.3pc/^(.55){\odelt_8} "6'"+(2,-3);"5"+(6,3)}};
   {\textcolor[rgb]{0.00,0.00,1.00}{\ar@/^0.25pc/^(.55){\odelt_7} "5'"+(1,-3);"5"+(2,3)}};
 \endxy
\end{equation}
\[
 \ogam_1 = \ogam_3 =\;\;
  \vcenter{
 \xy 0;/r.13pc/:
    (4,-4)*{};(-4,4)*{} **\crv{(4,-1) & (-4,1)} ;
    (-4,-4)*{};(4,4)*{} **\crv{(-4,-1) & (4,1)}?(0)*\dir{<};
    (-4,4)*{};(-12,12)*{} **\crv{(-4,7) & (-12,9)}?(1)*\dir{>};
    (-12,4)*{};(-4,12)*{} **\crv{(-12,7) & (-4,9)}?(1)*\dir{>};
    (4,4)*{}; (4,12) **\dir{-};
    (-12,-4)*{}; (-12,4) **\dir{-};
  (-10.5,9.5)*{\bullet}; 
\endxy}
\;\; - \;\; \vcenter{
 \xy 0;/r.13pc/:
    (4,-4)*{};(-4,4)*{} **\crv{(4,-1) & (-4,1)} ;
    (-4,-4)*{};(4,4)*{} **\crv{(-4,-1) & (4,1)}?(0)*\dir{<};
    (-4,4)*{};(-12,12)*{} **\crv{(-4,7) & (-12,9)}?(1)*\dir{>};
    (-12,4)*{};(-4,12)*{} **\crv{(-12,7) & (-4,9)}?(1)*\dir{>};
    (4,4)*{}; (4,12) **\dir{-};
    (-12,-4)*{}; (-12,4) **\dir{-};
    (4.3,6)*{\bullet}; 
\endxy}
\qquad
\ogam_2  =\;\; \vcenter{
 \xy 0;/r.13pc/:
    (4,-4)*{};(-4,-4)*{} **\crv{(4,4) & (-4,4)} ?(1)*\dir{>};
    (-12,-4)*{}; (-12,12) **\dir{-}?(1)*\dir{>};
\endxy}
\qquad
\ogam_4  = \;\; \vcenter{
 \xy 0;/r.13pc/:
    (4,-4)*{};(12,12)*{} **\crv{(4,3) & (12,5)}?(0)*\dir{<};
    (12,-4)*{};(4,12)*{} **\crv{(12,3) & (4,5)}?(1)*\dir{>};
    (-4,-4)*{}; (-4,12) **\dir{-}?(1)*\dir{>};
\endxy}
\]

\[
\ogam_5 = \ogam_6 =\;\; \vcenter{
 \xy 0;/r.13pc/:
    (4,-4)*{};(-4,4)*{} **\crv{(4,-1) & (-4,1)} ;
    (-4,-4)*{};(4,4)*{} **\crv{(-4,-1) & (4,1)}?(0)*\dir{<};
    (-4,4)*{};(-12,12)*{} **\crv{(-4,7) & (-12,9)}?(1)*\dir{>};
    (-12,4)*{};(-4,12)*{} **\crv{(-12,7) & (-4,9)}?(1)*\dir{>};
    (4,4)*{}; (4,12) **\dir{-};
    (-12,-4)*{}; (-12,4) **\dir{-};
    (-4,4)*{\bullet}; 
\endxy}
\;\; - \;\; \vcenter{
 \xy 0;/r.13pc/:
    (4,-4)*{};(-4,4)*{} **\crv{(4,-1) & (-4,1)} ;
    (-4,-4)*{};(4,4)*{} **\crv{(-4,-1) & (4,1)}?(0)*\dir{<};
    (-4,4)*{};(-12,12)*{} **\crv{(-4,7) & (-12,9)}?(1)*\dir{>};
    (-12,4)*{};(-4,12)*{} **\crv{(-12,7) & (-4,9)}?(1)*\dir{>};
    (4,4)*{}; (4,12) **\dir{-};
    (-12,-4)*{}; (-12,4) **\dir{-};
    (4.3,6)*{\bullet}; 
\endxy}
\qquad
\ogam_7  =\;\;
-\;\;
 \vcenter{
 \xy 0;/r.13pc/:
    (0,-4)*{}; (0,12) **\dir{-}?(1)*\dir{>};
    (0,4)*{\bullet};
\endxy}
\quad
\ogam_8  =
\;\; -\;\; \vcenter{
 \xy 0;/r.13pc/:
    (-4,-4)*{};(4,-4)*{} **\crv{(-4,4) & (4,4)} ?(1)*\dir{>};
    (12,-4)*{}; (12,12) **\dir{-}?(1)*\dir{>};
\endxy}
\]

\[
\odelt_1 = \odelt_4=\;\; \vcenter{
 \xy 0;/r.13pc/:
    (-4,-4)*{};(4,4)*{} **\crv{(-4,-1) & (4,1)};
    (4,-4)*{};(-4,4)*{} **\crv{(4,-1) & (-4,1)};
    (4,4)*{};(12,12)*{} **\crv{(4,7) & (12,9)}?(1)*\dir{>};
    (12,4)*{};(4,12)*{} **\crv{(12,7) & (4,9)};
    (-4,4)*{}; (-4,12) **\dir{-}?(1)*\dir{>};;
    (12,-4)*{}; (12,4) **\dir{-}?(0)*\dir{<};;
   (4.3,4)*{\bullet}; 
\endxy}
\;\; - \;\;
 \vcenter{
 \xy 0;/r.13pc/:
    (-4,-4)*{};(4,4)*{} **\crv{(-4,-1) & (4,1)};
    (4,-4)*{};(-4,4)*{} **\crv{(4,-1) & (-4,1)};
    (4,4)*{};(12,12)*{} **\crv{(4,7) & (12,9)}?(1)*\dir{>};
    (12,4)*{};(4,12)*{} **\crv{(12,7) & (4,9)};
    (-4,4)*{}; (-4,12) **\dir{-}?(1)*\dir{>};;
    (12,-4)*{}; (12,4) **\dir{-}?(0)*\dir{<};;
  (12.3,2)*{\bullet}; 
\endxy}
\qquad
\odelt_2  =
\;\; \vcenter{
 \xy 0;/r.13pc/:
    (-4,12)*{};(4,12)*{} **\crv{(-4,4) & (4,4)}?(0)*\dir{<};
    (12,-4)*{}; (12,12) **\dir{-}?(1)*\dir{>};
\endxy}
\qquad
\odelt_3 \;\; =\;\;
-\;\;
 \vcenter{
 \xy 0;/r.13pc/:
    (0,-4)*{}; (0,12) **\dir{-}?(1)*\dir{>};
    (0,4)*{\bullet};
     (9,8)*{n};
\endxy}
\]

\[
\odelt_5 \;\; = \;\;-\; \; \vcenter{
 \xy 0;/r.13pc/:
    (4,-4)*{};(12,12)*{} **\crv{(4,3) & (12,5)}?(1)*\dir{>};
    (12,-4)*{};(4,12)*{} **\crv{(12,3) & (4,5)}?(0)*\dir{<};
    (-4,-4)*{}; (-4,12) **\dir{-}?(1)*\dir{>};
\endxy}\qquad
 \odelt_6 = \odelt_7 =\;\; \vcenter{
 \xy 0;/r.13pc/:
    (-4,-4)*{};(4,4)*{} **\crv{(-4,-1) & (4,1)};
    (4,-4)*{};(-4,4)*{} **\crv{(4,-1) & (-4,1)};
    (4,4)*{};(12,12)*{} **\crv{(4,7) & (12,9)}?(1)*\dir{>};
    (12,4)*{};(4,12)*{} **\crv{(12,7) & (4,9)};
    (-4,4)*{}; (-4,12) **\dir{-}?(1)*\dir{>};;
    (12,-4)*{}; (12,4) **\dir{-}?(0)*\dir{<};;
  (-2,-1.5)*{\bullet}; 
\endxy}
\;\; - \;\;
 \vcenter{
 \xy 0;/r.13pc/:
    (-4,-4)*{};(4,4)*{} **\crv{(-4,-1) & (4,1)};
    (4,-4)*{};(-4,4)*{} **\crv{(4,-1) & (-4,1)};
    (4,4)*{};(12,12)*{} **\crv{(4,7) & (12,9)}?(1)*\dir{>};
    (12,4)*{};(4,12)*{} **\crv{(12,7) & (4,9)};
    (-4,4)*{}; (-4,12) **\dir{-}?(1)*\dir{>};;
    (12,-4)*{}; (12,4) **\dir{-}?(0)*\dir{<};;
  (12.3,2)*{\bullet}; 
\endxy}
\quad
\odelt_8  =\;\; - \;
 \vcenter{
 \xy 0;/r.13pc/:
    (4,12)*{};(-4,12)*{} **\crv{(4,4) & (-4,4)}?(0)*\dir{<};
    (-12,-4)*{}; (-12,12) **\dir{-}?(1)*\dir{>};
\endxy}
\]

%
\subsubsection{Chain maps $\tsigma(\gam)$ and $\tsigma(\delt)$ in indecomposable case}
%

When $n \geq 0$ it is useful to work with the indecomposable in $Kom(\UcatD)$ complex $\tsigma(\cal{C}\onenn{-n})$ instead of $\cal{C}\onen$.
For $n \geq 0$ the maps
\begin{align}
  \delts\onen:=\tsigma(\delt\onenn{-n+2}) &\maps \F \tsigma(\cal{C}\onenn{-n})\to\tsigma(\cal{C}\onenn{-n+2})\F\onen
  \\
  \gams\onen:=\tsigma(\gam\onenn{-n+2}) &\maps \tsigma(\cal{C}\onenn{-n+2})\F\onen \to \F \tsigma(\cal{C}\onenn{-n})
\end{align}
simplify to the form
\begin{equation}
 \xy
  (-65,35)*+{\F\F\E \onen \{2\}}="1'";
  (-45,15)*+{\F\onen \{1+n\}}="2'";
  (-10,35)*+{\F\F\E \onen }="3'";
  (10,15)*+{\F\F\E \onen}="4'";
  (65,15)*+{\F\onen \{-n-1\}}="5'";
  (45,35)*+{\F\F\E \onen \{-2\}}="6'";
   {\ar^{\sUdown\sUdown\sUupdot} "1'";"3'"};
   {\ar^{} "1'" ;"4'"};
   "1'"+(15,-8)*{\sUdown\sUdowndot\sUup};
   "3'"+(-15,-5)*{\sUdown\sUcupr};
   "3'"+(28,-4)*{\text{$\sUdown\sUcapl$}};
   "4'"+(4,8)*{\sUdown\sUdown\sUupdot};
   {\ar "2'";"3'"};
   {\ar^(.4){\text{$\sUdown\sUcupr$}} "2'";"4'"};
   {\ar^-{-\;\sUdown\sUdowndot\sUup} "3'";"6'"};
   {\ar "3'";"5'"};
   {\ar "4'";"6'"};
   {\ar^(.4){-\;\;\text{$\sUdown\sUcapl$}} "4'";"5'"};
  (-65,-15)*+{\F\E\F \onen \{2\}}="1";
  (-45,-35)*+{\F\onen \{-1+n\}}="2";
  (-10,-15)*+{\F\E\F \onen }="3";
  (10,-35)*+{\F\E\F \onen}="4";
  (65,-35)*+{\F\onen \{-n+1\}}="5";
  (45,-15)*+{\F\E\F \onen \{-2\}}="6";
   {\ar^{\sUdown\sUupdot\sUdown} "1";"3"};
   {\ar^{} "1" ;"4"};
   "4"+(-12,8)*{\sUdowndot\sUdown\sUup};
   "3"+(-16,-5)*{\sUcupr\sUdown};
   "5"+(-18,10)*{\text{$\sUcapl\sUdown$}};
   "4"+(14,4)*{\sUdown\sUupdot\sUdown};
   {\ar "2";"3"};
   {\ar_(.4){\text{$\sUcupr\sUdown$}} "2";"4"};
   {\ar^(.63){-\;\sUdown\sUup\sUdowndot} "3";"6"};
   {\ar "3";"5"};
   {\ar "4";"6"};
   {\ar_(.4){-\text{$\sUcapl\sUdown$}} "4";"5"};
   {\textcolor[rgb]{0.00,0.00,1.00}{\ar@/^0.25pc/^{(\gams)_1} "1"+(-6,4);"1'"+(-6,-4)}};
   {\textcolor[rgb]{0.00,0.00,1.00}{\ar@/^0.3pc/^{(\gams)_2} "1";"2'"+(-4,-3)}};
   {\textcolor[rgb]{0.00,0.00,1.00}{\ar@/^0.3pc/^{(\gams)_3} "3"+(-4,4);"3'"+(-4,-4)}};
   {\textcolor[rgb]{0.00,0.00,1.00}{\ar^(.35){(\gams)_4} "3";"4'"}};
   {\textcolor[rgb]{0.00,0.00,1.00}{\ar@/^0.35pc/^(.6){(\gams)_5} "4"+(2,4);"4'"+(2,-4)}};
    {\textcolor[rgb]{0.00,0.00,1.00}{\ar@/_0.3pc/^{(\gams)_6} "5";"5'"+(2,-4)}};
   {\textcolor[rgb]{0.00,0.00,1.00}{\ar@/^0.3pc/^{(\gams)_7} "6";"5'"+(-4,-4)}};
   {\textcolor[rgb]{0.00,0.00,1.00}{\ar@/^0.25pc/^{(\gams)_8} "6"+(-4,4);"6'"+(-4,-4)}};
   {\textcolor[rgb]{0.00,0.50,0.25}{\ar@/^0.25pc/^{(\delts)_1} "1'"+(-4,-4);"1"+(-4,4)}};
   {\textcolor[rgb]{0.00,0.50,0.25}{\ar@/^0.3pc/^{(\delts)_2} "2'";"1"+(3,3)}};
   {\textcolor[rgb]{0.00,0.50,0.25}{\ar@/^0.3pc/^(.3){(\delts)_3} "2'";"2"}};
   {\textcolor[rgb]{0.00,0.50,0.25}{\ar@/^0.35pc/^{(\delts)_4} "3'"+(-2,-4);"3"+(-2,4)}};
   {\textcolor[rgb]{0.00,0.50,0.25}{\ar^(.7){(\delts)_5} "3'";"4"+(-2,4)}};
   {\textcolor[rgb]{0.00,0.50,0.25}{\ar@/^0.3pc/^(.4){(\delts)_6} "4'"+(4,-4);"4"+(4,4)}};
   {\textcolor[rgb]{0.00,0.50,0.25}{\ar@/^0.3pc/^{(\delts)_7} "5'"+(-2,-4);"6"+(4,4)}};
   {\textcolor[rgb]{0.00,0.50,0.25}{\ar@/^0.25pc/^{(\delts)_8} "6'"+(-2,-4);"6"+(-2,4)}};
 \endxy
\end{equation}
where
\[
 (\delts)_1 = (\delts)_4 \;\;= \;\;\vcenter{
 \xy 0;/r.13pc/:
    (-4,-4)*{};(4,4)*{} **\crv{(-4,-1) & (4,1)}?(0)*\dir{<};
    (4,-4)*{};(-4,4)*{} **\crv{(4,-1) & (-4,1)}?(0)*\dir{<};
    (4,4)*{};(12,12)*{} **\crv{(4,7) & (12,9)};
    (12,4)*{};(4,12)*{} **\crv{(12,7) & (4,9)}?(1)*\dir{>};;
    (-4,4)*{}; (-4,12) **\dir{-};
    (12,-4)*{}; (12,4) **\dir{-};
   (4.3,4)*{\bullet}; 
\endxy}
\;\; - \;\;
 \vcenter{
 \xy 0;/r.13pc/:
    (-4,-4)*{};(4,4)*{} **\crv{(-4,-1) & (4,1)}?(0)*\dir{<};
    (4,-4)*{};(-4,4)*{} **\crv{(4,-1) & (-4,1)}?(0)*\dir{<};
    (4,4)*{};(12,12)*{} **\crv{(4,7) & (12,9)};
    (12,4)*{};(4,12)*{} **\crv{(12,7) & (4,9)}?(1)*\dir{>};;
    (-4,4)*{}; (-4,12) **\dir{-};
    (12,-4)*{}; (12,4) **\dir{-};
  (12.3,2)*{\bullet}; 
\endxy} \qquad
(\delts)_2 \;\; = \;\; \vcenter{
 \xy 0;/r.13pc/:
    (-4,12)*{};(4,12)*{} **\crv{(-4,4) & (4,4)}?(1)*\dir{>};
    (12,-4)*{}; (12,12) **\dir{-}?(0)*\dir{<};
\endxy}
\qquad (\delts)_3 \;\; = \;\; -\;
 \vcenter{\xy 0;/r.13pc/:
    (12,-4)*{}; (12,12) **\dir{-}?(0)*\dir{<};
  (12.3,2)*{\bullet}; 
\endxy}
\]

\[
(\delts)_5 \;\; = \;\; \vcenter{
 \xy 0;/r.13pc/:
    (4,-4)*{};(12,12)*{} **\crv{(4,3) & (12,5)}?(0)*\dir{<};
    (12,-4)*{};(4,12)*{} **\crv{(12,3) & (4,5)}?(1)*\dir{>};
    (-4,-4)*{}; (-4,12) **\dir{-}?(0)*\dir{<};
\endxy}
\qquad
 (\delts)_6 = (\delts)_8\;\; = \;\; \vcenter{
 \xy 0;/r.13pc/:
    (-4,-4)*{};(4,4)*{} **\crv{(-4,-1) & (4,1)}?(0)*\dir{<};
    (4,-4)*{};(-4,4)*{} **\crv{(4,-1) & (-4,1)}?(0)*\dir{<};
    (4,4)*{};(12,12)*{} **\crv{(4,7) & (12,9)};
    (12,4)*{};(4,12)*{} **\crv{(12,7) & (4,9)}?(1)*\dir{>};;
    (-4,4)*{}; (-4,12) **\dir{-};
    (12,-4)*{}; (12,4) **\dir{-};
  (-2,-1.5)*{\bullet}; 
\endxy}
\;\; - \;\;
 \vcenter{
 \xy 0;/r.13pc/:
    (-4,-4)*{};(4,4)*{} **\crv{(-4,-1) & (4,1)}?(0)*\dir{<};
    (4,-4)*{};(-4,4)*{} **\crv{(4,-1) & (-4,1)}?(0)*\dir{<};
    (4,4)*{};(12,12)*{} **\crv{(4,7) & (12,9)};
    (12,4)*{};(4,12)*{} **\crv{(12,7) & (4,9)}?(1)*\dir{>};;
    (-4,4)*{}; (-4,12) **\dir{-};
    (12,-4)*{}; (12,4) **\dir{-};
  (12.3,2)*{\bullet}; 
\endxy}
 \qquad
(\delts)_7 \;\; =\;\; - \;
 \vcenter{
 \xy 0;/r.13pc/:
    (4,12)*{};(-4,12)*{} **\crv{(4,4) & (-4,4)}?(1)*\dir{>};
    (-12,-4)*{}; (-12,12) **\dir{-}?(0)*\dir{<};
\endxy}
\]

\[
 (\gams)_1 = (\gams)_3\;\; =\;\; \vcenter{
 \xy 0;/r.13pc/:
    (4,-4)*{};(-4,4)*{} **\crv{(4,-1) & (-4,1)} ?(0)*\dir{<};
    (-4,-4)*{};(4,4)*{} **\crv{(-4,-1) & (4,1)};
    (-4,4)*{};(-12,12)*{} **\crv{(-4,7) & (-12,9)};
    (-12,4)*{};(-4,12)*{} **\crv{(-12,7) & (-4,9)};
    (4,4)*{}; (4,12) **\dir{-}?(1)*\dir{>};
    (-12,-4)*{}; (-12,4) **\dir{-}?(0)*\dir{<};
  (-10,9.5)*{\bullet}; 
\endxy}
\;\; - \;\; \vcenter{
 \xy 0;/r.13pc/:
    (4,-4)*{};(-4,4)*{} **\crv{(4,-1) & (-4,1)} ?(0)*\dir{<};
    (-4,-4)*{};(4,4)*{} **\crv{(-4,-1) & (4,1)};
    (-4,4)*{};(-12,12)*{} **\crv{(-4,7) & (-12,9)};
    (-12,4)*{};(-4,12)*{} **\crv{(-12,7) & (-4,9)};
    (4,4)*{}; (4,12) **\dir{-}?(1)*\dir{>};
    (-12,-4)*{}; (-12,4) **\dir{-}?(0)*\dir{<};
    (4.3,6)*{\bullet}; 
\endxy}
    \qquad
(\gams)_2 \;\; =\;\; \vcenter{
 \xy 0;/r.13pc/:
    (4,-4)*{};(-4,-4)*{} **\crv{(4,4) & (-4,4)} ?(0)*\dir{<};
    (-12,-4)*{}; (-12,12) **\dir{-}?(0)*\dir{<};
\endxy}
    \qquad
(\gams)_4 \;\; = \;\; \vcenter{
 \xy 0;/r.13pc/:
    (4,-4)*{};(12,12)*{} **\crv{(4,3) & (12,5)}?(1)*\dir{>};
    (12,-4)*{};(4,12)*{} **\crv{(12,3) & (4,5)}?(0)*\dir{<};
    (-4,-4)*{}; (-4,12) **\dir{-}?(0)*\dir{<};
\endxy}
\]

\[
(\gams)_5 = (\gams)_8 \;\; =\;\; \vcenter{
 \xy 0;/r.13pc/:
    (4,-4)*{};(-4,4)*{} **\crv{(4,-1) & (-4,1)} ?(0)*\dir{<};
    (-4,-4)*{};(4,4)*{} **\crv{(-4,-1) & (4,1)};
    (-4,4)*{};(-12,12)*{} **\crv{(-4,7) & (-12,9)};
    (-12,4)*{};(-4,12)*{} **\crv{(-12,7) & (-4,9)};
    (4,4)*{}; (4,12) **\dir{-}?(1)*\dir{>};
    (-12,-4)*{}; (-12,4) **\dir{-}?(0)*\dir{<};
    (-4,4)*{\bullet}; 
\endxy}
\;\; - \;\; \vcenter{
 \xy 0;/r.13pc/:
    (4,-4)*{};(-4,4)*{} **\crv{(4,-1) & (-4,1)} ?(0)*\dir{<};
    (-4,-4)*{};(4,4)*{} **\crv{(-4,-1) & (4,1)};
    (-4,4)*{};(-12,12)*{} **\crv{(-4,7) & (-12,9)};
    (-12,4)*{};(-4,12)*{} **\crv{(-12,7) & (-4,9)};
    (4,4)*{}; (4,12) **\dir{-}?(1)*\dir{>};
    (-12,-4)*{}; (-12,4) **\dir{-}?(0)*\dir{<};
    (4.3,6)*{\bullet}; 
\endxy}
    \qquad
(\gams)_6 \;\; = \;\; -\;
 \vcenter{\xy 0;/r.13pc/:
    (12,-4)*{}; (12,12) **\dir{-}?(0)*\dir{<};
  (12.3,2)*{\bullet}; 
\endxy}
    \qquad
(\gams)_7 \;\; =\;\; -\; \vcenter{
 \xy 0;/r.13pc/:
    (-4,-4)*{};(4,-4)*{} **\crv{(-4,4) & (4,4)} ?(0)*\dir{<};
    (12,-4)*{}; (12,12) **\dir{-}?(0)*\dir{<};
\endxy}
\]

%
\subsubsection{Chain maps $\tsigma(\ogam)$ and $\tsigma(\odelt)$ in indecomposable case}
%

For $n \geq 0$ the chain maps
\begin{align}
 \odelts\onen := \tsigma(\odelt\onenn{-n-2}) & \maps \tsigma(\cal{C}\onenn{-n-2})\E\onen \to \E
\tsigma(\cal{C}\onenn{-n}) \\
\ogams\onen:= \tsigma(\ogam\onenn{-n-2}) &\maps \E \tsigma(\cal{C}\onenn{-n}) \to
\tsigma(\cal{C}\onenn{-n-2})\E\onen
\end{align}
simplify to the form
\begin{equation}
 \xy
  (-65,35)*+{\F\E\E \onen \{2\}}="1'";
  (-45,15)*+{\E\onen \{3+n\}}="2'";
  (-10,35)*+{\F\E\E \onen }="3'";
  (10,15)*+{\F\E\E \onen}="4'";
  (65,15)*+{\E\onen \{-n-3\}}="5'";
  (45,35)*+{\F\E\E \onen \{-2\}}="6'";
   {\ar^{\sUdown\sUupdot\sUup} "1'";"3'"};
   {\ar^{} "1'" ;"4'"};
   "1'"+(15,-8)*{\sUdowndot\sUup\sUup};
   "3'"+(-15,-5)*{\sUcupr\sUup};
   "3'"+(28,-4)*{\text{$\sUcapl\sUup$}};
   "4'"+(4,8)*{\sUdown\sUupdot\sUup};
   {\ar "2'";"3'"};
   {\ar^(.4){\text{$\sUcupr\sUup$}} "2'";"4'"};
   {\ar^-{-\;\sUdowndot\sUup\sUup} "3'";"6'"};
   {\ar "3'";"5'"};
   {\ar "4'";"6'"};
   {\ar^(.4){-\;\;\text{$\sUcapl\sUup$}} "4'";"5'"};
  (-65,-15)*+{\E\F\E \onen \{2\}}="1";
  (-45,-35)*+{\E\onen \{1+n\}}="2";
  (-10,-15)*+{\E\F\E \onen }="3";
  (10,-35)*+{\E\F\E \onen}="4";
  (65,-35)*+{\E\onen \{-n-1\}}="5";
  (45,-15)*+{\E\F\E \onen \{-2\}}="6";
   {\ar^{\sUup\sUdown\sUupdot} "1";"3"};
   {\ar^{} "1" ;"4"};
   "4"+(-12,8)*{\sUdown\sUupdot\sUup};
   "3"+(-16,-5)*{\sUup\sUcupr};
   "5"+(-18,10)*{\text{$\sUup\sUcapl$}};
   "4"+(14,4)*{\sUup\sUdown\sUupdot};
   {\ar "2";"3"};
   {\ar_(.4){\text{$\sUup\sUcupr$}} "2";"4"};
   {\ar^(.63){-\;\sUup\sUdowndot\sUup} "3";"6"};
   {\ar "3";"5"};
   {\ar "4";"6"};
   {\ar_(.4){-\text{$\sUup\sUcapr$}} "4";"5"};
   {\textcolor[rgb]{0.00,0.00,1.00}{\ar@/^0.25pc/^{(\ogams)_1} "1"+(-6,4);"1'"+(-6,-4)}};
   {\textcolor[rgb]{0.00,0.00,1.00}{\ar@/^0.3pc/^{(\ogams)_2} "1";"2'"+(-4,-3)}};
   {\textcolor[rgb]{0.00,0.00,1.00}{\ar@/^0.3pc/^{(\ogams)_3} "3"+(-4,4);"3'"+(-4,-4)}};
   {\textcolor[rgb]{0.00,0.00,1.00}{\ar^(.35){(\ogams)_4} "3";"4'"}};
   {\textcolor[rgb]{0.00,0.00,1.00}{\ar@/^0.35pc/^(.6){(\ogams)_5} "4"+(2,4);"4'"+(2,-4)}};
    {\textcolor[rgb]{0.00,0.00,1.00}{\ar@/_0.3pc/^{(\ogams)_6} "5";"5'"+(2,-4)}};
   {\textcolor[rgb]{0.00,0.00,1.00}{\ar@/^0.3pc/^{(\ogams)_7} "6";"5'"+(-4,-4)}};
   {\textcolor[rgb]{0.00,0.00,1.00}{\ar@/^0.25pc/^{(\ogams)_8} "6"+(-4,4);"6'"+(-4,-4)}};
   {\textcolor[rgb]{0.00,0.50,0.25}{\ar@/^0.25pc/^{(\odelts)_1} "1'"+(-4,-4);"1"+(-4,4)}};
   {\textcolor[rgb]{0.00,0.50,0.25}{\ar@/^0.3pc/^{(\odelts)_2} "2'";"1"+(3,3)}};
   {\textcolor[rgb]{0.00,0.50,0.25}{\ar@/^0.3pc/^(.3){(\odelts)_3} "2'";"2"}};
   {\textcolor[rgb]{0.00,0.50,0.25}{\ar@/^0.35pc/^{(\odelts)_4} "3'"+(-2,-4);"3"+(-2,4)}};
   {\textcolor[rgb]{0.00,0.50,0.25}{\ar^(.7){(\odelts)_5} "3'";"4"+(-2,4)}};
   {\textcolor[rgb]{0.00,0.50,0.25}{\ar@/^0.3pc/^(.4){(\odelts)_6} "4'"+(4,-4);"4"+(4,4)}};
   {\textcolor[rgb]{0.00,0.50,0.25}{\ar@/^0.3pc/^{(\odelts)_7}  "5'"+(-2,-4);"6"+(4,4)}};
   {\textcolor[rgb]{0.00,0.50,0.25}{\ar@/^0.25pc/^{(\odelts)_8} "6'"+(-2,-4);"6"+(-2,4)}};
 \endxy
\end{equation}
where
\[
 (\odelts)_1 = (\odelts)_4 \;\;= \;\;\vcenter{
 \xy 0;/r.13pc/:
    (4,-4)*{};(-4,4)*{} **\crv{(4,-1) & (-4,1)};
    (-4,-4)*{};(4,4)*{} **\crv{(-4,-1) & (4,1)};
    (-4,4)*{};(-12,12)*{} **\crv{(-4,7) & (-12,9)}?(1)*\dir{>};
    (-12,4)*{};(-4,12)*{} **\crv{(-12,7) & (-4,9)};
    (4,4)*{}; (4,12) **\dir{-}?(1)*\dir{>};;
    (-12,-4)*{}; (-12,4) **\dir{-}?(0)*\dir{<};;
   (-4.3,4)*{\bullet}; 
\endxy}
\;\; - \;\;
 \vcenter{
 \xy 0;/r.13pc/:
    (4,-4)*{};(-4,4)*{} **\crv{(4,-1) & (-4,1)};
    (-4,-4)*{};(4,4)*{} **\crv{(-4,-1) & (4,1)};
    (-4,4)*{};(-12,12)*{} **\crv{(-4,7) & (-12,9)}?(1)*\dir{>};
    (-12,4)*{};(-4,12)*{} **\crv{(-12,7) & (-4,9)};
    (4,4)*{}; (4,12) **\dir{-}?(1)*\dir{>};;
    (-12,-4)*{}; (-12,4) **\dir{-}?(0)*\dir{<};;
  (-12.3,2)*{\bullet}; 
\endxy} \qquad
(\odelts)_2 \;\; = \;\; \vcenter{
 \xy 0;/r.13pc/:
    (4,12)*{};(-4,12)*{} **\crv{(4,4) & (-4,4)}?(0)*\dir{<};
    (-12,-4)*{}; (-12,12) **\dir{-}?(1)*\dir{>};
\endxy}
\qquad (\odelts)_3 \;\; = \;\; -\;
 \vcenter{\xy 0;/r.13pc/:
    (12,-4)*{}; (12,12) **\dir{-}?(1)*\dir{>};
  (12.3,2)*{\bullet}; 
\endxy}
\]

\[
(\odelts)_5 \;\; = \;\; \vcenter{
 \xy 0;/r.13pc/:
    (-4,-4)*{};(-12,12)*{} **\crv{(-4,3) & (-12,5)}?(1)*\dir{>};
    (-12,-4)*{};(-4,12)*{} **\crv{(-12,3) & (-4,5)}?(0)*\dir{<};
    (4,-4)*{}; (4,12) **\dir{-}?(1)*\dir{>};
\endxy}
\qquad
 (\odelts)_6 = (\odelts)_8\;\; = \;\; \vcenter{
 \xy 0;/r.13pc/:
    (4,-4)*{};(-4,4)*{} **\crv{(4,-1) & (-4,1)};
    (-4,-4)*{};(4,4)*{} **\crv{(-4,-1) & (4,1)};
    (-4,4)*{};(-12,12)*{} **\crv{(-4,7) & (-12,9)}?(1)*\dir{>};
    (-12,4)*{};(-4,12)*{} **\crv{(-12,7) & (-4,9)};
    (4,4)*{}; (4,12) **\dir{-}?(1)*\dir{>};;
    (-12,-4)*{}; (-12,4) **\dir{-}?(0)*\dir{<};;
  (2,-1.5)*{\bullet}; 
\endxy}
\;\; - \;\;
 \vcenter{
 \xy 0;/r.13pc/:
    (4,-4)*{};(-4,4)*{} **\crv{(4,-1) & (-4,1)};
    (-4,-4)*{};(4,4)*{} **\crv{(-4,-1) & (4,1)};
    (-4,4)*{};(-12,12)*{} **\crv{(-4,7) & (-12,9)}?(1)*\dir{>};
    (-12,4)*{};(-4,12)*{} **\crv{(-12,7) & (-4,9)};
    (4,4)*{}; (4,12) **\dir{-}?(1)*\dir{>};;
    (-12,-4)*{}; (-12,4) **\dir{-}?(0)*\dir{<};;
  (-12.3,2)*{\bullet}; 
\endxy}
 \qquad
(\odelts)_7 \;\; =\;\; - \;
 \vcenter{
 \xy 0;/r.13pc/:
    (-4,12)*{};(4,12)*{} **\crv{(-4,4) & (4,4)}?(0)*\dir{<};
    (12,-4)*{}; (12,12) **\dir{-}?(1)*\dir{>};
\endxy}
\]

\[
 (\ogams)_1 = (\ogams)_3\;\; =\;\; \vcenter{
 \xy 0;/r.13pc/:
    (-4,-4)*{};(4,4)*{} **\crv{(-4,-1) & (4,1)} ;
    (4,-4)*{};(-4,4)*{} **\crv{(4,-1) & (-4,1)}?(0)*\dir{<};
    (4,4)*{};(12,12)*{} **\crv{(4,7) & (12,9)}?(1)*\dir{>};
    (12,4)*{};(4,12)*{} **\crv{(12,7) & (4,9)}?(1)*\dir{>};
    (-4,4)*{}; (-4,12) **\dir{-};
    (12,-4)*{}; (12,4) **\dir{-};
  (10.5,9.5)*{\bullet}; 
\endxy}
\;\; - \;\; \vcenter{
 \xy 0;/r.13pc/:
    (-4,-4)*{};(4,4)*{} **\crv{(-4,-1) & (4,1)} ;
    (4,-4)*{};(-4,4)*{} **\crv{(4,-1) & (-4,1)}?(0)*\dir{<};
    (4,4)*{};(12,12)*{} **\crv{(4,7) & (12,9)}?(1)*\dir{>};
    (12,4)*{};(4,12)*{} **\crv{(12,7) & (4,9)}?(1)*\dir{>};
    (-4,4)*{}; (-4,12) **\dir{-};
    (12,-4)*{}; (12,4) **\dir{-};
    (-4.3,6)*{\bullet}; 
\endxy}
    \qquad
(\ogams)_2 \;\; =\;\; \vcenter{
 \xy 0;/r.13pc/:
    (-4,-4)*{};(4,-4)*{} **\crv{(-4,4) & (4,4)} ?(1)*\dir{>};
    (12,-4)*{}; (12,12) **\dir{-}?(1)*\dir{>};
\endxy}
    \qquad
(\ogams)_4 \;\; = \;\; \vcenter{
 \xy 0;/r.13pc/:
    (-4,-4)*{};(-12,12)*{} **\crv{(-4,3) & (-12,5)}?(0)*\dir{<};
    (-12,-4)*{};(-4,12)*{} **\crv{(-12,3) & (-4,5)}?(1)*\dir{>};
    (4,-4)*{}; (4,12) **\dir{-}?(1)*\dir{>};
\endxy}
\]

\[
(\ogams)_5 = (\ogams)_8 \;\; =\;\; \vcenter{
 \xy 0;/r.13pc/:
    (-4,-4)*{};(4,4)*{} **\crv{(-4,-1) & (4,1)} ;
    (4,-4)*{};(-4,4)*{} **\crv{(4,-1) & (-4,1)}?(0)*\dir{<};
    (4,4)*{};(12,12)*{} **\crv{(4,7) & (12,9)}?(1)*\dir{>};
    (12,4)*{};(4,12)*{} **\crv{(12,7) & (4,9)}?(1)*\dir{>};
    (-4,4)*{}; (-4,12) **\dir{-};
    (12,-4)*{}; (12,4) **\dir{-};
    (4,4)*{\bullet}; 
\endxy}
\;\; - \;\; \vcenter{
 \xy 0;/r.13pc/:
    (-4,-4)*{};(4,4)*{} **\crv{(-4,-1) & (4,1)} ;
    (4,-4)*{};(-4,4)*{} **\crv{(4,-1) & (-4,1)}?(0)*\dir{<};
    (4,4)*{};(12,12)*{} **\crv{(4,7) & (12,9)}?(1)*\dir{>};
    (12,4)*{};(4,12)*{} **\crv{(12,7) & (4,9)}?(1)*\dir{>};
    (-4,4)*{}; (-4,12) **\dir{-};
    (12,-4)*{}; (12,4) **\dir{-};
    (-4.3,6)*{\bullet}; 
\endxy}
    \qquad
(\ogams)_6 \;\; = \;\; -\;
 \vcenter{\xy 0;/r.13pc/:
    (12,-4)*{}; (12,12) **\dir{-}?(1)*\dir{>};
  (12.3,2)*{\bullet}; 
\endxy}
    \qquad
(\ogams)_7 \;\; =\;\; -\; \vcenter{
 \xy 0;/r.13pc/:
    (4,-4)*{};(-4,-4)*{} **\crv{(4,4) & (-4,4)} ?(1)*\dir{>};
    (-12,-4)*{}; (-12,12) **\dir{-}?(1)*\dir{>};
\endxy}
\]

%
\subsection{Symmetries of the commutativity chain maps} \label{sec_symm}
%

Given a 2-morphism $\alpha$ and a 1-morphism $x$ in $\Ucat$ we write $\alpha x$ in place of the composite $\alpha 1_x$ whenever this composite makes sense, likewise for $x\alpha$.

The following Propositions gives several alternative characterizations of the various commutativity chain maps.

\begin{prop}  \label{prop_psi}
For all $n \in \Z$ the equalities in $Kom(\Ucat)$
 \begin{align}
 \delt\onen &=
  \cal{F}\hat{\varrho}^{\tpsi} \circ \tpsi(\gam\onen) \circ \varrho^{\tpsi}\cal{F} \label{eq_delta_psi_gamma}\\
  \gam\onen &= \hat{\varrho}^{\tpsi}\cal{F}\circ\tpsi(\delt\onen)\circ \cal{F} \varrho^{\tpsi} \label{eq_gamma_psi} \\
  \ogam\onen &=
\cal{E}\hat{\varrho}^{\tpsi}\circ\tpsi(\odelt\onen)\circ \varrho^{\tpsi}\cal{E} \label{eq_bargamma_psi}\\
  \odelt\onen &=
  \hat{\varrho}^{\tpsi}\cal{E} \circ \tpsi(\ogam\onen) \circ \cal{E}\varrho^{\tpsi} \label{eq_bardelta_psi}
\end{align}
hold.
\end{prop}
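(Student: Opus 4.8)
The plan is to deduce all four equalities from the single definition \eqref{eq_delt_def} of $\delt\onen$, together with the formal properties of the $2$-functor $\tpsi$ and the structural chain isomorphisms $\varrho^{\tpsi}$, $\varrho^{\tsigma\tomega}$ recorded in Proposition~\ref{prop_homotopy_sym}. No new diagrammatic computation with the complicated explicit formulas for $\gam$, $\delt$, $\ogam$, $\odelt$ is required. First of all, \eqref{eq_delta_psi_gamma} needs no argument: it is precisely the defining formula \eqref{eq_delt_def} for $\delt\onen$.

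Next I would prove \eqref{eq_gamma_psi} by applying $\tpsi$ to \eqref{eq_delta_psi_gamma}. Since $\tpsi$ is involutive and contravariant on $2$-morphisms, while being covariant on composition of $1$-morphisms with $\tpsi(\cal{F})=\cal{F}$, applying $\tpsi$ reverses the order of the three factors, replaces $\tpsi(\tpsi(\gam\onen))$ by $\gam\onen$, and carries each whiskering by $\cal{F}$ to a whiskering on the same side. Using $\tpsi(\varrho^{\tpsi})=-\varrho^{\tpsi}$ and $\tpsi(\hat{\varrho}^{\tpsi})=-\hat{\varrho}^{\tpsi}$ from Proposition~\ref{prop_homotopy_sym}, the two signs cancel and I obtain $\tpsi(\delt\onen)=\varrho^{\tpsi}\cal{F}\circ\gam\onen\circ\cal{F}\hat{\varrho}^{\tpsi}$. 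Composing on the left with $\hat{\varrho}^{\tpsi}\cal{F}$ and on the right with $\cal{F}\varrho^{\tpsi}$, and using $\hat{\varrho}^{\tpsi}\circ\varrho^{\tpsi}=\Id$ together with the interchange law, collapses the outer isomorphisms and yields exactly \eqref{eq_gamma_psi}.

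The remaining equalities \eqref{eq_bargamma_psi} and \eqref{eq_bardelta_psi} are the same statements for the pair $(\ogam,\odelt)$ in place of $(\gam,\delt)$, with $\cal{E}$ in place of $\cal{F}$. The strategy is to transport the relations just proved through the definitions \eqref{eq_def_overline_gamma}, \eqref{eq_def_overline_delta}, which express $\ogam$ and $\odelt$ as $\tomega\tsigma$-conjugates of $\gam$ and $\delt$. Substituting these definitions into the right-hand sides, I would commute $\tpsi$ past $\tomega\tsigma$ (they commute on the nose, by the commutation relations at the end of Section~\ref{sec_sym}), so that the inner factor becomes $\tomega\tsigma$ applied to the unbarred $\tpsi$-relation. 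Since $\tpsi(\cal{E})=\cal{E}$, the $\cal{E}$-whiskerings are preserved, and the computation reduces to the identity \eqref{eq_gamma_psi}, resp.\ \eqref{eq_delta_psi_gamma}, sandwiched between copies of $\varrho^{\tsigma\tomega}$ and $\varrho^{\tpsi}$, which then collapse by mutual invertibility.

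The main obstacle is purely bookkeeping of whiskering sides and signs. Because $\tomega\tsigma$ is contravariant on $1$-morphisms (it sends $\cal{F}\mapsto\cal{E}$ and interchanges left and right whiskering) while $\tpsi$ is contravariant on $2$-morphisms (it reverses the order of vertical composition and inverts homological degree), the two variances interact, and one must verify that $\tpsi$ acts on $\varrho^{\tsigma\tomega}$ compatibly with $\varrho^{\tpsi}$. This compatibility can be read off directly from the explicit formulas for $\varrho^{\tpsi}$ and $\varrho^{\tsigma\tomega}$, since both are built solely from identity $2$-morphisms of $\cal{E}\cal{F}\onen$ and scalars $\pm1$, on which $\tpsi$ acts by sending identities to identities while reversing homological degree. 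Once these finitely many signs are matched, \eqref{eq_bargamma_psi} and \eqref{eq_bardelta_psi} follow formally, exactly as \eqref{eq_gamma_psi} followed from \eqref{eq_delta_psi_gamma}.
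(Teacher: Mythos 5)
Your proposal is correct and follows essentially the same route as the paper: \eqref{eq_delta_psi_gamma} is the definition of $\delt$, \eqref{eq_gamma_psi} is obtained by applying the involutive contravariant $\tpsi$ and cancelling the signs from $\tpsi(\varrho^{\tpsi})=-\varrho^{\tpsi}$, and \eqref{eq_bargamma_psi}--\eqref{eq_bardelta_psi} are obtained by conjugating through the definitions of $\ogam$, $\odelt$ and using the (anti)commutativity of $\varrho^{\tpsi}$ with $\varrho^{\tsigma\tomega}$, which is exactly the content of Proposition~\ref{prop_homotopy_sym}\,(\ref{rem_varrho_nat}) that the paper invokes for the left and right squares of its diagram.
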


\begin{proof}
Note that \eqref{eq_delta_psi_gamma} is how the map $\delt$ was defined.   Equation~\eqref{eq_gamma_psi} follows from the strictly commutative square
\begin{equation}
    \xy
   (-15,0)*+{\cal{F}\cal{C}\onen}="bl";
   (15,0)*+{\cal{C}\cal{F}\onen}="br";
   (-45,15)*+{\cal{F}\cal{C}\onen}="tl'";
   (45,15)*+{\cal{C}\cal{F}\onen}="tr'";
   (-45,-15)*+{\cal{F}\tpsi(\cal{C})\onen}="bl'";
   (45,-15)*+{\tpsi(\cal{C})\cal{F}\onen}="br'";
   {\ar_{\gam} "bl";"br"};
      {\ar^{\gam} "tl'";"tr'"};
   {\ar_{\cal{F}\varrho^{\tpsi}} "tl'";"bl'"};
   {\ar_{\tpsi(\delt)} "bl'";"br'"};
   {\ar_{\hat{\varrho}^{\tpsi}\cal{F}} "br'";"tr'"};
   {\ar^{-\Id} "tl'";"bl"};
   {\ar_{\cal{F}\tpsi(\hat{\varrho}^{\tpsi})} "bl'";"bl"};
   {\ar^{-\Id} "br";"tr'"};
   {\ar^{\tpsi(\varrho^{\tpsi})\cal{F}} "br";"br'"};
  \endxy
\end{equation}
where the triangles commute since $\tpsi(\varrho^{\tpsi}) = -\hat{\varrho}^{\tpsi}$, $\tpsi(\hat{\varrho}^{\tpsi})=-\varrho^{\tpsi}$, and $\varrho^{\tpsi}$ has inverse $\hat{\varrho}^{\tpsi}$ by Proposition~\ref{prop_homotopy_sym}. The bottom square is $\tpsi$ applied to \eqref{eq_delta_psi_gamma}.

To prove \eqref{eq_bargamma_psi} consider the diagram:
\begin{equation}
    \xy
   (-25,10)*+{\tsigma\tomega(\cal{C})\cal{E}\onen}="tl";
   (25,10)*+{\cal{E}\tsigma\tomega(\cal{C})\onen}="tr";
   (-25,-10)*+{\tpsi\tsigma\tomega(\cal{C})\cal{E}\onen}="bl";
   (25,-10)*+{\cal{E}\tpsi\tsigma\tomega(\cal{C})\onen}="br";
   (-45,25)*+{\cal{C}\cal{E}\onen}="tl'";
   (45,25)*+{\cal{E}\cal{C}\onen}="tr'";
   (-45,-25)*+{\tpsi(\cal{C})\cal{E}\onen}="bl'";
   (45,-25)*+{\cal{E}\tpsi(\cal{C})\onen}="br'";
   {\ar^{\tsigma\tomega(\gam)} "tl";"tr"};
   {\ar_{\tsigma\tomega(\cal{F}\varrho^{\tpsi})} "tl";"bl"};
   {\ar_{\tsigma\tomega\tpsi(\delt)} "bl";"br"};
   {\ar_{\tsigma\tomega(\hat{\varrho}^{\tpsi}\cal{F})} "br";"tr"};
      {\ar^{\ogam} "tl'";"tr'"};
   {\ar_{\varrho^{\tpsi}\cal{E}} "tl'";"bl'"};
   {\ar_{\tpsi(\odelt)} "bl'";"br'"};
   {\ar_{\cal{E}\hat{\varrho}^{\tpsi}} "br'";"tr'"};
   {\ar^{\varrho^{\tsigma\tomega}\cal{E}} "tl'";"tl"};
   {\ar_{-\tpsi(\hat{\varrho}^{\tsigma\tomega})\cal{E}} "bl'";"bl"};
   {\ar^{\cal{E}\hat{\varrho}^{\tsigma\tomega}} "tr";"tr'"};
   {\ar_{-\cal{E}\tpsi(\varrho^{\tsigma\tomega})} "br";"br'"};
  \endxy
\end{equation}
The bottom square commutes on the nose since it is $\tpsi$ applied to the definition of $\odelt$ in \eqref{eq_def_overline_delta} with two minus signs distributed through the map. The top square is the definition of $\ogam$ in \eqref{eq_def_overline_gamma}.  The center square is $\tsigma\tomega$ applied to \eqref{eq_delta_psi_gamma}.  Noting that
\begin{align}
  \tsigma\tomega(\cal{F}\varrho^{\tpsi})& =\tsigma\tomega(\varrho^{\tpsi})\cal{E}
  \\
  \tsigma\tomega(\hat{\varrho}^{\tpsi}\cal{F}) &
  =\tsigma\tomega(\hat{\varrho}^{\tpsi})\cal{E}
\end{align}
the left and right squares commute since
\begin{equation}
  \tpsi(\hat{\varrho}^{\tsigma\tomega})\circ\varrho^{\tpsi}
  = -\tsigma\tomega(\varrho^{\tpsi})\circ \varrho^{\tsigma\tomega}, \qquad
    \tpsi(\varrho^{\tsigma\tomega})\circ\hat{\varrho}^{\tpsi}
  = -\tsigma\tomega(\hat{\varrho}^{\tpsi})\circ \hat{\varrho}^{\tsigma\tomega}
\end{equation}
by Proposition~\ref{prop_homotopy_sym} part (\ref{rem_varrho_nat}).  Equation \eqref{eq_bardelta_psi} follows from \eqref{eq_bargamma_psi} by applying $\tpsi$ and arguing as in the proof that \eqref{eq_delta_psi_gamma} implies \eqref{eq_gamma_psi} above.
\end{proof}

Just as the maps $\ogam$ and $\odelt$ are defined from $\gam$ and $\delt$ using the symmetry $\tsigma\tomega$, the following proposition characterizes $\gam$ and $\delt$ in terms of $\ogam$ and $\odelt$ via the symmetry $\tsigma\tomega$.
\begin{prop}  \label{prop_sigmaomega}
For all $n \in \Z$ the equalities in $Kom(\Ucat)$
 \begin{align}
\gam\onen & =
    \hat{\varrho}^{\tsigma \tomega}\cal{F} \circ
   \tomega\tsigma(\ogam\onen)\circ \cal{F}\varrho^{\tsigma\tomega} \\
\delt\onen &=
   \cal{F} \hat{\varrho}^{\tsigma \tomega} \circ
   \tomega\tsigma(\odelt\onen)\circ \varrho^{\tsigma\tomega}\cal{F}
\end{align}
hold.
\end{prop}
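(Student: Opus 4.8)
The plan is to derive both identities from the definitions \eqref{eq_def_overline_gamma} and \eqref{eq_def_overline_delta} of $\ogam$ and $\odelt$ by applying the symmetry $\tomega\tsigma$ and exploiting that it is an involution, in exactly the same spirit in which \eqref{eq_gamma_psi} was extracted from \eqref{eq_delta_psi_gamma} in Proposition~\ref{prop_psi}. First I would apply $\tomega\tsigma$ to both sides of \eqref{eq_def_overline_gamma}. Since $\tomega\tsigma$ is covariant on $2$-morphisms the order of composition is preserved, and since $(\tomega\tsigma)^2=\Id$ the inner factor $\tomega\tsigma(\gam\onen)$ is returned to $\gam\onen$. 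The one point requiring care is the behaviour of whiskering: because $\tsigma$ is contravariant for composition of $1$-morphisms while $\tomega$ is covariant, and both send $\cal{E}\mapsto\cal{F}$, a left whisker by $\cal{E}$ is converted into a right whisker by $\cal{F}$ and vice versa, so that $\tomega\tsigma(\cal{E}\,\alpha)=\tomega\tsigma(\alpha)\,\cal{F}$ and $\tomega\tsigma(\alpha\,\cal{E})=\cal{F}\,\tomega\tsigma(\alpha)$ for any $2$-morphism $\alpha$.

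Carrying this out on \eqref{eq_def_overline_gamma} yields
\begin{equation}
\tomega\tsigma(\ogam\onen)=\tomega\tsigma(\hat{\varrho}^{\tsigma\tomega})\,\cal{F}\circ\gam\onen\circ\cal{F}\,\tomega\tsigma(\varrho^{\tsigma\tomega}).\nn
\end{equation}
Next I would invoke Proposition~\ref{prop_homotopy_sym}, whose relations $\tomega(\varrho^{\tsigma\tomega})=\tsigma(\hat{\varrho}^{\tsigma\tomega})$ and $\tomega(\hat{\varrho}^{\tsigma\tomega})=\tsigma(\varrho^{\tsigma\tomega})$ give, upon applying $\tsigma$ and using $\tsigma^2=\Id$ together with $\tsigma\tomega=\tomega\tsigma$, the simplifications $\tomega\tsigma(\varrho^{\tsigma\tomega})=\hat{\varrho}^{\tsigma\tomega}$ and $\tomega\tsigma(\hat{\varrho}^{\tsigma\tomega})=\varrho^{\tsigma\tomega}$. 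Substituting these identifies $\tomega\tsigma(\ogam\onen)$ with $\varrho^{\tsigma\tomega}\cal{F}\circ\gam\onen\circ\cal{F}\hat{\varrho}^{\tsigma\tomega}$. Finally, since $\varrho^{\tsigma\tomega}$ and $\hat{\varrho}^{\tsigma\tomega}$ are mutually inverse by Proposition~\ref{prop_homotopy_sym}, pre-composing with $\cal{F}\varrho^{\tsigma\tomega}$ and post-composing with $\hat{\varrho}^{\tsigma\tomega}\cal{F}$ cancels the outer factors and isolates $\gam\onen$, giving the first asserted equality. The identity for $\delt\onen$ follows by the same computation applied to \eqref{eq_def_overline_delta}, where the whisker on $\cal{E}$ now sits on the opposite side, producing the mirror-image composite $\cal{F}\hat{\varrho}^{\tsigma\tomega}\circ\tomega\tsigma(\odelt\onen)\circ\varrho^{\tsigma\tomega}\cal{F}$.

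No genuinely hard computation is involved; the entire argument is formal manipulation of the symmetry $2$-functors and the chain maps $\varrho^{\tsigma\tomega},\hat{\varrho}^{\tsigma\tomega}$ already fixed in Proposition~\ref{prop_homotopy_sym}. The only obstacle is the careful bookkeeping of which side each $\cal{F}$-whisker lands on after passing through the contravariant functor $\tsigma$, and confirming that the source and target $1$-morphisms match so that the cancellations $\hat{\varrho}^{\tsigma\tomega}\circ\varrho^{\tsigma\tomega}=\Id$ and $\varrho^{\tsigma\tomega}\circ\hat{\varrho}^{\tsigma\tomega}=\Id$ may legitimately be whiskered by $\cal{F}$ and applied. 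This is precisely the type of diagrammatic verification carried out via the commuting squares and triangles in the proof of \eqref{eq_bargamma_psi}, and I would present the argument most transparently by recording these compositions in an analogous commutative diagram.
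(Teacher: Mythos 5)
Your proposal is correct and follows essentially the same route as the paper: the paper also proves the first identity by applying $\tsigma\tomega$ to the definition \eqref{eq_def_overline_gamma} of $\ogam$, converting the $\cal{E}$-whiskers to $\cal{F}$-whiskers on the opposite side, invoking $\tsigma\tomega(\varrho^{\tsigma\tomega})=\hat{\varrho}^{\tsigma\tomega}$ and $\tsigma\tomega(\hat{\varrho}^{\tsigma\tomega})=\varrho^{\tsigma\tomega}$ from Proposition~\ref{prop_homotopy_sym}, and cancelling the mutually inverse maps — it merely records this as a commutative diagram, exactly as you suggest doing at the end. The second identity is likewise handled in the paper by the same argument applied to \eqref{eq_def_overline_delta}.
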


\begin{proof}
The first equation follows from the commutative square
\begin{equation} \label{eq_nat_dotE}
    \xy
   (-25,00)*+{\cal{F}\cal{C}\onen}="bl";
   (25,00)*+{\cal{C}\cal{F}\onen}="br";
   (-45,15)*+{\cal{F}\cal{C}\onen}="tl'";
   (45,15)*+{\cal{C}\cal{F}\onen}="tr'";
   (-45,-15)*+{\cal{F}\tsigma\tomega(\cal{C})\onen}="bl'";
   (45,-15)*+{\tsigma\tomega(\cal{C})\cal{F}\onen}="br'";
   {\ar_{\gam} "bl";"br"};
      {\ar^{\gam} "tl'";"tr'"};
   {\ar_{\cal{F}\varrho^{\tsigma\tomega}} "tl'";"bl'"};
   {\ar_{\tsigma\tomega(\ogam)} "bl'";"br'"};
   {\ar_{\cal{F}\hat{\varrho}^{\tsigma\tomega}} "br'";"tr'"};
   {\ar^{\Id} "tl'";"bl"};
   {\ar_{\tsigma\tomega(\varrho^{\tsigma\tomega}\cal{E})
   =\cal{F}\tsigma\tomega(\varrho^{\tsigma\tomega})} "bl'";"bl"};
   {\ar^{\Id} "br";"tr'"};
   {\ar_{\tsigma\tomega(\cal{E}\hat{\varrho}^{\tsigma\tomega})
   =\tsigma\tomega(\hat{\varrho}^{\tsigma\tomega}\cal{F})} "br";"br'"};
  \endxy
\end{equation}
where the bottom square commutes on the nose since it is $\tsigma\tomega$ applied to the definition of $\ogam$.  The left and right triangles commute on the nose since $\tsigma\tomega(\varrho^{\tsigma\tomega})=\hat{\varrho}^{\tsigma\tomega}$ and  $\tsigma\tomega(\hat{\varrho}^{\tsigma\tomega})=\varrho^{\tsigma\tomega}$ and $\varrho^{\tsigma\tomega}$ has inverse $\hat{\varrho}^{\tsigma\tomega}$ by Proposition~\ref{prop_homotopy_sym}. The second claim in the Proposition is proven similarly using the definition of $\odelt$.
\end{proof}

Notice that there are four equations in Proposition~\ref{prop_psi} but only two in Proposition~\ref{prop_sigmaomega}.  The missing equalities are definitions \eqref{eq_def_overline_gamma} and \eqref{eq_def_overline_delta}.

\begin{prop} \label{prop_sigma}
For all $n \in \Z$ the equalities (=) and homotopy equivalences ($\simeq$) hold.
 \begin{align}
 \gam\onen &= \hat{\varrho}^{\tsigma}\cal{F}\circ\tsigma(\delt\onenn{-n+2})\circ \cal{F} \varrho^{\tsigma} \label{eq_gamma_sigma}\\
 \delt\onen &\simeq \cal{F}\hat{\varrho}^{\tsigma}\circ\tsigma(\gam\onenn{-n+2})\circ  \varrho^{\tsigma}\cal{F} \label{eq_delt_sigma}\\
  \ogam\onen &= \cal{E} \hat{\varrho}^{\tsigma}\circ\tsigma(\odelt\onenn{-n-2})\circ \varrho^{\tsigma}\cal{E} \label{eq_bargamma_sigma}\\
  \odelt\onen &\simeq \hat{\varrho}^{\tsigma}\cal{E}\circ\tsigma(\ogam\onenn{-n-2})\circ \cal{E} \varrho^{\tsigma} \label{eq_bardelta_sigma}
\end{align}
\end{prop}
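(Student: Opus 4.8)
The plan is to establish the four identities by the same diagrammatic bookkeeping used in Propositions~\ref{prop_psi} and \ref{prop_sigmaomega}: prove the two equalities \eqref{eq_gamma_sigma} and \eqref{eq_bargamma_sigma} first, then deduce the two homotopy equivalences \eqref{eq_delt_sigma} and \eqref{eq_bardelta_sigma} from them by transport along $\tpsi$. Throughout I will use that $\tsigma$ is covariant on $2$-morphisms and contravariant on $1$-morphisms, that $\tsigma\tpsi=\tpsi\tsigma$ and $\tsigma\tomega\tsigma=\tomega$, and the transformation rules $\tsigma(\varrho^{\tsigma})=\hat{\varrho}^{\tsigma}$, $\tsigma(\hat{\varrho}^{\tsigma})=\varrho^{\tsigma}$ together with the (anti)commuting faces of the cube \eqref{eq_bigsym} recorded in Proposition~\ref{prop_homotopy_sym}.

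To prove \eqref{eq_gamma_sigma} I would start from the definition \eqref{eq_delt_def} of $\delt$ at weight $-n+2$ and apply $\tsigma$. Because $\tsigma$ preserves vertical composition, reverses horizontal composites, and fixes $\cal{E}$ and $\cal{F}$ individually, this turns \eqref{eq_delt_def} into an identity for $\delts\onen=\tsigma(\delt\onenn{-n+2})$ expressing it as $\tsigma(\hat{\varrho}^{\tpsi})\cal{F}\circ\tpsi(\gams\onen)\circ\cal{F}\tsigma(\varrho^{\tpsi})$, with $\gams\onen=\tsigma(\gam\onenn{-n+2})$ and $\tsigma\tpsi=\tpsi\tsigma$. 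Conjugating by $\varrho^{\tsigma}$ as on the right of \eqref{eq_gamma_sigma} and invoking the (anti)commuting squares of \eqref{eq_bigsym} to rewrite the composites $\cal{F}\tsigma(\varrho^{\tpsi})\circ\cal{F}\varrho^{\tsigma}$ and $\hat{\varrho}^{\tsigma}\cal{F}\circ\tsigma(\hat{\varrho}^{\tpsi})\cal{F}$ in terms of the single symmetry $\tpsi$, the expression should collapse, via \eqref{eq_gamma_psi}, to $\gam\onen$. The diagram to draw is the analogue of the squares in the proof of Proposition~\ref{prop_sigmaomega}: a central square which is $\tsigma$ applied to \eqref{eq_delt_def}, flanked by triangles governed by $\varrho^{\tsigma}$. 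What makes this a genuine equality rather than a homotopy is that the homotopy ambiguity carried by $\varrho^{\tsigma}$ enters only through the pair $(\varrho^{\tsigma},\hat{\varrho}^{\tsigma})$ in a strict conjugation, and the relations used to assemble the boundary are the on-the-nose identities of Proposition~\ref{prop_homotopy_sym}\ref{rem_varrho_nat}, not the homotopies themselves.

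For \eqref{eq_bargamma_sigma} I would apply $\tsigma\tomega$ to \eqref{eq_gamma_sigma}; since $\tsigma\tomega\tsigma=\tomega$ the right-hand side becomes a conjugate of $\tomega(\delt\onenn{-n+2})$, which the definitions \eqref{eq_def_overline_gamma} and \eqref{eq_def_overline_delta} identify, after the shift to weight $-n-2$, with $\tsigma(\odelt\onenn{-n-2})=\odelts\onen$, while the left-hand side becomes $\ogam\onen$ by \eqref{eq_def_overline_gamma}; alternatively one simply repeats the previous paragraph with the pair $(\odelt,\ogam)$ in place of $(\delt,\gam)$ and the relation \eqref{eq_bargamma_psi}. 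Finally, to obtain \eqref{eq_delt_sigma} and \eqref{eq_bardelta_sigma} I would apply $\tpsi$ to the two equalities just proved and push $\tpsi$ past the $\varrho$'s using Proposition~\ref{prop_psi}, exactly as \eqref{eq_gamma_psi} was produced from \eqref{eq_delta_psi_gamma}. Here the outcome is only a homotopy equivalence because this step reintroduces $\varrho^{\tsigma}$ in a position where its inverse is available only up to the explicit homotopies $h,h'$ of Proposition~\ref{prop_homotopy_sym}\ref{part_a}; those homotopies, whiskered by $\cal{F}$ or $\cal{E}$, supply the chain homotopy witnessing $\simeq$.

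The main obstacle is precisely this strict-versus-up-to-homotopy dichotomy: I must verify that in \eqref{eq_gamma_sigma} and \eqref{eq_bargamma_sigma} the homotopy indeterminacy of $\varrho^{\tsigma}$ genuinely cancels, so that the assembled diagram commutes on the nose, whereas in \eqref{eq_delt_sigma} and \eqref{eq_bardelta_sigma} it cannot. Making this honest means tracking the exact (anti)commutativity relations among $\varrho^{\tsigma}$, $\varrho^{\tpsi}$ and $\varrho^{\tsigma\tomega}$ read off from the faces of \eqref{eq_bigsym}, and, if the abstract argument proves too slippery, falling back on a direct comparison in the graphical calculus of the explicit formulas of Definition~\ref{def_xi_minus} for $\gam$ against the conjugate of $\tsigma(\delt)$ term by term. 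A secondary but real hazard is the bookkeeping: the weight shifts $-n+2$ and $-n-2$, the relabelling $\cal{E}\leftrightarrow\cal{F}$ forced by $\tomega$ but not by $\tsigma$, and the reversal of horizontal composites under $\tsigma$ must all be carried consistently across the six terms of each complex.
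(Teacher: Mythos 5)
Your reductions of \eqref{eq_delt_sigma}, \eqref{eq_bargamma_sigma} and \eqref{eq_bardelta_sigma} to \eqref{eq_gamma_sigma} are essentially the paper's (the paper conjugates by $\tsigma$ rather than $\tpsi$ to get the two homotopy equivalences, but the mechanism --- a commuting rectangle whose centre is a symmetry applied to \eqref{eq_gamma_sigma} or to the definitions \eqref{eq_def_overline_gamma}--\eqref{eq_def_overline_delta}, flanked by triangles controlled by Proposition~\ref{prop_homotopy_sym} --- is the same, and your observation that the $\simeq$'s arise exactly where $\hat{\varrho}^{\tsigma}\varrho^{\tsigma}\simeq\Id$ must be invoked is correct). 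The gap is in your proof of the base case \eqref{eq_gamma_sigma}, which is circular. Unwinding your manipulation: substituting \eqref{eq_delt_def} at weight $-n+2$, applying $\tsigma$, and using the anticommuting top square of \eqref{eq_bigsym} to trade $\tsigma(\varrho^{\tpsi})\circ\varrho^{\tsigma}$ for $-\tpsi(\hat{\varrho}^{\tsigma})\circ\varrho^{\tpsi}$ (and its mirror on the other side) converts the right-hand side of \eqref{eq_gamma_sigma} into $\hat{\varrho}^{\tpsi}\cal{F}\circ\tpsi\bigl(\cal{F}\hat{\varrho}^{\tsigma}\circ\tsigma(\gam\onenn{-n+2})\circ\varrho^{\tsigma}\cal{F}\bigr)\circ\cal{F}\varrho^{\tpsi}$. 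To collapse this to $\gam\onen$ via \eqref{eq_gamma_psi} you need the inner composite to equal $\delt\onen$ --- but that is precisely \eqref{eq_delt_sigma}, one of the statements you are proving, and moreover you would need it as a strict equality when it only holds up to homotopy. A second, independent defect is that the "inverse" form of the anticommuting square, $\hat{\varrho}^{\tsigma}\circ\tsigma(\hat{\varrho}^{\tpsi})=-\hat{\varrho}^{\tpsi}\circ\tpsi(\varrho^{\tsigma})$, is obtained by inverting $\varrho^{\tsigma}$, which is only a homotopy equivalence; so even absent the circularity your argument could never produce an on-the-nose equality.

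The underlying point is that \eqref{eq_gamma_sigma} is not a formal consequence of the symmetry apparatus: it is genuinely new information about how the explicit diagrams of Definition~\ref{def_xi_minus} at weight $n$ compare with those at weight $-n+2$ after conjugation by $\varrho^{\tsigma}$. What you relegate to a "fallback" is in fact the entire content of the paper's proof: each side of \eqref{eq_gamma_sigma} is a triple of $2\times 2$ matrices, and all twelve entries are checked by direct computation in the graphical calculus, using the nilHecke dot-slide \eqref{eq_nil_dotslide}, the curl relations \eqref{eq_reduction-dots}, the identity decomposition \eqref{eq_ident_decomp}, relation \eqref{eq_r3_extra}, and the bubble slides \eqref{eq_bubbleslide_cc_r}, with repeated use of positivity of bubbles to kill the summation terms. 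To repair your write-up you should promote that computation to the main argument for \eqref{eq_gamma_sigma} (and carry out at least the representative entry $\gam_1$), after which your formal derivations of the remaining three identities go through.
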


\begin{proof}
We need to show the equality of two chain maps in equation \eqref{eq_gamma_sigma}.  Each is given by three two-by-two matrices and we check the equality of coefficients one by one. That is twelve equations to check. Here we prove just one of the more complicated equalities of matrix entries. For example, the equality for the upper left term of the first matrix is
\begin{align} \label{eq_sigma_samp_gamma1}
\gam_1\onen
&= \hat{\varrho}^{\tsigma}_1 \cal{F} \circ \tsigma(\delt_1\onenn{-n+2}) \circ \cal{F} \varrho^{\tsigma}_1
+ \hat{\varrho}^{\tsigma}_3 \cal{F} \circ \tsigma(\delt_2\onenn{-n+2}) \circ \cal{F} \varrho^{\tsigma}_1 \nn \\
&\quad +\hat{\varrho}^{\tsigma}_1 \cal{F} \circ \tsigma(\delt_3\onenn{-n+2}) \circ \cal{F} \varrho^{\tsigma}_2
+ \hat{\varrho}^{\tsigma}_3 \cal{F} \circ \tsigma(\delt_4\onenn{-n+2}) \circ \cal{F} \varrho^{\tsigma}_2.
\end{align}
One can check that
\begin{align}
\hat{\varrho}^{\tsigma}_1 \cal{F} \circ \tsigma(\delt_1\onenn{-n+2}) \circ \cal{F} \varrho^{\tsigma}_1 \;\; &= \;\;
    \vcenter{
 \xy 0;/r.13pc/:
    (-8,-16)*{};(8,16)*{} **\crv{(-8,-4) & (8,4)} ?(0)*\dir{<};
    (0,-16)*{};(-8,16)*{} **\crv{(0,-10) &(8,-5) &(8,2) & (-8,10)} ?(1)*\dir{>};
    (0,16)*{};(8,-16)*{} **\crv{(0,10) &(-8,5) &(-8,-2) & (8,-10)} ?(1)*\dir{>};
      (15,8)*{n};
        (7,0)*{\bullet};
  \endxy}
 \;\; - \;\;
     \vcenter{
 \xy 0;/r.13pc/:
    (-8,-16)*{};(8,16)*{} **\crv{(-8,-4) & (8,4)} ?(0)*\dir{<};
    (0,-16)*{};(-8,16)*{} **\crv{(0,-10) &(8,-5) &(8,2) & (-8,10)} ?(1)*\dir{>};
    (0,16)*{};(8,-16)*{} **\crv{(0,10) &(-8,5) &(-8,-2) & (8,-10)} ?(1)*\dir{>};
      (15,8)*{n};
      (0,0)*{\bullet};
  \endxy}
 \;\; -\;\;\sum_{\xy (0,2)*{\scs f_1+f_2+f_3}; (0,-1)*{\scs =-n+1};
 (0,-4)*{\scs 1 \leq f_1}; \endxy} \;
\vcenter{
 \xy 0;/r.13pc/:
    (8,-16)*{};(8,16)*{} **\crv{(8,-10) & (0,-10) & (0,-5) & (8,10)}?(0)*\dir{<};
    (0,-16)*{};(-8,-16)*{} **\crv{(0,-10) & (8,-10) & (8,-2) & (-8,-2)}?(1)*\dir{>};
    (-16,1)*{\cbub{\spadesuit+f_2}{}};
    (0,8)*{};(-8,16)*{} **\crv{(0,11) & (-8,13)}?(1)*\dir{>};
    (-8,8)*{};(0,16)*{} **\crv{(-8,11) & (0,13)};
    (-8,8)*{};(0,8)*{} **\crv{(-8,4) & (0,4)}
        ?(.5)*\dir{}+(0,-0.1)*{\bullet}+(-1,-4)*{\scs f_3};
    (7,-5.5)*{\bullet}+(4.5,1)*{\scs f_1};
  (15,8)*{n};
\endxy} \\
\;\; &= \;\;
\vcenter{
 \xy 0;/r.13pc/:
    (-8,-16)*{};(8,16)*{} **\crv{(-8,-4) & (8,4)} ?(0)*\dir{<};
    (0,-16)*{};(-8,16)*{} **\crv{(0,-10) &(8,-5) &(8,2) & (-8,10)} ?(1)*\dir{>};
    (0,16)*{};(8,-16)*{} **\crv{(0,10) &(-8,5) &(-8,-2) & (8,-10)} ?(1)*\dir{>};
      (15,8)*{n};
        (7,0)*{\bullet};
  \endxy}
 \;\; - \;\;
     \vcenter{
 \xy 0;/r.13pc/:
    (-8,-16)*{};(8,16)*{} **\crv{(-8,-4) & (8,4)} ?(0)*\dir{<};
    (0,-16)*{};(-8,16)*{} **\crv{(0,-10) &(8,-5) &(8,2) & (-8,10)} ?(1)*\dir{>};
    (0,16)*{};(8,-16)*{} **\crv{(0,10) &(-8,5) &(-8,-2) & (8,-10)} ?(1)*\dir{>};
      (15,8)*{n};
      (0,0)*{\bullet};
  \endxy}
\end{align}
where the summation term is zero by the dotted curl relation \eqref{eq_reduction-dots} since
\begin{equation}
  \sum_{\xy (0,2)*{\scs f_1+f_2+f_3}; (0,-1)*{\scs =-n+1};
 (0,-4)*{\scs 1 \leq f_1}; \endxy} \;
\vcenter{
 \xy 0;/r.13pc/:
    (0,8)*{};(-8,16)*{} **\crv{(0,11) & (-8,13)}?(1)*\dir{>};
    (-8,8)*{};(0,16)*{} **\crv{(-8,11) & (0,13)};
    (-8,8)*{};(0,8)*{} **\crv{(-8,4) & (0,4)}
        ?(.5)*\dir{}+(0,-0.1)*{\bullet}+(-1,-4)*{\scs f_3};
  (11,3)*{n};
\endxy}
\;\; = \;\;
  \sum_{\xy (0,2)*{\scs f_1+f_2+f_3}; (0,-1)*{\scs =-n+1};
 (0,-4)*{\scs 1 \leq f_1}; \endxy} \;
 \sum_{\xy (0,2)*{\scs g_1+g_2}; (0,-1)*{\scs =n-2+f_3}; \endxy} \;
\vcenter{
 \xy 0;/r.13pc/:
    (0,16)*{};(-8,16)*{} **\crv{(0,10) & (-8,10)}?(1)*\dir{>}
        ?(.5)*\dir{}+(0,-0.1)*{\bullet}+(-1,-4)*{\scs g_1};
     (-1,-3)*{\ccbub{\spadesuit+g_2}{}};
  (11,3)*{n};
\endxy}
\end{equation}
and the for both summations to be nonzero we must have $-n+1\geq 0$ and $f_3+n-2\geq 0$ which is impossible since $f_3 \leq -n+1$. Similarly,
\begin{equation}
  \hat{\varrho}^{\tsigma}_3 \cal{F} \circ \tsigma(\delt_2\onenn{-n+2}) \circ \cal{F} \varrho^{\tsigma}_1 \;\; = \;\;
  -\;\;\sum_{\xy (0,2)*{\scs f_1+f_2}; (0,-1)*{\scs =-n+1};
 (0,-4)*{\scs 1 \leq f_1}; \endxy} \;
 \sum_{\xy (0,2)*{\scs g_1+g_2}; (0,-1)*{\scs =n-3};\endxy} \;
\vcenter{
 \xy 0;/r.13pc/:
    (8,-16)*{};(8,20)*{} **\crv{(8,-10) & (0,-10) & (0,-5) & (8,10)}?(0)*\dir{<};
    (0,-16)*{};(-8,-16)*{} **\crv{(0,-10) & (8,-10) & (8,-2) & (-8,-2)}?(1)*\dir{>};
    (-18,-7)*{\cbub{\spadesuit+f_2}{}};
   (0,20)*{};(-8,20)*{} **\crv{(0,14) & (-8,14)}?(1)*\dir{>}
        ?(.25)*\dir{}+(0,-0.1)*{\bullet}+(4,-1)*{\scs g_1};
     (-5.5,6)*{\ccbub{\spadesuit+g_2}{}};
    (7,-5.5)*{\bullet}+(4.5,1)*{\scs f_1};
  (15,8)*{n};
\endxy} \;\; = \;\; 0
\end{equation}
since the summation indices can never both be non-negative.

We also have
\begin{equation}
  \hat{\varrho}^{\tsigma}_1 \cal{F} \circ \tsigma(\delt_3\onenn{-n+2}) \circ \cal{F} \varrho^{\tsigma}_2 \;\; = \;\;
\vcenter{
 \xy 0;/r.13pc/:
    (-8,-16)*{};(8,16)*{} **\crv{(-8,-4) & (8,4)} ?(0)*\dir{<};
    (0,-16)*{};(8,-16)*{} **\crv{(0,-10) & (8,-10)}?(1)*\dir{>};
    (0,8)*{};(-8,16)*{} **\crv{(0,11) & (-8,13)}?(1)*\dir{>};
    (-8,8)*{};(0,16)*{} **\crv{(-8,11) & (0,13)};
    (-8,8)*{};(0,8)*{} **\crv{(-8,4) & (0,4)};
  (15,6)*{n};
\endxy}
\;\; = \;\;
\sum_{\xy (0,2)*{\scs g_1+g_2}; (0,-1)*{\scs =n-2};\endxy} \;
\vcenter{
 \xy 0;/r.13pc/:
    (-8,-16)*{};(8,20)*{} **\crv{(-8,-8) & (8,-8)} ?(0)*\dir{<};
    (0,-16)*{};(8,-16)*{} **\crv{(0,-10) & (8,-10)}?(1)*\dir{>};
     (0,20)*{};(-8,20)*{} **\crv{(0,14) & (-8,14)}?(1)*\dir{>}
        ?(.25)*\dir{}+(0,-0.1)*{\bullet}+(4,-1)*{\scs g_1};
     (-5.5,6)*{\ccbub{\spadesuit+g_2}{}};
  (15,6)*{n};
\endxy}
\end{equation}
and
\begin{equation}
  \hat{\varrho}^{\tsigma}_3 \cal{F} \circ \tsigma(\delt_4\onenn{-n+2}) \circ \cal{F} \varrho^{\tsigma}_2 \;\; = \;\; -\;
 \sum_{\xy (0,2)*{\scs g_1+g_2}; (0,-1)*{\scs =n-3};\endxy} \;
\vcenter{
 \xy 0;/r.13pc/:
    (-8,-16)*{};(8,20)*{} **\crv{(-8,-8) & (8,-8)} ?(0)*\dir{<}
        ?(.45)*\dir{}+(0,0)*{\bullet};
    (0,-16)*{};(8,-16)*{} **\crv{(0,-10) & (8,-10)}?(1)*\dir{>};
     (0,20)*{};(-8,20)*{} **\crv{(0,14) & (-8,14)}?(1)*\dir{>}
        ?(.25)*\dir{}+(0,-0.1)*{\bullet}+(4,-1)*{\scs g_1};
     (-5.5,6)*{\ccbub{\spadesuit+g_2}{}};
  (15,6)*{n};
\endxy}
\end{equation}
Hence, equation \eqref{eq_sigma_samp_gamma1} amounts to proving the equality
\begin{equation} \label{eq_gamma1_samp}
  \gam_1 \onen \;\; = \;\;
  \vcenter{
 \xy 0;/r.13pc/:
    (-8,-16)*{};(8,16)*{} **\crv{(-8,-4) & (8,4)} ?(0)*\dir{<};
    (0,-16)*{};(-8,16)*{} **\crv{(0,-10) &(8,-5) &(8,2) & (-8,10)} ?(1)*\dir{>};
    (0,16)*{};(8,-16)*{} **\crv{(0,10) &(-8,5) &(-8,-2) & (8,-10)} ?(1)*\dir{>};
      (15,8)*{n};
        (7,0)*{\bullet};
  \endxy}
 \;\; - \;\;
     \vcenter{
 \xy 0;/r.13pc/:
    (-8,-16)*{};(8,16)*{} **\crv{(-8,-4) & (8,4)} ?(0)*\dir{<};
    (0,-16)*{};(-8,16)*{} **\crv{(0,-10) &(8,-5) &(8,2) & (-8,10)} ?(1)*\dir{>};
    (0,16)*{};(8,-16)*{} **\crv{(0,10) &(-8,5) &(-8,-2) & (8,-10)} ?(1)*\dir{>};
      (15,8)*{n};
      (0,0)*{\bullet};
  \endxy}
  \;\; + \;\;
  \vcenter{
 \xy 0;/r.13pc/:
    (-8,-16)*{};(8,16)*{} **\crv{(-8,-4) & (8,4)} ?(0)*\dir{<};
    (0,-16)*{};(8,-16)*{} **\crv{(0,-10) & (8,-10)}?(1)*\dir{>};
    (0,8)*{};(-8,16)*{} **\crv{(0,11) & (-8,13)}?(1)*\dir{>};
    (-8,8)*{};(0,16)*{} **\crv{(-8,11) & (0,13)};
    (-8,8)*{};(0,8)*{} **\crv{(-8,4) & (0,4)};
  (15,6)*{n};
\endxy}
\;\; - \;\;
 \sum_{\xy (0,2)*{\scs g_1+g_2}; (0,-1)*{\scs =n-3};\endxy} \;
\vcenter{
 \xy 0;/r.13pc/:
    (-8,-16)*{};(8,20)*{} **\crv{(-8,-8) & (8,-8)} ?(0)*\dir{<}
        ?(.45)*\dir{}+(0,0)*{\bullet};
    (0,-16)*{};(8,-16)*{} **\crv{(0,-10) & (8,-10)}?(1)*\dir{>};
     (0,20)*{};(-8,20)*{} **\crv{(0,14) & (-8,14)}?(1)*\dir{>}
        ?(.25)*\dir{}+(0,-0.1)*{\bullet}+(4,-1)*{\scs g_1};
     (-5.5,6)*{\ccbub{\spadesuit+g_2}{}};
  (15,6)*{n};
\endxy}
\end{equation}

Using the nilHecke relation \eqref{eq_nil_dotslide} and cancelling terms we can write
\begin{align}
    \vcenter{
 \xy 0;/r.13pc/:
    (-8,-16)*{};(8,16)*{} **\crv{(-8,-4) & (8,4)} ?(0)*\dir{<};
    (0,-16)*{};(-8,16)*{} **\crv{(0,-10) &(8,-5) &(8,2) & (-8,10)} ?(1)*\dir{>};
    (0,16)*{};(8,-16)*{} **\crv{(0,10) &(-8,5) &(-8,-2) & (8,-10)} ?(1)*\dir{>};
      (15,8)*{n};
        (7,0)*{\bullet};
  \endxy}
 \;\; - \;\;
     \vcenter{
 \xy 0;/r.13pc/:
    (-8,-16)*{};(8,16)*{} **\crv{(-8,-4) & (8,4)} ?(0)*\dir{<};
    (0,-16)*{};(-8,16)*{} **\crv{(0,-10) &(8,-5) &(8,2) & (-8,10)} ?(1)*\dir{>};
    (0,16)*{};(8,-16)*{} **\crv{(0,10) &(-8,5) &(-8,-2) & (8,-10)} ?(1)*\dir{>};
      (15,8)*{n};
      (0,0)*{\bullet};
  \endxy}
  \;\; & = \;\;
      \vcenter{
 \xy 0;/r.13pc/:
    (-8,-16)*{};(8,16)*{} **\crv{(-8,-4) & (8,4)} ?(0)*\dir{<};
    (0,-16)*{};(-8,16)*{} **\crv{(0,-10) &(8,-5) &(8,2) & (-8,10)} ?(1)*\dir{>};
    (0,16)*{};(8,-16)*{} **\crv{(0,10) &(-8,5) &(-8,-2) & (8,-10)} ?(1)*\dir{>};
      (15,8)*{n};
        (0,5.5)*{\bullet};
  \endxy}
 \;\; - \;\;
     \vcenter{
 \xy 0;/r.13pc/:
    (-8,-16)*{};(8,16)*{} **\crv{(-8,-4) & (8,4)} ?(0)*\dir{<};
    (0,-16)*{};(-8,16)*{} **\crv{(0,-10) &(8,-5) &(8,2) & (-8,10)} ?(1)*\dir{>};
    (0,16)*{};(8,-16)*{} **\crv{(0,10) &(-8,5) &(-8,-2) & (8,-10)} ?(1)*\dir{>};
      (15,8)*{n};
      (5.5,7)*{\bullet};
  \endxy}
\end{align}
Now we can apply \eqref{eq_r3_extra} to both terms on the right-hand
side, sliding the bottom right crossing in each diagram through the
diagonal line.  Note that after applying \eqref{eq_r3_extra} the
location of the dots above produce some terms that cancel leaving
only
\begin{equation}
    \vcenter{
 \xy 0;/r.13pc/:
    (-8,-16)*{};(8,16)*{} **\crv{(-8,-8) & (8,-8) & (8,14)} ?(0)*\dir{<};
     (-8,16)*{};(0,8)*{} **\crv{(-8,11) & (0,11)}?(0)*\dir{<};
    (0,16)*{};(-8,8)*{} **\crv{(0,13) & (-8,11)};
    (-8,8)*{};(8,-16)*{} **\crv{(-8,3) & (8,-3)} ?(1)*\dir{>};
    (0,8)*{};(-8,0)*{} **\crv{(0,5) & (-8,3)};
    (-8,0)*{};(0,-16)*{} **\crv{(-8,-4) & (0,-8)};
     (0,8)*{\bullet}; (15,-4)*{n};
  \endxy}
  \;\; - \;\;
 \vcenter{
 \xy 0;/r.13pc/:
    (-8,-16)*{};(8,16)*{} **\crv{(-8,-8) & (8,-8) & (8,14)} ?(0)*\dir{<};
     (-8,16)*{};(0,8)*{} **\crv{(-8,11) & (0,11)}?(0)*\dir{<};
    (0,16)*{};(-8,8)*{} **\crv{(0,13) & (-8,11)};
    (-8,8)*{};(8,-16)*{} **\crv{(-8,3) & (8,-3)} ?(1)*\dir{>};
    (0,8)*{};(-8,0)*{} **\crv{(0,5) & (-8,3)};
    (-8,0)*{};(0,-16)*{} **\crv{(-8,-4) & (0,-8)};
     (7.5,8)*{\bullet}; (15,-4)*{n};
  \endxy}
  \;\; + \;\;
   \sum_{\xy (0,2)*{\scs g_1+g_2+g_3}; (0,-1)*{\scs +g_4=-n};\endxy} \;
\vcenter{
 \xy 0;/r.13pc/:
    (8,-16)*{};(8,16)*{} **\dir{-} ?(0)*\dir{<}
        ?(.45)*\dir{}+(0,0)*{\bullet}+(4.5,2)*{\scs g_4};
  (0,-16)*{};(-8,-16)*{} **\crv{(0,-10) & (-8,-10)}?(1)*\dir{>}
        ?(.35)*\dir{}+(0,0)*{\bullet}+(4,3)*{\scs g_3};
   (0,8)*{};(-8,16)*{} **\crv{(0,11) & (-8,11)}?(1)*\dir{>};
    (-8,8)*{};(0,16)*{} **\crv{(-8,11) & (0,11)};
    (-8,8)*{};(0,8)*{} **\crv{(-8,4) & (0,4)}
        ?(.5)*\dir{}+(0,-0.1)*{\bullet}+(-1,-4)*{\scs g_1+1};
     (-15,-4)*{\cbub{\spadesuit+g_2}{}};
  (15,-4)*{n};
\endxy}
\;\; - \;\;
    \sum_{\xy (0,2)*{\scs g_1+g_2+g_3}; (0,-1)*{\scs +g_4=-n};\endxy} \;
\vcenter{
 \xy 0;/r.13pc/:
    (8,-16)*{};(8,16)*{} **\dir{-} ?(0)*\dir{<}
        ?(.45)*\dir{}+(0,0)*{\bullet}+(8.5,2)*{\scs g_4+1};
  (0,-16)*{};(-8,-16)*{} **\crv{(0,-10) & (-8,-10)}?(1)*\dir{>}
        ?(.35)*\dir{}+(0,0)*{\bullet}+(4,3)*{\scs g_3};
   (0,8)*{};(-8,16)*{} **\crv{(0,11) & (-8,11)}?(1)*\dir{>};
    (-8,8)*{};(0,16)*{} **\crv{(-8,11) & (0,11)};
    (-8,8)*{};(0,8)*{} **\crv{(-8,4) & (0,4)}
        ?(.5)*\dir{}+(0,-0.1)*{\bullet}+(-1,-4)*{\scs g_1};
     (-15,-4)*{\cbub{\spadesuit+g_2}{}};
  (15,-4)*{n};
\endxy}
\end{equation}
where the last two terms can be shown to be zero by simplifying the dotted curl and arguing as above. Therefore,
\begin{align}
    \vcenter{
 \xy 0;/r.13pc/:
    (-8,-16)*{};(8,16)*{} **\crv{(-8,-4) & (8,4)} ?(0)*\dir{<};
    (0,-16)*{};(-8,16)*{} **\crv{(0,-10) &(8,-5) &(8,2) & (-8,10)} ?(1)*\dir{>};
    (0,16)*{};(8,-16)*{} **\crv{(0,10) &(-8,5) &(-8,-2) & (8,-10)} ?(1)*\dir{>};
      (14,8)*{n};
        (7,0)*{\bullet};
  \endxy}
 \;\; - \;\;
     \vcenter{
 \xy 0;/r.13pc/:
    (-8,-16)*{};(8,16)*{} **\crv{(-8,-4) & (8,4)} ?(0)*\dir{<};
    (0,-16)*{};(-8,16)*{} **\crv{(0,-10) &(8,-5) &(8,2) & (-8,10)} ?(1)*\dir{>};
    (0,16)*{};(8,-16)*{} **\crv{(0,10) &(-8,5) &(-8,-2) & (8,-10)} ?(1)*\dir{>};
      (14,8)*{n};
      (0,0)*{\bullet};
  \endxy}
  \;\; & = \;\;
  \vcenter{
 \xy 0;/r.13pc/:
    (-8,-16)*{};(8,16)*{} **\crv{(-8,-8) & (8,-8) & (8,14)} ?(0)*\dir{<};
     (-8,16)*{};(0,8)*{} **\crv{(-8,11) & (0,11)}?(0)*\dir{<};
    (0,16)*{};(-8,8)*{} **\crv{(0,13) & (-8,11)};
    (-8,8)*{};(8,-16)*{} **\crv{(-8,3) & (8,-3)} ?(1)*\dir{>};
    (0,8)*{};(-8,0)*{} **\crv{(0,5) & (-8,3)};
    (-8,0)*{};(0,-16)*{} **\crv{(-8,-4) & (0,-8)};
     (0,8)*{\bullet}; (14,-4)*{n};
  \endxy}
  \;\; - \;\;
 \vcenter{
 \xy 0;/r.13pc/:
    (-8,-16)*{};(8,16)*{} **\crv{(-8,-8) & (8,-8) & (8,14)} ?(0)*\dir{<};
     (-8,16)*{};(0,8)*{} **\crv{(-8,11) & (0,11)}?(0)*\dir{<};
    (0,16)*{};(-8,8)*{} **\crv{(0,13) & (-8,11)};
    (-8,8)*{};(8,-16)*{} **\crv{(-8,3) & (8,-3)} ?(1)*\dir{>};
    (0,8)*{};(-8,0)*{} **\crv{(0,5) & (-8,3)};
    (-8,0)*{};(0,-16)*{} **\crv{(-8,-4) & (0,-8)};
     (7.5,8)*{\bullet}; (14,-4)*{n};
  \endxy} \\
  \;\; &\refequal{\eqref{eq_nil_dotslide}} \;\;
   \vcenter{
 \xy 0;/r.13pc/:
    (-8,-16)*{};(8,16)*{} **\crv{(-8,-8) & (8,-8) & (8,14)} ?(0)*\dir{<};
     (-8,16)*{};(0,8)*{} **\crv{(-8,11) & (0,11)}?(0)*\dir{<};
    (0,16)*{};(-8,8)*{} **\crv{(0,13) & (-8,11)};
    (-8,8)*{};(8,-16)*{} **\crv{(-8,3) & (8,-3)} ?(1)*\dir{>};
    (0,8)*{};(-8,0)*{} **\crv{(0,5) & (-8,3)};
    (-8,0)*{};(0,-16)*{} **\crv{(-8,-4) & (0,-8)};
     (-7,-3)*{\bullet}; (15,-4)*{n};
  \endxy}
  \;\; - \;\;
 \vcenter{
 \xy 0;/r.13pc/:
    (-8,-16)*{};(8,16)*{} **\crv{(-8,-8) & (8,-8) & (8,14)} ?(0)*\dir{<};
     (-8,16)*{};(0,8)*{} **\crv{(-8,11) & (0,11)}?(0)*\dir{<};
    (0,16)*{};(-8,8)*{} **\crv{(0,13) & (-8,11)};
    (-8,8)*{};(8,-16)*{} **\crv{(-8,3) & (8,-3)} ?(1)*\dir{>};
    (0,8)*{};(-8,0)*{} **\crv{(0,5) & (-8,3)};
    (-8,0)*{};(0,-16)*{} **\crv{(-8,-4) & (0,-8)};
     (7.5,8)*{\bullet}; (15,-4)*{n};
  \endxy}
 \;\; + \;\;
    \vcenter{
 \xy 0;/r.13pc/:
    (-8,-16)*{};(8,16)*{} **\crv{(-8,-8) & (8,-8) & (8,14)} ?(0)*\dir{<};
     (-8,16)*{};(0,8)*{} **\crv{(-8,11) & (0,11)}?(0)*\dir{<};
    (0,16)*{};(-8,8)*{} **\crv{(0,13) & (-8,11)};
    (0,8)*{};(-8,8) **\crv{ (0,2) & (-8,2)};
    (8,-16)*{};(0,-16)*{} **\crv{(8,-12) & (0,2) & (-8,-2) & (0,-10)}?(0)*\dir{<};
     (15,-4)*{n};
  \endxy} \hspace{1in}
\end{align}
\begin{align}
 &\refequal{\eqref{eq_ident_decomp}}
- \;\;\vcenter{
 \xy 0;/r.13pc/:
    (-4,-4)*{};(4,4)*{} **\crv{(-4,-1) & (4,1)} ?(0)*\dir{<};
    (4,-4)*{};(-4,4)*{} **\crv{(4,-1) & (-4,1)};
    (4,4)*{};(12,12)*{} **\crv{(4,7) & (12,9)};
    (12,4)*{};(4,12)*{} **\crv{(12,7) & (4,9)};
    (-4,4)*{}; (-4,12) **\dir{-}?(1)*\dir{>};
    (12,-4)*{}; (12,4) **\dir{-}?(0)*\dir{<};
    (-4.3,6)*{\bullet}; 
\endxy} \;\; + \;\;
 \vcenter{
 \xy 0;/r.13pc/:
    (-4,-4)*{};(4,4)*{} **\crv{(-4,-1) & (4,1)} ?(0)*\dir{<};
    (4,-4)*{};(-4,4)*{} **\crv{(4,-1) & (-4,1)};
    (4,4)*{};(12,12)*{} **\crv{(4,7) & (12,9)};
    (12,4)*{};(4,12)*{} **\crv{(12,7) & (4,9)};
    (-4,4)*{}; (-4,12) **\dir{-}?(1)*\dir{>};
    (12,-4)*{}; (12,4) **\dir{-}?(0)*\dir{<};
  (10,9.5)*{\bullet}; 
\endxy}
 \;\; + \;\;
 \sum_{\xy (0,2)*{\scs f_1+f_2+f_3}; (0,-1)*{\scs =n-3};\endxy} \;
    \vcenter{
 \xy 0;/r.13pc/:
    (-8,-16)*{};(8,16)*{} **\crv{(-8,-8) & (8,-8) & (8,14)} ?(0)*\dir{<};
    (0,16)*{};(-8,16)*{} **\crv{(0,10) & (-8,10)}
      ?(1)*\dir{>} ?(.35)*\dir{} +(0,0)*{\bullet}+(4,-2)*{\scs f_1};
    (8,-16)*{};(0,-16)*{} **\crv{(8,-12) & (0,0) & (-8,-4) & (0,-10)}?(0)*\dir{<}
        ?(.7)*\dir{} +(0,0)*{\bullet}+(-7.5,-2)*{\scs f_3+1};
    (-14,4)*{\ccbub{\spadesuit+f_2}{}};
  \endxy}
  \;\; -\;\;
   \sum_{\xy (0,2)*{\scs f_1+f_2+f_3}; (0,-1)*{\scs =n-3};\endxy} \;
    \vcenter{
 \xy 0;/r.13pc/:
    (-8,-16)*{};(8,16)*{} **\crv{(-8,-8) & (8,-8) & (8,14)} ?(0)*\dir{<};
    (0,16)*{};(-8,16)*{} **\crv{(0,10) & (-8,10)}
      ?(1)*\dir{>} ?(.35)*\dir{} +(0,0)*{\bullet}+(4,-2)*{\scs f_1};
    (8,-16)*{};(0,-16)*{} **\crv{(8,-12) & (0,0) & (-8,-4) & (0,-10)}?(0)*\dir{<}
        ?(.7)*\dir{} +(0,0)*{\bullet}+(-4,-2)*{\scs f_3};
    (-14,4)*{\ccbub{\spadesuit+f_2}{}};
    (7.5,8)*{\bullet};
  \endxy}
 \;\; + \;\;
    \vcenter{
 \xy 0;/r.13pc/:
    (-8,-16)*{};(8,16)*{} **\crv{(-8,-8) & (8,-8) & (8,14)} ?(0)*\dir{<};
     (-8,16)*{};(0,8)*{} **\crv{(-8,11) & (0,11)}?(0)*\dir{<};
    (0,16)*{};(-8,8)*{} **\crv{(0,13) & (-8,11)};
    (0,8)*{};(-8,8) **\crv{ (0,2) & (-8,2)};
    (8,-16)*{};(0,-16)*{} **\crv{(8,-12) & (0,2) & (-8,-2) & (0,-10)}?(0)*\dir{<};
  \endxy}
 \end{align}
Now plug this into the right-hand side of \eqref{eq_gamma1_samp}, use the identity decomposition equation \eqref{eq_ident_decomp} on the last three terms above and note that the additional bubble terms arising from the application of \eqref{eq_ident_decomp} vanish by considering the conditions on the summation indices as above.  After reordering the non-vanishing terms of the right-hand side of \eqref{eq_gamma1_samp} we are left with
\begin{align}
  - \;\;\vcenter{
 \xy 0;/r.13pc/:
    (-4,-4)*{};(4,4)*{} **\crv{(-4,-1) & (4,1)} ?(0)*\dir{<};
    (4,-4)*{};(-4,4)*{} **\crv{(4,-1) & (-4,1)};
    (4,4)*{};(12,12)*{} **\crv{(4,7) & (12,9)};
    (12,4)*{};(4,12)*{} **\crv{(12,7) & (4,9)};
    (-4,4)*{}; (-4,12) **\dir{-}?(1)*\dir{>};
    (12,-4)*{}; (12,4) **\dir{-}?(0)*\dir{<};
    (-4.3,6)*{\bullet}; 
\endxy} \;\; + \;\;
 \vcenter{
 \xy 0;/r.13pc/:
    (-4,-4)*{};(4,4)*{} **\crv{(-4,-1) & (4,1)} ?(0)*\dir{<};
    (4,-4)*{};(-4,4)*{} **\crv{(4,-1) & (-4,1)};
    (4,4)*{};(12,12)*{} **\crv{(4,7) & (12,9)};
    (12,4)*{};(4,12)*{} **\crv{(12,7) & (4,9)};
    (-4,4)*{}; (-4,12) **\dir{-}?(1)*\dir{>};
    (12,-4)*{}; (12,4) **\dir{-}?(0)*\dir{<};
  (10,9.5)*{\bullet}; 
\endxy}
 \;\; - \;\;
 \sum_{\xy (0,2)*{\scs f_1+f_2+f_3}; (0,-1)*{\scs =n-3};\endxy} \;
  \vcenter{
 \xy 0;/r.13pc/:
    (-8,-16)*{};(8,20)*{} **\crv{(-8,-8) & (8,-8)} ?(0)*\dir{<};
    (0,-16)*{};(8,-16)*{} **\crv{(0,-10) & (8,-10)}?(1)*\dir{>}
        ?(.5)*\dir{} +(0,0)*{\bullet}+(5,2.5)*{\scs f_3+1};
     (0,20)*{};(-8,20)*{} **\crv{(0,14) & (-8,14)}?(1)*\dir{>}
        ?(.25)*\dir{}+(0,-0.1)*{\bullet}+(4,-1)*{\scs f_1};
     (-5.5,6)*{\ccbub{\spadesuit+f_2}{}};
\endxy}
  \;\; +\;\;
   \sum_{\xy (0,2)*{\scs f_1+f_2+f_3}; (0,-1)*{\scs =n-3};\endxy} \;
  \vcenter{
 \xy 0;/r.13pc/:
    (-8,-16)*{};(8,20)*{} **\crv{(-8,-8) & (8,-8)} ?(0)*\dir{<}
        ?(.45)*\dir{}+(0,0)*{\bullet};
    (0,-16)*{};(8,-16)*{} **\crv{(0,-10) & (8,-10)}?(1)*\dir{>}
        ?(.5)*\dir{} +(0,0)*{\bullet}+(4,2)*{\scs f_3};
     (0,20)*{};(-8,20)*{} **\crv{(0,14) & (-8,14)}?(1)*\dir{>}
        ?(.25)*\dir{}+(0,-0.1)*{\bullet}+(4,-1)*{\scs f_1};
     (-5.5,6)*{\ccbub{\spadesuit+f_2}{}};
\endxy}
\;\; - \;\;
 \sum_{\xy (0,2)*{\scs g_1+g_2}; (0,-1)*{\scs =n-3};\endxy} \;
\vcenter{
 \xy 0;/r.13pc/:
    (-8,-16)*{};(8,20)*{} **\crv{(-8,-8) & (8,-8)} ?(0)*\dir{<}
        ?(.45)*\dir{}+(0,0)*{\bullet};
    (0,-16)*{};(8,-16)*{} **\crv{(0,-10) & (8,-10)}?(1)*\dir{>};
     (0,20)*{};(-8,20)*{} **\crv{(0,14) & (-8,14)}?(1)*\dir{>}
        ?(.25)*\dir{}+(0,-0.1)*{\bullet}+(4,-1)*{\scs g_1};
     (-5.5,6)*{\ccbub{\spadesuit+g_2}{}};
\endxy}
\end{align}
By rescalling the third term and simplifying the last two terms,
the above becomes
\begin{align}
  - \;\;\vcenter{
 \xy 0;/r.13pc/:
    (-4,-4)*{};(4,4)*{} **\crv{(-4,-1) & (4,1)} ?(0)*\dir{<};
    (4,-4)*{};(-4,4)*{} **\crv{(4,-1) & (-4,1)};
    (4,4)*{};(12,12)*{} **\crv{(4,7) & (12,9)};
    (12,4)*{};(4,12)*{} **\crv{(12,7) & (4,9)};
    (-4,4)*{}; (-4,12) **\dir{-}?(1)*\dir{>};
    (12,-4)*{}; (12,4) **\dir{-}?(0)*\dir{<};
    (-4.3,6)*{\bullet}; 
\endxy} \;\; + \;\;
 \vcenter{
 \xy 0;/r.13pc/:
    (-4,-4)*{};(4,4)*{} **\crv{(-4,-1) & (4,1)} ?(0)*\dir{<};
    (4,-4)*{};(-4,4)*{} **\crv{(4,-1) & (-4,1)};
    (4,4)*{};(12,12)*{} **\crv{(4,7) & (12,9)};
    (12,4)*{};(4,12)*{} **\crv{(12,7) & (4,9)};
    (-4,4)*{}; (-4,12) **\dir{-}?(1)*\dir{>};
    (12,-4)*{}; (12,4) **\dir{-}?(0)*\dir{<};
  (10,9.5)*{\bullet}; 
\endxy}
 \;\; - \;\;
 \sum_{\xy (0,2)*{\scs f_1+f_2+f'_3}; (0,-1)*{\scs =n-2};
        (0,-4)*{\scs 1 \leq f'_3};\endxy} \;
  \vcenter{
 \xy 0;/r.13pc/:
    (-8,-16)*{};(8,20)*{} **\crv{(-8,-8) & (8,-8)} ?(0)*\dir{<};
    (0,-16)*{};(8,-16)*{} **\crv{(0,-10) & (8,-10)}?(1)*\dir{>}
        ?(.5)*\dir{} +(0,0)*{\bullet}+(5,2.5)*{\scs f'_3};
     (0,20)*{};(-8,20)*{} **\crv{(0,14) & (-8,14)}?(1)*\dir{>}
        ?(.25)*\dir{}+(0,-0.1)*{\bullet}+(4,-1)*{\scs f_1};
     (-5.5,6)*{\ccbub{\spadesuit+f_2}{}};
\endxy}
  \;\; +\;\;
   \sum_{\xy (0,2)*{\scs f_1+f_2+f_3}; (0,-1)*{\scs =n-3};
        (0,-4)*{\scs 1 \leq f_3};\endxy} \;
  \vcenter{
 \xy 0;/r.13pc/:
    (-8,-16)*{};(8,20)*{} **\crv{(-8,-8) & (8,-8)} ?(0)*\dir{<}
        ?(.45)*\dir{}+(0,0)*{\bullet};
    (0,-16)*{};(8,-16)*{} **\crv{(0,-10) & (8,-10)}?(1)*\dir{>}
        ?(.5)*\dir{} +(0,0)*{\bullet}+(4,2)*{\scs f_3};
     (0,20)*{};(-8,20)*{} **\crv{(0,14) & (-8,14)}?(1)*\dir{>}
        ?(.25)*\dir{}+(0,-0.1)*{\bullet}+(4,-1)*{\scs f_1};
     (-5.5,6)*{\ccbub{\spadesuit+f_2}{}};
\endxy} \hspace{0.8in}
\end{align}
But by sliding the bubbles using a rotated version of \eqref{eq_bubbleslide_cc_r} we can simplify the last two terms
\begin{align}
\sum_{\xy (0,2)*{\scs f_1+f_2+f_3}; (0,-1)*{\scs =n-3};
        (0,-4)*{\scs 1 \leq f_3};\endxy} \;
  \vcenter{
 \xy 0;/r.13pc/:
    (-8,-16)*{};(8,20)*{} **\crv{(-8,-8) & (8,-8)} ?(0)*\dir{<}
        ?(.45)*\dir{}+(0,0)*{\bullet};
    (0,-16)*{};(8,-16)*{} **\crv{(0,-10) & (8,-10)}?(1)*\dir{>}
        ?(.5)*\dir{} +(0,0)*{\bullet}+(4,2)*{\scs f_3};
     (0,20)*{};(-8,20)*{} **\crv{(0,14) & (-8,14)}?(1)*\dir{>}
        ?(.25)*\dir{}+(0,-0.1)*{\bullet}+(4,-1)*{\scs f_1};
     (-5.5,6)*{\ccbub{\spadesuit+f_2}{}};
\endxy} \;\; - \;\;
 \sum_{\xy (0,2)*{\scs f_1+f_2+f'_3}; (0,-1)*{\scs =n-2};
        (0,-4)*{\scs 1 \leq f'_3};\endxy} \;
  \vcenter{
 \xy 0;/r.13pc/:
    (-8,-16)*{};(8,20)*{} **\crv{(-8,-8) & (8,-8)} ?(0)*\dir{<};
    (0,-16)*{};(8,-16)*{} **\crv{(0,-10) & (8,-10)}?(1)*\dir{>}
        ?(.5)*\dir{} +(0,0)*{\bullet}+(5,2.5)*{\scs f'_3};
     (0,20)*{};(-8,20)*{} **\crv{(0,14) & (-8,14)}?(1)*\dir{>}
        ?(.25)*\dir{}+(0,-0.1)*{\bullet}+(4,-1)*{\scs f_1};
     (-5.5,6)*{\ccbub{\spadesuit+f_2}{}};
\endxy}
 &\refequal{\eqref{eq_bubbleslide_cc_r}}
   \sum_{\xy (0,2)*{\scs f_1+f_2+f_3}; (0,-1)*{\scs =n-3};
        (0,-4)*{\scs 1 \leq f_3};\endxy} \;
    \sum_{\xy (0,2)*{\scs g_1+g_2}; (0,-1)*{\scs =f_2};
       \endxy} (f_2+1-g_2)\;
  \vcenter{
 \xy 0;/r.13pc/:
    (-8,-16)*{};(8,20)*{} **\crv{(-8,10) & (8,8)} ?(0)*\dir{<}
         ?(.45)*\dir{}+(0,0)*{\bullet}+(-7.5,2)*{\scs g_1+1};
    (0,-16)*{};(8,-16)*{} **\crv{(0,-10) & (8,-10)}?(1)*\dir{>}
        ?(.25)*\dir{} +(0,0)*{\bullet}+(-5,2.5)*{\scs f_3};
     (0,20)*{};(-8,20)*{} **\crv{(0,14) & (-8,14)}?(1)*\dir{>}
        ?(.7)*\dir{}+(0,-0.1)*{\bullet}+(-4,-1)*{\scs f_1};
     (8,0)*{\ccbub{\spadesuit+g_2}{}};
\endxy} \nn\\
 & \qquad  \;\; - \;\;
 \sum_{\xy (0,2)*{\scs f_1+f_2+f'_3}; (0,-1)*{\scs =n-2};
        (0,-4)*{\scs 1 \leq f'_3};\endxy}
 \sum_{\xy (0,2)*{\scs g_1+g_2}; (0,-1)*{\scs =f_2};
       \endxy} (f_2+1-g_2) \;
  \vcenter{
 \xy 0;/r.13pc/:
    (-8,-16)*{};(8,20)*{} **\crv{(-8,10) & (8,8)} ?(0)*\dir{<}
         ?(.45)*\dir{}+(0,0)*{\bullet}+(-4,2)*{\scs g_1};
    (0,-16)*{};(8,-16)*{} **\crv{(0,-10) & (8,-10)}?(1)*\dir{>}
        ?(.25)*\dir{} +(0,0)*{\bullet}+(-5,2.5)*{\scs f'_3};
     (0,20)*{};(-8,20)*{} **\crv{(0,14) & (-8,14)}?(1)*\dir{>}
        ?(.7)*\dir{}+(0,-0.1)*{\bullet}+(-4,-1)*{\scs f_1};
     (8,0)*{\ccbub{\spadesuit+g_2}{}};
\endxy} \nn \\
&=
\;\;- \sum_{\xy (0,2)*{\scs f_1+f_2+f_3+f_4}; (0,-1)*{\scs =n-2}; (0,-4)*{\scs 1 \leq f_3};\endxy} \;
  \vcenter{
 \xy 0;/r.13pc/:
    (-8,-16)*{};(8,20)*{} **\crv{(-8,10) & (8,8)} ?(0)*\dir{<}
         ?(.45)*\dir{}+(0,0)*{\bullet}+(-4,2)*{\scs f_2};
    (0,-16)*{};(8,-16)*{} **\crv{(0,-10) & (8,-10)}?(1)*\dir{>}
        ?(.25)*\dir{} +(0,0)*{\bullet}+(-5,2.5)*{\scs f_3};
     (0,20)*{};(-8,20)*{} **\crv{(0,14) & (-8,14)}?(1)*\dir{>}
        ?(.7)*\dir{}+(0,-0.1)*{\bullet}+(-4,-1)*{\scs f_1};
     (8,0)*{\ccbub{\spadesuit+f_4}{}};
\endxy}
\end{align}
proving equation \eqref{eq_sigma_samp_gamma1}.  The rest of the proof of
\eqref{eq_gamma_sigma} follows by many more computations analogous to the one above.

Equations \eqref{eq_delt_sigma}--\eqref{eq_bardelta_sigma} follow by applying various symmetries to \eqref{eq_gamma_sigma}. The left and right hand sides of \eqref{eq_delt_sigma} constitute the perimeter of the following diagram
\begin{equation} \label{eq_delta_sigma_fill}
    \xy
   (-25,0)*+{\cal{C}\cal{F}\onen}="bl";
   (25,0)*+{\cal{F}\cal{C}\onen}="br";
   (-45,15)*+{\cal{C}\cal{F}\onen}="tl'";
   (45,15)*+{\cal{F}\cal{C}\onen}="tr'";
   (-45,-15)*+{\tsigma(\cal{C})\cal{F}\onen}="bl'";
   (45,-15)*+{\cal{F}\tsigma(\cal{C})\onen}="br'";
   {\ar_{\delt\onen} "bl";"br"};
      {\ar^{\delt\onen} "tl'";"tr'"};
   {\ar_{\varrho^{\tsigma}\cal{F}} "tl'";"bl'"};
   {\ar_{\tsigma(\gam\onenn{-n+2})} "bl'";"br'"};
   {\ar_{\cal{}\hat{\varrho}^{\tsigma}} "br'";"tr'"};
   {\ar^{\Id} "tl'";"bl"};
   {\ar_{\tsigma(\cal{F}\varrho^{\tsigma})=\tsigma(\varrho^{\tsigma})\cal{F}} "bl'";"bl"};
   {\ar^{\Id} "br";"tr'"};
   {\ar_{\tsigma(\hat{\varrho}^{\tsigma}\cal{F})=\cal{F}\tsigma(\hat{\varrho}^{\tsigma})} "br";"br'"};
  \endxy
\end{equation}
Since $\tsigma(\varrho^{\tsigma})=\hat{\varrho}^{\tsigma}$ and $\tsigma(\hat{\varrho}^{\tsigma})=\varrho^{\tsigma}$  the left and right triangles commute up to homotopy by Proposition~\ref{prop_homotopy_sym}.  The bottom square is $\tsigma$ applied to \eqref{eq_gamma_sigma} with parameter $-n+2$.

Equation~\eqref{eq_bargamma_sigma} follows from the commutativity of the diagram
\begin{equation}
    \xy
   (-25,10)*+{\tsigma\tomega(\cal{C})\cal{E}\onen}="tl";
   (25,10)*+{\cal{E}\tsigma\tomega(\cal{C})\onen}="tr";
   (-25,-10)*+{\tomega(\cal{C})\cal{E}\onen}="bl";
   (25,-10)*+{\cal{E}\tomega(\cal{C})\onen}="br";
   (-45,25)*+{\cal{C}\cal{E}\onen}="tl'";
   (45,25)*+{\cal{E}\cal{C}\onen}="tr'";
   (-45,-25)*+{\tsigma(\cal{C})\cal{E}\onen}="bl'";
   (45,-25)*+{\cal{E}\tsigma(\cal{C})\onen}="br'";
   {\ar^{\tsigma\tomega(\gam)} "tl";"tr"};
   {\ar^{\tomega(\varrho^{\tsigma}\cal{F})} "bl";"tl"};
   {\ar_{\tomega(\delt)} "bl";"br"};
   {\ar^{\tomega(\cal{F}\hat{\varrho}^{\tsigma})} "tr";"br"};
      {\ar^{\ogam} "tl'";"tr'"};
   {\ar_{\varrho^{\tsigma}\cal{E}} "tl'";"bl'"};
   {\ar_{\tsigma(\odelt)} "bl'";"br'"};
   {\ar_{\cal{E}\hat{\varrho}^{\tsigma}} "br'";"tr'"};
   {\ar^{\varrho^{\tsigma\tomega}\cal{E}} "tl'";"tl"};
   {\ar_{\tsigma(\varrho^{\tsigma\tomega})\cal{E}=\tsigma(\varrho^{\tsigma\tomega})\cal{E}} "bl'";"bl"};
   {\ar^{\cal{E}\hat{\varrho}^{\tsigma\tomega}} "tr";"tr'"};
   {\ar_{\cal{E}\tsigma(\hat{\varrho}^{\tsigma\tomega})=
   \cal{E}\tsigma(\hat{\varrho}^{\tsigma\tomega})} "br";"br'"};
  \endxy
\end{equation}
Here $\tomega(\varrho^{\tsigma}\cal{F})=\tomega(\varrho^{\tsigma})\cal{E}$ and
$\tomega(\cal{F}\hat{\varrho}^{\tsigma})=\cal{E}\tomega(\hat{\varrho}^{\tsigma})$, the left and right squares commute on the nose since
\begin{equation}
  \varrho^{\tomega,\tsigma\tomega} \circ \tsigma(\varrho^{\tsigma\tomega})\circ \varrho^{\tsigma} = \varrho^{\tsigma\tomega}, \qquad
  \hat{\varrho}^{\tomega}
  \circ \tsigma(\hat{\varrho}^{\tsigma\tomega})\circ \hat{\varrho}^{\tomega,\tsigma\tomega} = \hat{\varrho}^{\tsigma\tomega}
\end{equation}
by Proposition~\ref{prop_homotopy_sym} part (\ref{rem_varrho_nat}).  The center square commutes since it is $\tomega$ applied to \eqref{eq_gamma_sigma}.  The bottom square is $\tsigma$ applied to the definition of $\odelt$, where we used that $\tsigma^2=\Id$.  The top square commutes by definition of $\ogam$.

A similar homotopy commutative square to \eqref{eq_delta_sigma_fill} shows that \eqref{eq_bardelta_sigma} follows from \eqref{eq_bargamma_sigma}.
\end{proof}

\begin{prop} \label{prop_omega}
For all $n \in \Z$ the equalities (=) and homotopy equivalences ($\simeq$) hold.
 \begin{align}
 \gam\onen &= \hat{\varrho}^{\tomega}\cal{F}\circ\tomega(\odelt\onenn{-n})\circ \cal{F} \varrho^{\tomega} \label{prop_omega1}\\
 \delt\onen &\simeq \cal{F}\hat{\varrho}^{\tomega}\circ\tomega(\ogam\onenn{-n})\circ  \varrho^{\tomega}\cal{F} \\
  \ogam\onen &= \cal{E} \hat{\varrho}^{\tomega}\circ\tomega(\delt\onenn{-n})\circ \varrho^{\tomega}\cal{E} \\
  \odelt\onen &\simeq \hat{\varrho}^{\tomega}\cal{E}\circ\tomega(\gam\onenn{-n})\circ \cal{E} \varrho^{\tomega} \label{prop_omega4}
\end{align}
\end{prop}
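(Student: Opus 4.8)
The plan is to deduce all four identities of Proposition~\ref{prop_omega} from the relations already established in Propositions~\ref{prop_sigma} and \ref{prop_sigmaomega}, without performing any new $2$-morphism computation. The point is that $\tomega$ factors through the two symmetries whose commutativity relations we already control: since the symmetry $2$-functors are involutive and pairwise commute, $\tomega=\tsigma\circ(\tsigma\tomega)=(\tsigma\tomega)\circ\tsigma$. Moreover, in the proof of Proposition~\ref{prop_homotopy_sym}(\ref{rem_varrho_nat}) the chain maps $\varrho^{\tomega}$ and $\hat{\varrho}^{\tomega}$ were defined precisely as the composites $\varrho^{\tomega}=\tsigma(\varrho^{\tsigma\tomega})\circ\varrho^{\tsigma}$ and $\hat{\varrho}^{\tomega}=\hat{\varrho}^{\tsigma}\circ\tsigma(\hat{\varrho}^{\tsigma\tomega})$. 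Thus each $\tomega$-relation should appear as the outer perimeter of a two-slab diagram, one slab being a $\tsigma$-relation from Proposition~\ref{prop_sigma} and the other being $\tsigma$ applied to the matching $\tsigma\tomega$-relation from Proposition~\ref{prop_sigmaomega}.

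Concretely, to establish \eqref{prop_omega1} I would start from the equality \eqref{eq_gamma_sigma}, which writes $\gam\onen$ through $\tsigma(\delt)$, and substitute for $\delt$ its $\tsigma\tomega$-description from Proposition~\ref{prop_sigmaomega}. Applying the outer $\tsigma$ (covariant on chain maps, order-reversing on the whiskerings by $\cal{F}$ and $\cal{E}$) and using $\tsigma\tomega\tsigma=\tomega$ turns the inner term into $\tomega(\odelt)$, while the flanking isomorphisms become $\tsigma(\varrho^{\tsigma\tomega})$ and $\tsigma(\hat{\varrho}^{\tsigma\tomega})$; these combine with the remaining $\varrho^{\tsigma}$, $\hat{\varrho}^{\tsigma}$ into exactly $\varrho^{\tomega}$ and $\hat{\varrho}^{\tomega}$. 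I would package this as a diagram of the shape \eqref{eq_delta_sigma_fill}: its perimeter is the claimed identity, its two horizontal slabs are the $\tsigma$- and $(\tsigma\tomega)$-relations, and the connecting triangles encode the definitions of $\varrho^{\tomega}$, $\hat{\varrho}^{\tomega}$, commuting by the compatibility identities for the $\varrho$-maps recorded in Proposition~\ref{prop_homotopy_sym}(\ref{rem_varrho_nat}). Because both slabs used here are honest equalities, the resulting relation for $\gam\onen$ is an equality, as stated.

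The remaining three identities follow from the same template, with the pairing of slabs chosen to reproduce the equality/homotopy pattern of the statement. The relation for $\ogam\onen$ is again an equality, obtained by stacking the equality \eqref{eq_bargamma_sigma} with the first (equality) relation of Proposition~\ref{prop_sigmaomega}; whereas the relations for $\delt\onen$ and $\odelt\onen$ are only homotopy equivalences, because the $\tsigma$-slab feeding them, namely \eqref{eq_delt_sigma} and \eqref{eq_bardelta_sigma}, are themselves only homotopy equivalences. In each case the two connecting triangles commute by Proposition~\ref{prop_homotopy_sym}(\ref{rem_varrho_nat}), and the bottom slab is $\tsigma$ (or $\tomega\tsigma$) applied to one of the already-proven relations or to a definition \eqref{eq_def_overline_gamma}, \eqref{eq_def_overline_delta}.

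The main obstacle is the weight bookkeeping, not any graphical calculation. Unlike the $\tsigma$-relations of Proposition~\ref{prop_sigma}, whose weight arguments carry the characteristic $\pm2$ shifts produced by whiskering with $\cal{F}$ or $\cal{E}$, the $\tomega$-relations of Proposition~\ref{prop_omega} are stated with no shift (argument $\onenn{-n}$). Reconciling these requires using that $\tomega$ both negates the weight and interchanges $\cal{E}\leftrightarrow\cal{F}$, so that the whiskerings on the two sides of $\tomega(\odelt\onenn{-n})$ shift the weight in opposite directions and absorb the $\pm2$ coming from the intermediate $\tsigma$-slab; one must check at each stage that the intermediate map is $\odelt\onenn{-n}$ (respectively $\ogam$, $\delt$, $\gam$ at the indicated weight) and not a $\pm2$-shifted cousin. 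Keeping the equality ($=$) versus homotopy-equivalence ($\simeq$) distinction straight through the stacking, and tracking the signs inherited from the identities relating $\varrho^{\tsigma}$ and $\varrho^{\tsigma\tomega}$ recorded in Proposition~\ref{prop_homotopy_sym}(\ref{rem_varrho_nat}), are the only genuinely error-prone parts; no new computation beyond Propositions~\ref{prop_sigma} and \ref{prop_sigmaomega} is needed.
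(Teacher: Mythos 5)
Your proposal matches the paper's proof: the paper likewise derives each identity by stacking the corresponding $\tsigma$-relation from Proposition~\ref{prop_sigma} with $\tomega$ applied to the definition of $\odelt$ or $\ogam$ (equivalently, the relations of Proposition~\ref{prop_sigmaomega}), with the connecting triangles commuting via $\varrho^{\tomega}=\tsigma(\varrho^{\tsigma\tomega})\circ\varrho^{\tsigma}$ and the identities $\tomega(\varrho^{\tsigma\tomega})=\tsigma(\hat{\varrho}^{\tsigma\tomega})$ from Proposition~\ref{prop_homotopy_sym}. The equality-versus-homotopy pattern you identify is also the one the paper uses, so no further changes are needed.
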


\begin{proof}
The proof follows from Proposition~\ref{prop_sigma} and the definitions of $\ogam$ and $\odelt$. For example, the first two equations are proven by the diagrams
\begin{equation}
    \xy
   (-18,0)*+{\tsigma(\cal{C})\cal{F}\onen}="bl";
   (18,0)*+{\tsigma(\cal{C})\cal{F}\onen}="br";
   (-30,15)*+{\cal{C}\cal{F}\onen}="tl'";
   (30,15)*+{\cal{F}\cal{C}\onen}="tr'";
   (-30,-15)*+{\tomega(\cal{C})\cal{F}\onen}="bl'";
   (30,-15)*+{\cal{F}\tomega(\cal{C})\onen}="br'";
   {\ar_{\tsigma(\delt\onenn{-n+2})} "bl";"br"};
      {\ar^{\gam\onen} "tl'";"tr'"};
   {\ar_{\varrho^{\tomega}\cal{F}} "tl'";"bl'"};
   {\ar_{\tomega(\odelt\onen{-n})} "bl'";"br'"};
   {\ar_{\cal{F}\hat{\varrho}^{\tomega}} "br'";"tr'"};
   {\ar^{\varrho^{\tsigma}\cal{F}} "tl'";"bl"};
   {\ar_{
   \tomega(\varrho^{\tsigma\tomega})\cal{F}} "bl'";"bl"};
   {\ar^{\cal{F}\hat{\varrho}^{\tsigma}} "br";"tr'"};
   {\ar_{
   \cal{F}\tomega(\hat{\varrho}^{\tsigma\tomega})} "br";"br'"};
  \endxy
\quad
    \xy
   (-18,0)*+{\cal{F}\tsigma(\cal{C})\onen}="bl";
   (18,0)*+{\tsigma(\cal{C})\cal{F}\onen}="br";
   (-30,15)*+{\cal{F}\cal{C}\onen}="tl'";
   (30,15)*+{\cal{C}\cal{F}\onen}="tr'";
   (-30,-15)*+{\cal{F}\tomega(\cal{C})\onen}="bl'";
   (30,-15)*+{\tomega(\cal{C})\cal{F}\onen}="br'";
   {\ar_{\tsigma(\gam\onenn{-n+2})} "bl";"br"};
   {\ar^{\delt\onen} "tl'";"tr'"};
   {\ar_{\cal{F}\varrho^{\tomega}} "tl'";"bl'"};
   {\ar_{\tomega(\ogam\onenn{-n})} "bl'";"br'"};
   {\ar_{\hat{\varrho}^{\tomega}\cal{F}} "br'";"tr'"};
   {\ar^{\cal{F}\varrho^{\tsigma}} "tl'";"bl"};
   {\ar_{\cal{F}\tomega(\varrho^{\tsigma\tomega})} "bl'";"bl"};
   {\ar^{\hat{\varrho}^{\tsigma}\cal{F}} "br";"tr'"};
   {\ar_{\tomega(\hat{\varrho}^{\tsigma\tomega})\cal{F}} "br";"br'"};
  \endxy
\end{equation}
The top squares commute by Proposition~\ref{prop_sigma}. The bottom
squares are $\tomega$ applied to the definitions of $\odelt$ and
$\ogam$, respectively.  Note that
$\tomega(\varrho^{\tsigma\tomega})=\tsigma(\hat{\varrho}^{\tsigma\tomega})$
and
$\tomega(\hat{\varrho}^{\tsigma\tomega})=\tsigma(\varrho^{\tsigma\tomega})$.
The left and right triangles in both squares commute on the nose
since
\begin{equation}
  \tsigma(\hat{\varrho}^{\tsigma\tomega}) \circ \varrho^{\tomega} = \varrho^{\tsigma}, \qquad
  \hat{\varrho}^{\tomega} \circ \tsigma(\varrho^{\tsigma\tomega}) = \hat{\varrho}^{\tsigma}.
\end{equation}

Using Proposition~\ref{prop_sigmaomega}, similar arguments as the above prove the last two equations in the Proposition.
\end{proof}

%
\section{Naturality of the Casimir complex} \label{sec_nat}
%

In the previous section we have shown that the Casimir complex commutes with generating 1-morphisms $\cal{E}\onen$, $\cal{F}\onen$ in $Com(\Ucat)$.  In this section we show that this commutativity is natural with respect to 2-morphisms.

%
\subsection{Natural transformations $\kappa$ and $\hat{\kappa}$}
%

Throughout this section we will find it convenient to view $Kom(\Ucat)$ and $Com(\Ucat)$ as idempotented additive monoidal categories as explained in the introduction.  Consider the complex
\begin{equation}
  \cal{C}  \;\; := \;\; \bigoplus_{n \in \Z} \cal{C}\onen.
\end{equation}
In this section we show that the functor
\begin{equation}
  -\otimes \cal{C} \maps  Com(\Ucat) \to Com(\Ucat),
\end{equation}
is naturally isomorphic to the functor
\begin{equation}
  \cal{C} \otimes - \maps Com(\Ucat) \to Com(\Ucat)
\end{equation}
via an invertible natural transformation
\begin{equation}
  \kappa \maps - \otimes \cal{C} \To \cal{C} \otimes -
\end{equation}
with inverse
\begin{equation}
  \hat{\kappa} \maps \cal{C} \otimes - \To - \otimes \cal{C}.
\end{equation}
Recall that the tensor product of complexes and juxtaposition of diagrams gives the composition operation in categories $Com(\Ucat)$ and $Com(\UcatD)$.  Here we will use composition notation rather than the tensor notation.

Defining the natural transformation $\kappa$ and its inverse
$\hat{\kappa}$ amounts to specifying for any complex $X$ in
$Com(\Ucat)$ a chain map
\begin{equation}
  \kappa_{X} \maps X \cal{C} \to \cal{C} X, \qquad
  \hat{\kappa}_{X} \maps \cal{C} X \to X \cal{C},
\end{equation}
such that for any chain map $f \maps X \to Y$ the squares
\begin{equation} \label{eq_nat_squares}
    \xy
   (-10,-10)*+{X\cal{C}}="tl";
   (10,-10)*+{\cal{C}X}="tr";
   (-10,10)*+{Y\cal{C}}="bl";
   (10,10)*+{\cal{C}Y}="br";
   {\ar_-{\kappa_{X}} "tl";"tr"};
   {\ar^{f\Ucas} "tl";"bl"};
   {\ar^{\kappa_{Y}} "bl";"br"};
   {\ar_{\Ucas f} "tr";"br"};
  \endxy
  \qquad
      \xy
   (-10,-10)*+{\cal{C}X}="tl";
   (10,-10)*+{X\cal{C}}="tr";
   (-10,10)*+{\cal{C}Y}="bl";
   (10,10)*+{Y\cal{C}}="br";
   {\ar_-{\hat{\kappa}_{X}} "tl";"tr"};
   {\ar^{\Ucas f} "tl";"bl"};
   {\ar^{\hat{\kappa}_{Y}} "bl";"br"};
   {\ar_{f\Ucas} "tr";"br"};
  \endxy
\end{equation}
commute (up to chain homotopy).

On generating 1-morphisms we define
\begin{align} \label{eq_kappa}
  \kappa_{\cal{F}\onen} &:=\gam \maps   \cal{F} \cal{C}\onen\to \cal{C} \cal{F}\onen &\hat{\kappa}_{\cal{F}\onen} &:= \delt \maps \cal{C}\cal{F}\onen \to  \cal{F}\cal{C}\onen
   \\ \label{eq_kappabar}
  \kappa_{\cal{E}\onen} &:= \odelt\maps   \cal{E}\cal{C}\onen \to \cal{C}\cal{E}\onen  &
  \hat{\kappa}_{\cal{E}\onen} &:= \ogam \maps \cal{C}\cal{E}\onen \to  \cal{E}\cal{C}\onen.
\end{align}
For an arbitrary complex $X$ in $Com(\Ucat)$ the chain maps
$\kappa_{X}$ and $\hat{\kappa}_{X}$ are determined from the
assignments above.  For example, if
$X=\cal{E}\cal{F}\cal{F}\onen\{t\}$ then
\begin{equation}
  \kappa_{X} =
 \xy
 (-58,0)*+{\cal{E}\cal{F}\cal{F}\cal{C}\onen\{t\} }="1";
 (-20,0)*+{\cal{E}\cal{F}\cal{C}\cal{F}\onen\{t\}}="2" ;  (20,0)*+{\cal{E}\cal{C}\cal{F}\cal{F}\onen\{t\}}="3";
 (58,0)*+{\cal{C}\cal{E}\cal{F}\cal{F}\onen\{t\} .}="4"; {\ar^-{\cal{E}\cal{F}\gam\{t\}} "1";"2"};
 {\ar^-{\cal{E}\gam\cal{F}\{t\}} "2";"3"};
 {\ar^-{\odelt\cal{F}\cal{F}\{t\}} "3";"4"};
 \endxy
\end{equation}
This definition of $\kappa_X$ produces a commutative diagram
\begin{equation}
  \xy
   (-20,15)*+{\cal{C}XY\onen}="l";
   (20,15)*+{XY\cal{C}\onen}="r";
   (0,0)*+{X\cal{C}Y\onen}="b";
   {\ar^-{\kappa_{XY}} "l";"r"};
   {\ar_{\kappa_X Y} "l";"b"};
   {\ar_{X\kappa_Y} "b";"r"};
  \endxy
\end{equation}
in $Com(\Ucat)$ for complexes $Y=\onenp Y\onen$ and $X=\onenpp X \onenp$, with $\cal{C}XY\onen = \onenpp\cal{C}\onenpp X\onenp Y\onen$.

\begin{prop}
Equations \eqref{eq_nat_squares} hold for all 2-morphisms in $Com(\Ucat)$.
\end{prop}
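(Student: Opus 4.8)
The plan is to reduce the naturality of $\kappa$ (and of $\hat{\kappa}$) to a finite list of local identities among 2-morphisms of $\Ucat$, and then to verify each by the graphical calculus. First I observe that it suffices to treat $\kappa$: since $\hat{\kappa}_X$ is a homotopy inverse of $\kappa_X$ (Proposition~\ref{prop_gam_delt} and its analogue for $\ogam,\odelt$), composing the left square in \eqref{eq_nat_squares} with $\hat{\kappa}_X,\hat{\kappa}_Y$ on the two ends and using $\hat{\kappa}_Y\kappa_Y\simeq\Id$, $\kappa_X\hat{\kappa}_X\simeq\Id$ yields the right square up to homotopy. Next, because $\kappa$ is built on the generating 1-morphisms $\cal{E}\onen,\cal{F}\onen$ via \eqref{eq_kappa}--\eqref{eq_kappabar} and extended to arbitrary $X$ by the cocycle rule $\kappa_{XY}=(X\kappa_Y)\circ(\kappa_X Y)$ recorded in the commutative triangle above, naturality is multiplicative: \eqref{eq_nat_squares} is linear in $f$, and if it holds (up to homotopy) for 2-morphisms $f$ and $f'$ then it holds for their vertical composite (stack the squares) and for their horizontal composite (decompose $\kappa_{XX'}$ through the cocycle and glue the squares for $f$ and $f'$ via the interchange law). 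Since every 2-morphism of $\Ucat$ is a $\Bbbk$-linear combination of composites of identities together with the generating dots, crossings, cups and caps, it remains only to check \eqref{eq_nat_squares} for these generators.

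Second, I would cut down the list of generators using the symmetry 2-functors. Propositions~\ref{prop_psi}, \ref{prop_sigmaomega}, \ref{prop_sigma} and \ref{prop_omega} express $\gam,\delt,\ogam,\odelt$ in terms of one another through $\tpsi,\tsigma,\tomega$, and these 2-functors permute the generators (interchanging the dot on $\cal{E}$ with the dot on $\cal{F}$, the upward with the downward crossing, and cups with caps and left with right). Consequently a verified naturality identity for one generator transports, by applying the appropriate symmetry and conjugating by the chain isomorphisms $\varrho^{g},\hat{\varrho}^{g}$, to naturality for every generator in its orbit. This leaves a short list of representatives to be checked directly, which I would take to be the dot on a $\cal{F}$-strand (against $\kappa_{\cal{F}}=\gam$), one crossing, one cup and one cap.

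Third, each representative becomes an equality (or homotopy) of chain maps which I would verify entry-by-entry against the explicit formulas for $\gam,\delt,\ogam,\odelt$ in Definition~\ref{def_xi_minus}. The dot cases are the mildest: sliding the dot through $\gam$ uses only the nilHecke dot-slide \eqref{eq_nil_dotslide} and the bubble-slides \eqref{eq_bubslide1}--\eqref{eq_bubslide2}, much as in the warm-up computation inside the proof of Proposition~\ref{prop_sigma}. For the crossing and the cup/cap I would work separately in the two indecomposable regimes $n\le 0$ and $n\ge 0$ (the latter obtained from the former by $\tsigma$, cf.\ Section~\ref{sec_indec}), where the maps collapse to the short forms listed there; writing $\kappa_{\cal{F}\cal{E}}=(\gam\cal{E})\circ(\cal{F}\odelt)$ and similarly for the other multi-strand cases, the naturality squares reduce to identities provable with the $R3$-type relations \eqref{eq_r3_extra}, \eqref{eq_r3_extra2}, the reduction and dotted-curl relations \eqref{eq_reduction-dots}, and the bubble-slide \eqref{eq_bubbleslide_cc_r}.

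I expect the main obstacle to be precisely these crossing and cup/cap verifications. Unlike the dot cases, they mix all the summation terms of $\gam$ and $\odelt$ simultaneously, so the cancellations rely on the same delicate bookkeeping of bubble degrees and summation ranges that appears in \eqref{eq_sigma_samp_gamma1} and the computation following it. As there, the bubble terms vanish outside the indecomposable range by positivity of bubbles, but in the overlap the fake-bubble contributions must be tracked carefully; this is the step where a direct but lengthy computation is unavoidable, and I would organize the argument so that the symmetry reduction keeps the number of such computations to the absolute minimum.
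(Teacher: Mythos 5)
Your proposal is correct and follows essentially the same strategy as the paper: reduce to the generating 2-morphisms (dots, crossings, cups, caps), use the symmetry 2-functors $\tpsi$, $\tsigma$, $\tomega$ together with the chain maps $\varrho^{g}$ to transport a verified naturality square to the other generators and to the other sign of $n$, and settle the remaining representatives by direct graphical computation with explicit homotopies. The only (harmless) deviation is that you derive the naturality of $\hat{\kappa}$ from that of $\kappa$ via the inverse-of-a-natural-isomorphism argument, whereas the paper obtains it by applying $\tsigma\tomega$ and $\tpsi$ to the already-established squares; both are valid.
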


\begin{proof}
It is enough to check naturality squares \eqref{eq_nat_squares} on generating 2-morphisms (dots, crossing, cups, and caps) in $\Ucat$.   This will be done in Section~\ref{sec_nat_2morph}.
\end{proof}

It is clear from the definitions in \eqref{eq_kappa} and \eqref{eq_kappabar} of $\kappa$ and $\hat{\kappa}$ and the results in section~\ref{subsec_commutativity} that
\begin{equation}
  \hat{\kappa}_X\kappa_X = \Id_{\cal{C}X}, \qquad
  \kappa_X\hat{\kappa}_X = \Id_{X\cal{C}}
\end{equation}
in $Com(\Ucat)$, so that $\kappa$ and $\hat{\kappa}$ are inverse.

Naturality of $\kappa$ and $\hat{\kappa}$ and the universality of the Karoubian envelope allow us to extend $\kappa$ and $\hat{\kappa}$ to isomorphisms between functors
\begin{equation}
  -\otimes \cal{C}\; \maps \;  Kar(Com(\Ucat)) \to Kar(Com(\Ucat))
\end{equation}
and
\begin{equation}
  \cal{C} \otimes - \; \maps \; Kar(Com(\Ucat)) \to Kar(Com(\Ucat)).
\end{equation}
The equivalence $Kar(Com(\Ucat)) \cong Com(\UcatD)$ allows us to treat $\kappa$ and $\hat{\kappa}$ as isomorphisms
\begin{eqnarray}
  \kappa \maps - \otimes \cal{C} \To \cal{C} \otimes - \quad \text{and} \quad
  \hat{\kappa} \maps \cal{C} \otimes - \To - \otimes \cal{C} \nn
\end{eqnarray}
between endofunctors on $Com(\UcatD)$ concluding the proof of Theorem~\ref{thm_main}.

%
\subsection{Naturality with respect to 2-morphisms} \label{sec_nat_2morph}
%

It is immediate from the axioms of a 2-category and will be used throughout this section that for any $g \in \cal{G}$  the chain maps $\varrho^{g} \maps \cal{C} \to g(\cal{C})$ induce natural transformations between functors
\begin{align}
  \varrho^{g} \otimes - &\maps \cal{C} \otimes - \To g(\cal{C}) \otimes -, &
  - \otimes \varrho^{g} &\maps - \otimes \cal{C} \To - \otimes g(\cal{C}).
\end{align}

%
\subsubsection{Naturality of $\kappa$ for dot 2-morphism}
%

We will show that the diagram
\begin{equation} \label{eq_nat_dotF}
    \xy
   (-14,-10)*+{\cal{F}\cal{C}\onen}="tl";
   (14,-10)*+{\cal{C}\cal{F}\onen}="tr";
   (-14,10)*+{\cal{F}\cal{C}\onen}="bl";
   (14,10)*+{\cal{C}\cal{F}\onen}="br";
   {\ar_{\kappa_{\cal{F}\onen}} "tl";"tr"};
   {\ar^{\Udowndot\Ucas} "tl";"bl"};
   {\ar^{\kappa_{\cal{F}\onen}} "bl";"br"};
   {\ar_{\Ucas\Udowndot} "tr";"br"};
  \endxy
  \quad := \quad
    \xy
   (-10,-10)*+{\cal{F}\cal{C}\onen}="tl";
   (10,-10)*+{\cal{C}\cal{F}\onen}="tr";
   (-10,10)*+{\cal{F}\cal{C}\onen}="bl";
   (10,10)*+{\cal{C}\cal{F}\onen}="br";
   {\ar_{\gam} "tl";"tr"};
   {\ar^{\Udowndot\Ucas} "tl";"bl"};
   {\ar^{\gam} "bl";"br"};
   {\ar_{\Ucas\Udowndot} "tr";"br"};
  \endxy
\end{equation}
commutes up to homotopy (commutes in $Com(\Ucat)$).  Let
\begin{equation}
  (\gam)' = \gam \circ \big( \Udowndot\Ucas\big) - \big(\Ucas\Udowndot\big) \circ \gam.
\end{equation}
We construct a chain homotopy $(\gam)' \simeq 0$.
\begin{equation}
  \xy
  (-64,15)*+{
    \left(
    \begin{array}{c}
    \scs
    \E{}\F{}\cal{F} \onen \{2\} \\
    \scs  \cal{F}\onen \{1+n\} \\
    \end{array}
    \right)}="1t";
  (-5,15)*+{
    \und{\left(
    \begin{array}{c}
    \scs \E{}\F{}\cal{F} \onen \\
     \scs  \E{}\F{} \cal{F}\onen \\
    \end{array}
    \right)}}="3t";
  (64,15)*+{
    \left(\begin{array}{c}
    \scs \E{}\F{} \cal{F}\onen \{-2\} \\
     \scs  \cal{F}\onen \{n-3\} \\
    \end{array}
    \right) }="5t";
  {\ar^-{  \left(
    \begin{array}{cc}
      \text{$\Uupdot\Udown\Udown$} & \Ucupl\Udown \\ & \\
      \text{$\Uup\Udowndot\Udown $} & \Ucupl\Udown \\
    \end{array}
    \right)}   "1t";"3t"};
  {\ar^-{  \left(
    \begin{array}{cc}
      -\;\Uup\Udowndot\Udown  &  \text{$\Uupdot\Udown\Udown $}  \\ & \\
      \Ucapr\Udown & -\;\Ucapr\Udown\\
    \end{array}
  \right)} "3t";"5t"};
  (-64,-15)*+{
    \left( \begin{array}{c}
    \scs
    \cal{F}\E{}\F{} \onen \{2\} \\
    \scs  \cal{F}\onen \{1-n\} \\
   \end{array}
    \right)}="1";
 (-5,-15)*+{    \und{\left(
    \begin{array}{c}
    \scs \cal{F}\E{}\F{} \onen \\
     \scs  \cal{F}\E{}\F{} \onen \\
    \end{array}\right)}}="3";
 (64,-15)*+{  \left(
    \begin{array}{c}
    \scs \cal{F}\E{}\F{} \onen \{-2\} \\
     \scs  \cal{F}\onen \{n-1\} \\
    \end{array}\right) }="5";
 {\ar_-{   \left(
    \begin{array}{cc}
      \text{$\Udown\Uupdot\Udown$} & \Udown\Ucupl \\ & \\
      \text{$\Udown\Uup\Udowndot $} & \Udown\Ucupl \\
    \end{array}
    \right)} "1";"3"};
 {\ar_-{\left(
    \begin{array}{cc}
      -\;\Udown\Uup\Udowndot  &  \text{$\Udown\Uupdot\Udown $}  \\ & \\
      \Udown\Ucapr & -\;\Udown\Ucapr\\
     \end{array}
    \right)} "3";"5"};
 {\ar^{(\gamp)^{-1}} "1";"1t"};
 {\ar^{(\gamp)^{0}} "3";"3t"};
 {\ar^{(\gamp)^{1}} "5";"5t"};
 {\ar^{h^0} "3";"1t"};
 {\ar^{h^1} "5";"3t"};
 \endxy
\end{equation}
where, after simplifying the map $(\gam)'$, we have
\begin{align}
 (\gamp)^{-1}_{11} &= \;\;
 \vcenter{
 \xy 0;/r.13pc/:
    (-4,-4)*{};(-12,12)*{} **\crv{(-4,3) & (-12,5)}?(1)*\dir{>};
    (-12,-4)*{};(-4,12)*{} **\crv{(-12,3) & (-4,5)}?(0)*\dir{<};
    (4,-4)*{}; (4,12) **\dir{-}?(0)*\dir{<};
    (-5,0)*{\bullet};
\endxy}
 \;\; - \;\;
  \vcenter{
 \xy 0;/r.13pc/:
    (-4,-4)*{};(-12,12)*{} **\crv{(-4,3) & (-12,5)}?(1)*\dir{>};
    (-12,-4)*{};(-4,12)*{} **\crv{(-12,3) & (-4,5)}?(0)*\dir{<};
    (4,-4)*{}; (4,12) **\dir{-}?(0)*\dir{<};
    (4,4)*{\bullet};
\endxy}
  &  (\gamp)^{-1}_{12} &= 0 \\
  (\gamp)^{-1}_{21} &=
  \;\; \vcenter{
 \xy 0;/r.13pc/:
    (-4,-12)*{};(4,-12)*{} **\crv{(-4,-4) & (4,-4)}?(0)*\dir{<};
    (12,12)*{}; (12,-12) **\dir{-}?(1)*\dir{>};
    (0,-6)*{\bullet};
\endxy}
 \;\; - \;\;
   \vcenter{
 \xy 0;/r.13pc/:
    (-4,-12)*{};(4,-12)*{} **\crv{(-4,-4) & (4,-4)}?(0)*\dir{<};
    (12,12)*{}; (12,-12) **\dir{-}?(1)*\dir{>};
    (12,0)*{\bullet};
\endxy}
  &  (\gamp)^{-1}_{22} &= 0
\end{align}

\begin{align}
(\gamp)^{0}_{11} &=
  \;\; \vcenter{
 \xy 0;/r.13pc/:
    (-4,-4)*{};(-12,12)*{} **\crv{(-4,3) & (-12,5)}?(1)*\dir{>};
    (-12,-4)*{};(-4,12)*{} **\crv{(-12,3) & (-4,5)}?(0)*\dir{<};
    (4,-4)*{}; (4,12) **\dir{-}?(0)*\dir{<};
    (-5,0)*{\bullet};
\endxy}
 \;\; - \;\;
  \vcenter{
 \xy 0;/r.13pc/:
    (-4,-4)*{};(-12,12)*{} **\crv{(-4,3) & (-12,5)}?(1)*\dir{>};
    (-12,-4)*{};(-4,12)*{} **\crv{(-12,3) & (-4,5)}?(0)*\dir{<};
    (4,-4)*{}; (4,12) **\dir{-}?(0)*\dir{<};
    (4,4)*{\bullet};
\endxy}
  \\
 (\gamp)^{0}_{12} &= 0
  \\
  (\gamp)^{0}_{21} &=
  \;\; \vcenter{
 \xy 0;/r.13pc/:
    (-4,-4)*{};(-12,12)*{} **\crv{(-4,3) & (-12,5)}?(1)*\dir{>};
    (-12,-4)*{};(-4,12)*{} **\crv{(-12,3) & (-4,5)}?(0)*\dir{<};
    (4,-4)*{}; (4,12) **\dir{-}?(0)*\dir{<};
    (-11,0)*{\bullet};
\endxy}
 \;\; - \;\;
  \vcenter{
 \xy 0;/r.13pc/:
    (-4,-4)*{};(-12,12)*{} **\crv{(-4,3) & (-12,5)}?(1)*\dir{>};
    (-12,-4)*{};(-4,12)*{} **\crv{(-12,3) & (-4,5)}?(0)*\dir{<};
    (4,-4)*{}; (4,12) **\dir{-}?(0)*\dir{<};
    (4,4)*{\bullet};
\endxy}
 \;\; +\;\;
 \sum_{\xy (0,2)*{\scs f_1+f_2+f_3}; (0,-1)*{\scs =n-1}; \endxy}
\vcenter{  \xy 0;/r.13pc/:
  (-4,12)*{};(-12,-12)*{} **\crv{(-4,4) & (-12,-4)} ?(1)*\dir{>};
  (4,12)*{}="t1";
  (-12,12)*{}="t2";
  "t1";"t2" **\crv{(4,5) & (-12,5)};?(1)*\dir{>}?
   ?(.25)*\dir{}+(0,0)*{\bullet}+(1,-3.5)*{\scs f_1};
  (4,-12)*{}="t1";  (-4,-12)*{}="t2";
  "t2";"t1" **\crv{(-4,-5) & (4,-5)}; ?(1)*\dir{>}
    ?(.5)*\dir{}+(0,0)*{\bullet}+(1,3.5)*{\scs f_3};
  (14,0)*{\ccbub{\spadesuit+f_2}{}};
  (-10,-4)*{\bullet};
  \endxy }
  \;\; -\;\;
 \sum_{\xy (0,2)*{\scs f_1+f_2+f_3}; (0,-1)*{\scs =n-1}; \endxy}
\vcenter{  \xy 0;/r.13pc/:
  (-4,12)*{};(-12,-12)*{} **\crv{(-4,4) & (-12,-4)} ?(1)*\dir{>};
  (4,12)*{}="t1";
  (-12,12)*{}="t2";
  "t1";"t2" **\crv{(4,5) & (-12,5)};?(1)*\dir{>}?
   ?(.25)*\dir{}+(0,0)*{\bullet}+(-1,-3.5)*{\scs f_1+1};
  (4,-12)*{}="t1";  (-4,-12)*{}="t2";
  "t2";"t1" **\crv{(-4,-5) & (4,-5)}; ?(1)*\dir{>}
    ?(.5)*\dir{}+(0,0)*{\bullet}+(1,3.5)*{\scs f_3};
  (15,0)*{\ccbub{\spadesuit+f_2}{}};
  \endxy }
  \\
  (\gamp)^{0}_{22} &=
  \;\; \vcenter{
 \xy 0;/r.13pc/:
    (-4,-4)*{};(-12,12)*{} **\crv{(-4,3) & (-12,5)}?(1)*\dir{>};
    (-12,-4)*{};(-4,12)*{} **\crv{(-12,3) & (-4,5)}?(0)*\dir{<};
    (4,-4)*{}; (4,12) **\dir{-}?(0)*\dir{<};
    (-5,0)*{\bullet};
\endxy}
 \;\; -  \;\; \vcenter{
 \xy 0;/r.13pc/:
    (-4,-4)*{};(-12,12)*{} **\crv{(-4,3) & (-12,5)}?(1)*\dir{>};
    (-12,-4)*{};(-4,12)*{} **\crv{(-12,3) & (-4,5)}?(0)*\dir{<};
    (4,-4)*{}; (4,12) **\dir{-}?(0)*\dir{<};
    (-11,0)*{\bullet};
\endxy}
\;\; + \;\;
 \sum_{\xy (0,2)*{\scs f_1+f_2+f_3}; (0,-1)*{\scs =n-1}; \endxy}
\vcenter{  \xy 0;/r.13pc/:
  (-4,12)*{};(-12,-12)*{} **\crv{(-4,4) & (-12,-4)} ?(1)*\dir{>};
  (4,12)*{}="t1";
  (-12,12)*{}="t2";
  "t1";"t2" **\crv{(4,5) & (-12,5)};?(1)*\dir{>}?
   ?(.75)*\dir{}+(0,0)*{\bullet}+(-1,-3.5)*{\scs f_1};
  (4,-12)*{}="t1";  (-4,-12)*{}="t2";
  "t2";"t1" **\crv{(-4,-5) & (4,-5)}; ?(1)*\dir{>}
    ?(.5)*\dir{}+(0,0)*{\bullet}+(1,3.5)*{\scs f_3};
  (14,0)*{\ccbub{\spadesuit+f_2}{}};
  (-9,-2)*{\bullet};
  \endxy }
  \;\; - \;\;
   \sum_{\xy (0,2)*{\scs f_1+f_2+f_3}; (0,-1)*{\scs =n-1}; \endxy}
\vcenter{  \xy 0;/r.13pc/:
  (-4,12)*{};(-12,-12)*{} **\crv{(-4,4) & (-12,-4)} ?(1)*\dir{>};
  (4,12)*{}="t1";
  (-12,12)*{}="t2";
  "t1";"t2" **\crv{(4,5) & (-12,5)};?(1)*\dir{>} ?
   ?(.75)*\dir{}+(0,0)*{\bullet}+(-1,-3.5)*{\scs f_1}
   ?(.25)*\dir{}+(0,0)*{\bullet};
  (4,-12)*{}="t1";  (-4,-12)*{}="t2";
  "t2";"t1" **\crv{(-4,-5) & (4,-5)}; ?(1)*\dir{>}
    ?(.5)*\dir{}+(0,0)*{\bullet}+(1,3.5)*{\scs f_3};
  (14,0)*{\ccbub{\spadesuit+f_2}{}};
  \endxy }
\end{align}

\begin{align}
 (\gamp)^{1}_{11} &=
  \;\; \vcenter{
 \xy 0;/r.13pc/:
    (-4,-4)*{};(-12,12)*{} **\crv{(-4,3) & (-12,5)}?(1)*\dir{>};
    (-12,-4)*{};(-4,12)*{} **\crv{(-12,3) & (-4,5)}?(0)*\dir{<};
    (4,-4)*{}; (4,12) **\dir{-}?(0)*\dir{<};
    (-11,7.5)*{\bullet};
\endxy}
 \;\; -  \;\; \vcenter{
 \xy 0;/r.13pc/:
    (-4,-4)*{};(-12,12)*{} **\crv{(-4,3) & (-12,5)}?(1)*\dir{>};
    (-12,-4)*{};(-4,12)*{} **\crv{(-12,3) & (-4,5)}?(0)*\dir{<};
    (4,-4)*{}; (4,12) **\dir{-}?(0)*\dir{<};
    (-5,7.5)*{\bullet};
\endxy}
 \;\; + \;\;
 \sum_{\xy (0,2)*{\scs f_1+f_2+f_3}; (0,-1)*{\scs =n-1}; (0,-4)*{\scs 1 \leq f_1};\endxy}
\vcenter{  \xy 0;/r.13pc/:
  (-4,12)*{};(-12,-12)*{} **\crv{(-4,4) & (-12,-4)} ?(1)*\dir{>};
  (4,12)*{}="t1";
  (-12,12)*{}="t2";
  "t1";"t2" **\crv{(4,5) & (-12,5)};?(1)*\dir{>}?
   ?(.75)*\dir{}+(0,0)*{\bullet}+(-1,-3.5)*{\scs f_1};
  (4,-12)*{}="t1";  (-4,-12)*{}="t2";
  "t2";"t1" **\crv{(-4,-5) & (4,-5)}; ?(1)*\dir{>}
    ?(.5)*\dir{}+(0,0)*{\bullet}+(1,3.5)*{\scs f_3};
  (14,0)*{\ccbub{\spadesuit+f_2}{}};
  (-9,-2)*{\bullet};
  \endxy }
  \;\; - \;\;
   \sum_{\xy (0,2)*{\scs f_1+f_2+f_3}; (0,-1)*{\scs =n-1};
   (0,-4)*{\scs 1 \leq f_1 }; \endxy}
\vcenter{  \xy 0;/r.13pc/:
  (-4,12)*{};(-12,-12)*{} **\crv{(-4,4) & (-12,-4)} ?(1)*\dir{>};
  (4,12)*{}="t1";
  (-12,12)*{}="t2";
  "t1";"t2" **\crv{(4,5) & (-12,5)};?(1)*\dir{>} ?
   ?(.75)*\dir{}+(0,0)*{\bullet}+(-1,-3.5)*{\scs f_1}
   ?(.25)*\dir{}+(0,0)*{\bullet};
  (4,-12)*{}="t1";  (-4,-12)*{}="t2";
  "t2";"t1" **\crv{(-4,-5) & (4,-5)}; ?(1)*\dir{>}
    ?(.5)*\dir{}+(0,0)*{\bullet}+(1,3.5)*{\scs f_3};
  (14,0)*{\ccbub{\spadesuit+f_2}{}};
  \endxy }
\\
   (\gamp)^{1}_{12} &=
    \sum_{\xy (0,2)*{\scs f_1+f_2=n}; (0,-1)*{\scs 1 \leq f_1}; \endxy}
\vcenter{  \xy 0;/r.13pc/:
  (-4,12)*{};(-4,-12)*{} **\crv{(-4,4) & (-4,-4)} ?(1)*\dir{>};
  (4,12)*{}="t1";
  (-12,12)*{}="t2";
  "t1";"t2" **\crv{(4,5) & (-12,5)};?(1)*\dir{>}?
   ?(.75)*\dir{}+(0,0)*{\bullet}+(-1,-3.5)*{\scs f_1} ?(.25)*\dir{}+(0,0)*{\bullet};
  (10,-3)*{\ccbub{\spadesuit+f_2}{}};
  \endxy }
  \;\; -\;\;
      \sum_{\xy (0,2)*{\scs f_1+f_2=n}; (0,-1)*{\scs 1 \leq f_1}; \endxy}
\vcenter{  \xy 0;/r.13pc/:
  (-4,12)*{};(-4,-12)*{} **\crv{(-4,4) & (-4,-4)} ?(1)*\dir{>};
  (4,12)*{}="t1";
  (-12,12)*{}="t2";
  "t1";"t2" **\crv{(4,5) & (-12,5)};?(1)*\dir{>}?
   ?(.75)*\dir{}+(0,0)*{\bullet}+(-1,-3.5)*{\scs f_1};
  (10,-3)*{\ccbub{\spadesuit+f_2}{}};
  (-4,0)*{\bullet};
  \endxy }\\
  (\gamp)^{1}_{21}  &= (\gamp)^{1}_{22} = 0
\end{align}
and the chain homotopy is given by
\begin{align}
  \big(h^0\big)_{11} &= - \big(h^0\big)_{12} \;\;= \;\;
     \;\; \vcenter{
 \xy 0;/r.13pc/:
    (-4,-4)*{};(-12,12)*{} **\crv{(-4,3) & (-12,5)}?(1)*\dir{>};
    (-12,-4)*{};(-4,12)*{} **\crv{(-12,3) & (-4,5)}?(0)*\dir{<};
    (4,-4)*{}; (4,12) **\dir{-}?(0)*\dir{<};
\endxy} \;\; - \;\;   \sum_{\xy (0,2)*{\scs f_1+f_2+f_3}; (0,-1)*{\scs =n-1}; \endxy}
\vcenter{  \xy 0;/r.13pc/:
  (-4,12)*{};(-12,-12)*{} **\crv{(-4,4) & (-12,-4)} ?(1)*\dir{>};
  (4,12)*{}="t1";
  (-12,12)*{}="t2";
  "t1";"t2" **\crv{(4,5) & (-12,5)};?(1)*\dir{>} ?
   ?(.75)*\dir{}+(0,0)*{\bullet}+(-1,-3.5)*{\scs f_1};
  (4,-12)*{}="t1";  (-4,-12)*{}="t2";
  "t2";"t1" **\crv{(-4,-5) & (4,-5)}; ?(1)*\dir{>}
    ?(.5)*\dir{}+(0,0)*{\bullet}+(1,3.5)*{\scs f_3};
  (14,0)*{\ccbub{\spadesuit+f_2}{}};
  \endxy }
  \\
  \big(h^0\big)_{21} &=  - \big(h^0\big)_{22} \;\;=\;\;
  - \;\; \vcenter{
 \xy 0;/r.13pc/:
    (-4,-12)*{};(4,-12)*{} **\crv{(-4,-4) & (4,-4)}?(0)*\dir{<};
    (12,12)*{}; (12,-12) **\dir{-}?(1)*\dir{>};
\endxy}
\end{align}

\begin{align}
  \big(h^1\big)_{11} &=
   \;\; \vcenter{
 \xy 0;/r.13pc/:
    (-4,-4)*{};(-12,12)*{} **\crv{(-4,3) & (-12,5)}?(1)*\dir{>};
    (-12,-4)*{};(-4,12)*{} **\crv{(-12,3) & (-4,5)}?(0)*\dir{<};
    (4,-4)*{}; (4,12) **\dir{-}?(0)*\dir{<};
\endxy}
 \;\; - \;\;
 \sum_{\xy (0,2)*{\scs f_1+f_2+f_3}; (0,-1)*{\scs =n-1}; (0,-4)*{\scs 1 \leq f_1}; \endxy}
\vcenter{  \xy 0;/r.13pc/:
  (-4,12)*{};(-12,-12)*{} **\crv{(-4,4) & (-12,-4)} ?(1)*\dir{>};
  (4,12)*{}="t1";
  (-12,12)*{}="t2";
  "t1";"t2" **\crv{(4,5) & (-12,5)};?(1)*\dir{>} ?
   ?(.75)*\dir{}+(0,0)*{\bullet}+(-1,-3.5)*{\scs f_1};
  (4,-12)*{}="t1";  (-4,-12)*{}="t2";
  "t2";"t1" **\crv{(-4,-5) & (4,-5)}; ?(1)*\dir{>}
    ?(.5)*\dir{}+(0,0)*{\bullet}+(1,3.5)*{\scs f_3};
  (14,0)*{\ccbub{\spadesuit+f_2}{}};
  \endxy }
  &   \big(h^1\big)_{12} &=
    \;\; \vcenter{
 \xy 0;/r.13pc/:
    (-4,-4)*{};(-12,12)*{} **\crv{(-4,3) & (-12,5)}?(1)*\dir{>};
    (-12,-4)*{};(-4,12)*{} **\crv{(-12,3) & (-4,5)}?(0)*\dir{<};
    (4,-4)*{}; (4,12) **\dir{-}?(0)*\dir{<};
\endxy}
 \;\; - \;\;
 \sum_{\xy (0,2)*{\scs f_1+f_2+f_3}; (0,-1)*{\scs =n-1}; (0,-4)*{\scs 1 \leq f_1}; \endxy}
\vcenter{  \xy 0;/r.13pc/:
  (-4,12)*{};(-12,-12)*{} **\crv{(-4,4) & (-12,-4)} ?(1)*\dir{>};
  (4,12)*{}="t1";
  (-12,12)*{}="t2";
  "t1";"t2" **\crv{(4,5) & (-12,5)};?(1)*\dir{>} ?
   ?(.75)*\dir{}+(0,0)*{\bullet}+(-1,-3.5)*{\scs f_1};
  (4,-12)*{}="t1";  (-4,-12)*{}="t2";
  "t2";"t1" **\crv{(-4,-5) & (4,-5)}; ?(1)*\dir{>}
    ?(.5)*\dir{}+(0,0)*{\bullet}+(1,3.5)*{\scs f_3};
  (14,0)*{\ccbub{\spadesuit+f_2}{}};
  \endxy }  \\
   \big(h^1\big)_{21} &=
    \;\; \sum_{\xy (0,2)*{\scs f_1+f_2=n}; (0,-1)*{\scs 1 \leq f_1}; \endxy}
\vcenter{  \xy 0;/r.13pc/:
  (-4,12)*{};(-4,-12)*{} **\crv{(-4,4) & (-4,-4)} ?(1)*\dir{>};
  (4,12)*{}="t1";
  (-12,12)*{}="t2";
  "t1";"t2" **\crv{(4,5) & (-12,5)};?(1)*\dir{>}?
   ?(.75)*\dir{}+(0,0)*{\bullet}+(-1,-3.5)*{\scs f_1};
  (10,-3)*{\ccbub{\spadesuit+f_2}{}};
  \endxy }
  &   \big(h^1\big)_{22} &= 0
\end{align}

The naturality square for the map $\kappa_{\cal{E}\onen}=\odelt$ follows from the naturality square in \eqref{eq_nat_dotF}:
\begin{equation} \label{eq_nat_dotE}
    \xy
   (-15,-10)*+{\cal{E}\tomega(\cal{C})\onen}="tl";
   (15,-10)*+{\tomega(\cal{C})\cal{E}\onen}="tr";
   (-15,10)*+{\cal{E}\tomega(\cal{C})\onen}="bl";
   (15,10)*+{\tomega(\cal{C})\cal{E}\onen}="br";
   (-35,-25)*+{\cal{E}\cal{C}\onen}="tl'";
   (35,-25)*+{\cal{C}\cal{E}\onen}="tr'";
   (-35,25)*+{\cal{E}\cal{C}\onen}="bl'";
   (35,25)*+{\cal{C}\cal{E}\onen}="br'";
   {\ar_{\tomega(\gam)} "tl";"tr"};
   {\ar^{\Uupdot\Ucas} "tl";"bl"};
   {\ar^{\tomega(\gam)} "bl";"br"};
   {\ar_{\Ucas\Uupdot} "tr";"br"};
   {\ar_{\odelt} "tl'";"tr'"};
   {\ar^{\Uupdot\Ucas} "tl'";"bl'"};
   {\ar^{\odelt} "bl'";"br'"};
   {\ar_{\Ucas\Uupdot} "tr'";"br'"};
   {\ar_{} "tl'";"tl"}; (-20,20)*{\UupD\varrho^{\tomega}};
   {\ar^{} "bl'";"bl"}; (-20,-20)*{\UupD\varrho^{\tomega}};
   {\ar_{} "tr";"tr'"}; (20,20)*{\hat{\varrho}^{\tomega}\UupD};
   (20,-20)*{\hat{\varrho}^{\tomega}\UupD};
   {\ar_{} "br";"br'"};
  \endxy
\end{equation}
where the middle square is the image of \eqref{eq_nat_dotF} under the 2-functor $\tomega$.  The left and right squares commute by the naturality of $\varrho^{\tomega}$ and $\hat{\varrho}^{\tomega}$, and the top and bottom squares commutes by Proposition~\ref{prop_omega}.

%
\subsubsection{Naturality of $\hat{\kappa}$ for dot 2-morphism}
%

We will show that the diagram
\begin{equation}
    \xy
   (-14,-10)*+{\cal{C}\cal{F}\onen}="tl";
   (14,-10)*+{\cal{F}\cal{C}\onen}="tr";
   (-14,10)*+{\cal{C}\cal{F}\onen}="bl";
   (14,10)*+{\cal{F}\cal{C}\onen}="br";
   {\ar_{\hat{\kappa}_{\cal{F}\onen}} "tl";"tr"};
   {\ar^{\Ucas\Udowndot} "tl";"bl"};
   {\ar^{\hat{\kappa}_{\cal{F}\onen}} "bl";"br"};
   {\ar_{\Udowndot\Ucas} "tr";"br"};
  \endxy
  \quad := \quad
    \xy
   (-10,-10)*+{\cal{C}\cal{F}\onen}="tl";
   (10,-10)*+{\cal{F}\cal{C}\onen}="tr";
   (-10,10)*+{\cal{C}\cal{F}\onen}="bl";
   (10,10)*+{\cal{F}\cal{C}\onen}="br";
   {\ar_{\delt} "tl";"tr"};
   {\ar^{\Ucas\Udowndot} "tl";"bl"};
   {\ar^{\delt} "bl";"br"};
   {\ar_{\Udowndot\Ucas} "tr";"br"};
  \endxy
\end{equation}
commutes up to homotopy.  To see this apply $\tsigma\tomega$ to the diagram \eqref{eq_nat_dotE}, and use the naturality of $\varrho^{\tsigma\tomega}$ and $\hat{\varrho}^{\tsigma\tomega}$, and Proposition~\ref{prop_sigmaomega} to see that each of the five small squares in \eqref{eq_5.31} commutes.
\begin{equation} \label{eq_5.31}
    \xy
   (-25,-10)*+{\tsigma\tomega(\cal{C})\cal{F}\onen}="tl";
   (25,-10)*+{\cal{F}\tsigma\tomega(\cal{C})\onen}="tr";
   (-25,10)*+{\tsigma\tomega(\cal{C})\cal{F}\onen}="bl";
   (25,10)*+{\cal{F}\tsigma\tomega(\cal{C})\onen}="br";
   (-55,-25)*+{\cal{C}\cal{F}\onen}="tl'";
   (55,-25)*+{\cal{F}\cal{C}\onen}="tr'";
   (-55,25)*+{\cal{C}\cal{F}\onen}="bl'";
   (55,25)*+{\cal{F}\cal{C}\onen}="br'";
   {\ar_{\tsigma\tomega(\odelt)} "tl";"tr"};
   {\ar^{\tsigma\tomega(\cal{C})\Udowndot} "tl";"bl"};
   {\ar^{\tsigma\tomega(\odelt)} "bl";"br"};
   {\ar_{\Udowndot\tsigma\tomega(\cal{C})} "tr";"br"};
   {\ar_{\delt} "tl'";"tr'"};
   {\ar^{\Ucas\Udowndot} "tl'";"bl'"};
   {\ar^{\delt} "bl'";"br'"};
   {\ar_{\Udowndot\Ucas} "tr'";"br'"};
   {\ar_{} "tl'";"tl"};
    (-34,20)*{\varrho^{\tsigma\tomega}\UdownD};
    (-34,-20)*{\varrho^{\tsigma\tomega}\UdownD};
   {\ar^{} "bl'";"bl"};
   {\ar_{} "tr";"tr'"};
   {\ar_{} "br";"br'"};
    (34,20)*{\UdownD\hat{\varrho}^{\tsigma\tomega}};
    (34,-20)*{\UdownD\hat{\varrho}^{\tsigma\tomega}};
  \endxy
\end{equation}

The naturality square
\begin{equation}
      \xy
   (-14,-10)*+{\cal{C}\cal{E}\onen}="tl";
   (14,-10)*+{\cal{E}\cal{C}\onen}="tr";
   (-14,10)*+{\cal{C}\cal{E}\onen}="bl";
   (14,10)*+{\cal{E}\cal{C}\onen}="br";
   {\ar_{\hat{\kappa}_{\cal{E}\onen}} "tl";"tr"};
   {\ar^{\Ucas\Uupdot} "tl";"bl"};
   {\ar^{\hat{\kappa}_{\cal{E}\onen}} "bl";"br"};
   {\ar_{\Uupdot\Ucas} "tr";"br"};
  \endxy
    \quad := \quad
    \xy
   (-10,-10)*+{\cal{C}\cal{E}\onen}="tl";
   (10,-10)*+{\cal{E}\cal{C}\onen}="tr";
   (-10,10)*+{\cal{C}\cal{E}\onen}="bl";
   (10,10)*+{\cal{E}\cal{C}\onen}="br";
   {\ar_{\ogam} "tl";"tr"};
   {\ar^{\Ucas\Udowndot} "tl";"bl"};
   {\ar^{\ogam} "bl";"br"};
   {\ar_{\Uupdot\Ucas} "tr";"br"};
  \endxy
\end{equation}
can similarly be shown to commute by applying $\tsigma\tomega$ to the square \eqref{eq_nat_dotF} and appealing to Proposition~\ref{prop_sigmaomega}.

%
\subsubsection{Naturality of $\kappa$ and $\hat{\kappa}$ for crossing 2-morphisms}
%

We will show that the diagram
\begin{equation} \label{eq_cross_nat}
     \xy
   (-14,-10)*+{\cal{F}\cal{F}\cal{C}\onen}="tl";
   (14,-10)*+{\cal{C}\cal{F}\cal{F}\onen}="tr";
   (-14,10)*+{\cal{F}\cal{F}\cal{C}\onen}="bl";
   (14,10)*+{\cal{C}\cal{F}\cal{F}\onen}="br";
   {\ar_{\kappa_{\cal{F}\cal{F}\onen}} "tl";"tr"};
   {\ar^{\Ucrossd\Ucas} "tl";"bl"};
   {\ar^{\kappa_{\cal{F}\cal{F}\onen}} "bl";"br"};
   {\ar_{\Ucas\Ucrossd} "tr";"br"};
  \endxy
  \quad := \quad
     \xy
   (-25,-10)*+{\cal{F}\cal{F}\cal{C}\onen}="tl";
   (0,-10)*+{\cal{F}\cal{C}\cal{F}\onen}="tm";
   (25,-10)*+{\cal{C}\cal{F}\cal{F}\onen}="tr";
   (-25,10)*+{\cal{F}\cal{F}\cal{C}\onen}="bl";
   (0,10)*+{\cal{F}\cal{C}\cal{F}\onen}="bm";
   (25,10)*+{\cal{C}\cal{F}\cal{F}\onen}="br";
   {\ar_-{\UdownD\gam} "tl";"tm"};
   {\ar_-{\gam\UdownD} "tm";"tr"};
   {\ar^{\Ucrossd\Ucas} "tl";"bl"};
   {\ar^{\UdownD\gam} "bl";"bm"};
   {\ar^{\gam\UdownD} "bm";"br"};
   {\ar_{\Ucas\Ucrossd} "tr";"br"};
  \endxy
\end{equation}
commutes for all $n \in \Z$ by treating the cases $n \leq 0$ and $n>0$ separately.  For $n \leq 0$ consider the map
\begin{equation}
  Z:= \big(\Ucas\Ucrossd \big) \circ \big(\gam\UdownD\big) \circ \big(\UdownD\gam\big)
 - \big(\gam\UdownD\big)\circ \big(\cal{F}\gam\big) \circ  \big(\Ucrossd\Ucas \big).
\end{equation}
One can check that the map $Z$ is identically zero, so that the diagram \eqref{eq_cross_nat} commutes on the nose for $n \leq 0$ (i.e. commutes in $Kom(\Ucat)$).

For $n>0$ we deduce commutativity from the commutativity of the diagram
\begin{equation}
    \xy
   (-45,-10)*+{\cal{F}\cal{F}\tsigma(\cal{C})\onen}="tl";
   (-15,-10)*+{\cal{F}\tsigma(\cal{C})\cal{F}\onen}="tm";
   (15,-10)*+{\cal{F}\tsigma(\cal{C})\cal{F}\onen}="tm2";
   (45,-10)*+{\tsigma(\cal{C})\cal{F}\cal{F}\onen}="tr";
   (-45,10)*+{\cal{F}\cal{F}\tsigma(\cal{C})\onen}="bl";
   (-15,10)*+{\cal{F}\tsigma(\cal{C})\cal{F}\onen}="bm";
    (15,10)*+{\cal{F}\tsigma(\cal{C})\cal{F}\onen}="bm2";
   (45,10)*+{\tsigma(\cal{C})\cal{F}\cal{F}\onen}="br";
   (-60,-25)*+{\cal{F}\cal{F}\cal{C}\onen}="tl'";
   (0,-25)*+{\cal{F}\cal{C}\cal{F}\onen}="tm'";
   (60,-25)*+{\cal{C}\cal{F}\cal{F}\onen}="tr'";
   (-60,25)*+{\cal{F}\cal{F}\cal{C}\onen}="bl'";
   (0,25)*+{\cal{F}\cal{F}\cal{C}\onen}="bm'";
   (60,25)*+{\cal{C}\cal{F}\cal{F}\onen}="br'";
   {\ar_-{\UdownD\delts} "tl";"tm"};
   {\ar_{\Ucrossd\tsigma(\cal{C})} "tl";"bl"};
   {\ar^{\UdownD\delts} "bl";"bm"};
   {\ar^-{} "bm";"bm'"};
   {\ar^-{} "bm'";"bm2"};
   {\ar_-{} "tm";"tm'"};
   {\ar_-{} "tm'";"tm2"};
   {\ar_-{\delts\UdownD} "tm2";"tr"};
   {\ar^-{\delts\UdownD} "bm2";"br"};
   {\ar^-{\Id} "bm";"bm2"};
   {\ar^-{\Id} "tm";"tm2"};
   {\ar^{\tsigma(\cal{C})\Ucrossd} "tr";"br"};
   {\ar_{\UdownD\gam} "tl'";"tm'"};
   {\ar_{\gam\UdownD} "tm'";"tr'"};
   {\ar^{\Ucrossd\Ucas} "tl'";"bl'"};
   {\ar^{\UdownD\gam} "bl'";"bm'"};
   {\ar^{\gam\UdownD} "bm'";"br'"};
   {\ar_{\Ucas\Ucrossd} "tr'";"br'"};
   {\ar_{} "tl'";"tl"};
   {\ar^{} "bl'";"bl"};
   {\ar_{} "tr";"tr'"};
   {\ar_{} "br";"br'"};
    (-46,20)*{\UdownD\UdownD\varrho^{\tsigma}};
    (-46,-20)*{\UdownD\UdownD\varrho^{\tsigma}};
    (-14,20)*{\UdownD\hat{\varrho}^{\tsigma}\UdownD};
    (-14,-20)*{\UdownD\hat{\varrho}^{\tsigma}\UdownD};
    (14,20)*{\UdownD\varrho^{\tsigma}\UdownD};
    (14,-20)*{\UdownD\varrho^{\tsigma}\UdownD};
    (46,20)*{\hat{\varrho}^{\tsigma}\UdownD\UdownD};
    (46,-20)*{\hat{\varrho}^{\tsigma}\UdownD\UdownD};
  \endxy
\end{equation}
where the center rectangle
\begin{equation}
    \xy
   (-35,-10)*+{\cal{F}\cal{F}\tsigma(\cal{C})\onen}="tl";
   (0,-10)*+{\cal{F}\tsigma(\cal{C})\cal{F}\onen}="tm";
   (35,-10)*+{\tsigma(\cal{C})\cal{F}\cal{F}\onen}="tr";
   (-35,10)*+{\cal{F}\cal{F}\tsigma(\cal{C})\onen}="bl";
   (0,10)*+{\cal{F}\tsigma(\cal{C})\cal{F}\onen}="bm";
   (35,10)*+{\tsigma(\cal{C})\cal{F}\cal{F}\onen}="br";
   {\ar_-{\UdownD\delts} "tl";"tm"};
   {\ar_-{\delts\UdownD} "tm";"tr"};
   {\ar^{\Ucrossd\tsigma(\cal{C})} "tl";"bl"};
   {\ar^{\UdownD\delts} "bl";"bm"};
   {\ar^{\delts\UdownD} "bm";"br"};
   {\ar_{\tsigma(\cal{C})\Ucrossd} "tr";"br"};
  \endxy
\end{equation}
commutes on the nose since the map (of complexes)
\begin{equation}
  Z':= \big(\tsigma(\cal{C})\Ucrossd \big) \circ \big(\delts\UdownD\big)\circ \big(\UdownD\delts\big)
 - \big(\delts\UdownD\big)\circ \big(\UdownD\delts\big) \circ  \big(\Ucrossd\tsigma(\cal{C}) \big).
\end{equation}
is already zero in $Kom(\Ucat)$ for $n>0$.  The triangles commute up to homotopy since $\varrho^{\tsigma}$ is the homotopy inverse of $\hat{\varrho}^{\tsigma}$ by Proposition~\ref{prop_homotopy_sym}.  The left and right squares follow from the naturality of $\varrho^{\tsigma}$, and the commutativity up to homotopy of the remaining squares is implied by \eqref{eq_gamma_sigma}.

For all $n\in \Z$ the naturality square for $\kappa_{\cal{E}\cal{E}\onen}$ and the crossing is proven as follows
\begin{equation}
    \xy
   (-45,-10)*+{\cal{E}\cal{E}\tomega(\cal{C})\onen}="tl";
   (-15,-10)*+{\cal{E}\tomega(\cal{C})\cal{E}\onen}="tm";
   (15,-10)*+{\cal{E}\tomega(\cal{C})\cal{E}\onen}="tm2";
   (45,-10)*+{\tomega(\cal{C})\cal{E}\cal{E}\onen}="tr";
   (-45,10)*+{\cal{E}\cal{E}\tomega(\cal{C})\onen}="bl";
   (-15,10)*+{\cal{E}\tomega(\cal{C})\cal{E}\onen}="bm";
    (15,10)*+{\cal{E}\tomega(\cal{C})\cal{E}\onen}="bm2";
   (45,10)*+{\tomega(\cal{C})\cal{E}\cal{E}\onen}="br";
   (-60,-25)*+{\cal{E}\cal{E}\cal{C}\onen}="tl'";
   (0,-25)*+{\cal{E}\cal{C}\cal{E}\onen}="tm'";
   (60,-25)*+{\cal{C}\cal{E}\cal{E}\onen}="tr'";
   (-60,25)*+{\cal{E}\cal{E}\cal{C}\onen}="bl'";
   (0,25)*+{\cal{E}\cal{C}\cal{E}\onen}="bm'";
   (60,25)*+{\cal{C}\cal{E}\cal{E}\onen}="br'";
   {\ar_-{\UupD\tomega(\gam)} "tl";"tm"};
   {\ar_-{\Id} "tm";"tm2"};
   {\ar_-{\tomega(\gam)\UupD} "tm2";"tr"};
   {\ar_-{\Ucrossu\;\tomega(\cal{C})} "tl";"bl"};
   {\ar^-{\UupD\tomega(\gam)} "bl";"bm"};
   {\ar^-{\Id} "bm";"bm2"};
   {\ar^-{\tomega(\gam)\UupD} "bm2";"br"};
   {\ar^-{\tomega(\cal{C})\;\Ucrossu} "tr";"br"};
   {\ar_-{\UupD\odelt} "tl'";"tm'"};
   {\ar_-{\odelt\UupD} "tm'";"tr'"};
   {\ar^-{\Ucrossu\;\Ucas} "tl'";"bl'"};
   {\ar^-{\UupD\odelt} "bl'";"bm'"};
   {\ar^-{\odelt\UupD} "bm'";"br'"};
   {\ar_-{\Ucas\;\Ucrossu} "tr'";"br'"};
   {\ar_-{} "tl'";"tl"};
   {\ar^-{} "bl'";"bl"};
   {\ar_-{} "tr";"tr'"};
   {\ar^-{} "br";"br'"};
   {\ar^-{} "bm";"bm'"};
   {\ar^-{} "bm'";"bm2"};
   {\ar_-{} "tm";"tm'"};
   {\ar_-{} "tm'";"tm2"};
   (-46,20)*{\UupD\UupD\varrho^{\tomega}};
    (-46,-20)*{\UupD\UupD\varrho^{\tomega}};
    (-14,20)*{\UupD\hat{\varrho}^{\tomega}\UupD};
    (-14,-20)*{\UupD\hat{\varrho}^{\tomega}\UupD};
    (14,20)*{\UupD\varrho^{\tomega}\UupD};
    (14,-20)*{\UupD\varrho^{\tomega}\UupD};
    (46,20)*{\hat{\varrho}^{\tomega}\UupD\UupD};
    (46,-20)*{\hat{\varrho}^{\tomega}\UupD\UupD};
  \endxy
\end{equation}
where the center rectangle is the image of \eqref{eq_cross_nat} under $\tomega$.  Each of the two triangles commutes up to homotopy since $\varrho^{\tomega}$ is the homotopy inverse of $\hat{\varrho}^{\tomega}$.  The squares on the left and right commute by the naturality of $\varrho^{\tomega}$ and $\hat{\varrho}^{\tomega}$.  The remaining squares commute by equation \eqref{prop_omega4} in Proposition~\ref{prop_omega}.

The naturality of $\hat{\kappa}$ and the crossing is established by applying the symmetry 2-functor $\tpsi$ to the above arguments.

%
\subsubsection{Naturality of $\kappa$ for cap 2-morphisms}
%

We show that the naturality square
\begin{equation} \label{eq_cup_nat_square2}
     \xy
   (-15,-10)*+{\cal{E}\cal{F}\cal{C}\onen}="tl";
   (15,-10)*+{\cal{C}\cal{E}\cal{F}\onen}="tr";
   (-15,10)*+{\cal{C}\onen}="bl";
   (15,10)*+{\cal{C}\onen}="br";
   {\ar_-{\kappa_{\cal{E}\cal{F}\onen}} "tl";"tr"};
   {\ar^{\Ucapr\Ucas} "tl";"bl"};
   {\ar^{\kappa_{\onen}} "bl";"br"};
   {\ar_{\Ucas\Ucapr} "tr";"br"};
  \endxy
  \quad := \quad   \xy
   (-25,-10)*+{\cal{E}\cal{F}\cal{C}\onen}="tl";
   (0,-10)*+{\cal{E}\cal{C}\cal{F}\onen}="tm";
   (25,-10)*+{\cal{C}\cal{E}\cal{F}\onen}="tr";
   (-25,10)*+{\cal{C}\onen}="bl";
   (25,10)*+{\cal{C}\onen}="br";
   {\ar_-{\UupD\gam} "tl";"tm"};
   {\ar_-{\odelt\UdownD} "tm";"tr"};
   {\ar^{\Ucapr\Ucas} "tl";"bl"};
   {\ar^{} "bl";"br"};(0,13)*{\Ucas};
   {\ar_{\Ucas\Ucapr} "tr";"br"};
  \endxy
\end{equation}
commutes up to homotopy by considering the cases $n> 0$ and $n \leq 0$ separately.

For $n \leq 0$ the complex $\cal{C}\onen$ is indecomposable. In this case, let
\begin{equation}
Z = \left(\Ucas\Ucapr \right)\circ \left(\odelt\UdownD\right) \circ
\left(\UupD \gam \right)- \left(\Ucapr \Ucas \right).
\end{equation}
We will specify a homotopy $Z \simeq 0$.
\begin{equation}
  \xy
  (-68,15)*+{
    \left(
    \begin{array}{c}
    \scs
    \E{}\F{} \onen \{2\} \\
    \scs  \onen \{1-n\} \\
    \end{array}
    \right)}="1t";
  (-5,15)*+{
    \und{\left(
    \begin{array}{c}
    \scs \E{}\F{} \onen \\
     \scs  \E{}\F{} \onen \\
    \end{array}
    \right)}}="3t";
  (68,15)*+{
    \left(\begin{array}{c}
    \scs \E{}\F{} \onen \{-2\} \\
     \scs  \onen \{n-1\} \\
    \end{array}
    \right) }="5t";
  {\ar^-{  \left(
    \begin{array}{cc}
      \text{$\Uupdot\Udown$} & \Ucupl \\ & \\
      \text{$\Uup\Udowndot$} & \Ucupl \\
    \end{array}
    \right)}   "1t";"3t"};
  {\ar^-{  \left(
    \begin{array}{cc}
      -\;\Uup\Udowndot  &  \text{$\Uupdot\Udown$}  \\ & \\
      \Ucapr & -\;\Ucapr\\
    \end{array}
  \right)} "3t";"5t"};
  (-68,-15)*+{
    \left( \begin{array}{c}
    \scs
    \cal{E}\cal{F}\E{}\F{} \onen \{2\} \\
    \scs  \cal{E}\cal{F}\onen \{1-n\} \\
   \end{array}
    \right)}="1";
 (-5,-15)*+{    \und{\left(
    \begin{array}{c}
    \scs \cal{E}\cal{F}\E{}\F{} \onen \\
     \scs  \cal{E}\cal{F}\E{}\F{} \onen \\
    \end{array}\right)}}="3";
 (68,-15)*+{  \left(
    \begin{array}{c}
    \scs \cal{E}\cal{F}\E{}\F{} \onen \{-2\} \\
     \scs  \cal{E}\cal{F}\onen \{n-1\} \\
    \end{array}\right) }="5";
 {\ar_-{   \left(
    \begin{array}{cc}
      \text{$\Uup\Udown\Uupdot\Udown$} & \text{$\Uup\Udown\Ucupl$} \\ & \\
      \text{$\Uup\Udown\Uup\Udowndot $} & \text{$\Uup\Udown\Ucupl$} \\
    \end{array}
    \right)} "1";"3"};
 {\ar_-{\left(
    \begin{array}{cc}
      -\;\Uup\Udown\Uup\Udowndot  &  \text{$\Uup\Udown\Uupdot\Udown $}  \\ & \\
      \text{$\Uup\Udown\Ucapr$} & -\;\Uup\Udown\Ucapr\\
     \end{array}
    \right)} "3";"5"};
 {\ar^{(Z)^{-1}} "1";"1t"};
 {\ar^{(Z)^{0}} "3";"3t"};
 {\ar^{(Z)^{1}} "5";"5t"};
 {\ar^{h^0} "3";"1t"};
 {\ar^{h^1} "5";"3t"};
 \endxy
\end{equation}
where, after simplifying,  $Z$ is given by
terms
\begin{align}
(Z)^{-1}_{11} &= \;\;
  \vcenter{
 \xy 0;/r.13pc/:
    (-4,12)*{};(-4,0)*{} **\crv{(-4,3) & (-4,-1)};
    (-12,12)*{};(-12,0)*{} **\crv{(-12,3) & (-12,-1)}?(0)*\dir{<};
    (-4,0)*{};(-12,-12)*{} **\crv{(-4,-3) & (-12,-9)}?(1)*\dir{>};
    (-12,0)*{};(-4,-12)*{} **\crv{(-12,-3) & (-4,-9)};
    (-20,-12)*{};(4,-12)*{} **\crv{(-20,6) & (4,6)}?(1)*\dir{>};
    (-8,1)*{\bullet}; (5,8)*{n};
\endxy}
 \;\; - \;\;
 \vcenter{
 \xy 0;/r.13pc/:
    (-4,12)*{};(-4,0)*{} **\crv{(-4,3) & (-4,-1)};
    (-12,12)*{};(-12,0)*{} **\crv{(-12,3) & (-12,-1)}?(0)*\dir{<};
    (-4,0)*{};(-12,-12)*{} **\crv{(-4,-3) & (-12,-9)}?(1)*\dir{>};
    (-12,0)*{};(-4,-12)*{} **\crv{(-12,-3) & (-4,-9)};
    (-20,-12)*{};(4,-12)*{} **\crv{(-20,6) & (4,6)}?(1)*\dir{>};
    (-10.5,-3)*{\bullet}; (5,8)*{n};
\endxy}
 \;\; - \;\;
 \vcenter{
 \xy 0;/r.13pc/:
    (-12,12)*{};(-4,-12)*{} **\crv{(-12,3) & (-4,-1)}?(0)*\dir{<};
    (-4,12)*{};(4,-12)*{} **\crv{(-4,3) & (4,-1)}?(1)*\dir{>};
    (-20,-12)*{};(-12,-12)*{} **\crv{(-20,-4) & (-12,-4)}?(1)*\dir{>};
     (5,8)*{n};
\endxy}
 \;\; + \;\;
 \vcenter{
 \xy 0;/r.13pc/:
    (-20,-12)*{};(4,-12)*{} **\crv{(-20,6) & (4,6)}?(1)*\dir{>};
    (-12,-12)*{};(-4,-12)*{} **\crv{(-12,-4) & (-4,-4)}?(1)*\dir{>};
    (-12,12)*{};(-4,12)*{} **\crv{(-12,4) & (-4,4)}?(0)*\dir{<};
    (5,8)*{n};
\endxy}
\\
(Z)^{-1}_{12} &=  0\\
 (Z)^{-1}_{21} &= \;\; -\;\;
  \vcenter{
 \xy 0;/r.13pc/:
    (-20,-12)*{};(4,-12)*{} **\crv{(-20,6) & (4,6)}?(1)*\dir{>};
    (-12,-12)*{};(-4,-12)*{} **\crv{(-12,-4) & (-4,-4)}?(0)*\dir{<};
    (-4,1)*{\bullet};
    (5,8)*{n};
\endxy}
 \\
 (Z)^{-1}_{22} &= \;\; - \;\;
   \vcenter{
 \xy 0;/r.13pc/:
    (-12,-12)*{};(-4,-12)*{} **\crv{(-12,-4) & (-4,-4)}?(1)*\dir{>};
    (5,4)*{n};
\endxy}
\end{align}

\begin{align}
 (Z)^{0}_{11} &= \;\;
  \vcenter{
 \xy 0;/r.13pc/:
    (-4,12)*{};(-4,0)*{} **\crv{(-4,3) & (-4,-1)};
    (-12,12)*{};(-12,0)*{} **\crv{(-12,3) & (-12,-1)}?(0)*\dir{<};
    (-4,0)*{};(-12,-12)*{} **\crv{(-4,-3) & (-12,-9)}?(1)*\dir{>};
    (-12,0)*{};(-4,-12)*{} **\crv{(-12,-3) & (-4,-9)};
    (-20,-12)*{};(4,-12)*{} **\crv{(-20,6) & (4,6)}?(1)*\dir{>};
    (-17.5,-3)*{\bullet}; (5,8)*{n};
\endxy}
 \;\; - \;\;
 \vcenter{
 \xy 0;/r.13pc/:
    (-4,12)*{};(-4,0)*{} **\crv{(-4,3) & (-4,-1)};
    (-12,12)*{};(-12,0)*{} **\crv{(-12,3) & (-12,-1)}?(0)*\dir{<};
    (-4,0)*{};(-12,-12)*{} **\crv{(-4,-3) & (-12,-9)}?(1)*\dir{>};
    (-12,0)*{};(-4,-12)*{} **\crv{(-12,-3) & (-4,-9)};
    (-20,-12)*{};(4,-12)*{} **\crv{(-20,6) & (4,6)}?(1)*\dir{>};
    (-10.5,-3)*{\bullet}; (5,8)*{n};
\endxy}
 \;\; - \;\;
 \vcenter{
 \xy 0;/r.13pc/:
    (-12,12)*{};(-4,-12)*{} **\crv{(-12,3) & (-4,-1)}?(0)*\dir{<};
    (-4,12)*{};(4,-12)*{} **\crv{(-4,3) & (4,-1)}?(1)*\dir{>};
    (-20,-12)*{};(-12,-12)*{} **\crv{(-20,-4) & (-12,-4)}?(1)*\dir{>};
     (5,8)*{n};
\endxy}
 \;\; - \;\;
 \vcenter{
 \xy 0;/r.13pc/:
    (-4,-4)*{};(4,4)*{} **\crv{(-4,-1) & (4,1)}?(0)*\dir{<};
    (4,-4)*{};(-4,4)*{} **\crv{(4,-1) & (-4,1)}?(1)*\dir{};
    (4,4)*{};(12,12)*{} **\crv{(4,7) & (12,9)}?(1)*\dir{};
    (12,4)*{};(4,12)*{} **\crv{(12,7) & (4,9)}?(1)*\dir{};
    (4,12)*{};(-4,12)*{} **\crv{(3,15) & (-4,15)}?(0)*\dir{};
    (-4,4)*{}; (-4,12) **\dir{-};
    (12,-4)*{}; (12,4) **\dir{-} ?(0)*\dir{<};
    (-12,-4)*{}; (-12,20) **\dir{-}?(1)*\dir{>};
    (12,12)*{}; (12,20) **\dir{-};
  (16,8)*{n};
\endxy}
\\
 (Z)^{0}_{12} &=  \;\;
  \vcenter{
 \xy 0;/r.13pc/:
    (-4,12)*{};(-4,0)*{} **\crv{(-4,3) & (-4,-1)};
    (-12,12)*{};(-12,0)*{} **\crv{(-12,3) & (-12,-1)}?(0)*\dir{<};
    (-4,0)*{};(-12,-12)*{} **\crv{(-4,-3) & (-12,-9)}?(1)*\dir{>};
    (-12,0)*{};(-4,-12)*{} **\crv{(-12,-3) & (-4,-9)};
    (-20,-12)*{};(4,-12)*{} **\crv{(-20,6) & (4,6)}?(1)*\dir{>};
    (-8,1)*{\bullet}; (5,8)*{n};
\endxy}
 \;\; - \;\;
  \vcenter{
 \xy 0;/r.13pc/:
    (-4,12)*{};(-4,0)*{} **\crv{(-4,3) & (-4,-1)};
    (-12,12)*{};(-12,0)*{} **\crv{(-12,3) & (-12,-1)}?(0)*\dir{<};
    (-4,0)*{};(-12,-12)*{} **\crv{(-4,-3) & (-12,-9)}?(1)*\dir{>};
    (-12,0)*{};(-4,-12)*{} **\crv{(-12,-3) & (-4,-9)};
    (-20,-12)*{};(4,-12)*{} **\crv{(-20,6) & (4,6)}?(1)*\dir{>};
    (-5,-3)*{\bullet}; (5,8)*{n};
\endxy}
\\
 (Z)^{0}_{21} &= 0
 \\
(Z)^{0}_{22} &= \;\;
  \vcenter{
 \xy 0;/r.13pc/:
    (-4,12)*{};(-4,0)*{} **\crv{(-4,3) & (-4,-1)};
    (-12,12)*{};(-12,0)*{} **\crv{(-12,3) & (-12,-1)}?(0)*\dir{<};
    (-4,0)*{};(-12,-12)*{} **\crv{(-4,-3) & (-12,-9)}?(1)*\dir{>};
    (-12,0)*{};(-4,-12)*{} **\crv{(-12,-3) & (-4,-9)};
    (-20,-12)*{};(4,-12)*{} **\crv{(-20,6) & (4,6)}?(1)*\dir{>};
    (-5,-3)*{\bullet}; (5,8)*{n};
\endxy}
 \;\; - \;\;
 \vcenter{
 \xy 0;/r.13pc/:
    (-4,12)*{};(-4,0)*{} **\crv{(-4,3) & (-4,-1)};
    (-12,12)*{};(-12,0)*{} **\crv{(-12,3) & (-12,-1)}?(0)*\dir{<};
    (-4,0)*{};(-12,-12)*{} **\crv{(-4,-3) & (-12,-9)}?(1)*\dir{>};
    (-12,0)*{};(-4,-12)*{} **\crv{(-12,-3) & (-4,-9)};
    (-20,-12)*{};(4,-12)*{} **\crv{(-20,6) & (4,6)}?(1)*\dir{>};
    (-10.5,-3)*{\bullet}; (5,8)*{n};
\endxy}
 \;\; - \;\;
 \vcenter{
 \xy 0;/r.13pc/:
    (-12,12)*{};(-4,-12)*{} **\crv{(-12,3) & (-4,-1)}?(0)*\dir{<};
    (-4,12)*{};(4,-12)*{} **\crv{(-4,3) & (4,-1)}?(1)*\dir{>};
    (-20,-12)*{};(-12,-12)*{} **\crv{(-20,-4) & (-12,-4)}?(1)*\dir{>};
     (5,8)*{n};
\endxy}
\end{align}

\begin{align}
(Z)^{1}_{11} &= \;\;
   \vcenter{
 \xy 0;/r.13pc/:
    (-4,12)*{};(-4,0)*{} **\crv{(-4,3) & (-4,-1)};
    (-12,12)*{};(-12,0)*{} **\crv{(-12,3) & (-12,-1)}?(0)*\dir{<};
    (-4,0)*{};(-12,-12)*{} **\crv{(-4,-3) & (-12,-9)}?(1)*\dir{>};
    (-12,0)*{};(-4,-12)*{} **\crv{(-12,-3) & (-4,-9)};
    (-20,-12)*{};(4,-12)*{} **\crv{(-20,6) & (4,6)}?(1)*\dir{>};
    (-5,-3)*{\bullet}; (5,8)*{n};
\endxy}
 \;\; - \;\;
 \vcenter{
 \xy 0;/r.13pc/:
    (-4,12)*{};(-4,0)*{} **\crv{(-4,3) & (-4,-1)};
    (-12,12)*{};(-12,0)*{} **\crv{(-12,3) & (-12,-1)}?(0)*\dir{<};
    (-4,0)*{};(-12,-12)*{} **\crv{(-4,-3) & (-12,-9)}?(1)*\dir{>};
    (-12,0)*{};(-4,-12)*{} **\crv{(-12,-3) & (-4,-9)};
    (-20,-12)*{};(4,-12)*{} **\crv{(-20,6) & (4,6)}?(1)*\dir{>};
    (-10.5,-3)*{\bullet}; (5,8)*{n};
\endxy}
 \;\; - \;\;
 \vcenter{
 \xy 0;/r.13pc/:
    (-12,12)*{};(-4,-12)*{} **\crv{(-12,3) & (-4,-1)}?(0)*\dir{<};
    (-4,12)*{};(4,-12)*{} **\crv{(-4,3) & (4,-1)}?(1)*\dir{>};
    (-20,-12)*{};(-12,-12)*{} **\crv{(-20,-4) & (-12,-4)}?(1)*\dir{>};
     (5,8)*{n};
\endxy}
 \;\; - \;\;
 \vcenter{
 \xy 0;/r.13pc/:
    (12,12)*{};(4,-12)*{} **\crv{(12,3) & (4,-1)}?(1)*\dir{>};
    (4,12)*{};(-4,-12)*{} **\crv{(4,3) & (-4,-1)}?(0)*\dir{<};
    (20,-12)*{};(12,-12)*{} **\crv{(20,-4) & (12,-4)}?(0)*\dir{<};
     (25,8)*{n};
\endxy}
\\ \nn \\
 (Z)^{1}_{12} &=
 \vcenter{
 \xy 0;/r.13pc/:
    (12,10)*{};(12,-10)*{} **\dir{-}?(1)*\dir{>};
    (4,10)*{};(4,-10)*{} **\dir{-}?(0)*\dir{<};
     (12,0)*{\bullet};(20,6)*{n};
\endxy}
\\
(Z)^{1}_{21} &= 0
 \\
(Z)^{1}_{22} &=  \;\; - \;\;
   \vcenter{
 \xy 0;/r.13pc/:
    (-12,-12)*{};(-4,-12)*{} **\crv{(-12,-4) & (-4,-4)}?(1)*\dir{>};
    (5,4)*{n};
\endxy}
\end{align}
and the chain homotopy is given by
\begin{align}
  \big(h^0\big)_{11} &= \;\; -
    \;\;\vcenter{
 \xy 0;/r.13pc/:
    (-4,12)*{};(-4,0)*{} **\crv{(-4,3) & (-4,-1)};
    (-12,12)*{};(-12,0)*{} **\crv{(-12,3) & (-12,-1)}?(0)*\dir{<};
    (-4,0)*{};(-12,-12)*{} **\crv{(-4,-3) & (-12,-9)}?(1)*\dir{>};
    (-12,0)*{};(-4,-12)*{} **\crv{(-12,-3) & (-4,-9)};
    (-20,-12)*{};(4,-12)*{} **\crv{(-20,6) & (4,6)}?(1)*\dir{>};
   (5,8)*{n};
\endxy}
\;\; -\;\;
\sum_{\xy (0,2)*{\scs g_1+g_2+g_3}; (0,-1)*{\scs =-n}; \endxy}
\vcenter{  \xy 0;/r.13pc/:
  (-14,12)*{}; (-14,-12)*{} **\dir{-} ?(0)*\dir{<};
   ?(.5)*\dir{}+(0,0)*{\bullet}+(-5,1)*{\scs g_1};
  (12,12)*{}; (12,-12)*{} **\dir{-} ?(1)*\dir{>};
  (-4,-12)*{}="t1";  (4,-12)*{}="t2";
  "t2";"t1" **\crv{(4,-5) & (-4,-5)}; ?(1)*\dir{>};
    ?(.75)*\dir{}+(0,0)*{\bullet}+(-3.5,1)*{\scs g_3};
  (20,-10)*{n};(-2,5)*{\cbub{\spadesuit+g_2}{}};
  \endxy }
  \\
  \big(h^0\big)_{12} &=   0
  \\
  \big(h^0\big)_{21} &=
   \;\;\vcenter{
 \xy 0;/r.13pc/:
    (-4,12)*{};(-4,0)*{} **\crv{(-4,3) & (-4,-1)};
    (-12,12)*{};(-12,0)*{} **\crv{(-12,3) & (-12,-1)}?(0)*\dir{<};
    (-4,0)*{};(-12,-12)*{} **\crv{(-4,-3) & (-12,-9)}?(1)*\dir{>};
    (-12,0)*{};(-4,-12)*{} **\crv{(-12,-3) & (-4,-9)};
    (-20,-12)*{};(4,-12)*{} **\crv{(-20,6) & (4,6)}?(1)*\dir{>};
     (5,8)*{n};
\endxy}
\;\; + \;\;
\sum_{\xy (0,2)*{\scs g_1+g_2+g_3}; (0,-1)*{\scs =-n}; \endxy}
\vcenter{  \xy 0;/r.13pc/:
  (-14,12)*{}; (-14,-12)*{} **\dir{-} ?(0)*\dir{<};
   ?(.5)*\dir{}+(0,0)*{\bullet}+(-4,1)*{\scs g_1};
  (12,12)*{}; (12,-12)*{} **\dir{-} ?(1)*\dir{>};
   ?(.5)*\dir{}+(0,0)*{\bullet}+(5,1)*{\scs g_3};
  (-4,-12)*{}="t1";  (4,-12)*{}="t2";
  "t2";"t1" **\crv{(4,-5) & (-4,-5)}; ?(1)*\dir{>};
  (20,-10)*{n};(-2,5)*{\cbub{\spadesuit+g_2}{}};
  \endxy }
  \\ \big(h^0\big)_{22} &=  \;\; - \;\;
  \vcenter{
 \xy 0;/r.13pc/:
    (-20,-12)*{};(4,-12)*{} **\crv{(-20,6) & (4,6)}?(1)*\dir{>};
    (-12,-12)*{};(-4,-12)*{} **\crv{(-12,-4) & (-4,-4)}?(0)*\dir{<};
    (5,8)*{n};
\endxy}
\end{align}

\begin{align}
  \big(h^1\big)_{11} &= \;\; -
    \;\;\vcenter{
 \xy 0;/r.13pc/:
    (-4,12)*{};(-4,0)*{} **\crv{(-4,3) & (-4,-1)};
    (-12,12)*{};(-12,0)*{} **\crv{(-12,3) & (-12,-1)}?(0)*\dir{<};
    (-4,0)*{};(-12,-12)*{} **\crv{(-4,-3) & (-12,-9)}?(1)*\dir{>};
    (-12,0)*{};(-4,-12)*{} **\crv{(-12,-3) & (-4,-9)};
    (-20,-12)*{};(4,-12)*{} **\crv{(-20,6) & (4,6)}?(1)*\dir{>};
   (5,8)*{n};
\endxy}
\;\; -\;\;
\sum_{\xy (0,2)*{\scs g_1+g_2+g_3}; (0,-1)*{\scs =-n}; \endxy}
\vcenter{  \xy 0;/r.13pc/:
  (-14,12)*{}; (-14,-12)*{} **\dir{-} ?(0)*\dir{<};
  (12,12)*{}; (12,-12)*{} **\dir{-} ?(1)*\dir{>};
   ?(.5)*\dir{}+(0,0)*{\bullet}+(5,1)*{\scs g_1};
  (-4,-12)*{}="t1";  (4,-12)*{}="t2";
  "t2";"t1" **\crv{(4,-5) & (-4,-5)}; ?(1)*\dir{>};
    ?(.75)*\dir{}+(0,0)*{\bullet}+(-5,1)*{\scs g_3};
  (20,-10)*{n};(-2,5)*{\cbub{\spadesuit+g_2}{}};
  \endxy }
  \\
  \big(h^1\big)_{12} &=
  \;\; - \;\;
 \vcenter{
 \xy 0;/r.13pc/:
    (12,10)*{};(12,-10)*{} **\dir{-}?(1)*\dir{>};
    (4,10)*{};(4,-10)*{} **\dir{-}?(0)*\dir{<};
     (25,8)*{n};
\endxy}
  \\
  \big(h^1\big)_{21} &=
  \;\;- \;\;\vcenter{
 \xy 0;/r.13pc/:
    (-4,12)*{};(-4,0)*{} **\crv{(-4,3) & (-4,-1)};
    (-12,12)*{};(-12,0)*{} **\crv{(-12,3) & (-12,-1)}?(0)*\dir{<};
    (-4,0)*{};(-12,-12)*{} **\crv{(-4,-3) & (-12,-9)}?(1)*\dir{>};
    (-12,0)*{};(-4,-12)*{} **\crv{(-12,-3) & (-4,-9)};
    (-20,-12)*{};(4,-12)*{} **\crv{(-20,6) & (4,6)}?(1)*\dir{>};
     (5,8)*{n};
\endxy}
\;\; - \;\;
\sum_{\xy (0,2)*{\scs g_1+g_2+g_3}; (0,-1)*{\scs =-n}; \endxy}
\vcenter{  \xy 0;/r.13pc/:
  (-14,12)*{}; (-14,-12)*{} **\dir{-} ?(0)*\dir{<};
   ?(.5)*\dir{}+(0,0)*{\bullet}+(-4,1)*{\scs g_1};
  (12,12)*{}; (12,-12)*{} **\dir{-} ?(1)*\dir{>};
  (-4,-12)*{}="t1";  (4,-12)*{}="t2";
  "t2";"t1" **\crv{(4,-5) & (-4,-5)}; ?(1)*\dir{>};
    ?(.75)*\dir{}+(0,0)*{\bullet}+(-5,1)*{\scs g_3};
  (20,-10)*{n};(-2,5)*{\cbub{\spadesuit+g_2}{}};
  \endxy }
  \\ \big(h^0\big)_{22} &=  \;\;
0.
\end{align}
It remains to prove the homotopy commutativity of \eqref{eq_cup_nat_square2} in the case $n >0$.  This follows from the commutativity of the diagram
\begin{equation}
    \xy
   (-35,-10)*+{\cal{E}\cal{F}\tsigma(\cal{C})\onen}="tl";
   (0,-10)*+{\cal{E}\tsigma(\cal{C})\cal{F}\onen}="tm";
   (35,-10)*+{\tsigma(\cal{C})\cal{E}\cal{F}\onen}="tr";
   (-35,10)*+{\tsigma(\cal{C})\onen}="bl";
   (35,10)*+{\tsigma(\cal{C})\onen}="br";
   (-55,-25)*+{\cal{E}\cal{F}\cal{C}\onen}="tl'";
   (0,-25)*+{\cal{E}\cal{C}\cal{F}\onen}="tm'";
   (55,-25)*+{\cal{C}\cal{E}\cal{F}\onen}="tr'";
   (-55,25)*+{\cal{C}\onen}="bl'";
   (55,25)*+{\cal{C}\onen}="br'";
   {\ar_-{\UupD\delts} "tl";"tm"};
   {\ar_-{\ogams\UdownD} "tm";"tr"};
   {\ar^{\Ucapr\tsigma(\Ucas)} "tl";"bl"};
   {\ar^{} "bl";"br"};
   {\ar_{\tsigma(\Ucas)\Ucapr} "tr";"br"};
   {\ar_{\UupD\gam} "tl'";"tm'"};
   {\ar_{\odelt\UdownD} "tm'";"tr'"};
   {\ar^{\Ucapr\Ucas} "tl'";"bl'"};
   {\ar^{} "bl'";"br'"};
   {\ar_{\Ucas\Ucapr} "tr'";"br'"};
   {\ar_{} "tl'";"tl"};
   {\ar^{\varrho^{\tsigma}} "bl'";"bl"};
   {\ar_{} "tr";"tr'"};
   {\ar_{\hat{\varrho}^{\tsigma}} "br";"br'"};
   {\ar_{\UupD\hat{\varrho}^{\tsigma}\UdownD} "tm";"tm'"};
    (-38,-20)*{\scs\UupD\UdownD\varrho^{\tsigma}};
    (38,-20)*{\scs\hat{\varrho}^{\tsigma}\UupD\UdownD};
    (0,29)*{\Ucas};
    (0,14)*{\scs \tsigma(\Ucas)};
  \endxy
\end{equation}The left and right squares commute (up to homotopy) by definition.  The top square commutes up to homotopy by Proposition~\ref{prop_homotopy_sym}.  The bottom two squares commute by Proposition~\ref{prop_sigma}.
Hence, naturality follows from the commutativity of the diagram below:
\begin{equation} \label{eq_cup_nat_square}
    \xy
   (-35,-10)*+{\cal{E}\cal{F}\tsigma(\cal{C})\onen}="tl";
   (0,-10)*+{\cal{E}\tsigma(\cal{C})\cal{F}\onen}="tm";
   (35,-10)*+{\tsigma(\cal{C})\cal{E}\cal{F}\onen}="tr";
   (-35,10)*+{\tsigma(\cal{C})\onen}="bl";
   (35,10)*+{\tsigma(\cal{C})\onen}="br";
   {\ar_-{\cal{E}\delts} "tl";"tm"};
   {\ar_-{\ogams\cal{F}} "tm";"tr"};
   {\ar^{\Ucapr\tsigma(\cal{C})} "tl";"bl"};
   {\ar^{\tsigma(\cal{C})} "bl";"br"};
   {\ar_{\tsigma(\cal{C})\Ucapr} "tr";"br"};
  \endxy
\end{equation}
when $n > 0$.

To prove the homotopy commutativity of \eqref{eq_cup_nat_square} let
\begin{equation}
Z' = \left(\tsigma(\cal{C})\Ucapr \right)\circ \left(\ogams\UdownD\right) \circ
\left(\UupD \delts\right) - \left(\Ucapr \tsigma(\cal{C}) \right).
\end{equation}
We will show that $Z'$ is homotopic to zero
\begin{equation}
  \xy
  (-68,15)*+{
    \left(
    \begin{array}{c}
    \scs
    \F{}\E{} \onen \{2\} \\
    \scs  \onen \{1+n\} \\
    \end{array}
    \right)}="1t";
  (-5,15)*+{
    \und{\left(
    \begin{array}{c}
    \scs \F{} \E{}\onen \\
     \scs  \F{}\E{} \onen \\
    \end{array}
    \right)}}="3t";
  (68,15)*+{
    \left(\begin{array}{c}
    \scs \F{}\E{} \onen \{-2\} \\
     \scs  \onen \{-n-1\} \\
    \end{array}
    \right) }="5t";
   {\ar^-{
  \left(
    \begin{array}{cc}
      \text{$\Udown\Uupdot$} & \Ucupr \\ & \\
      \text{$\Udowndot \Uup$} & \Ucupr \\
    \end{array}
  \right)
   } "1t";"3t"};
   {\ar^-{
  \left(
    \begin{array}{cc}
      -\;\Udowndot \Uup &  \text{$\Udown \Uupdot$}  \\ & \\
      \Ucapl & -\;\Ucapl\\
    \end{array}
  \right)
   } "3t";"5t"};
(-68,-15)*+{
    \left(
    \begin{array}{c}
    \scs
    \cal{E}\cal{F} \F{}\E{} \onen \{2\} \\
    \scs  \cal{E}\cal{F}\onen \{1+n\} \\
    \end{array}
    \right)}="1";
  (-5,-15)*+{
    \und{\left(
    \begin{array}{c}
    \scs \cal{E}\cal{F}\F{} \E{}\onen \\
     \scs  \cal{E}\cal{F}\F{}\E{} \onen \\
    \end{array}
    \right)}}="3";
  (68,-15)*+{
    \left(\begin{array}{c}
    \scs \cal{E}\cal{F}\F{}\E{} \onen \{-2\} \\
     \scs  \cal{E}\cal{F}\onen \{-n-1\} \\
    \end{array}
    \right) }="5";
   {\ar_-{
  \left(
    \begin{array}{cc}
      \text{$\Uup\Udown\Udown\Uupdot$} & \text{$\Uup\Udown\Ucupr$} \\ & \\
      \text{$\Uup\Udown\Udowndot \Uup$} & \text{$\Uup\Udown\Ucupr$} \\
    \end{array}
  \right)
   } "1";"3"};
   {\ar_-{
  \left(
    \begin{array}{cc}
      -\;\Uup\Udown\Udowndot \Uup &  \text{$\Uup\Udown\Udown \Uupdot$}  \\ & \\
      \text{$\Uup\Udown\Ucapl$} & -\;\Uup\Udown\Ucapl\\
    \end{array}
  \right)
   } "3";"5"};
 {\ar^{(Z')^{-1}} "1";"1t"};
 {\ar^{(Z')^{0}} "3";"3t"};
 {\ar^{(Z')^{1}} "5";"5t"};
 {\ar^{h^0} "3";"1t"};
 {\ar^{h^1} "5";"3t"};
 \endxy
\end{equation}
where the chain map $Z'$ is given by
\begin{align}
 (Z')^{-1}_{11} &= \;\;
\vcenter{  \xy 0;/r.13pc/:
  (-4,-12)*{};(4,12)*{} **\crv{(-4,-4) & (4,-4)} ?(0)*\dir{<};
  (4,-12)*{}="t1";  (-12,-12)*{}="t2";
  "t1";"t2" **\crv{(4,-2) & (-12,-2)};?(0)*\dir{<};
   (12,-12)*{}; (12,12)*{} **\dir{-} ?(1)*\dir{>};
  (19,6)*{n}; (12,0)*{\bullet};
  \endxy }
 \;\; - \;\;
\vcenter{  \xy 0;/r.13pc/:
  (-4,-12)*{};(4,12)*{} **\crv{(-4,-4) & (4,-4)} ?(0)*\dir{<};
  (4,-12)*{}="t1";  (-12,-12)*{}="t2";
  "t1";"t2" **\crv{(4,-2) & (-12,-2)};?(0)*\dir{<};
   ?(.2)*\dir{}+(0,0)*{\bullet};
   (12,-12)*{}; (12,12)*{} **\dir{-} ?(1)*\dir{>};
  (19,6)*{n};
  \endxy }
\\
 (Z')^{-1}_{12} &=
 (Z')^{-1}_{21} =
(Z')^{-1}_{22} = \;\; 0.
\end{align}

\begin{align}
 (Z')^{0}_{11} &= \;\;
\vcenter{  \xy 0;/r.13pc/:
  (-4,-12)*{};(4,12)*{} **\crv{(-4,-4) & (4,-4)} ?(0)*\dir{<};
  (4,-12)*{}="t1";  (-12,-12)*{}="t2";
  "t1";"t2" **\crv{(4,-2) & (-12,-2)};?(0)*\dir{<};
   (12,-12)*{}; (12,12)*{} **\dir{-} ?(1)*\dir{>};
  (19,6)*{n}; (12,0)*{\bullet};
  \endxy }
 \;\; - \;\;
\vcenter{  \xy 0;/r.13pc/:
  (-4,-12)*{};(4,12)*{} **\crv{(-4,-4) & (4,-4)} ?(0)*\dir{<};
  (4,-12)*{}="t1";  (-12,-12)*{}="t2";
  "t1";"t2" **\crv{(4,-2) & (-12,-2)};?(0)*\dir{<};
   ?(.2)*\dir{}+(0,0)*{\bullet};
   (12,-12)*{}; (12,12)*{} **\dir{-} ?(1)*\dir{>};
  (19,6)*{n};
  \endxy }
\\
 (Z')^{0}_{12} &= 0\\
 (Z')^{0}_{22} &= -(Z')^{0}_{21} =\;\;
 \vcenter{  \xy 0;/r.13pc/:
  (-4,-12)*{};(4,12)*{} **\crv{(-4,-4) & (4,-4)} ?(0)*\dir{<};
  (4,-12)*{}="t1";  (-12,-12)*{}="t2";
  "t1";"t2" **\crv{(4,-2) & (-12,-2)};?(0)*\dir{<};
   (12,-12)*{}; (12,12)*{} **\dir{-} ?(1)*\dir{>};
  (19,6)*{n}; (12,0)*{\bullet};
  \endxy }
 \;\; - \;\;
\vcenter{  \xy 0;/r.13pc/:
  (-4,-12)*{};(4,12)*{} **\crv{(-4,-4) & (4,-4)} ?(0)*\dir{<}
  ?(.6)*\dir{}+(0,0)*{\bullet};
  (4,-12)*{}="t1";  (-12,-12)*{}="t2";
  "t1";"t2" **\crv{(4,-2) & (-12,-2)};?(0)*\dir{<};
   (12,-12)*{}; (12,12)*{} **\dir{-} ?(1)*\dir{>};
  (19,6)*{n};
  \endxy }
\end{align}

\begin{align}
 (Z')^{1}_{11} &= \;\;
 \vcenter{  \xy 0;/r.13pc/:
  (-4,-12)*{};(4,12)*{} **\crv{(-4,-4) & (4,-4)} ?(0)*\dir{<};
  (4,-12)*{}="t1";  (-12,-12)*{}="t2";
  "t1";"t2" **\crv{(4,-2) & (-12,-2)};?(0)*\dir{<};
   (12,-12)*{}; (12,12)*{} **\dir{-} ?(1)*\dir{>};
  (19,6)*{n}; (12,0)*{\bullet};
  \endxy }
 \;\; - \;\;
\vcenter{  \xy 0;/r.13pc/:
  (-4,-12)*{};(4,12)*{} **\crv{(-4,-4) & (4,-4)} ?(0)*\dir{<}
  ?(.6)*\dir{}+(0,0)*{\bullet};
  (4,-12)*{}="t1";  (-12,-12)*{}="t2";
  "t1";"t2" **\crv{(4,-2) & (-12,-2)};?(0)*\dir{<};
   (12,-12)*{}; (12,12)*{} **\dir{-} ?(1)*\dir{>};
  (19,6)*{n};
  \endxy }
\\
 (Z')^{1}_{12} &=
 (Z')^{1}_{22} =
 (Z')^{0}_{11} = \;\;0
\end{align}
and the chain homotopy is given by
\begin{align}
  \big(h^0\big)_{11} &= -\big(h^0\big)_{12} = \;\;
\vcenter{  \xy 0;/r.13pc/:
  (-4,-12)*{};(4,12)*{} **\crv{(-4,-4) & (4,-4)} ?(0)*\dir{<};
  (4,-12)*{}="t1";  (-12,-12)*{}="t2";
  "t1";"t2" **\crv{(4,-2) & (-12,-2)};?(0)*\dir{<};
   (12,-12)*{}; (12,12)*{} **\dir{-} ?(1)*\dir{>};
  (19,6)*{n};
  \endxy }
  \\
  \big(h^0\big)_{21} &=
  \big(h^0\big)_{22} =  \;\; 0
\end{align}
and
\begin{align}
  \big(h^1\big)_{11} &= \big(h^1\big)_{21} = \;\;
\vcenter{  \xy 0;/r.13pc/:
  (-4,-12)*{};(4,12)*{} **\crv{(-4,-4) & (4,-4)} ?(0)*\dir{<};
  (4,-12)*{}="t1";  (-12,-12)*{}="t2";
  "t1";"t2" **\crv{(4,-2) & (-12,-2)};?(0)*\dir{<};
   (12,-12)*{}; (12,12)*{} **\dir{-} ?(1)*\dir{>};
  (19,6)*{n};
  \endxy }
  \\
  \big(h^1\big)_{12} &=
  \big(h^1\big)_{22} =  \;\; 0.
\end{align}

Naturality of the other cup follows from the homotopy commutative diagram below:
\begin{equation} \label{eq_nat_cup2}
    \xy
   (-50,-10)*+{\cal{F}\cal{E}\tomega(\cal{C})\onen}="tl";
   (-14,-10)*+{\cal{F}\tomega(\cal{C})\cal{E}\onen}="tm";
   (14,-10)*+{\cal{F}\tomega(\cal{C})\cal{E}\onen}="tm'";
   (50,-10)*+{\tomega(\cal{C})\cal{F}\cal{E}\onen}="tr";
   (-50,10)*+{\tomega(\cal{C})\onen}="bl";
   (50,10)*+{\tomega(\cal{C})\onen}="br";
   (-65,-25)*+{\cal{F}\cal{E}\cal{C}\onen}="tl2";
   (0,-25)*+{\cal{F}\cal{C}\cal{E}\onen}="tm2";
   (65,-25)*+{\cal{C}\cal{F}\cal{E}\onen}="tr2";
   (-65,25)*+{\cal{C}\onen}="bl2";
   (65,25)*+{\cal{C}\onen}="br2";
   {\ar_-{\UdownD\tomega(\gam\onenn{-n})} "tl";"tm"};
   {\ar_-{\Id} "tm";"tm'"};
   {\ar_-{\tomega(\odelt\onenn{-n})\UupD} "tm'";"tr"};
   {\ar_{\Ucapl\tomega(\Ucas)} "tl";"bl"};
   {\ar^{} "bl";"br"};
   {\ar^{\tomega(\Ucas)\Ucapl} "tr";"br"};
   {\ar_{} "tr";"tr2"};
   {\ar_{\hat{\varrho}^{\tomega}} "br";"br2"};
   {\ar_{\UdownD\odelt} "tl2";"tm2"};
   {\ar_{\gam\UupD} "tm2";"tr2"};
   {\ar^{\Ucapl\Ucas} "tl2";"bl2"};
   {\ar^{} "bl2";"br2"};
   {\ar_{\Ucas\Ucapl} "tr2";"br2"};
   {\ar_{} "tm";"tm2"};
   {\ar_{} "tm2";"tm'"};
   {\ar_{} "tl2";"tl"};
   {\ar_{\varrho^{\tomega}} "bl2";"bl"};
        (0,14)*{\scs \tomega(\Ucas)};
        (0,29)*{\scs\Ucas};
        (-46,-20)*{\scs\UdownD\UupD\varrho^{\tomega}};
        (-14,-20)*{\scs\UdownD\varrho^{\tomega}\UupD};
        (14,-20)*{\scs\UdownD\hat{\varrho}^{\tomega}\UupD};
        (46,-20)*{\scs\hat{\varrho}^{\tomega}\UdownD\UupD};
  \endxy
\end{equation}
The center rectangle follows by applying the symmetry $\tomega$ to the homotopy commutative square \eqref{eq_cup_nat_square2} and replacing $n$ by $-n$.  The two squares on the left and right commute by naturality of $\varrho^{\tomega}$, $\hat{\varrho}^{\tomega}$.  The top square and the bottom middle triangle are commutative since $\varrho^{\tomega}$ is the homotopy inverse of $\hat{\varrho}^{\tomega}$ as shown in Proposition~\ref{prop_homotopy_sym}.  The bottom two squares follow from equations \eqref{prop_omega1} and \eqref{prop_omega4} in Proposition~\ref{prop_omega}.

%
\subsubsection{Naturality of $\hat{\kappa}$ for cap 2-morphisms}
%

We show that the squares
\begin{equation} \label{eq_nat_barkappa_cup1}
    \xy
   (-14,-10)*+{\cal{C}\cal{E}\cal{F}\onen}="tl";
   (14,-10)*+{\cal{E}\cal{F}\cal{C}\onen}="tr";
   (-14,10)*+{\cal{C}\onen}="bl";
   (14,10)*+{\cal{C}\onen}="br";
   {\ar_{\hat{\kappa}_{\cal{E}\cal{F}\onen}} "tl";"tr"};
   {\ar^{\Ucas\Ucapr} "tl";"bl"};
   {\ar^{\hat{\kappa}_{\onen}} "bl";"br"};
   {\ar_{\Ucapr\Ucas} "tr";"br"};
  \endxy
\qquad
    \xy
   (-14,-10)*+{\cal{C}\cal{F}\cal{E}\onen}="tl";
   (14,-10)*+{\cal{F}\cal{E}\cal{C}\onen}="tr";
   (-14,10)*+{\cal{C}\onen}="bl";
   (14,10)*+{\cal{C}\onen}="br";
   {\ar_{\hat{\kappa}_{\cal{F}\cal{E}\onen}} "tl";"tr"};
   {\ar^{\Ucas\Ucapl} "tl";"bl"};
   {\ar^{\hat{\kappa}_{\onen}} "bl";"br"};
   {\ar_{\Ucapl\Ucas} "tr";"br"};
  \endxy
\end{equation}
commute up to homotopy.  To show that the first square is homotopy commutative consider the diagram below:
\begin{equation}
    \xy
   (-48,-10)*+{\tsigma\tomega(\cal{C})\cal{E}\cal{F}\onen}="tl";
   (-15,-10)*+{\cal{E}\tsigma\tomega(\cal{C})\cal{F}\onen}="tm";
   (15,-10)*+{\cal{E}\tsigma\tomega(\cal{C})\cal{F}\onen}="tm2";
   (48,-10)*+{\cal{E}\cal{F}\tsigma\tomega(\cal{C})\onen}="tr";
   (-48,10)*+{\tsigma\tomega(\cal{C})\onen}="bl";
   (48,10)*+{\tsigma\tomega(\cal{C})\onen}="br";
   (-60,-25)*+{\cal{C}\cal{E}\cal{F}\onen}="tl'";
   (0,-25)*+{\cal{E}\cal{C}\cal{F}\onen}="tm'";
   (60,-25)*+{\cal{E}\cal{F}\cal{C}\onen}="tr'";
   (-60,25)*+{\cal{C}\onen}="bl'";
   (60,25)*+{\cal{C}\onen}="br'";
   {\ar_-{\tsigma\tomega(\gam)\UdownD} "tl";"tm"};
   {\ar^-{\Id} "tm";"tm2"};
   {\ar_-{\UupD\tsigma\tomega(\odelt)} "tm2";"tr"};
   {\ar_{\Ucas\Ucapr} "tl";"bl"};
   {\ar^{} "bl";"br"};
   {\ar^{\Ucapr\Ucas} "tr";"br"};
   {\ar_{\ogam\UdownD} "tl'";"tm'"};
   {\ar_{\UupD\delt} "tm'";"tr'"};
   {\ar^{\Ucas\Ucapr} "tl'";"bl'"};
   {\ar^{} "bl'";"br'"};
   {\ar_{\Ucapr\Ucas} "tr'";"br'"};
   {\ar_{} "tl'";"tl"};
   {\ar^{\varrho^{\tsigma\tomega}} "bl'";"bl"};
   {\ar_{} "tr";"tr'"};
   {\ar_{\hat{\varrho}^{\tsigma\tomega}} "br";"br'"};
   {\ar_{} "tm";"tm'"};
   {\ar_{} "tm'";"tm2"};
    (0,14)*{\scs \tsigma\tomega(\Ucas)};
    (0,29)*{\Ucas};
    (-46,-20)*{\scs \varrho^{\tsigma\tomega}\UupD\UdownD};
    (-14,-20)*{\scs \UupD\hat{\varrho}^{\tsigma\tomega}\UdownD};
    (14,-20)*{\scs \UupD\varrho^{\tsigma\tomega}\UdownD};
    (46,-20)*{\scs \UupD\UdownD\hat{\varrho}^{\tsigma\tomega}};
  \endxy
\end{equation}
The center rectangle is obtained by applying $\tsigma\tomega$ to the homotopy commutative square \eqref{eq_cup_nat_square2}.  The triangle commutes since $\varrho^{\tsigma\tomega}$ has inverse $\hat{\varrho}^{\tsigma\tomega}$ by Proposition~\ref{prop_homotopy_sym}.  The bottom two squares were shown to commute in Proposition~\ref{prop_sigmaomega}. The remaining squares commute up to homotopy by the naturality of $\varrho^{\tsigma\tomega}$ and Propositions~\ref{prop_homotopy_sym}.

The second naturality square in \eqref{eq_nat_barkappa_cup1} can be shown to commute by applying $\tsigma\tomega$ to \eqref{eq_nat_cup2} and arguing as above.

%
\subsubsection{Naturality of $\kappa$ for the cup 2-morphisms}
%

We must show that the squares
\begin{equation}
    \xy
   (-14,-10)*+{\cal{C}\onen}="tl";
   (14,-10)*+{\cal{C}\onen}="tr";
   (-14,10)*+{\cal{E}\cal{F}\cal{C}\onen}="bl";
   (14,10)*+{\cal{C}\cal{E}\cal{F}\onen}="br";
   {\ar_{\hat{\kappa}_{\onen}} "tl";"tr"};
   {\ar^{\Ucupr\Ucas} "tl";"bl"};
   {\ar^{\hat{\kappa}_{\cal{E}\cal{F}\onen}} "bl";"br"};
   {\ar_{\Ucas\Ucupr} "tr";"br"};
  \endxy
\qquad
    \xy
   (-14,-10)*+{\cal{C}\onen}="tl";
   (14,-10)*+{\cal{C}\onen}="tr";
   (-14,10)*+{\cal{F}\cal{E}\cal{C}\onen}="bl";
   (14,10)*+{\cal{C}\cal{F}\cal{E}\onen}="br";
   {\ar_{\hat{\kappa}_{\onen}} "tl";"tr"};
   {\ar^{\Ucupr\Ucas} "tl";"bl"};
   {\ar^{\hat{\kappa}_{\cal{F}\cal{E}\onen}} "bl";"br"};
   {\ar_{\Ucas\Ucupr} "tr";"br"};
  \endxy
\end{equation}
commute up to homotopy.  The proof is given by the following diagrams:
\begin{equation}
    \xy
   (-48,10)*+{\cal{E}\cal{F}\tpsi(\cal{C})\onen}="tl";
   (-15,10)*+{\cal{E}\tpsi(\cal{C})\cal{F}\onen}="tm";
   (15,10)*+{\cal{E}\tpsi(\cal{C})\cal{F}\onen}="tm2";
   (48,10)*+{\tpsi(\cal{C})\cal{E}\cal{F}\onen}="tr";
   (-48,-10)*+{\tpsi(\cal{C})\onen}="bl";
   (48,-10)*+{\tpsi(\cal{C})\onen}="br";
   (-60,25)*+{\cal{E}\cal{F}\cal{C}\onen}="tl'";
   (0,25)*+{\cal{E}\cal{C}\cal{F}\onen}="tm'";
   (60,25)*+{\cal{C}\cal{E}\cal{F}\onen}="tr'";
   (-60,-25)*+{\cal{C}\onen}="bl'";
   (60,-25)*+{\cal{C}\onen}="br'";
   {\ar^-{\UupD\tpsi(\delt)} "tl";"tm"};
   {\ar^-{\Id} "tm";"tm2"};
   {\ar^-{\tpsi(\ogam)\UdownD} "tm2";"tr"};
   {\ar_{\Ucupl\tpsi(\Ucas)} "bl";"tl"};
   {\ar^{} "bl";"br"};
   {\ar^{\tpsi(\Ucas)\Ucupl} "br";"tr"};
   {\ar^{\UupD\gam} "tl'";"tm'"};
   {\ar^{\odelt\UdownD} "tm'";"tr'"};
   {\ar^{\Ucupl\Ucas} "bl'";"tl'"};
   {\ar^{} "bl'";"br'"};
   {\ar_{\Ucas\Ucupl} "br'";"tr'"};
   {\ar^{} "tl'";"tl"};
   {\ar_{\varrho^{\tpsi}} "bl'";"bl"};
   {\ar^{} "tr";"tr'"};
   {\ar_{\hat{\varrho}^{\tpsi}} "br";"br'"};
   {\ar^{} "tm";"tm'"};
   {\ar^{} "tm'";"tm2"};
    (0,-14)*{\scs \tpsi(\Ucas)};
    (0,-31)*{\scs \Ucas};
    (-46,20)*{\scs\UupD\UdownD\varrho^{\tpsi}};
    (-14,20)*{\scs\UupD\hat{\varrho}^{\tpsi}\UdownD};
    (14,20)*{\scs\UupD\varrho^{\tpsi}\UdownD};
    (46,20)*{\scs\hat{\varrho}^{\tpsi}\UupD\UdownD};
  \endxy
\end{equation}
\begin{equation}
    \xy
   (-48,10)*+{\cal{F}\cal{E}\tpsi(\cal{C})\onen}="tl";
   (-15,10)*+{\cal{F}\tpsi(\cal{C})\cal{E}\onen}="tm";
   (15,10)*+{\cal{F}\tpsi(\cal{C})\cal{E}\onen}="tm2";
   (48,10)*+{\tpsi(\cal{C})\cal{F}\cal{E}\onen}="tr";
   (-48,-10)*+{\tpsi(\cal{C})\onen}="bl";
   (48,-10)*+{\tpsi(\cal{C})\onen}="br";
   (-60,25)*+{\cal{F}\cal{E}\cal{C}\onen}="tl'";
   (0,25)*+{\cal{F}\cal{C}\cal{E}\onen}="tm'";
   (60,25)*+{\cal{C}\cal{F}\cal{E}\onen}="tr'";
   (-60,-25)*+{\cal{C}\onen}="bl'";
   (60,-25)*+{\cal{C}\onen}="br'";
   {\ar^-{\UdownD\tpsi(\ogam)} "tl";"tm"};
   {\ar^-{\Id} "tm";"tm2"};
   {\ar^-{\tpsi(\delt)\UupD} "tm2";"tr"};
   {\ar_{\Ucupr\tpsi(\Ucas)} "bl";"tl"};
   {\ar^{} "bl";"br"};
   {\ar^{\tpsi(\Ucas)\Ucupr} "br";"tr"};
   {\ar^{\UdownD\odelt} "tl'";"tm'"};
   {\ar^{\gam\UdownD} "tm'";"tr'"};
   {\ar^{\Ucupr\Ucas} "bl'";"tl'"};
   {\ar^{} "bl'";"br'"};
   {\ar_{\Ucas\Ucupr} "br'";"tr'"};
   {\ar^{} "tl'";"tl"};
   {\ar_{\varrho^{\tpsi}} "bl'";"bl"};
   {\ar^{} "tr";"tr'"};
   {\ar_{\hat{\varrho}^{\tpsi}} "br";"br'"};
   {\ar^{} "tm";"tm'"};
   {\ar^{} "tm'";"tm2"};
    (0,-14)*{\scs \tpsi(\Ucas)};
    (0,-31)*{\scs \Ucas};
    (-46,20)*{\scs\UdownD\UupD\varrho^{\tpsi}};
    (-14,20)*{\scs\UdownD\hat{\varrho}^{\tpsi}\UupD};
    (14,20)*{\scs\UdownD\varrho^{\tpsi}\UupD};
    (46,20)*{\scs\hat{\varrho}^{\tpsi}\UdownD\UupD};
  \endxy
\end{equation}
The center rectangles commute up to homotopy since they are obtained from the homotopy commutative squares in \eqref{eq_nat_barkappa_cup1}.  The triangles and the left, bottom and right squares in both diagrams above commute since $\varrho^{\tpsi}$ is inverse $\hat{\varrho}^{\tpsi}$ by Proposition~\ref{prop_homotopy_sym}.  The remaining squares commute by Proposition~\ref{prop_psi}.

%
\subsubsection{Naturality of $\hat{\kappa}$ for the cup 2-morphisms}
%

The naturality for one cap is given by the commutative diagram in $Com(\Ucat)$
\begin{equation}
    \xy
   (-35,10)*+{\tpsi(\cal{C})\cal{E}\cal{F}\onen}="tl";
   (0,10)*+{\cal{E}\tpsi(\cal{C})\cal{F}\onen}="tm";
   (35,10)*+{\cal{E}\cal{F}\tpsi(\cal{C})\onen}="tr";
   (-35,-10)*+{\tpsi(\cal{C})\onen}="bl";
   (35,-10)*+{\tpsi(\cal{C})\onen}="br";
   (-55,25)*+{\cal{C}\cal{E}\cal{F}\onen}="tl'";
   (0,25)*+{\cal{E}\cal{C}\cal{F}\onen}="tm'";
   (55,25)*+{\cal{E}\cal{F\cal{C}}\onen}="tr'";
   (-55,-25)*+{\cal{C}\onen}="bl'";
   (55,-25)*+{\cal{C}\onen}="br'";
   {\ar^-{\tpsi(\odelt)\UdownD} "tl";"tm"};
   {\ar^-{\UupD\tpsi(\gam)} "tm";"tr"};
   {\ar^{\tpsi(\Ucas)\Ucupl} "bl";"tl"};
   {\ar^{} "bl";"br"};
   {\ar_{\Ucupl\tpsi(\Ucas)} "br";"tr"};
   {\ar^{\UupD\ogam} "tl'";"tm'"};
   {\ar^{\delt\UdownD} "tm'";"tr'"};
   {\ar^{\Ucas\Ucupl} "bl'";"tl'"};
   {\ar^{} "bl'";"br'"};
   {\ar_{\Ucupl\Ucas} "br'";"tr'"};
   {\ar^{} "tl'";"tl"};
   {\ar^{\varrho^{\tpsi}} "bl'";"bl"};
   {\ar^{} "tr";"tr'"};
   {\ar_{\hat{\varrho}^{\tpsi}} "br";"br'"};
   {\ar^{} "tm";"tm'"};
    (0,-14)*{\scs \tpsi(\Ucas)};
    (0,-31)*{\scs \Ucas};
    (-38,20)*{\scs\varrho^{\tpsi}\UupD\UdownD};
    (-7,20)*{\scs\UupD\hat{\varrho}^{\tpsi}\UdownD};
    (38,20)*{\scs\UupD\UdownD\hat{\varrho}^{\tpsi}};
  \endxy
\end{equation}
where the middle square commutes up to homotopy by applying $\tpsi$ to the homotopy commutative diagram \eqref{eq_cup_nat_square2}.  The left and right squares commute by the naturality of $\varrho^{\tpsi}$ and $\hat{\varrho}^{\tpsi}$.  The top square commutes on the nose since $\varrho^{\tpsi}$ has inverse $\hat{\varrho}^{\tpsi}.  $The bottom two squares commute by Proposition~\ref{prop_psi}.

Naturality for $\hat{\kappa}$ with respect to the other cap is proven similarly.

%
%

\addcontentsline{toc}{section}{References}



%

\vspace{0.1in}

\noindent A.B.:  { \sl \small Institut f\"ur Mathematik,
Universit\"at Z\"urich, Winterthurerstr. 190
CH-8057 Z\"urich} \newline \noindent
  {\tt \small email: anna@math.uzh.ch}

\vspace{0.1in}

\noindent M.K.: { \sl \small Department of Mathematics, Columbia University, New
York, NY 10027, USA} \newline \noindent {\tt \small email: khovanov@math.columbia.edu}

\vspace{0.1in}

\noindent A.L.:  { \sl \small Department of Mathematics, Columbia University, New
York, NY 10027, USA} \newline \noindent
  {\tt \small email: lauda@math.columbia.edu}

%
\end{document}